\documentclass[12pt]{amsart}
%
%
\usepackage[utf8]{inputenc}
\usepackage{geometry,verbatim,upgreek}
\geometry{a4paper} 
\usepackage[shadow]{todonotes}
\usepackage{amssymb,latexsym,amsfonts,mathtools}
\usepackage{mathrsfs} 
\usepackage{enumitem} 
\usepackage{microtype} 
\usepackage[shadow]{todonotes}
\usepackage{tikz}
\usetikzlibrary{arrows,backgrounds,patterns,shapes,calc,decorations.pathreplacing}
%
%
\newenvironment{enumerate-(a)}{\begin{enumerate}[label={\upshape (\alph*)}, leftmargin=2pc]}{\end{enumerate}}
\newenvironment{enumerate-(i)}{\begin{enumerate}[label={\upshape (\roman*)}, leftmargin=2pc]}{\end{enumerate}}
\theoremstyle{plain}
\newtheorem{theorem}{Theorem}[section]

\newtheorem{proposition}[theorem]{Proposition}
\newtheorem{lemma}[theorem]{Lemma}
\newtheorem{corollary}[theorem]{Corollary}
\newtheorem{claim}{Claim}[theorem]
\theoremstyle{definition}
\newtheorem{definition}[theorem]{Definition}
\theoremstyle{remark}
\newtheorem{remark}[theorem]{Remark}
\newtheorem*{remark*}{Remark}
\newtheorem*{remarks*}{Remarks}
\newtheorem{remarks}[theorem]{Remarks}
\newtheorem{example}[theorem]{Example}
\newtheorem{examples}[theorem]{Examples}

%
\newcommand{\AND}{\mathbin{\, \wedge \,}}
\newcommand{\OR}{\mathbin{\, \vee \,}}
\renewcommand{\implies}{\Rightarrow}	
\newcommand{\IMPLIES}{\mathbin{\, \Rightarrow \,}}
\renewcommand{\iff}{\mathbin{\Leftrightarrow }}
\newcommand{\IFF}{\mathbin{\, \Leftrightarrow \,}}
\newcommand{\FORALL}[1]{\forall {#1} \, }
\newcommand{\FORALLS}[2]{\forall^{#1} {#2} \, }
\newcommand{\EXISTS}[1]{\exists {#1} \, }

\newcommand{\EXISTSS}[2]{\exists^{#1} {#2} \, }
\newcommand{\R}{\mathbb{R}}	
\newcommand{\Q}{\mathbb{Q}}	
\newcommand{\Mid}{\boldsymbol\mid}		
\newcommand{\equalsdef}{\stackrel{\text{\tiny\rm def}}{=}}	
\newcommand{\setof}[2]{\mathopen \{{#1}\Mid{#2} \mathclose\}} 
\newcommand{\setofLR}[2]{\left \{{#1} \Mid {#2} \right\}} 
\newcommand{\set}[1]{\mathopen \{ {#1} \mathclose \}} 
\newcommand{\setLR}[1]{\left \{ {#1} \right \}} 
\newcommand{\Pow}{\mathscr{P}}		
\newcommand{\seqof}[2]{\mathopen \langle #1 \Mid #2 \mathclose \rangle} 
\newcommand{\seqofLR}[2]{\left \langle #1 \Mid #2 \right \rangle} 
\newcommand{\seq}[1]{\mathopen \langle #1 \mathclose \rangle}	
\newcommand{\eq}[1]{{\boldsymbol [}{#1} {\boldsymbol ]}}	
\newcommand{\symdif}{\mathop{\triangle}}	
\newcommand{\LOC}[2]{{#1}_{\lfloor {#2}\rfloor}} 
\DeclareMathOperator{\PrTr}{PrTr} 
%
%
\newcommand{\pre}[2]{\prescript{#1}{}{#2}}				
\newcommand{\Pre}[2]{\prescript{#1}{}{#2}}				
\DeclareMathOperator{\dom}{dom} 		
\DeclareMathOperator{\ran}{ran} 		
\newcommand{\conc}{{}^\smallfrown}		
\DeclareMathOperator{\lh}{lh}				
\newcommand{\onto}{\twoheadrightarrow}
%
%
\DeclareMathOperator{\Int}{Int} 	
\DeclareMathOperator{\Cl}{Cl} 		
\DeclareMathOperator{\Fr}{Fr} 		
\newcommand{\Nbhd}{{\boldsymbol N}\!} 
\DeclareMathOperator{\Ball}{B} 
\newcommand{\ocinterval}[2]{\left ( {#1} ; {#2} \right ]}
\newcommand{\cointerval}[2]{\left [ {#1} ; {#2} \right)} 
%
%
\newcommand{\bSigma}{\boldsymbol{\Sigma}}
\newcommand{\bPi}{\boldsymbol{\Pi}}

\newcommand{\bDelta}{\boldsymbol{\Delta}}
\DeclareMathOperator{\KK}{\mathbf{K}}	
\DeclareMathOperator{\Gdelta}{\mathbf{G}_\delta}	
\DeclareMathOperator{\Fsigma}{\mathbf{F}_\sigma}	
\DeclareMathOperator{\Ksigma}{\mathbf{K}_\sigma}	
\DeclareMathOperator{\Fsigmadelta}{\mathbf{F}_{\sigma\delta}}

\DeclareMathOperator{\Gdeltasigma}{\mathbf{G}_{\delta\sigma}} 
\DeclareMathOperator{\NULL}{\textrm{\scshape Null}}	
\DeclareMathOperator{\Bor}{\textrm{\scshape Bor}}	
\DeclareMathOperator{\MEAS}{\textrm{\scshape Meas}}	
\DeclareMathOperator{\MALG}{\textrm{\scshape Malg}}	
\DeclareMathOperator{\Solid}{\textrm{\scshape Sld}}
\DeclareMathOperator{\Dual}{\textrm{\scshape Dl}}
\DeclareMathOperator{\qDual}{\textrm{\scshape qDl}}
\DeclareMathOperator{\Spongy}{\textrm{\scshape Spng}}
%
%
\newcommand{\cardLR}[1]{\left | #1 \right |}		
\newcommand{\card}[1]{\mathopen | #1 \mathclose |}		
%
%
\newcommand{\leqW}{\leq_{\mathrm{W}}}
\DeclareMathOperator{\supt}{supt}
\newcommand{\body}[1]{\left [ {#1} \right ]} 
\newcommand{\markdef}[1]{\textbf{#1}}
\newcommand{\densitytree}{\boldsymbol{D}}
\newcommand{\absval}[1]{\left \| {#1} \right \|}
\newcommand{\measurecantor}{\mu^{\mathrm{C}}}
\newcommand{\measurebaire}{\mu^{\mathrm{B}}}

\newcommand{\density}{\mathscr{D}} 
\newcommand{\oscillation}{\mathscr{O}} 
\DeclareMathOperator{\diam}{diam}	
\DeclareMathOperator{\Blur}{Blr}
\DeclareMathOperator{\Sharp}{Shrp}
\DeclareMathOperator{\Exc}{Exc}
\DeclareMathOperator{\exc}{exc}
\renewcommand{\restriction}{\mathpunct{\upharpoonright}}
\DeclareMathOperator{\Lev}{Lev}
\usepackage[naturalnames]{hyperref}

\def\version{0}
\ifthenelse{0 = \version}{\newcommand{\nota}[1]{\todo{#1}}}{%
	\ifthenelse{1 = \version}{\newcommand{\nota}[1]{\null}} }

\usepackage[style=alphabetic,sorting=nyt,backend=bibtex]{biblatex}
\addbibresource{density.bib}
\title{Lebesgue density and exceptional points}
\author{Alessandro Andretta}
\address{Dipartimento di matematica, Università di Torino, via Carlo Alberto 10, 10123 Torino---Italy}
\email{alessandro.andretta@unito.it}
\author{Riccardo Camerlo}
\address{Dipartimento di scienze matematiche, Politecnico di Torino, Corso Duca degli Abruzzi 24, 10129 Torino---Italy}
\address{Département des systèmes d'information, Université de Lausanne, Quartier UNIL-Dorigny  Bâtiment Internef,  1015 Lausanne}
\email{camerlo@calvino.polito.it}
\author{Camillo Costantini}
\address{Dipartimento di Matematica, Università di Torino, via Carlo Alberto 10, 10123 Torino---Italy}
\email{camillo.costantini@unito.it}
\subjclass[2010]{03E15, 28A05}
\date{\today} 

\begin{document}
\begin{abstract}
Work in the measure algebra of the Lebesgue measure on \( \pre{\omega}{2} \): for comeager many \( \eq{A} \) the set of points \( x \) such that the density of \( x \) at \( A \) is not defined is \( \bSigma^{0}_{3} \)-complete; for some compact \( K \) the set of points \( x \) such that the density of \( x \) at \( K \) exists and it is different from \( 0 \) or \( 1 \) is \( \bPi^{0}_{3} \)-complete; the set of all \( \eq{K} \) with \( K \) compact is \( \bPi^{0}_{3} \)-complete. 
There is a set (which can be taken to be open or closed) in \( \R^n \) such that the density of any point is either \( 0 \) or \( 1 \), or else undefined.
Conversely, if a subset of \( \R^n \) is such that the density exists at every point, then the value \( 1/2 \) is always attained.
On the route to this result we show that Cantor space can be embedded in a measured Polish space in a measure-preserving fashion.
\end{abstract}
\maketitle

\section{Statement of the main results}
In this paper we study from the point of view (and with the methods) of descriptive set theory, some questions stemming from real analysis and measure theory.
In order to state our results we recall a few definitions.
The density of a measurable set \( A \) at a point \( x \in X \) is the limit \( \density_A ( x ) = \lim_{ \varepsilon {\downarrow} 0 } \mu ( A \cap \Ball ( x ; \varepsilon ) ) / \mu ( \Ball ( x ; \varepsilon ) ) \), where \( \mu \) is a Borel measure on the metric space \( X \) and \( \Ball ( x ; \varepsilon ) \) is the open ball centered at \( x \) of radius \( \varepsilon \).
Let \( \Sharp ( A ) \) be the collection of all points \( x \) where \( 0 < \density_A ( x ) < 1 \), and let \( \Blur ( A ) \) be the collection of all points \( x \) where the limit \( \density_A ( x ) \) does not exist.
The Lebesgue density theorem says that \( A \symdif \setofLR{x \in X}{ \density_A ( x ) = 1 } \) is null, and hence \( \Blur ( A ) \cup \Sharp ( A ) \) is null, when \( ( X , d , \mu ) \) is \emph{e.g.} the Euclidean space \( \R^n \) with the usual distance and the Lebesgue measure, or the Cantor space \( \pre{\omega }{2} \) with the usual ultrametric and the coin-tossing measure.
If \( \Blur ( A ) = \emptyset \), i.e. \( \density_A ( x ) \) exists for any \( x \), then \( A \) is said to be solid; at the other extreme of the spectrum there are the spongy sets, that is sets \( A \) such that there are no points of intermediate density and there are points \( x \) where \( \density_A ( x ) \) does not exist, i.e., \( \Sharp ( A ) = \emptyset \) and \( \Blur ( A ) \neq \emptyset \).
(Examples of solid sets are the balls in \( \R^n \) and the clopen sets in the Cantor space; it is not hard to construct a spongy set in the Cantor space, but the case of \( \R^n \) is another story.)
All these notions are invariant under perturbations by a null set, so they can be defined on the measure algebra \( \MALG ( X , \mu ) \).

We prove a few results on these matters.
Theorem~\ref{thm:KisPi03complete} shows that for a large class of spaces \( ( X , d , \mu ) \), the set \( \mathscr{K} \) of all \( \eq{K} \in \MALG \) with \( K \) compact is in \( \Fsigmadelta \setminus \Gdeltasigma \), i.e.~it is \( \bPi^{0}_{3} \)-complete, in the logicians' parlance.
The result still holds for \( \mathscr{F} \) the set of all \( \eq{F} \in \MALG \) with \( F \) closed.
The result is first proved for the Cantor space \( \pre{\omega}{2} \) with the usual coin-tossing measure, and then extended to the general case by means of a construction enabling us to embed the Cantor space into \( ( X , \mu ) \) in a measure preserving way (Theorem~\ref{thm:embeddingCantorinPolish}).
Restricting ourselves to the Cantor space, we show that for comeager many \( \eq{A} \in \MALG \) the set \( \Blur ( A ) \) is \( \Gdeltasigma \setminus \Fsigmadelta \), i.e.~\( \bSigma^{0}_{3} \)-complete (Theorem~\ref{thm:blurrypointsSigma03}), and that \( \Sharp ( K ) \) is \( \bPi^{0}_{3} \)-complete, for some compact set \( K \) (Theorem~\ref{thm:sharppointsPi03}).
Finally we address the issue of solid and spongy sets in Euclidean spaces: we show that if \( A \) is solid, then it has density \( 1 / 2 \) at some point (Corollary~\ref{cor:nodualisticsetsinRn}), and that spongy sets exist (Theorem~\ref{thm:spongy}).

The paper is organized as follows.
Section~\ref{sec:notationsandpreliminaries} collects some standard facts and notations used throughout the paper, while Section~\ref{sec:densityfunction} summarizes the basic results on the density function and the Lebesgue density theorem; these two section can be skipped on first read.
Section~\ref{sec:Cantorsets} is devoted to the problem of embedding the Cantor space in a Polish space, while a characterization of compact sets in the measure algebra is given in Section~\ref{sec:compactsetsinMALG}.
Section~\ref{sec:exceptionalpoints} is devoted to the study of \( \Blur ( A ) \) and \( \Sharp ( A ) \), while the study of solid sets in \( \R^n \) and the construction of spongy subset of \( \R^n \) is carried out in Section~\ref{sec:solid&spongy}.

\section{Notation and preliminaries}\label{sec:notationsandpreliminaries}
The notation of this paper is standard and follows closely that of~\cite[][]{Kechris:1995kc,Andretta:2013uq}, but for the reader's convenience we summarize it below.

\subsection{Polish spaces}
In a topological space \( X \), the closure, the interior, the frontier, and the complement of \( Y \subseteq X \) are denoted by \( \Cl Y \), \( \Int Y \), \( \Fr Y \), and \( Y^\complement \).
A topological space is Polish if it is separable and completely metrizable.
In a metric space \( ( X , d ) \), the open ball of center \( x \) and radius \( r \geq 0 \) is \( \Ball ( x ; r ) \), with the understanding that \( \Ball ( x ; 0 ) = \emptyset \). 
The collection \( \Bor ( X ) \) of all Borel subsets of \( X \) is stratified in the Borel hierarchy \( \bSigma^{0}_{ \alpha } ( X ) \), \( \bPi^{0}_{ \alpha } ( X ) \), \( \bDelta^{0}_{ \alpha } ( X ) \), with \( 1 \leq \alpha < \omega _1 \).
Namely: \( \bSigma^{0}_{1} \) is the collection of open sets, \( \bSigma^{0}_{ \alpha } \) is the collection of sets \( \bigcup_{n} A_n \) with \( A_n \in \bPi^{0}_{ \beta _n} \) and \( \beta _n < \alpha \), and \( \bPi^{0}_{ \alpha } = \setofLR{ A^\complement }{ A \in \bSigma^{0}_{ \alpha } } \).
We also set \( \bDelta^{0}_{ \alpha } = \bSigma^{0}_{ \alpha } \cap \bPi^{0}_{ \alpha } \).
Thus \( \bDelta^{0}_{1} \) are the clopen sets, \( \bPi^{0}_{1} \) are the closed sets, \( \bSigma^{0}_{2} \) are the \( \Fsigma \) sets, \( \bPi^{0}_{2} \) are the \( \Gdelta \) sets, \( \bPi^{0}_{3} \) are the \( \Fsigmadelta \) sets, and so on.
The collections of all compact and of all \( \sigma \)-compact subsets of \( X \) are denoted by \( \KK ( X ) \) and \( \KK_ \sigma ( X ) \), respectively.
If \( X \) is Polish, then \( \KK ( X ) \) endowed with the Vietoris topology is Polish.

A function \( f \colon X \to Y \) between Polish spaces is of \markdef{Baire class \( \xi \)} if the preimage of any open \( U \subseteq Y \) is in \( \bSigma^{0}_{1 + \xi } \).
The collection of all Baire class \( \xi \) functions from \( X \) to \( Y \) is denoted by \( \mathscr{B}_ \xi ( X , Y ) \) or simply by \( \mathscr{B}_ \xi \) when \( X \) and \( Y \) are clear from the context.

A \markdef{measurable space} \( ( X , \mathcal{S} ) \) consists of a \( \sigma \)-algebra \( \mathcal{S} \) on a nonempty set \( X \).
A measurable space \( ( X , \mathcal{S} ) \) is \markdef{standard Borel} if \( \mathcal{S} \) is the \( \sigma \)-algebra of the Borel subsets of \( X \), for some suitable Polish topology on \( X \).

\subsection{Sequences and trees}\label{subsec:sequences&trees}
\subsubsection{Sequences}
The set of all functions from \( J \) to \( I \) is denoted by \( \Pre{ J }{ I } \).
The set \( \pre{ < \omega }{I} = \bigcup_{n} \pre{ n}{ I } \) is the set of all finite sequences from \( I \), and \( \pre{ \leq \omega }{ I } = \pre{ < \omega }{ I } \cup \pre{ \omega }{ I } \).
The \markdef{length} of \( x \in \pre{ \leq \omega }{I} \) is the ordinal \( \lh ( x ) = \dom ( x ) \).
The \markdef{concatenation of \( s \in \pre{ < \omega }{I} \) with \( x \in \pre{ \leq \omega }{I} \)} is \( s \conc x \in \pre{ \leq \omega }{ I } \) defined by \( s \conc x ( n ) = s ( n ) \) if \( n < \lh ( s ) \), and \( s \conc x ( n ) = x ( i ) \) if \( n = i + \lh ( s ) \).
We often blur the difference between the sequence \( \seq{ i } \) of length \( 1 \) with its unique element \( i \) and write \( t \conc i \) instead of \( t \conc \seq{ i } \).
The sequence of length \( N \leq \omega \) that attains only the value \( i \) is denoted by \( i^{ ( N ) } \).

\subsubsection{Trees}
A \markdef{tree} on a nonempty set \( I \) is a \( T \subseteq \pre{ < \omega }{I} \) closed under initial segments; the \markdef{body} of \( T \) is \( \body{T} = \setofLR{ b \in \pre{ \omega }{I} }{ \FORALL{n \in \omega } ( b \restriction n \in T ) } \).
A tree \( T \) on \( I \) is \markdef{pruned} if \( \FORALL{t \in T} \EXISTS{s \in T} ( t \subset s ) \).
The set \( \body{T} \) is a topological space with the topology generated by the sets 
\[ 
\Nbhd_t ^{\body{T}}= \Nbhd_t = \setofLR{x \in \body{T} }{ x \supseteq t }
\] 
with \( t \in T \).
This topology is induced by the metric \( d_T ( x , y ) = 2^{-n} \) where \( n \) is least such that \( x ( n ) \neq y ( n ) \).
This is actually a complete metric, and an ultrametric, i.e the triangular inequality holds in the stronger form \( d ( x , z ) \leq \max \setLR{ d ( x , y ) , d ( y , z ) } \).
Therefore \( \body{T} \) is zero-dimensional, i.e. it has a basis of clopen sets.
A nonempty closed subset of \( \body{T} \) is of the form \( \body{S} \) with \( S \) a pruned subtree of \( T \).
If \( T \) is a tree on a countable set \( I \), then \( \body{T} \) is separable, and therefore it is a Polish space.

The \markdef{localization} of \( X \subseteq \pre{ \leq \omega }{I} \) at \( s \in \pre{ < \omega }{I} \) is
\[
\LOC{X}{s} = \setofLR{ t \in \pre{ \leq \omega }{I } }{ s \conc t \in X } . 
\]
Thus if \( A \subseteq \pre{\omega}{I} \) then \( s \conc \LOC{A}{s} = A \cap \Nbhd^{\mathcal{X}}_s \), where \( \mathcal{X} = \body{ \pre{ < \omega }{I} } \).
Note that if \( T \) is a tree on \( I \) and \( t \in T \), then \( \body{\LOC{T}{t}} = \LOC{ \body{T}}{t} \).

A function \( \varphi \colon S \to T \) between pruned trees is
\begin{itemize}
\item
\markdef{monotone} if \( s_1 \subseteq s_2 \implies \varphi ( s_1 ) \subseteq \varphi ( s_2 ) \),
\item
\markdef{Lipschitz} if it is monotone and \( \lh s \leq \lh \varphi ( s ) \),
\item
\markdef{continuous} if it is monotone and \( \lim_n \lh \varphi ( x \restriction n ) = \infty \) for all \( x \in \body{S} \).
\end{itemize}
If \( \varphi \) is Lipschitz then it is continuous, and a continuous \( \varphi \) induces a continuous function
\[ 
f _ \varphi \colon \body{S} \to \body{T} , \quad f _ \varphi ( x ) = \bigcup_{n} \varphi ( x \restriction n ) ,
\] 
and every continuous function \( \body{S} \to \body{T} \) arises this way.
If \( \varphi \) is Lipschitz, then \( f_ \varphi \) is Lipschitz with constant \( \leq 1 \), that is \( d_T ( f ( x ) , f ( y ) ) \leq d_S ( x , y ) \), and every such function arises this way.
These definitions can be extended to similar situations.
For example, letting \( \pre{ < \omega \times \omega }{ I } = \bigcup_{n} \pre{ n \times n }{ I } \), we say that \( \varphi \colon \pre{ < \omega \times \omega }{I} \to T \) is Lipschitz if 
\[
 \FORALL{n} \FORALL{m < n} \FORALL{a \in \pre{ n \times n }{I} } \left ( \varphi ( a \restriction m \times m ) \subset \varphi ( a ) \right ) . 
\]
Such \( \varphi \) defines a continuous map from the space \( \pre{ \omega \times \omega }{I} \) (which is homeomorphic to \( \pre{ \omega }{I} \)) to \( \body{T} \).

\subsection{The Cantor and Baire spaces}
The \markdef{Cantor space} \( \pre{\omega }{2} \) is the body of the complete binary tree \( \pre{ < \omega }{2} \). 
A subset of a separable metric space is a \markdef{Cantor set} if it is nonempty, compact, zero-dimensional, and perfect (i.e. without isolated points).
By a theorem of Brouwer's~\cite[][Theorem 7.4]{Kechris:1995kc} every Cantor set is homeomorphic to \( \pre{\omega }{2} \), whence the name.
The typical example of such set is \( E_{1/3} \), the closed, nowhere dense, null subset of \( [ 0 ; 1 ] \) usually known as \emph{Cantor's middle-third set}.
See Section~\ref{sec:Cantorsets} for more examples of Cantor sets.

The \markdef{Baire space} \( \pre{\omega}{\omega} \) is the body of \( \pre{< \omega}{\omega} \).
If \( T \) is pruned, then \( \body{T} \) is compact iff \( T \) is finitely branching, and therefore every compact subset of \( \pre{\omega}{\omega} \) has empty interior.
The Baire set is homeomorphic to \( [ 0 ; 1 ] \setminus \mathbb{D} \), where \( \mathbb{D} = \setofLR{ k \cdot 2^{-n}}{ 0 \leq k \leq 2^n \wedge n \in \omega } \) is the set of dyadic numbers, via the map 
\begin{equation}\label{eq:homeomorphismBaire}
G \colon \pre{\omega}{\omega} \to [ 0 ; 1 ] \setminus \mathbb{D} , \qquad \setLR{G ( x ) } = \bigcap_{n} I ( x \restriction n )
\end{equation}
where the \( I ( s ) \) (for \( s \in \pre{< \omega}{\omega} \)) are the closed intervals with endpoints in \( \mathbb{D} \) defined as follows: \( I ( \emptyset ) = [ 0 ; 1 ] \), and if \( I ( s ) = [ a ; b ] \), then 
\[ 
I ( s \conc k ) = 
\begin{cases}
[ b - ( b - a ) 2^{- k} ; b - ( b - a ) 2^{- k - 1} ] & \text{if \( \lh s \) is odd,}
\\
[ a + ( b - a ) 2^{- k - 1} ; a + ( b - a ) 2^{- k } ] & \text{if \( \lh s \) is even,}
\end{cases}
\]
see~\cite[][Chapter VII, \S 3]{Levy:2002pt}.
By Cantor's theorem \( \mathbb{D} \setminus \set{ 0 , 1 } \) is order isomorphic to any countable dense set \( D \subseteq \R \), and hence there is a homeomorphism \( ( 0 ; 1 ) \to \R \) that maps \( ( 0 ; 1 ) \setminus \mathbb{D} \) onto \( \R \setminus D \). 
In other words, \( \pre{\omega}{\omega} \) is homeomorphic to \( \R \setminus D \) where \( D \) is countable dense set; in particular, it is homeomorphic to the set of irrational numbers.

\subsection{Measures}
A \markdef{measure space} \( ( X , \mathcal{S} , \mu ) \) consists of a \( \sigma \)-algebra \( \mathcal{S} \) on a nonempty set \( X \) and a \( \sigma \)-additive measure \( \mu \) with domain \( \mathcal{S} \).
We always assume that \( \mu \) is \markdef{nonzero}, that is \( \mu ( X ) > 0 \).
Given a measure space \( ( X , \mathcal{S} , \mu ) \) we say that \( \mu \) is \markdef{non-singular}%
\footnote{In the literature these measures are also called non-atomic or continuous, but in this paper the adjective \emph{continuous} is reserved for a different property (Definition~\ref{def:continuousmeasure}).} 
or \markdef{diffuse} if \( \mu ( \set{x} ) = 0 \) for all \( x \in X \), it is a \markdef{probability measure} if \( \mu ( X ) = 1 \), it is \markdef{finite} if \( \mu ( X ) < \infty \), it is \markdef{\( \sigma \)-finite} if \( X = \bigcup_{n} X_n \) with \( X_n \in \mathcal{S} \) and \( \mu ( X_n ) < \infty \). 
Following Carathéodory, \( \mathcal{S} \) can be extended to \( \MEAS_\mu \), the \( \sigma \)-algebra of \markdef{\( \mu \)-measurable sets}, and the measure can be uniquely extended to a measure (still denoted by \( \mu \)) on \( \MEAS_\mu \).
A set \( N \in \MEAS_\mu \) is \markdef{null} if \( \mu ( N ) = 0 \), that is if there is \( A \in \mathcal{S} \) such that \( N \subseteq A \) and \( \mu ( A ) = 0 \).
A set \( A \in \MEAS_\mu \) is \markdef{nontrivial} if \( A , A^\complement \notin \NULL_\mu \).

For \( A , B \in \MEAS_\mu \) set \( A \subseteq_\mu B \iff \mu ( A \setminus B ) = 0 \), and 
\[
 A =_\mu B \iff A \subseteq_\mu B \wedge B \subseteq_\mu A \iff \mu ( A \symdif B ) = 0 .
 \]
Taking the quotient of \( \MEAS_\mu \) by the ideal \( \NULL_\mu \) or equivalently by the equivalence relation \( =_\mu \), we obtain the \markdef{measure algebra} of \( \mu \)
\[
\MALG ( X , \mu ) = \frac{\MEAS_\mu }{ \NULL_\mu} = \frac{ \mathcal{S}}{ \mathcal{S} \cap \NULL_\mu} ,
\]
which is a boolean algebra.
(Whenever possible we will drop the mention to \( X \) and/or \( \mu \) in the definition of measure algebra.)
The measure \( \mu \) induces a function on the quotient
\[
\hat{ \mu } \colon \MALG \to [ 0 ; + \infty ] , \quad \hat{ \mu } ( \eq{A} ) = \mu ( A ) .
\]
We often write \( \mu ( \eq{A} ) \) or \( \mu \eq{A} \) instead of \( \hat{ \mu } ( \eq{A} ) \).
The set \( \MALG_\mu \) is endowed with the topology generated by the sets \( \mathcal{B} _{ \eq{A} , r } = \setof{ \eq{B} }{ \mu ( A \symdif B ) < r } \) for \( \eq{A} \in \MALG \) and \( r > 0 \).
When \( \mu \) is finite, this topology is metrizable with the distance \( \delta ( \eq{A} , \eq{B} ) = \mu ( A \symdif B ) \), and therefore \( \mathcal{B} _{ \eq{A} , r } = \Ball ( \eq{A} ; r ) \).

A \markdef{Borel measure} on a topological space \( X \) is a measure \( \mu \) defined on \( \Bor ( X ) \), the collection of all Borel subsets of \( X \); we say that \( \mu \) is \markdef{fully supported} if \( \mu ( U ) > 0 \) for all nonempty open set \( U \).
A Borel measure is \markdef{inner regular} if \( \mu ( A ) = \sup \setof{ \mu ( F )}{ F \subseteq A \wedge F \text{ is closed}} \); it is \markdef{outer regular} if \( \mu ( A ) = \inf \setof{\mu ( U )}{U \supseteq A \wedge U \text{ is open}} \).
A finite Borel measure on a metric space is both inner and outer regular.
A Borel measure is \markdef{locally finite} if every point has a neighborhood of finite measure; hence in a second countable space a locally finite measure is automatically \( \sigma \)-finite.
A \markdef{Radon space} \( ( X , \mu ) \) is a Hausdorff topological space \( X \) with a locally finite Borel measure which is \markdef{tight}, that is \( \mu ( A ) = \sup \setofLR{\mu ( K )}{K \subseteq A \wedge K \in \KK ( X ) } \).
A \markdef{metric measure space} \( ( X , d , \mu ) \) is a metric space endowed with a Borel measure; if the underlying topological space is Polish we will speak of \markdef{Polish measure space}.
Every finite Borel measure on a Polish space is tight. 
In this paper, unless otherwise stated, we \emph{work in a fully supported, locally finite metric measure space}. 
The space \( \MALG_\mu \) is Polish when \( X \) is Polish and \( \mu \) is Borel and finite.
If moreover \( \mu \) is a non-singular, probability measure on \( X \) then \( \MALG_\mu \) is isomorphic to the measure algebra constructed from the Lebesgue measure \( \lambda \) on \( [ 0 ; 1 ] \)~\cite[][Theorem 17.41]{Kechris:1995kc}. 

If \( \mu \) is nonsingular, then \( \lim_{ \varepsilon {\downarrow} 0 } \mu ( \Ball ( x ; \varepsilon ) ) = 0 \), for all \( x \in X \).
The next definition strengthens this fact.

\begin{definition}\label{def:continuousmeasure}
Let \( ( X , d , \mu ) \) be fully supported, locally finite metric masure space.
Then \( \mu \) is
\begin{itemize}[leftmargin=1pc]
\item
\markdef{continuous} if for all \( x \in X \) the map \( \cointerval{0}{+\infty} \to [ 0 ; +\infty ] \), \( r \mapsto \mu ( \Ball ( x ; r ) ) \), is continuous,
\item
\markdef{uniform} if \( \mu ( \Ball ( x ; r ) ) = \mu ( \Ball ( y ; r ) ) \) for all \( x , y \in X \), i.e.~if the measure of an open ball depends only on its radius.
\end{itemize}
\end{definition}

The Lebesgue measure on \( \R^n \) is the typical example of a continuous and uniform measure.
If a measure is continuous, then a much stronger form of continuity holds.

\begin{lemma}\label{lem:continuityofmeasure}
If \( \mu \) is continuous, then the function 
\[ 
B \colon X \times \cointerval{0}{+\infty} \to \MALG , \quad ( x , r ) \mapsto \eq{\Ball ( x , r ) }
\] 
is continuous. 
In particular the map \( X \times \cointerval{0}{+\infty} \to [ 0 ; + \infty ] \), \( ( x , r ) \mapsto \mu ( \Ball ( x , r ) ) \) is continuous. 
\end{lemma}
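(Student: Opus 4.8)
The plan is to verify that \( B \) is continuous at an arbitrary point \( ( x_0 , r_0 ) \in X \times \cointerval{0}{+\infty} \), using that the sets \( \mathcal{B}_{\eq{A} , \varepsilon } \) form a neighbourhood base at \( \eq{A} \) in \( \MALG \). The heart of the matter is a purely geometric sandwich: by the triangle inequality, whenever \( d ( x , x_0 ) < \delta \) and \( \card{ r - r_0 } < \delta \) one has
\[
\Ball ( x_0 ; r_0 - 2 \delta ) \subseteq \Ball ( x ; r ) \subseteq \Ball ( x_0 ; r_0 + 2 \delta ) ,
\]
with the convention that a ball of non-positive radius is empty (so the left inclusion is vacuous when \( r_0 \leq 2 \delta \), and in particular when \( r_0 = 0 \)). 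Consequently \( \Ball ( x ; r ) \symdif \Ball ( x_0 ; r_0 ) \) is contained in the annulus \( \Ball ( x_0 ; r_0 + 2 \delta ) \setminus \Ball ( x_0 ; r_0 - 2 \delta ) \), and therefore
\[
\mu \bigl ( \Ball ( x ; r ) \symdif \Ball ( x_0 ; r_0 ) \bigr ) \leq \mu \bigl ( \Ball ( x_0 ; r_0 + 2 \delta ) \bigr ) - \mu \bigl ( \Ball ( x_0 ; r_0 - 2 \delta ) \bigr ) ,
\]
where one uses that the balls centred at \( x_0 \) occurring here have finite measure — immediate from local finiteness of \( \mu \), and automatic when \( \mu \) is finite, which is the case needed in the sequel.

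Next I would invoke the hypothesis that \( \mu \) is continuous, i.e.\ that \( r \mapsto \mu ( \Ball ( x_0 ; r ) ) \) is continuous: as \( \delta {\downarrow} 0 \) both \( \mu ( \Ball ( x_0 ; r_0 + 2 \delta ) ) \) and \( \mu ( \Ball ( x_0 ; r_0 - 2 \delta ) ) \) converge to \( \mu ( \Ball ( x_0 ; r_0 ) ) \), so the right-hand side of the displayed inequality tends to \( 0 \). Hence, given \( \varepsilon > 0 \), I can fix \( \delta > 0 \) making that difference smaller than \( \varepsilon \); then \( B \) maps the open neighbourhood \( \setof{ ( x , r ) }{ d ( x , x_0 ) < \delta \AND \card{ r - r_0 } < \delta } \) of \( ( x_0 , r_0 ) \) into the basic open set \( \mathcal{B}_{ \eq{ \Ball ( x_0 ; r_0 ) } , \varepsilon } \). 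This establishes continuity of \( B \) at \( ( x_0 , r_0 ) \), hence everywhere. The "in particular" clause is then immediate, since \( \card{ \mu ( A ) - \mu ( B ) } \leq \mu ( A \symdif B ) \) shows that \( \hat{ \mu } \colon \MALG \to [ 0 ; + \infty ] \) is continuous, and \( ( x , r ) \mapsto \mu ( \Ball ( x ; r ) ) \) is its composition with \( B \).

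I expect the only genuine obstacle to be the geometric sandwich together with the bookkeeping that converts it into the stated bound on \( \mu ( \Ball ( x ; r ) \symdif \Ball ( x_0 ; r_0 ) ) \) (and the attendant care about empty balls at \( r_0 = 0 \) and about finiteness of the measures of the annuli); once that is in place, the conclusion is a direct appeal to the one-variable continuity hypothesis.
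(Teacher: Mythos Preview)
Your proof is correct and follows essentially the same approach as the paper: both arguments use the triangle inequality to sandwich \( \Ball ( x ; r ) \) between two concentric balls around the fixed point, and then invoke the one-variable continuity hypothesis to make the measure of the resulting annulus small. The only differences are cosmetic --- the paper uses \( \delta / 4 \) for the neighbourhood and \( \delta / 2 \) for the annulus width, and splits off the case \( r \leq \delta / 2 \) explicitly, whereas you absorb that case into the convention \( \Ball ( x ; \rho ) = \emptyset \) for \( \rho \leq 0 \); you are also a bit more explicit than the paper about the finiteness needed for the subtraction.
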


\begin{proof}
Fix \( ( x , r ) \in X \times \cointerval{0}{+\infty} \), in order to prove continuity of \( B \) in \( ( x , r ) \). 
Fix also \( \varepsilon \in \cointerval{0}{+\infty} \). 
There is \( \delta \in \cointerval{0}{+\infty} \) such that 
\[ 
\forall r' \in \cointerval{0}{+\infty} \left ( \card{ r - r' } < \delta \implies \card{ \mu ( \Ball ( x ; r ) ) - \mu ( \Ball ( x ; r' ) ) } < \varepsilon \right ) .
\]
Let \( ( x' , r' ) \in X \times \cointerval{0}{+\infty} \) with \( d ( x , x' ) < \delta / 4 \) and \( \card{ r - r' } < \delta / 4 \).
If \( r > \frac{\delta }2 \), then
\[
\Ball ( x ; r - \frac { \delta }{2} ) \subseteq \Ball ( x ; r' - \textstyle \frac{ \delta}{4} ) \subseteq \Ball ( x' ; r' ) \subseteq \Ball ( x ; r' + \textstyle \frac{ \delta}{4} ) \subseteq \Ball ( x ; r + \textstyle \frac{ \delta}{2} ) ,
\]
so
\begin{align*}
\mu ( \Ball ( x ; r ) \symdif \Ball ( x' ; r' ) ) 
& = \mu ( \Ball ( x ; r ) \setminus \Ball ( x' ; r' ) ) + \mu ( \Ball ( x' ; r' ) \setminus \Ball ( x ; r ) )
\\
 & \leq \mu ( \Ball ( x ; r ) \setminus \Ball ( x ; r - \textstyle \frac{ \delta}{2} ) ) + \mu ( \Ball ( x ; r + \textstyle \frac{ \delta}{2} ) \setminus \Ball ( x ; r ) )
 \\
 & < 2 \varepsilon. 
\end{align*} 
On the other hand, if \( r \leq \frac{\delta }2 \), then \( \Ball ( x ' ; r ' ) \subseteq \Ball ( x ; r + \frac{ \delta }{2} ) \) as well, so
\begin{equation*}
\begin{split}
\mu ( \Ball ( x ; r ) \symdif \Ball ( x' ; r' ) ) & = \mu ( \Ball ( x ; r ) \setminus \Ball ( x' ; r' ) ) + \mu ( \Ball ( x' ; r' ) \setminus \Ball ( x ; r ) )
\\
 & \leq \mu ( \Ball ( x ; r ) ) + \mu ( \Ball ( x ; r + \textstyle \frac{ \delta}{2} ) \setminus \Ball ( x ; r ) )
 \\
 &< 2 \varepsilon. \qedhere
\end{split}
\end{equation*}
\end{proof}

Using an argument as in Lemma~\ref{lem:continuityofmeasure} one can prove

\begin{lemma}\label{lem:uniformcontinuityofmeasure}
The function \( B \) from Lemma~\ref{lem:continuityofmeasure} is uniformly continuous if
\[
 \FORALL{ \varepsilon > 0 } \EXISTS{ \delta > 0 } \FORALL{x \in X} \FORALL{r , r' \geq 0 } \bigl ( \card{ r - r' } < \delta \implies \card{ \mu ( \Ball ( x ; r ) ) - \mu ( \Ball ( x ; r' ) ) } < \varepsilon \bigr ) .
\]
\end{lemma}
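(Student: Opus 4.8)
The plan is to rerun the proof of Lemma~\ref{lem:continuityofmeasure} essentially verbatim, the only new ingredient being that the modulus $\delta$ produced there via continuity of $s \mapsto \mu(\Ball(x;s))$ at the fixed point $x$ can, under the displayed hypothesis, be chosen once and for all, independently of $x$ (and of the radius). Concretely, I would fix $\varepsilon > 0$ and use the hypothesis to pick a single $\delta > 0$ with
\[
\FORALL{x \in X} \FORALL{s, s' \geq 0} \bigl( \card{s - s'} < \delta \implies \card{\mu(\Ball(x;s)) - \mu(\Ball(x;s'))} < \varepsilon \bigr),
\]
and then show that $d(x,x') < \delta/4$ together with $\card{r - r'} < \delta/4$ force $\mu(\Ball(x;r) \symdif \Ball(x';r')) < 2\varepsilon$. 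Note first that this hypothesis already forces $\mu(\Ball(x;r)) < \infty$ for all $x$ and $r$: this holds at $r = 0$, where the measure is $0$, and propagates by $\delta/2$-steps since $\card{\mu(\Ball(x;s+t)) - \mu(\Ball(x;s))} < \varepsilon$ is finite. Hence the range of $B$ consists of classes of sets of finite measure, and on it the formula $\delta(\eq A, \eq B) = \mu(A \symdif B)$ is a genuine finite-valued metric; so, endowing $X \times \cointerval{0}{+\infty}$ with any of the (uniformly equivalent) product metrics, the displayed estimate is exactly the uniform continuity of $B$. The final ``in particular'' clause is then immediate from $\card{\mu(\Ball(x;r)) - \mu(\Ball(x';r'))} \leq \mu(\Ball(x;r) \symdif \Ball(x';r'))$.

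For the estimate I would reproduce the two inclusion chains of Lemma~\ref{lem:continuityofmeasure}, observing that they are purely metric and hold for \emph{every} base point: if $d(x,x') < \delta/4$ and $\card{r - r'} < \delta/4$, then by the triangle inequality
\[
\Ball(x; r - \tfrac{\delta}{2}) \subseteq \Ball(x'; r') \subseteq \Ball(x; r + \tfrac{\delta}{2})
\]
when $r > \delta/2$, while only the right-hand inclusion is asserted when $r \leq \delta/2$. Splitting $\mu(\Ball(x;r) \symdif \Ball(x';r'))$ as $\mu(\Ball(x;r) \setminus \Ball(x';r')) + \mu(\Ball(x';r') \setminus \Ball(x;r))$, the second summand is bounded by $\mu(\Ball(x;r+\delta/2)) - \mu(\Ball(x;r))$ in both cases, and the first by $\mu(\Ball(x;r)) - \mu(\Ball(x;r-\delta/2))$ when $r > \delta/2$, and by $\mu(\Ball(x;r)) \leq \mu(\Ball(x;\delta/2)) = \card{\mu(\Ball(x;\delta/2)) - \mu(\Ball(x;0))}$ when $r \leq \delta/2$ (using $\Ball(x;0) = \emptyset$). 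Each of these is an increment of $s \mapsto \mu(\Ball(x;s))$ over an interval of length $\delta/2 < \delta$, hence strictly below $\varepsilon$ by the choice of $\delta$; summing, we obtain $< 2\varepsilon$, and since $\varepsilon$ was arbitrary this proves the claim.

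I do not expect a real obstacle: the content is simply the remark that every quantity in the proof of Lemma~\ref{lem:continuityofmeasure} that mentions the base point $x$ is either an inclusion among balls, which is a metric fact valid uniformly in $x$, or an instance of continuity of $s \mapsto \mu(\Ball(x;s))$ at a single $x$, and the hypothesis is exactly what upgrades the latter to a statement uniform in $x$. The only points meriting a line of care are the finiteness observation above (so that the differences of measures are meaningful and $\delta(\cdot,\cdot)$ is finite-valued on balls) and the harmless choice of which product metric to put on $X \times \cointerval{0}{+\infty}$.
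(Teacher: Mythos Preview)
Your proposal is correct and follows exactly the approach the paper indicates: the paper does not give a detailed proof but simply states ``Using an argument as in Lemma~\ref{lem:continuityofmeasure} one can prove'' before the lemma, and your write-up is precisely that adaptation, with the key observation that the displayed hypothesis lets one choose a single $\delta$ independent of $x$ and of the radius. Your added remark that the hypothesis forces $\mu(\Ball(x;r)) < \infty$ for all $x,r$ is a useful point of hygiene not spelled out in the paper; note however that Lemma~\ref{lem:uniformcontinuityofmeasure} as stated has no ``in particular'' clause---that sentence in your proposal is harmless but refers to something not present here.
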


\subsubsection{Measures on the Cantor and Baire spaces}\label{subsubsec:measureonCantor}
A zero-dimensional Polish space can be identified, up to homeomorphism, with a closed subset of \( \pre{\omega}{\omega} \).
Let \( T \) be a pruned tree on \( \omega \); a locally finite Borel measure \( \mu \) on \( \body{T} \subseteq \pre{\omega}{\omega} \) is completely described by its values on the basic open sets \( \Nbhd_s \) with \( s \in T \), so it can be identified with a map 
\[ 
w \colon T \to [ 0 ; M ]
\] 
where \( M = \mu ( \body{T} ) \leq + \infty \), and such that \( w ( \emptyset ) = M \), \( T_\infty = \setof{ t \in T}{ w ( t ) = \infty } \) is a well-founded (possibly empty) tree, and for all \( t \in T \setminus T_ \infty \) 
\[ 
w ( t ) = \sum_{ t \conc i \in T , i \in \omega } w ( t \conc i ) .
\] 
Thus if the measure is finite then \( T_\infty = \emptyset \).
If we require the measure to be fully supported, just replace in the definition above \( [ 0 ; M ] \) with \( \ocinterval{0}{ M } \).
The measure is non-singular just in case
\[
 \lim_{n \to \infty} w ( x \restriction n ) = 0 .
\]
The \markdef{Lebesgue measure} \( \measurecantor \) on \( \pre{\omega }{2} \) is determined by \( w \colon \pre{ < \omega }{2} \to \ocinterval{0}{1} \), \( w ( s ) = 2^{- \lh s } \); it is also known as the \markdef{Bernoulli} or \markdef{coin tossing measure}.
The \markdef{Lebesgue measure} \( \measurebaire \) on \( \pre{\omega}{\omega} \) is determined by \( w \colon \pre{< \omega}{\omega} \to \ocinterval{0}{1} \), \( w ( s ) = \prod_{i < \lh ( s )} 2^{ - s ( i ) - 1} \).
Both \( \measurecantor \) and \( \measurebaire \) are non-singular, and neither is continuous, as the next result shows.
The reasons for tagging \( \measurecantor \) and \( \measurebaire \) with the name ``Lebesgue'' is that they are induced by the Lebesgue measure on \( \R \) via suitable embeddings---for \( \measurebaire \) apply \( G \colon \pre{\omega}{\omega} \to [ 0 ; 1] \) of~\eqref{eq:homeomorphismBaire}, and for \( \measurecantor \) see Example~\ref{xmp:Cntorofmeasure2}.

\begin{proposition}\label{prop:measuresonBairenotcontinuous}
Let \( T \) be a pruned tree on \( \omega \), and let \( \mu \) be a locally finite, non-singular, fully supported Borel measure on \( \body{T} \).
Then for each \( x \in \body{T} \) the set of discontinuity points of \( r \mapsto \mu ( \Ball ( x ; r ) ) \) accumulates to \( 0 \).
\end{proposition}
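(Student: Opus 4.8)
The plan is to compute the function $\rho_x\colon \cointerval{0}{+\infty} \to [0;+\infty]$, $\rho_x(r) = \mu(\Ball(x;r))$, explicitly and then read off its discontinuities. Because $d_T$ is an ultrametric with values in $\set{0} \cup \setof{2^{-n}}{n \in \omega}$, for every $r > 0$ one has $\Ball(x;r) = \Nbhd_{x \restriction n(r)}$, where $n(r)$ is the least $n \in \omega$ with $2^{-n} < r$; thus $n(r) = 0$ for $r > 1$ and $n(r) = k+1$ whenever $2^{-k-1} < r \le 2^{-k}$. Hence $\rho_x(r) = w(x \restriction n(r))$ is a step function: it equals $w(\emptyset) = \mu(\body{T})$ on $\ocinterval{1}{+\infty}$, and it is constant with value $w(x\restriction(k+1))$ on each $\ocinterval{2^{-k-1}}{2^{-k}}$. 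In particular $\rho_x$ is right-continuous and its only candidate discontinuity points are the reals $2^{-k}$, $k \in \omega$; it is continuous at $0$, since $\lim_n w(x\restriction n) = 0$ by non-singularity.

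Next I would pin down which $2^{-k}$ are genuine discontinuities. As $r$ decreases through $2^{-k}$ the value of $\rho_x$ passes from $w(x\restriction k)$ to $w(x\restriction(k+1))$, so $\rho_x$ fails to be continuous at $2^{-k}$ exactly when $w(x\restriction k) \ne w(x\restriction(k+1))$, that is, when $\sum \setof{w((x\restriction k)\conc i)}{(x\restriction k)\conc i \in T,\ i \ne x(k)} > 0$. Since $\mu$ is fully supported, every node of $T$ carries strictly positive weight, so this happens precisely when $x\restriction k$ has at least two immediate successors in $T$.

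It therefore remains to show that, for every $x \in \body{T}$, the set of levels $k$ at which $x\restriction k$ branches in $T$ is infinite; this is the only step that is not pure bookkeeping, and it is short. If that set were finite, there would be $k_0$ with $x\restriction k$ having a unique immediate successor in $T$ for all $k \ge k_0$; since $T$ is pruned this successor is necessarily $x\restriction(k+1)$, and an immediate induction yields $\Nbhd_{x\restriction k_0} = \set{x}$. Then $\set{x}$ is a nonempty open set, so $\mu(\set{x}) = w(x\restriction k_0) > 0$ by full support, contradicting the non-singularity of $\mu$. (Equivalently: a fully supported non-singular Borel measure forces the space to be perfect, and in $\body{T}$ the isolated points are exactly the branches that eventually stop branching.) Consequently $\rho_x$ has infinitely many discontinuity points among $\setof{2^{-k}}{k \in \omega}$, and these accumulate to $0$, which is the assertion. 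Local finiteness is used only to make sense of ``discontinuity'' at the finitely many initial levels where $w$ might take the value $+\infty$; it plays no role near $0$.
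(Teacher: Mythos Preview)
Your proof is correct and follows essentially the same approach as the paper's: identify that the discontinuities of $r\mapsto\mu(\Ball(x;r))$ occur at the values $2^{-k}$ for which $x\restriction k$ branches in $T$, and then use full support together with non-singularity to conclude that there are infinitely many such $k$. Your write-up is simply more explicit about the step-function computation and the isolated-point argument; the paper condenses the latter into the single remark that $\body{T}$ has no isolated points.
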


\begin{proof}
Let \( w \colon T \to [ 0 , \infty ] \) be the map inducing \( \mu \).
As \( \mu \) is fully supported and non-singular, then \( \body{T} \) has no isolated points and \( \FORALL{ s \in T } \EXISTS{ t \in T} ( s \subset t \wedge w ( s ) > w ( t ) ) \).
Thus for each \( x \in \body{T} \) and each \( n \) such that \( w ( x \restriction n ) < + \infty \) and \( x \restriction n \) has more than one immediate successor in \( T \), 
\[ 
w ( x \restriction n ) = \lim_ {\varepsilon \downarrow 2^{-n} } \mu ( \Ball ( x ; \varepsilon ) ) > \mu ( \Ball ( x ; 2^{-n} ) ) = w ( x \restriction n + 1 ) . \qedhere
\]
\end{proof}

In particular, Proposition~\ref{prop:measuresonBairenotcontinuous} applies to \( \measurecantor \) and \( \measurebaire \).

\section{Cantor sets}\label{sec:Cantorsets}
\subsection{Cantor-schemes}\label{subsec:Cantorschemes}
A \markdef{Cantor-scheme} in a metric space \( ( X , d ) \) is a system \( \seqofLR{U_s }{ s \in \pre{ < \omega }{2} } \) of nonempty open subsets of \( X \) such that 
\begin{itemize}
\item 
\( \Cl ( U_{s \conc i} ) \subseteq U_s \), for all \( s \in \pre{ < \omega }{2} \) and \( i \in \set{ 0 , 1} \),
\item
\( \Cl ( U_{s \conc 0 } ) \cap \Cl ( U_{s \conc 1 } ) = \emptyset \).
\end{itemize}
If it also satisfies
\begin{itemize}
\item
\( \lim_{n \to \infty} \diam ( U_{ z \restriction n } ) = 0 \), for all \( z \in \pre{\omega }{2} \),
\end{itemize}
we say that it has \markdef{shrinking diameter}.
A Cantor-scheme of shrinking diameter in a complete metric space yields a continuous injective \( F \colon \pre{\omega }{2} \to X \)
\begin{equation}\label{eq:homeoCantorscheme}
F ( z ) = \text{the unique point in } \bigcap_{n} \Cl ( U_{ z \restriction n} ) . 
\end{equation}
Thus \( \ran F \) is a Cantor subset of \( X \).
Conversely, if \( F \colon \pre{\omega }{2} \to K \subseteq X \) witnesses that \( K \) is a Cantor set, then there is a Cantor-scheme of shrinking diameter that yields \( K \): let \( U_\emptyset = X \), and for each \( s \in \pre{ < \omega }{2} \) let \( K_s = F ( \Nbhd_s ) \) and let \( U_{s \conc i} = \Ball ( K_{s \conc i} ; r_s / 3) \) where \( r_s = d ( K_{ s \conc 0} , K_{s \conc 1 } ) \).

\begin{example}\label{xmp:Cntorofmeasure2}
Fix \( \varepsilon _n > 0 \) such that \( \sum_{n = 0}^\infty 2^n \varepsilon _n = 1 \), and consider \( \seqof{ U_s }{ s \in \pre{ < \omega }{2} } \), the Cantor-scheme on \( \R \) defined as follows: each \( U_s \) is an open interval \( ( a_s ; b_s ) \) with \( a_\emptyset = 0 \), \( b_\emptyset = 2 \), and 
\[
 a_{s \conc 0 } = a_s , \quad b_{s \conc 0 } = ( a_s + b_s - \varepsilon _{\lh s} ) / 2 , \quad a_{s \conc 1 } = ( a_s + b_s + \varepsilon _{\lh s} ) / 2, \quad b_{ s \conc 1 } = b_s .
\]
In other words, \( U_{s \conc 0 } \) and \( U_{s \conc  1 } \) are obtained by removing from \( U_s \) a closed centered interval of length \( \varepsilon _{\lh s} \).
This scheme has shrinking diameter, so we obtain a Cantor set \( K \subseteq [ 0 ; 2 ] \).
Note that for this Cantor scheme the function \( F \), defined as in~\eqref{eq:homeoCantorscheme}, is measure preserving between \( \pre{\omega }{2} \) with \( \measurecantor \) and \( K \) with the induced Lebesgue measure \( \lambda \).
\end{example}

Cantor-schemes on \( \R \) can be generalized by using ternary sequences instead of binary ones.
Let \( \seqof{ K_s , I_s^- , I_s^+ }{ s \in \pre{ < \omega}{ \set{ -1 , 0 , 1 } } } \) be such that \( K_s = [ a_s ; b_s ] \) and \( I_s^- = ( c_s^- ; d_s^- ) \), \( I_s^+ = ( c_s^+ ; d_s^+ ) \), with \( a_s < c_s^- < d_s^- < c_s^+ < d_s^+ < b_s \) and \( K_{s \conc \seq{-1}} = [ a_s ; c_s^- ] \), \( K_{s \conc \seq{0}} = [ d_s^- ; c_s^+ ] \), and \( K_{s \conc \seq{1}} = [ d_s^+ ; b_s ] \).
In other words, the intervals \( K_{ s \conc \seq{ i } } \) with \( i \in \set{ -1 , 0 , 1 } \) are obtained by removing from \( K_s \) two open intervals \( I_s^- \) and \( I_s^+ \).
Let 
\[
 K^{( n )} = \bigcup_{s \in \pre{ n }{ \set{ -1 , 0 , 1 } } } K_s \text{ and } K = \bigcap_{n \in \omega } K^{( n )} .
\] 
We dub this a \markdef{triadic Cantor-construction}. 
Note that \( K^{( n )} \) is the disjoint union of the closed intervals \( K_s \) for \( s \in \pre{ n }{ \set{ -1 , 0 , 1 } } \); in other words these \( K_s \) are the connected components of \( K^{( n )} \).
We say that this construction has shrinking diameter if \( \lim_{n \to \infty }\card{K_{ z \restriction n }} = 0 \) for all \( z \in \pre{ \omega}{ \set{ -1 , 0 , 1 } } \), and in this case we have a homeomorphism just like in~\eqref{eq:homeoCantorscheme}, that is \( F \colon \pre{ \omega}{ \set{ -1 , 0 , 1 } } \to K \), \( F ( z ) ={} \)the unique element of \( \bigcap_{n \in \omega } K_{ z \restriction n } \).
Since \( \pre{ \omega }{ 2 } \) and \( \pre{ \omega }{ 3 } \) are homeomorphic, this is just a Cantor-construction in disguise.

If the triadic Cantor-construction is of \emph{non-shrinking} diameter, a map like in~\eqref{eq:homeoCantorscheme} is undefined, and the map \( \pre{ \omega}{ \set{ -1 , 0 , 1 } } \to \KK ( \R ) \), \( z \mapsto \bigcap_{n \in \omega } K_{ z \restriction n } \) is not continuous.
On the other hand, regardless whether the Cantor-construction is of shrinking diameter, there is a continuous surjection
\begin{equation}\label{eq:continuouscodingspongy}
G \colon K \onto \pre{ \omega}{ \set{ -1 , 0 , 1 } } , \quad K \ni x \mapsto G ( x ) \colon \omega \to \set{ -1 , 0 , 1 } 
\end{equation}
defined as follows: if \( K_s \) is the connected component of \( K^{( n )} \) to which \( x \) belongs, 
\[ 
G ( x ) ( n ) = i \IFF x \in K_{ s \conc \seq{ i }} .
\]
Note that the connected components of \( K \) are the \( \bigcap_{n} K_{ z \restriction n} \), for \( z \in \pre{ \omega}{ \set{ -1 , 0 , 1 } } \).
In Section~\ref{subsec:spongy} we define a spongy subset of \( \R \) via a triadic Cantor-construction of non-shrinking diameter.

\subsection{Embedding the Cantor set in a measure preserving way}
A basic result in Descriptive Set Theory states that an uncountable Polish space contains a Cantor set.
The next result shows that the embedding can be taken to be measure-preserving.

\begin{theorem}\label{thm:embeddingCantorinPolish}
Suppose \( \mu \) and \( \nu \) are nonsingular Borel measures on a Polish space \( ( X , d ) \) and on the Cantor set \( \pre{\omega }{2} \), respectively. 
Suppose also \( \nu \) is fully supported, and that
\[
 \exists Y \in \Bor ( X ) \left ( \nu ( \pre{\omega }{2} ) < \mu ( Y ) < \infty \right ) . \tag{\( * \)}
\]
Then there is a continuous injective \( H \colon \pre{\omega }{2} \to X \) that preserves the measure.
\end{theorem}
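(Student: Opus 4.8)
The plan is to realize $H$ as the continuous injection $F\colon\pre{\omega}{2}\to X$ associated (as in~\eqref{eq:homeoCantorscheme}) to a suitably chosen Cantor-scheme $\seqof{U_s}{s\in\pre{<\omega}{2}}$ of shrinking diameter, the point being to control the $\mu$-measures of the $\Cl U_s$. Alongside the two defining clauses of a Cantor-scheme and $\lim_n\diam(\Cl U_{z\restriction n})=0$, I would keep the invariants that each $U_s$ is a finite union of open balls with $\mu$-null bounding spheres (so $\mu(\Fr U_s)=0$ and $\mu(\Cl U_s)=\mu(U_s)<\infty$) and that
\[
\mu(\Cl U_s)=\nu(\Nbhd_s)+\delta_s,\qquad \delta_s>0,\qquad \delta_{s\conc 0}+\delta_{s\conc 1}\le\tfrac12\delta_s .
\]
Granting this, $H:=F$ is a homeomorphism of $\pre{\omega}{2}$ onto the Cantor set $K:=\ran F$, with $H(\Nbhd_s)=K\cap\Cl U_s=\bigcap_m\bigcup\setof{\Cl U_t}{t\supseteq s\AND \lh t=m}$; since at each level the closures involved are pairwise disjoint and these sets decrease with $m$ to the finite-measure set $H(\Nbhd_s)$, we get $\mu(H(\Nbhd_s))=\lim_m\sum_{t\supseteq s,\ \lh t=m}\mu(\Cl U_t)=\nu(\Nbhd_s)+\lim_m\sum_{t\supseteq s,\ \lh t=m}\delta_t=\nu(\Nbhd_s)$, because $\sum_{t\supseteq s,\ \lh t=m}\delta_t\le 2^{-(m-\lh s)}\delta_s\to 0$. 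As $\setof{\Nbhd_s}{s}\cup\set{\emptyset}$ is a $\pi$-system generating $\Bor(\pre{\omega}{2})$ and $\mu(K)=\nu(\pre{\omega}{2})<\infty$, the finite Borel measures $B\mapsto\mu(H(B))$ and $\nu$ agree, i.e.\ $H$ preserves the measure.

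For the root: by $(*)$, tightness of the finite measure $A\mapsto\mu(A\cap Y)$, and local finiteness of $\mu$, there is a compact $Q\subseteq X$ with $\nu(\pre{\omega}{2})<\mu(Q)<\infty$; cover $Q$ by finitely many open balls of finite measure and $\mu$-null bounding sphere, let $U_\emptyset$ be their union, and set $\delta_\emptyset:=\mu(\Cl U_\emptyset)-\nu(\pre{\omega}{2})>0$. The inductive step is a carving lemma: inside an open $U$ with $\mu(\Fr U)=0$ and $\mu(U)=\nu(\Nbhd_s)+\delta_s$ one can find two disjoint closed sets $\Cl U_{s\conc 0},\Cl U_{s\conc 1}\subseteq U$ of the required form, with $\mu(\Cl U_{s\conc i})=\nu(\Nbhd_{s\conc i})+\delta_{s\conc i}$ for suitable $\delta_{s\conc i}\in(0,\delta_s/4]$. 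One does this one child at a time: by non-singularity of $\mu$ the map $r\mapsto\mu(U\cap\Ball(x;r))$ climbs continuously — up to jumps at the finitely-or-countably many radii where the sphere meets $U$ in positive measure — from $0$ to $\mu(U)$, so, after choosing $x\in U$ and trimming off part of an offending sphere when necessary (using non-atomicity and outer regularity of $\mu$), one obtains an open $V_0\subseteq U$, a finite union of balls with null spheres, of any prescribed measure in $(\nu(\Nbhd_{s\conc 0}),\nu(\Nbhd_{s\conc 0})+\delta_s/4)$; then $U\setminus\Cl V_0$ is open with measure $\nu(\Nbhd_{s\conc 1})+(\delta_s-\delta_{s\conc 0})>\nu(\Nbhd_{s\conc 1})$, and the same procedure carves $U_{s\conc 1}$ inside it, disjoint from $\Cl V_0$. (Full support of $\nu$ is used here twice: it gives $\nu(\Nbhd_s)>0$, hence $U_s\ne\emptyset$ for every $s$ — so $F$ is defined on all of $\pre{\omega}{2}$ and $K$ is perfect — and it gives $\nu(\Nbhd_s)>\nu(\Nbhd_{s\conc i})$, which is what leaves room above.)

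The main obstacle is reconciling the carving with the requirement $\lim_n\diam(\Cl U_{z\restriction n})=0$: a set of small diameter has small $\mu$-measure (non-singularity again), so one cannot demand small pieces at shallow levels, where the targets $\nu(\Nbhd_s)$ may still be large. The way out is to exploit non-singularity of $\nu$: for each $\varepsilon>0$ the tree $\setof{t}{\nu(\Nbhd_t)\ge\varepsilon}$ is binary and, by König's lemma, well-founded (an infinite branch would give $z$ with $\nu(\set{z})\ge\varepsilon$), so $M_n:=\max_{\lh t=n}\nu(\Nbhd_t)\to 0$. One therefore builds the scheme level by level: having built level $n$ — finitely many pieces $U_s$ — one may choose a small threshold $\varepsilon_{n+1}>0$ such that inside each of these (fixed) pieces the target measures at level $n{+}1$, all $\le M_{n+1}$, can be realized by balls of radius $<\varepsilon_{n+1}$, and then run the carving lemma at level $n{+}1$ with the extra clause $\diam(\Cl U_{s\conc i})<\varepsilon_{n+1}$; the bookkeeping must be arranged (choosing the $U_s$ "concentrated" relative to their measure, much as in Example~\ref{xmp:Cntorofmeasure2}) so that $\varepsilon_n\to 0$, which is possible precisely because $M_n\to 0$. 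This diameter bookkeeping — ensuring the pieces can be kept small deep in the tree while still hitting the prescribed measures — is the delicate part of the argument; everything else is routine once it is in place.
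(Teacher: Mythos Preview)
Your approach is genuinely different from the paper's, and the difference is instructive. The paper does \emph{not} build a Cantor scheme directly in $X$. Instead it invokes the Lusin--Souslin theorem to write $X$ as a continuous bijective image $f\colon[T]\to X$ of a closed subset of Baire space, passes by tightness to a finitely branching pruned subtree $T'$ with $\nu(\pre{\omega}{2})<\mu(f[T'])$, and then solves a purely combinatorial problem: given weights $u$ on $\pre{<\omega}{2}$ and $w$ on $T'$ with $u(\emptyset)<w(\emptyset)$, build a monotone $\varphi\colon\pre{<\omega}{2}\to T'$ whose induced map is injective and measure-preserving. The barrel--amphor{\ae} lemma (Lemma~\ref{lem:embeddingCantorinPolish}) drives this: at each stage one refines $T'$ until its nodes all have small weight (this is where nonsingularity of $\mu$ and finite branching enter, via K\"onig's lemma --- your $M_n\to 0$ insight, but on the \emph{target} side), groups them to dominate the binary children, then refines $\pre{<\omega}{2}$ until its nodes are small enough to be distributed back into those groups. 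The tree depth increases automatically, so injectivity and continuity of $f_\varphi$ come for free; no metric diameters are ever mentioned.

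Your direct-scheme approach is plausible and uses the right ingredients, but the step you yourself flag as ``the delicate part'' is a real gap, not just bookkeeping. Your carving produces $U_{s\conc i}$ of the form $\Ball(x;r)\cap U_s$ (possibly patched on a sphere). For $\diam U_{s\conc i}$ to be small you need $r$ small, which forces $\mu(\Ball(x;r)\cap U_s)$ small --- but only if $U_s$ already has a point where the measure is concentrated. If $U_s$ is a spread-out union of many small balls (which is exactly what your own carving can produce at earlier levels, e.g.\ when the space is totally disconnected and balls have quantised measures), there may be no small-radius ball inside $U_s$ capturing the prescribed mass, and you are forced to keep $U_{s\conc i}$ spread out as well. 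Your appeal to $M_n\to 0$ explains why small targets are eventually \emph{compatible} with small diameter in principle, but it does not break this chicken-and-egg cycle between the shape of $U_s$ and the achievable shape of its children. The paper's passage to a tree is precisely what dissolves this geometric difficulty: once everything lives on a finitely branching tree, ``small diameter'' becomes ``large depth'', which is automatic.
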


The assumption (\( * \)) holds when \( \mu \) is \( \sigma \)-finite and \( \nu ( \pre{\omega}{2} ) < \mu ( X ) \).
The proof of Theorem~\ref{thm:embeddingCantorinPolish} is based on a simple combinatorial fact, which can be formulated as follows: if we have empty barrels of capacity \( b_1, \dots , b_n \) and sufficiently small amphoræ of capacity \( a_1, \dots , a_m \) so that \( a_1 + \dots + a_m < b_1 + \dots + b_n \), it is possible to pour the wine of the amphoræ into the barrels so that the content of each amphora is poured into a single barrel. 

\begin{lemma}\label{lem:embeddingCantorinPolish}
Let \( 0 < a < b \) and \( 0 < A < B \) be real numbers.
\begin{subequations}
\begin{enumerate-(a)}
\item\label{lem:embeddingCantorinPolish-a}
For all \( b_1 , \dots , b_n > 0 \) such that \( b = b_1 + \cdots + b_n \) there is an \( r > 0 \) with the following property: for all \( 0 < a_1 , \dots , a_m \leq r \) such that \( a = a_1 + \cdots + a_m \), there are pairwise disjoint (possibly empty) sets \( I_1 \cup \dots \cup I_n = \setLR{1 , \dots , m} \) such that for all \( k = 1 , \dots , n \)
\begin{equation}\label{eq:lem:embeddingCantorinPolish-a}
 \sum_{i \in I_k} a_i < b_k .
\end{equation}
\item\label{lem:embeddingCantorinPolish-b}
For all \( A_1 , \dots , A_N > 0 \) such that \( A = A_1 + \cdots + A_N \) there is an \( R > 0 \) with the following property: for all \( 0 < B_1 , \dots , B_M \leq R \) such that \( B_1 + \cdots + B_M = B \), there are pairwise disjoint nonempty sets \( J_1 \cup \dots \cup J_N = \setLR{1 , \dots , M} \) such that for all \( k = 1 , \dots , N \)
\begin{equation}\label{eq:lem:embeddingCantorinPolish-b}
A_k < \sum_{j \in J_k} B_j .
\end{equation}
Moreover the \( J_k \)s can be taken to be consecutive intervals, that is there are natural numbers \( j_0 = 0 < j_1 < \dots < j_N = M \) such that \( J_k = \setLR{ j_{k - 1} + 1 , \dots , j_k } \).
\end{enumerate-(a)}
\end{subequations}
\end{lemma}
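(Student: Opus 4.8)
The plan is to prove both parts by the same greedy ``pouring'' procedure suggested by the wine-and-barrels picture preceding the statement: process the small quantities---the \(a_i\) in part~\ref{lem:embeddingCantorinPolish-a}, the \(B_j\) in part~\ref{lem:embeddingCantorinPolish-b}---one at a time in the given order, fill the large containers one at a time in the given order, and, once the threshold on the small quantities has been chosen correctly, control everything by an elementary counting argument.

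For part~\ref{lem:embeddingCantorinPolish-a} I would take \(r = (b-a)/n > 0\). Given \(0 < a_1 , \dots , a_m \leq r\) with \(\sum_i a_i = a\), run the following procedure: keep a pointer \(k\) to a ``current barrel'', initialized to \(1\); for \(j = 1 , \dots , m\) in turn, put \(a_j\) into barrel \(k\) if doing so keeps the content of barrel \(k\) below \(b_k\), and otherwise increase \(k\) by one and retry the same step with the new barrel. Let \(I_k\) be the set of indices ultimately assigned to barrel \(k\) (so \(I_k = \emptyset\) for barrels never reached); by construction \(\sum_{i \in I_k} a_i < b_k\) for every \(k = 1 , \dots , n\). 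It remains to rule out that the pointer ever runs past \(n\) with some \(a_j\) still unplaced. If it did, then for each \(k = 1 , \dots , n\), at the instant the pointer left barrel \(k\) there was an amphora \(a_j\) with \(\sum_{i \in I_k} a_i + a_j \geq b_k\), hence \(\sum_{i \in I_k} a_i \geq b_k - a_j \geq b_k - r\); for \(k = n\) this \(a_j\) is an amphora that never gets placed. Summing, \(\sum_{k=1}^{n} \sum_{i \in I_k} a_i \geq b - nr = a\); but the left-hand side is the sum of those \(a_i\) with \(i \in \bigcup_k I_k\), and since the unplaced amphora is missing from that union and all \(a_i > 0\), it is strictly smaller than \(\sum_i a_i = a\), a contradiction. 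Hence the procedure halts with every \(a_j\) placed, which is the required partition.

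Part~\ref{lem:embeddingCantorinPolish-b} is dual, the small quantities now having to be bunched so as to \emph{exceed} targets. I would take \(R = (B-A)/N > 0\). Given \(0 < B_1 , \dots , B_M \leq R\) with \(\sum_j B_j = B\), build consecutive blocks greedily: set \(j_0 = 0\); for \(k = 1 , \dots , N-1\) let \(j_k\) be least with \(B_{j_{k-1}+1} + \dots + B_{j_k} > A_k\) and put \(J_k = \{ j_{k-1}+1 , \dots , j_k \}\); finally \(J_N = \{ j_{N-1}+1 , \dots , M \}\). Minimality of \(j_k\) yields the overshoot bound \(\sum_{j \in J_k} B_j \leq A_k + R\) for \(k \leq N-1\). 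Two points need checking. First, the bricks are never exhausted before a block \(J_k\) with \(k \leq N-1\) can be closed: otherwise the total would be \(\sum_{i=1}^{k-1} \sum_{j \in J_i} B_j + \sum_{j > j_{k-1}} B_j \leq \sum_{i=1}^{k} A_i + (k-1)R \leq A + (N-1)R = B - R < B\), contradicting \(\sum_j B_j = B\). Second, \(\sum_{j \in J_N} B_j = B - \sum_{k=1}^{N-1} \sum_{j \in J_k} B_j \geq B - \sum_{k=1}^{N-1}(A_k + R) = A_N + (B-A) - (N-1)R = A_N + R > A_N\), so \(J_N\) meets its requirement and in particular \(J_N \neq \emptyset\). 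Each \(J_k\) with \(k \leq N-1\) is nonempty because at least one brick is needed to exceed \(A_k > 0\); thus the \(J_k\) form nonempty consecutive intervals partitioning \(\{ 1 , \dots , M \}\) with \(0 = j_0 < j_1 < \dots < j_N = M\), exactly as demanded. (When \(N = 1\) everything degenerates correctly: there are no intermediate blocks, \(J_1 = \{ 1 , \dots , M \}\), and \(\sum_j B_j = B > A = A_1\).)

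The substance is only the choice of thresholds: each container contributes an ``error'' of at most one small quantity, so the accumulated error is at most \(nr = b-a\) in part~\ref{lem:embeddingCantorinPolish-a} and at most \((N-1)R < B-A\) in part~\ref{lem:embeddingCantorinPolish-b}---precisely the available slack. The only care required is the usual bookkeeping around boundary cases (empty or unreached barrels, the degenerate \(N=1\)) and keeping track of which inequalities are strict; I do not anticipate a genuine obstacle.
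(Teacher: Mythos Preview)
Your proof is correct and follows essentially the same greedy-plus-slack argument as the paper: the same threshold \(r=(b-a)/n\) in part~\ref{lem:embeddingCantorinPolish-a}, and the same least-\(j_k\) construction in part~\ref{lem:embeddingCantorinPolish-b}, with the contradiction in each case coming from summing the per-container error bounds against the available slack. The only cosmetic differences are that the paper builds the \(I_k\) as abstract maximal subsets rather than consecutive blocks, and in part~\ref{lem:embeddingCantorinPolish-b} it takes \(R=(B-A)/(N-1)\) (treating \(N=1\) separately) where you take \(R=(B-A)/N\); neither affects the argument.
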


\begin{proof}
\ref{lem:embeddingCantorinPolish-a}
Given \( b_1 , \dots , b_n \), let \( r = ( b - a ) / n \).
Suppose we are given \( 0 < a_1 , \dots , a_m \leq r \).
By induction on \( k \), construct pairwise disjoint sets \( I_k \subseteq \setLR{1 , \dots , m} \) that are maximal with respect to~\eqref{eq:lem:embeddingCantorinPolish-a}, and let \( I = I_1 \cup \dots \cup I_n \).
If \( I \neq \setLR{1 , \dots , m} \), then by maximality of \( I_k \),
\[ 
b_k \leq \frac{b - a}{n} + \sum_{ i \in I_k} a_i ,
\] 
so we would have
\[
b = \sum_{k = 1}^n b_k \leq ( b - a ) + \sum_{i \in I} a_i < ( b - a ) + \sum_{i = 1}^m a_i = b - a + a = b ,
\]
a contradiction.

\smallskip

\ref{lem:embeddingCantorinPolish-b}
The proof is similar to the one of~\ref{lem:embeddingCantorinPolish-a}.
If \( N = 1 \) there is nothing to prove, so we may assume otherwise.
Given \( A_1 , \dots , A_N \), let \( R = ( B - A ) / ( N - 1 ) \).
Suppose we are given \( 0 < B_1 , \dots , B_M \leq R \).
By induction on \( k \), we shall construct \( j_0 = 0 < j_1 < \dots < j_N = M \) such that each \( J_k = \setLR{ j_{k - 1} + 1 , \dots , j_k } \) satisfies~\eqref{eq:lem:embeddingCantorinPolish-b}, and it is least such, except possibly the last one \( j_N \).
The definition of \( j_1 \) is clear: it is the least \( j \leq M \) such that \( \sum_{h = 0}^{j} B_h > A_1 \), and such number exists since \( A_1 < A < B \).
We must show that the other \( j_k \)s exist, i.e. that the construction does not break-down before step \( N \).
Towards a contradiction, suppose \( 1 \leq \bar{N} < N \) is least such that \( j_{\bar{N} + 1 } \) is not defined.
By construction \( A_k + R > \sum_{i \in J_k} B_i \) for all \( k \leq \bar{N} \), and therefore 
\[ 
\sum_{k = 1}^{\bar{N}} A_k > \sum_{ i = 1}^{j_{\bar{N}}} B_i - \bar{N} R ,
\] 
and by case assumption \( A_{\bar{N} + 1} > \sum_{ i = j_{\bar{N}} + 1 }^M B_i \), if \( j_{\bar{N}} < M \).
Then 
\[
 A \geq \sum_{k = 1}^{\bar{N} + 1 } A_k > \sum_{i = 1}^M B_i - \bar{N} R \geq B - ( N - 1 ) R = A ,
\]
a contradiction.
\end{proof}

We now turn to the proof of Theorem~\ref{thm:embeddingCantorinPolish}.
The Cantor scheme construction with shrinking diameters guarantees that there is a continuous embedding \( f \colon \pre{\omega}{2} \to X \), but the map \( f \) need not be measure preserving---in fact it can happen that \( f ( \pre{\omega}{2} ) \) is \( \mu \)-null.
Of course we could modify the Cantor scheme by using Borel subsets of \( X \) of appropriate measure, but then we would have no control on the diameters of these Borel sets.
The cure is to carefully mix these two approaches, so that the construction succeeds. 

\begin{proof}[Proof of Theorem~\ref{thm:embeddingCantorinPolish}]
We claim it is enough to prove the result when \( \nu ( \pre{\omega}{2} ) < \mu ( X ) < + \infty \).
In fact if \( Y \in \Bor ( X ) \) and \( \nu ( \pre{\omega}{2} ) < \mu ( Y ) < + \infty \) then there is a finer topology \( \tau \) on \( X \) so that \( Y \) with the topology induced by \( \tau \) is Polish~\cite[][Theorem 13.1]{Kechris:1995kc}, so that any continuous injective measure preserving map \( H \colon \pre{\omega }{2} \to ( Y , \tau ) \) is also continuous as a function \( H \colon \pre{\omega }{2} \to X \) when \( X \) is endowed with the original topology.
Therefore we may assume that
\[
\nu ( \pre{\omega}{2} ) < \mu ( X ) < + \infty .
\]
By a result of Lusin and Souslin~\cite[][Theorem 13.7]{Kechris:1995kc}, \( X \) is the continuous injective image of a closed subset of the Baire space, so we may fix a pruned tree \( T \) on \( \omega \) and a continuous bijection \( f \colon \body{T} \to X \).
To avoid ambiguity we write 
\begin{align*}
\tilde{\Nbhd}_t & = \setofLR{ z \in \body{T}}{ t \subseteq z } , & \Nbhd_s & = \setofLR{ z \in \pre{\omega }{2} }{ s \subseteq z } 
\end{align*}
to denote the basic open neighborhood of \( \body{T} \) and of \( \pre{\omega }{2} \) determined by \( t \in T \) and \( s \in \pre{ < \omega }{2} \).
The measure \( \mu \) together with \( f \) induces a measure \( \mu' \) on \( \body{T} \) defined by 
\[ 
\mu' ( \tilde{\Nbhd}_t ) = \mu ( f ( \tilde{\Nbhd}_t ) ) , 
\]
and by tightness, there is a pruned, finite branching \( T' \subseteq T \) such that \( \nu ( \pre{\omega }{2} ) < \mu' ( \body{T'} ) \).
Without loss of generality we may assume \( T' \) is \markdef{normal}, that is the set of successors of \( t \in T' \) is \( \setofLR{t \conc \seq{ i }}{ i < n } \) for some \( n \in \omega \).
Therefore, it is enough to show that there is an injective, continuous \( g \colon \pre{\omega }{2} \to \body{T'} \), such that \( \nu ( \Nbhd_s ) = \mu' ( g ( \Nbhd_s ) ) \), for all \( s \in \pre{ < \omega }{2} \) since then \( f \circ g \colon \pre{\omega }{2} \to X \) would be injective, continuous, and it follows that \( f \circ g \) is measure-preserving, as required.
Therefore, it all boils-down to prove that:
\begin{quote}
If \( T \) is a pruned, normal, finitely branching tree on \( \omega \), and \( u \colon \pre{ < \omega }{2} \to ( 0 ; + \infty ) \) and \( w \colon T \to ( 0 ; + \infty ) \) induce fully supported, nonsingular, Borel measures \( \nu \) on \( \pre{\omega }{2} \) and \( \mu \) on \( \body{T} \), respectively, such that \( u ( \emptyset ) = \nu ( \pre{\omega }{2} ) < \mu ( \body{T} ) = w ( \emptyset ) \), then there is a continuous \( \varphi \colon \pre{ < \omega }{2} \to T \) such that the induced function \( f_ \varphi \colon \pre{\omega }{2} \to \body{T} \) is injective and \( \nu \left ( \Nbhd_s \right ) = \mu \left ( f_ \varphi ( \Nbhd_s ) \right ) \).
\end{quote}

Suppose we are given \( T \), \( u \) and \( w \) as above.
The function \( \varphi \colon \pre{ < \omega }{2} \to T \) is first defined on \( \bigcup_{k \in \omega} \pre{L_k}{2} \) for some suitable increasing sequence \( ( L_k )_k \), and then extended to all of \( \pre{ < \omega }{2} \) by requiring that when \( L_k < \lh s < L_{k + 1} \), then \( \varphi ( s ) = \varphi ( s \restriction L_k ) \).

We require that 
\[ 
s \in \pre{L_k}{2} \IMPLIES \varphi ( s ) \in \Lev_{M_k} ( T ) \equalsdef \setof{ t \in T }{ \lh ( t ) = M_k } ,
\] 
where \( ( M_k )_k \) is a suitable increasing sequence.
The function \( f_ \varphi \) will be injective, but the same need not be true of the map \( \varphi \): even if \( x \restriction L_k \neq y \restriction L_k \) one might need to reach a much larger \( L_n \) in order to witness \( \varphi ( x \restriction L_n ) \neq \varphi ( y \restriction L_n ) \) and hence \( f_ \varphi ( x ) \neq f_ \varphi ( y ) \).
For each \( t \in \varphi ( \pre{ L_k }{ 2 } ) = \setofLR { \varphi ( s ) \in \pre{ M_k }{ 2 } }{ s \in \pre{L_k}{2} } \) let
\[
\mathcal{A}_k ( t ) = \setof{ s \in \pre{L_k}{2} }{ \varphi ( s ) = t } 
\]
so that \( \setofLR{\mathcal{A}_k ( t ) }{ t \in \varphi ( \pre{ L_k }{ 2 } ) } \) is the partition of \( \pre{ L_k }{ 2 } \) given by the fibers of \( \varphi \).

Set \( \varphi ( \emptyset ) = \emptyset \), \( L_0 = M_0 = 0 \) and let \( \delta_0 \) be a positive real such that \( u ( \emptyset ) < w ( \emptyset ) < u ( \emptyset ) + \delta_0 \).

Fix \( k \in \omega \) and suppose that \( L_k \), \( M_k \), and \( \delta _k \) have been defined, together with the values \( \varphi ( s ) \) for all \( s \in \pre{L_k}{2} \), and suppose that for every \( t \in \varphi ( \pre{L_k}{2} ) \),
\begin{equation}\label{eq:th:embeddingCantorinPolish}
\sum_{s \in \mathcal{A}_k ( t ) } u ( s ) < w ( t ) < \delta_k + \sum_{s \in \mathcal{A}_k ( t ) } u ( s ) . 
\end{equation}
The goal is to define \( L_{k + 1} \), \( M_{k + 1} \), \( \delta _{k + 1} \) and the values \( \varphi ( s ) \in \Lev_{M_{k + 1}} ( T ) \) for \( s \in \pre{L_{k + 1}}{2} \).
Let \( \delta _{ k + 1} = 2^{- 2 L_k } \).
(The actual values of the \( \delta _j \)s are only used in Claim~\ref{cl:embeddingCantorinPolish4} to certify that \( f_ \varphi \) is measure preserving, and play no significant role in the construction of \( \varphi \).)

\begin{claim}\label{cl:embeddingCantorinPolish1}
Let \( R > 0 \).
Then there is \( M \) such that \( w ( t ) < R \) for all \( t \in \Lev _{M} ( T ) \).
Moreover, this relation implies that \( \FORALL{ M' > M } \FORALL{ t \in \Lev _{M'} ( T ) } ( w ( t ) < R ) \).

Similarly, \( \EXISTS{ M } \FORALL{ s \in \pre{M}{2}} u ( s ) < R \), thus \( \FORALL{ M' > M } \FORALL{ s \in \pre{M'}{2} } ( u ( s ) < R ) \).
\end{claim}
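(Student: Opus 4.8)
The plan is to prove both halves of the claim by the same two observations: monotonicity of the weight functions along branches, and König's lemma applied to the ``heavy'' subtree.

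First I would record that \( w \) is non-increasing along branches of \( T \): if \( t \subseteq t' \) in \( T \), then repeatedly applying the additivity identity \( w(t) = \sum_{t \conc i \in T} w(t \conc i) \) together with \( w > 0 \) gives \( w(t) \geq w(t') \); the same holds for \( u \) on \( \pre{<\omega}{2} \). This already disposes of the two ``moreover'' statements: if \( w(t) < R \) for every \( t \in \Lev_M(T) \) and \( M' > M \), then any \( t' \in \Lev_{M'}(T) \) has \( t' \restriction M \in \Lev_M(T) \) and hence \( w(t') \leq w(t' \restriction M) < R \); likewise for \( u \) on the full binary tree.

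For the existence of \( M \) I would argue by contradiction. Assuming that for every \( M \) there is \( t \in \Lev_M(T) \) with \( w(t) \geq R \), consider the set \( S = \setof{ t \in T }{ w(t) \geq R } \). By the monotonicity just noted, \( S \) is closed under initial segments, so it is a subtree of \( T \); being a subtree of the finitely branching tree \( T \) it is itself finitely branching; and by the contradiction hypothesis it has a node at every level, hence it is infinite. König's lemma then yields a branch \( x \in \body{S} \subseteq \body{T} \), and then \( w(x \restriction n) \geq R \) for all \( n \), contradicting \( \lim_n w(x \restriction n) = 0 \), i.e.\ the non-singularity of \( \mu \). The argument for \( u \) is verbatim the same, applied to the subtree \( \setof{ s \in \pre{<\omega}{2} }{ u(s) \geq R } \) of \( \pre{<\omega}{2} \) and the non-singularity of \( \nu \).

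There is no real obstacle here: the content is exactly the recognition that non-singularity forbids an infinite branch of ``heavy'' nodes, and finite branching of \( T \) makes König's lemma directly applicable to the heavy subtree. (One could avoid even invoking finite branching of \( T \) by noting that the level-\( M \) nodes of \( S \) index pairwise disjoint sets of \( \mu \)-measure \( \geq R \) inside \( \body{T} \), so each level of \( S \) has at most \( \mu(\body{T})/R \) nodes; but in the present setting \( T \) is finitely branching anyway, so this refinement is unnecessary.)
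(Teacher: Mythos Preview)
Your proof is correct and follows essentially the same approach as the paper: the paper's proof is the terse one-liner that the tree \( \setofLR{ t \in T}{ w(t) \geq R } \) would otherwise be infinite and finitely branching, hence ill-founded, contradicting non-singularity. You have simply unpacked the implicit monotonicity step and treated the ``moreover'' clauses explicitly, which the paper leaves to the reader.
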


\begin{proof}
Otherwise, the tree \( \setofLR{ t \in T}{w ( t ) \geq R} \) would be infinite.
Since it is finitely branching, it would be ill-founded, contradicting non-singularity of \( \mu \).
\end{proof}

Fix a \( t \in \varphi ( \pre{L_k}{2} ) \).
Applying Lemma~\ref{lem:embeddingCantorinPolish}\ref{lem:embeddingCantorinPolish-b} to the numbers
\begin{align*}
A_{ s \conc i } &= u ( s \conc i ) , & ( \text{for } ( s , i ) \in \mathcal{A}_k ( t ) \times 2 )
\\
A & = \textstyle \sum_{ s \in \mathcal{A}_k ( t ) } u ( s ) ,
\\
 B & = w ( t ) ,
\end{align*}
a value \( R_t \) is obtained such that whenever \( B_1 , \ldots , B_M \leq R_t \) and \( B_1 + \ldots + B_M = B \), there exists a partition of \( \set{ 1 , \ldots , M} \) into sets \( J_{ s \conc i } \) such that \( u ( s \conc i ) < \sum_{ h \in J_{ s \conc i } } B_h \).
Let \( R = \min \setofLR{\delta_{k + 1} , R_t }{ t \in \varphi ( \pre{L_k}{2} ) } \).
Applying Claim~\ref{cl:embeddingCantorinPolish1}, let \( M_{k + 1} > M_k \) be such that \( w ( t' ) < R \) for all \( t' \in \Lev _{M_{k + 1}} ( T ) \).
Let 
\[ 
D_t = \setof{ t' \in \Lev _{M_{k + 1}} ( T ) }{t' \restriction M_k = t} .
\]
It follows that \( B_{t'} = w ( t' ) < R \) for \( t' \in D_t \), and \( B = \sum_{ t' \in D_t } B_{t'} \) so there is a partition \( \setof{J_{ s \conc i } }{ ( s , i ) \in \mathcal{A}_k ( t ) \times 2 } \) of \( D_t \) such that 
\begin{equation}\label{eq:th:embeddingCantorinPolish1}
u ( s \conc i ) < \sum_{t' \in J_{ s \conc i }} w ( t' ) .
\end{equation}
Choose 
\[ 
C_{ s \conc i } \subseteq J_{ s \conc i } 
\]
minimal so that \( u ( s \conc i ) < \sum_{t' \in C_{ s \conc i }} w ( t' ) \).
By the choice of \( R \) one also has 
\begin{equation}\label{eq:th:embeddingCantorinPolish2}
 \sum_{t' \in C_{ s \conc i }} w ( t' ) < u ( s \conc i ) + \delta_{k + 1} . 
\end{equation}

Now, for each \( ( s , i ) \in \mathcal{A}_k ( t ) \times 2 \), apply Lemma~\ref{lem:embeddingCantorinPolish}\ref{lem:embeddingCantorinPolish-a} to the numbers 
\begin{align*}
b_{t'} & = w ( t' ) , & ( \text{for } t' \in C_{ s \conc i } )
\\
b & = \textstyle \sum_{t' \in C_{ s \conc i } } w ( t' ) 
\\
a & = u ( s \conc i ) 
\end{align*}
to get a value \( r_{ s \conc i } \) such that whenever \( 0 < a_1 , \dots , a_m \leq r_{ s \conc i } \) and \( a_1 + \ldots + a_m = a \), there are pairwise disjoint, possibly empty, subsets \( I_{t' } \) of \( \set{ 1 , \ldots , m} \) such that \( \bigcup_{t' \in C_{ s \conc i }} I_{t' } = \set{ 1 , \ldots , m} \) and \( \sum_{h \in I_{t' }} a_h < w ( t' ) \).
Let \( r \) be the least of all \( r_{ s \conc i } \).
By Claim~\ref{cl:embeddingCantorinPolish1}, there is \( L_{k + 1} > L_k \) such that \( u ( s ) < r \) for all \( s \in \pre{L_{k + 1}}{2} \).
Set \( E_{ s \conc i } = \setofLR{ s' \in \pre{L_{k + 1}}{2}}{s \conc i \subseteq s' } \), so that \( \sum_{s' \in E_{s \conc i}} u ( s' ) = u ( s \conc i ) \).
By Lemma~\ref{lem:embeddingCantorinPolish}\ref{lem:embeddingCantorinPolish-a}, \( E_{ s \conc i } \) is partitioned into sets \( I_{t' } \), for \( t' \in C_{ s \conc i } \), such that \( \sum_{ s' \in I_{t' }} u ( s' ) < w ( t' ) \).
By~\eqref{eq:th:embeddingCantorinPolish2} we have \( w ( t' ) < \delta_{k + 1} + \sum_{ s' \in I_{t' }} u ( s' ) \).
Let \( \varphi ( s' ) = t' \) for \( s' \in I_{t' } \), so that \( \varphi \) is defined on \( \pre{ L_{ k + 1}}{2} \).
This concludes the definition of \( \varphi \colon \pre{ < \omega }{2} \to T \).
Note that by construction \( \mathcal{A}_{ k + 1 } ( t' ) = I_{t'} \), so~\eqref{eq:th:embeddingCantorinPolish} holds for \( k + 1 \).

\begin{claim}\label{cl:embeddingCantorinPolish2}
The function \( \varphi \colon \pre{ < \omega }{2} \to T \) is continuous.
\end{claim}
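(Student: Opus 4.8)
The plan is to verify the two conditions in the definition of a continuous map between pruned trees given in Section~\ref{subsec:sequences&trees}: that $\varphi$ is monotone, and that $\lim_n \lh \varphi ( x \restriction n ) = \infty$ for every $x \in \pre{\omega}{2}$.

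For monotonicity the key reduction is that, by the extension rule $\varphi ( s ) = \varphi ( s \restriction L_k )$ valid for $L_k \leq \lh s < L_{k+1}$, it suffices to prove $\varphi ( s' \restriction L_k ) \subseteq \varphi ( s' )$ for every $k$ and every $s' \in \pre{L_{k+1}}{2}$; the general statement $s_1 \subseteq s_2 \implies \varphi ( s_1 ) \subseteq \varphi ( s_2 )$ then follows by transitivity of $\subseteq$, splitting $[ \lh s_1 , \lh s_2 )$ at the distinguished values $L_k$ lying inside it. So I would fix such an $s'$, put $s = s' \restriction L_k$ and $t = \varphi ( s )$, and trace the construction at stage $k$: since $s'$ extends exactly one of $s \conc 0 , s \conc 1$, it lies in exactly one set $E_{s \conc i}$, hence in exactly one block $I_{t'}$ of the partition of $E_{s \conc i}$ indexed by $t' \in C_{s \conc i}$, and $\varphi ( s' ) = t'$ by definition. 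Now $C_{s \conc i} \subseteq J_{s \conc i} \subseteq D_t$, and every element of $D_t$ restricts to $t$ on level $M_k$; hence $\varphi ( s' ) \restriction M_k = t' \restriction M_k = t = \varphi ( s )$, i.e.\ $\varphi ( s ) \subseteq \varphi ( s' )$. The same bookkeeping shows $\varphi$ is well defined: the sets $E_{s \conc i}$, for $t$ ranging over $\varphi ( \pre{L_k}{2} )$ and $( s , i ) \in \mathcal{A}_k ( t ) \times 2$, partition $\pre{L_{k+1}}{2}$ and refine the partition $\setofLR{ \mathcal{A}_k ( t ) }{ t }$ of $\pre{L_k}{2}$, while each $E_{s \conc i}$ is in turn partitioned by the $I_{t'}$.

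For the second condition I would use that $s \in \pre{L_k}{2}$ implies $\varphi ( s ) \in \Lev_{M_k} ( T )$, so that $\lh \varphi ( x \restriction n ) = M_k$ whenever $L_k \leq n < L_{k+1}$. Since the construction chooses $M_{k+1} > M_k$ and $L_{k+1} > L_k$ at every step, both sequences $( M_k )_k$ and $( L_k )_k$ are strictly increasing, hence diverge; therefore $n \to \infty$ forces the index $k$ with $L_k \leq n < L_{k+1}$ to tend to $\infty$, and with it $\lh \varphi ( x \restriction n ) = M_k \to \infty$.

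I do not expect a genuine obstacle here: the claim is essentially the observation that the block-by-block inductive construction of $\varphi$ has this regularity built into it. The only point demanding care is organizing the chain of inclusions $I_{t'} \subseteq E_{s \conc i}$, $C_{s \conc i} \subseteq D_t$, $D_t \subseteq \setofLR{ t' \in \Lev_{M_{k+1}}(T) }{ t' \restriction M_k = t }$ in the right order, and handling lengths that do not fall on one of the distinguished levels $L_k$ via the extension convention.
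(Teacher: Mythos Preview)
Your proposal is correct and follows the same approach as the paper: verify monotonicity and that $\lh \varphi ( x \restriction n ) \to \infty$. The paper's own proof is two sentences long---it asserts monotonicity ``directly from the definition'' and notes $\lim_k \lh \varphi ( x \restriction L_k ) = \lim_k M_k = +\infty$---so you have simply unpacked what the paper leaves implicit, tracing the chain $\varphi ( s' ) \in I_{t'} \subseteq C_{s \conc i} \subseteq D_t$ to justify $\varphi ( s ) \subseteq \varphi ( s' )$.
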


\begin{proof}
First notice that \( \varphi \) is monotone, directly from the definition.
Moreover, \( \lim_{k \to \infty } \lh \varphi ( x \restriction L_k ) = \lim_{k \to \infty }M_k = + \infty \), since the sequence \( M_k \) is increasing.
\end{proof}

\begin{claim}\label{cl:embeddingCantorinPolish3}
\( f_{\varphi } \colon \pre{\omega }{2} \to \body{T} \) is injective.
\end{claim}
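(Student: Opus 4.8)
The plan is to distill from the construction one clean combinatorial fact about the fibres of \( \varphi \) on the levels \( \pre{L_{k+1}}{2} \), and then read off injectivity of \( f_\varphi \) from it.

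\textbf{Key fact.} For every \( k \) and all \( s'_1, s'_2 \in \pre{L_{k+1}}{2} \),
\[
\varphi ( s'_1 ) = \varphi ( s'_2 ) \implies s'_1 \restriction ( L_k + 1 ) = s'_2 \restriction ( L_k + 1 ) .
\]
In words: once the construction collapses two strings of length \( L_{k+1} \) to the same node of \( \Lev_{M_{k+1}} ( T ) \), those strings must already have agreed on their first \( L_k + 1 \) coordinates.

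To prove the Key fact I would first observe, directly from the last paragraph defining \( \varphi \) on \( \pre{L_{k+1}}{2} \), that if \( s' \in \pre{L_{k+1}}{2} \) and \( s' \restriction ( L_k + 1 ) = s \conc i \) (so \( s = s' \restriction L_k \) and \( i = s'(L_k) \)), then \( s' \in E_{s \conc i} \), hence \( \varphi ( s' ) \in C_{s \conc i} \); thus \( \varphi ( s' ) \) always lands in \( C_{s' \restriction ( L_k + 1 )} \). So it suffices to check that the family \( \setof{ C_r }{ r \in \pre{ L_k + 1 }{ 2 } } \) is pairwise disjoint. This is pure bookkeeping with the nested partitions: writing \( r = s \conc i \) with \( s \in \pre{L_k}{2} \) and \( i \in 2 \), one has \( C_{s \conc i} \subseteq J_{s \conc i} \subseteq D_{ \varphi ( s ) } \), where the sets \( D_t = \setof{ t' \in \Lev_{M_{k+1}} ( T ) }{ t' \restriction M_k = t } \) (for \( t \in \Lev_{M_k} ( T ) \)) are pairwise disjoint, and, for each fixed \( t \), the sets \( J_{s \conc i} \) with \( ( s , i ) \in \mathcal{A}_k ( t ) \times 2 \) are pairwise disjoint by construction. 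Hence for distinct \( r_1 = s_1 \conc i_1 \) and \( r_2 = s_2 \conc i_2 \) we get \( C_{r_1} \cap C_{r_2} = \emptyset \): if \( \varphi ( s_1 ) \neq \varphi ( s_2 ) \) this comes from disjointness of \( D_{\varphi(s_1)} \) and \( D_{\varphi(s_2)} \); and if \( \varphi ( s_1 ) = \varphi ( s_2 ) = t \), then \( ( s_1 , i_1 ) \neq ( s_2 , i_2 ) \) are distinct elements of \( \mathcal{A}_k ( t ) \times 2 \), so disjointness of the \( J \)'s applies. Now if \( \varphi ( s'_1 ) = \varphi ( s'_2 ) \), this common node lies in both \( C_{s'_1 \restriction ( L_k + 1 )} \) and \( C_{s'_2 \restriction ( L_k + 1 )} \), forcing \( s'_1 \restriction ( L_k + 1 ) = s'_2 \restriction ( L_k + 1 ) \).

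Granting the Key fact, injectivity follows quickly. Suppose \( f_\varphi ( x ) = f_\varphi ( y ) \). Since \( \varphi \) is monotone, \( \varphi ( x \restriction L_k ) \) and \( \varphi ( y \restriction L_k ) \) are both initial segments of this common element of \( \body{T} \), and both have length \( M_k \); hence \( \varphi ( x \restriction L_k ) = \varphi ( y \restriction L_k ) \) for every \( k \). Applying the Key fact (with \( k \) in place of \( k + 1 \), for \( k \geq 1 \)) to \( x \restriction L_k , y \restriction L_k \in \pre{L_k}{2} \) yields \( x \restriction ( L_{k-1} + 1 ) = y \restriction ( L_{k-1} + 1 ) \) for every \( k \geq 1 \); since the \( L_k \) are strictly increasing, \( L_{k-1} + 1 \to \infty \), and therefore \( x = y \). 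The only delicate point is the disjointness of the \( C_r \)'s, i.e.\ keeping the nested partitions \( D_t \supseteq J_{s \conc i} \supseteq C_{s \conc i} \) and the fibres \( \mathcal{A}_k ( t ) \) straight; everything else is routine.
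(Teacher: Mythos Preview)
Your proof is correct and follows essentially the same approach as the paper: both hinge on the observation that \( \varphi ( s' ) \in C_{s' \restriction ( L_k + 1 )} \) for \( s' \in \pre{L_{k+1}}{2} \) together with the pairwise disjointness of the \( C_r \)'s, though the paper argues the contrapositive directly (from \( x \restriction L_k \neq y \restriction L_k \) to \( \varphi ( x \restriction L_{k+1} ) \neq \varphi ( y \restriction L_{k+1} ) \)) and leaves the disjointness of the \( C_r \)'s implicit, whereas you spell it out via the nested partitions \( D_t \supseteq J_{s \conc i} \supseteq C_{s \conc i} \).
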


\begin{proof}
Let \( x , y \) be distinct elements of \( \pre{\omega }2 \), and let \( k \in \omega \) such that \( x \restriction L_k\neq y \restriction L_k \).
Since \( \varphi ( x \restriction L_{k + 1} ) \in C_{x \restriction ( L_k + 1 ) } \), \( \varphi ( y \restriction L_{ k + 1 } ) \in C_{y \restriction ( L_k + 1 ) } \), and \( C_{x \restriction ( L_k + 1 ) }\cap C_{y \restriction ( L_k + 1 ) } = \emptyset \), it follows that \( \varphi ( x \restriction L_{k + 1} ) \neq\varphi ( y \restriction L_{k + 1} ) \), whence \( f_{\varphi } ( x ) \neq f_{\varphi } ( y ) \).
\end{proof}

\begin{claim}\label{cl:embeddingCantorinPolish4}
\( \FORALL{ s \in \pre{ < \omega }{2} } [ \nu ( \Nbhd_s ) = \mu \left ( f_{\varphi } ( \Nbhd_s ) \right ) ] \).
\end{claim}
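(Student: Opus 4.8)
The plan is to express \( f_\varphi ( \Nbhd_s ) \) as a decreasing intersection of explicit open subsets of \( \body{T} \) and to compute its measure as a limit. Fix \( s \in \pre{<\omega}{2} \) and, for \( k \) large enough that \( L_k \geq \lh s \), set
\[
\varphi_s ( k ) = \setofLR{ \varphi ( s' ) }{ s' \in \pre{L_k}{2} \AND s' \supseteq s } \subseteq \Lev_{M_k} ( T ) , \qquad V_k ( s ) = \bigcup_{ t \in \varphi_s ( k ) } \tilde{\Nbhd}_t .
\]
First I would check that \( f_\varphi ( \Nbhd_s ) = \bigcap_{k} V_k ( s ) \): the inclusion \( \subseteq \) is immediate from monotonicity of \( \varphi \), and for \( \supseteq \), given \( y \) in every \( V_k ( s ) \) one picks for each \( k \) some \( s'_k \in \pre{L_k}{2} \) with \( s'_k \supseteq s \) and \( \varphi ( s'_k ) \subseteq y \); these choices form a finitely branching infinite tree under restriction, so König's lemma yields a branch \( x \supseteq s \) with \( \varphi ( x \restriction L_k ) \subseteq y \) for all \( k \), whence (since \( \lh \varphi ( x \restriction L_k ) = M_k \to \infty \) and \( y \in \body{T} \)) \( f_\varphi ( x ) = y \). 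The sets \( V_k ( s ) \) are open, they decrease (as \( \tilde{\Nbhd}_{\varphi ( s' )} \subseteq \tilde{\Nbhd}_{\varphi ( s' \restriction L_k )} \) when \( \lh s' = L_{k+1} \)), and \( \mu ( V_k ( s ) ) \leq \mu ( \body{T} ) < \infty \), so \( \mu ( f_\varphi ( \Nbhd_s ) ) = \lim_k \mu ( V_k ( s ) ) \); and since distinct \( t \in \Lev_{M_k} ( T ) \) give disjoint cylinders, \( \mu ( V_k ( s ) ) = \sum_{ t \in \varphi_s ( k ) } w ( t ) \).

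Next I would establish the combinatorial fact that, for all sufficiently large \( k \), every fibre \( \mathcal{A}_k ( t ) \) with \( t \in \varphi_s ( k ) \) is contained in \( \setof{ s' }{ s' \supseteq s } \). The cleanest route is that \( f_\varphi \) is a topological embedding (a continuous injection on a compact space), so \( f_\varphi ( \Nbhd_s ) \) and \( f_\varphi ( \pre{\omega}{2} \setminus \Nbhd_s ) \) are disjoint compact sets, hence at positive \( d_T \)-distance; once \( M_k \) is large enough that no cylinder \( \tilde{\Nbhd}_t \) with \( \lh t = M_k \) meets both, every \( s' \in \mathcal{A}_k ( t ) \) must lie on the same side of \( \Nbhd_s \). (Alternatively this can be read off the construction, since the sets \( C_{ s'' \conc i } \) built at each step are pairwise disjoint, being contained in the pieces \( J_{ s'' \conc i } \) of partitions of the pairwise disjoint \( D_t \).) Consequently \( \setof{ s' \in \pre{L_k}{2} }{ s' \supseteq s } = \bigsqcup_{ t \in \varphi_s ( k ) } \mathcal{A}_k ( t ) \), and since \( \nu \) is a measure \( \sum_{ s' \in \pre{L_k}{2} , \, s' \supseteq s } u ( s' ) = u ( s ) \); hence
\[
\epsilon_k \equalsdef \mu ( V_k ( s ) ) - u ( s ) = \sum_{ t \in \varphi_s ( k ) } \Bigl ( w ( t ) - \textstyle\sum_{ s' \in \mathcal{A}_k ( t ) } u ( s' ) \Bigr ) ,
\]
which is \( \geq 0 \) by~\eqref{eq:th:embeddingCantorinPolish} and nonincreasing in \( k \) since \( V_{k+1} ( s ) \subseteq V_k ( s ) \). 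It then remains to prove that \( \epsilon_k \to 0 \), after which \( \mu ( f_\varphi ( \Nbhd_s ) ) = u ( s ) = \nu ( \Nbhd_s ) \).

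The heart of the argument is to bound \( \epsilon_{k+1} \) by analysing the refinement from level \( k \) to level \( k+1 \) \emph{inside a single cylinder} \( \tilde{\Nbhd}_t \), \( t \in \varphi_s ( k ) \). Writing \( D_t = \setof{ t' \in \Lev_{M_{k+1}} ( T ) }{ t' \restriction M_k = t } \), the elements of \( \varphi_s ( k + 1 ) \cap D_t \) form a subset of \( \bigsqcup_{ s'' \in \mathcal{A}_k ( t ) , \, i \in 2 } C_{ s'' \conc i } \); since for each \( ( s'' , i ) \) the new fibres partition \( E_{ s'' \conc i } \) (no \( u \)-mass being lost, even through the possibly empty \( I_{t'} \)), one gets \( \sum_{ t' \in \varphi_s ( k+1 ) \cap D_t } \sum_{ s''' \in \mathcal{A}_{k+1} ( t' ) } u ( s''' ) = \sum_{ s'' \in \mathcal{A}_k ( t ) } u ( s'' ) \), while \( \sum_{ t' \in \varphi_s ( k+1 ) \cap D_t } w ( t' ) \leq \sum_{ s'' , i } \sum_{ t' \in C_{ s'' \conc i } } w ( t' ) < \sum_{ s'' \in \mathcal{A}_k ( t ) } u ( s'' ) + 2 \card{ \mathcal{A}_k ( t ) } \delta_{k+1} \) by~\eqref{eq:th:embeddingCantorinPolish2}. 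Subtracting and summing over \( t \in \varphi_s ( k ) \), and using \( \sum_{ t \in \varphi_s ( k ) } \card{ \mathcal{A}_k ( t ) } = 2^{ L_k - \lh s } \) together with \( \delta_{k+1} = 2^{ - 2 L_k } \), one obtains
\[
0 \leq \epsilon_{k+1} < 2 \delta_{k+1} \cdot 2^{ L_k - \lh s } = 2^{ 1 - L_k - \lh s } ,
\]
which tends to \( 0 \) as \( k \to \infty \) since \( L_k \to \infty \). Hence \( \mu ( f_\varphi ( \Nbhd_s ) ) = \lim_k \mu ( V_k ( s ) ) = u ( s ) = \nu ( \Nbhd_s ) \).

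The main obstacle is precisely this last estimate. The naive bound \( \epsilon_k < \card{ \varphi_s ( k ) } \cdot \delta_k \) is hopeless: every \( t \in \Lev_{M_k} ( T ) \) satisfies \( w ( t ) < \delta_k \), so \( \card{ \Lev_{M_k} ( T ) } \cdot \delta_k > \mu ( \body{T} ) \), and \( \card{ \varphi_s ( k ) } \) is of that order. The trick is to estimate \( \epsilon_{k+1} \) rather than \( \epsilon_k \): at the \( ( k{+}1 ) \)-st step the total wasted mass inside each \( \tilde{\Nbhd}_t \) is only \( 2 \card{ \mathcal{A}_k ( t ) } \delta_{k+1} \), governed by the number of length-\( L_k \) extensions of \( s \) (summing to \( 2^{ L_k - \lh s } \)), and the value \( \delta_{k+1} = 2^{-2L_k} \) chosen in the construction is exactly what is needed to kill it. The one point demanding care is the bookkeeping identity \( \sum_{ t' \in \varphi_s ( k+1 ) \cap D_t } \sum_{ s''' \in \mathcal{A}_{k+1} ( t' ) } u ( s''' ) = \sum_{ s'' \in \mathcal{A}_k ( t ) } u ( s'' ) \): one must verify that, although \( f_\varphi \) is far from surjective onto \( \body{T} \) and some \( I_{t'} \) may be empty, every bit of \( u \)-mass in the blocks \( E_{ s'' \conc i } \) is accounted for among the fibres \( \mathcal{A}_{k+1} ( t' ) \).
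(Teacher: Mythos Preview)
Your argument is correct and lands on the same estimate as the paper: both proofs exploit~\eqref{eq:th:embeddingCantorinPolish2} together with \( \delta_{k+1} = 2^{-2L_k} \) to bound the excess \( \mu \)-mass at level \( k+1 \) by roughly \( 2^{L_k+1}\delta_{k+1} \to 0 \).

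The paper's route is a bit more direct than yours in two respects. First, it reduces at the outset to \( s \in \pre{L_k}{2} \); this makes your ``combinatorial fact'' (that fibres \( \mathcal{A}_k(t) \) eventually lie entirely on one side of \( \Nbhd_s \)) automatic, so neither the compactness argument nor the inspection of the disjointness of the \( C_{s''\conc i} \) is needed. Second, rather than working with \( V_k(s) = \bigcup_{t\in\varphi_s(k)}\tilde{\Nbhd}_t \), the paper writes \( f_\varphi(\Nbhd_s) \) directly as \( \bigcap_{h\geq k+1}\bigcup_{p\in X(h,s)}\tilde{\Nbhd}_p \) with \( X(h,s)=\bigcup\{C_{s'\conc i}: s'\supseteq s,\ \lh s'=L_h,\ i\in 2\} \). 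Since \( \varphi_s(h+1)\subseteq X(h,s) \), the paper's open sets sit between yours and \( f_\varphi(\Nbhd_s) \), and the sandwich \( \nu(\Nbhd_s) < \sum_{p\in X(h,s)}w(p) < \nu(\Nbhd_s)+2^{L_h+1}\delta_{h+1} \) follows immediately from~\eqref{eq:th:embeddingCantorinPolish1} and~\eqref{eq:th:embeddingCantorinPolish2} without the bookkeeping identity you flag at the end. Your approach buys a little extra generality (arbitrary \( s \) rather than \( s\in\pre{L_k}{2} \)) and makes the role of injectivity of \( f_\varphi \) explicit via the compactness argument, at the cost of the additional verifications.
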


\begin{proof}
It is enough to establish the claim for \( s \in \pre{ L_k }{ 2 } \), for some \( k > 0 \). 
For \( h \geq k \) let \( X ( h , s ) = \bigcup \setof{C_{ s' \conc i }}{s' \supseteq s \wedge \lh ( s' ) = L_h \wedge i \in 2 } \).

First remark that 
\begin{equation}\label{eq:th:embeddingCantorinPolish5}
f_{\varphi } ( \Nbhd_s ) = \bigcap_{h \geq k + 1} \bigcup_{p \in X ( h , s ) } \Nbhd_p .
\end{equation}
To prove that left-hand side is contained in the right-hand side argue as follows.
Given \( x \supseteq s \), for \( h \geq k + 1 \) choose \( s \subseteq s' \in \pre{L_h}{2} \), \( i \in 2 \), and \( s'' \in \pre{< \omega}{2} \) such that \( \lh ( s'' ) = L_{h + 1} - L_h - 1 \) and \( s' \conc \seq{ i } \conc s'' \subseteq x \); then \( \varphi ( s' \conc \seq{ i } \conc s'' ) \in C_{s' \conc i} \).
Conversely, pick \( y \) in the right-hand side of the equation: for every \( h \geq k + 1 \) there are \( s_h \in \pre{L_h}{2} \), \( i_h \in 2 \), \( p_h \in C_{s_h \conc i_h} \) such that \( s \subseteq s_h \) and \( p_h \subseteq y \), and since all \( p_h \) are compatible, all \( s_h \) must be compatible as well by construction, so their union is an element \( x \in \Nbhd_s \) such that \( f_{\varphi } ( x ) = y \).

Equation~\eqref{eq:th:embeddingCantorinPolish5} yields \( f_{ \varphi } \) as a decreasing intersection of disjoint unions, so
\[
\mu \left ( f_{\varphi } ( \Nbhd_s ) \right ) = \inf_{ h \geq k + 1} \sum_{ p \in X ( h , s ) } w ( p ) .
\]

Now, for any given \( h \geq k + 1 \), letting \( Y ( h ; s ) = \setofLR{ s' \conc i }{ s \subseteq s' \in \pre{L_h}{2}, i \in 2 } \),
\[
\begin{split}
\nu ( \Nbhd_s ) & = \sum_{s'' \in Y ( h ; s ) } u ( s'' ) 
\\
& < \sum_{p \in C_{s''} , s'' \in Y ( h ; s ) } w ( p ) 
\\
& = \sum_{ p \in X ( h , s ) } w ( p )
\\
 & < \sum_{s'' \in Y ( h ; s ) } ( u ( s'' ) + \delta_{h + 1} ) 
 \\
 &= \sum_{ s'' \in Y ( h ; s ) } u ( s'' ) + 2^{L_h + 1} \delta_{h + 1} .
\end{split}
\]
As \( \lim_{h \to \infty} \sum_{s'' \in Y ( h ; s ) } u ( s'' ) + 2^{L_h + 1}\delta_{h + 1} = \nu ( \Nbhd_s ) \) the claim is proved.
\end{proof} 
This completes the proof of Theorem~\ref{thm:embeddingCantorinPolish}.
\end{proof}

\section{The density function}\label{sec:densityfunction}
Let \( ( X , d , \mu ) \) be a fully supported, locally finite metric measure space and let \( A \in \MEAS_\mu \).
For \( x \in X \), the \markdef{upper} and \markdef{lower density of \( x \) at} \( A \) are 
\[
\density^ + _A ( x ) = \limsup_{ \varepsilon {\downarrow} 0}\frac{ \mu ( A \cap \Ball ( x ; \varepsilon ))}{ \mu ( \Ball ( x ; \varepsilon ) )} ,\qquad \density^- _A ( x ) = \liminf_{ \varepsilon {\downarrow} 0}\frac{ \mu ( A \cap \Ball ( x ; \varepsilon ))}{ \mu ( \Ball ( x ; \varepsilon ) )} .
\]
The \markdef{oscillation of \( x \) at} \( A \) is
\[
\oscillation_A ( x ) = \density^ + _A ( x ) - \density^- _A ( x ) .
\]
When \( \oscillation_A ( x ) = 0 \), that is to say: \( \density^ + _A ( x ) \leq \density^- _A ( x ) \), the value \( \density^ + _A ( x ) = \density^- _A ( x ) \) is called the \markdef{density of \( x \) at \( A \)}
\[
\density_A ( x ) = \lim_{ \varepsilon {\downarrow} 0}\frac{ \mu ( A \cap \Ball ( x ; \varepsilon ))}{ \mu ( \Ball ( x ; \varepsilon ) )} .
\]
It is important that in the computation of \( \density _A \) and \( \oscillation_A \) balls of every radius \( \varepsilon \) be considered, and not just for \( \varepsilon \) ranging over a countable set --- see Section~\ref{subsec:basisofdifferentiation}.
Note that if \( \mu ( \set{x} ) > 0 \) and \( x \in A \), then \( \density_A ( x ) = 1 \) for trivial reasons. 

The limit \( \density_A ( x ) \) does not exist if and only if \( \oscillation_A ( x ) > 0 \).
In any case if \( A =_\mu B \) then 
\begin{align*}
 \density_{ A^\complement} ( x ) & = 1 - \density_A ( x ) ,
& \density_A ( x ) & = \density_B ( x ) ,
\\
 \oscillation_{ A^\complement } ( x ) & = \oscillation_A ( x ) ,
&
\oscillation_A ( x ) & = \oscillation_B ( x ) ,
\end{align*}
in the sense that if one of the two sides of the equations exists, then so does the other one, and their values are equal.
Let
\begin{equation}\label{eq:Phi}
 \Phi ( A ) = \setofLR{ x \in X}{ \density_A ( x ) = 1 } . 
\end{equation}
The set of \markdef{blurry points of \( A \)} is
\[
\Blur ( A ) = \setof{ x \in X }{ \oscillation_A ( x ) > 0 },
\]
the set of \markdef{sharp points of \( A \)} is
\[
\Sharp ( A ) = \setof{ x \in X }{ \density_A ( x ) \in ( 0 ; 1 ) }
\]
and 
\[ 
\Exc ( A ) \equalsdef \Blur ( A ) \cup \Sharp ( A ) 
\] 
is the set of \markdef{exceptional points of \( A \)}.
For \( x \in X \) let 
\[
\begin{split}
\exc_A ( x ) & = \sup \setof{ \delta \leq 1 / 2 }{ \delta \leq \density^- _A ( x ) \leq \density^+ _A ( x ) \leq 1 - \delta }
\\
 & = \min \setLR{ \density_A^- ( x ) , 1 - \density_A^+ ( x ) } ,
 \\
 & \leq 1 / 2 .
\end{split} 
\]
If \( x \in \Phi ( A ) \cup \Phi ( A^\complement ) \) then \( \exc_A ( x ) = 0 \), so this notion is of interest only when \( x \in \Exc ( A ) \).
Let 
\begin{equation*} 
\boldsymbol{ \delta }_A = \sup \setof{ \exc_A ( x ) }{ x \in X } \leq 1 / 2 .
\end{equation*}
If \( A \) is either null or co-null, then \( \boldsymbol{ \delta }_A = 0 \), so this justifies the restriction to nontrivial sets in the following definition:
\begin{equation}\label{eq:delta(X)}
\boldsymbol{ \delta } ( X ) = \inf \setof{ \boldsymbol{ \delta }_A }{ \eq{A} \in \MALG \setminus \set{ \eq{\emptyset} , \eq{X} } } .
\end{equation}
The following are easily checked.
\begin{subequations}
\begin{gather}
\oscillation_A ( x ) = 0 \IMPLIES \exc_A ( x ) = \min \setLR{ \density_A ( x ) , 1 - \density_A ( x ) }
\\
\exc_A ( x ) = 1 / 2 \IFF \density_A ( x ) = 1 / 2 
\\
 \exc_A ( x ) = 0 \IFF x \in \Phi ( A ) \cup \Phi ( A^\complement ) \OR \density^-_A ( x ) = 0 \OR \density^+_A ( x ) = 1
\\
\boldsymbol{ \delta }_A = 0 \IMPLIES \Sharp ( A ) = \emptyset .
\end{gather}
\end{subequations}

\begin{remarks}\label{rmks:exc}
\begin{enumerate-(a)}
\item
If \( A \) is clopen and nontrivial in \( X \), then \( \boldsymbol{ \delta } _A = 0 \), so if \( X \) is disconnected, then \( \boldsymbol{ \delta } ( X ) = 0 \).
In particular \( \boldsymbol{ \delta } ( X ) = 0 \) when \( X \) is a closed subset of the Baire space \( \pre{\omega}{\omega} \). 
The case when \( X = \R \) is completely different---see Section~\ref{sec:solidsets}.
\item
The notions above are \( =_\mu \)-invariant, that is if \( A =_\mu B \) then \( \exc_A ( x ) = \exc_B ( x ) \), \( \boldsymbol{ \delta }_A = \boldsymbol{ \delta }_B \), and \( \Blur ( A ) = \Blur ( B ) = \Blur ( A^\complement ) \), and similarly for \( \Sharp \) and \( \Exc \).
\end{enumerate-(a)}
\end{remarks}

\subsection{Density in the real line}\label{subsec:realline}
Let \( \lambda \) be the Lebesgue measure on \( \R \).
For \( A \subseteq \R \) a measurable set, the \markdef{right density} of \( A \) at \( x \) is defined as
\[
\density_A ( x^ + ) = \lim_{ \varepsilon {\downarrow} 0} \frac{ \lambda ( A \cap ( x ; x + \varepsilon ) )}{ \varepsilon } ,
\]
and the \markdef{left density} \( \density_A ( x^ - ) \) is defined similarly.
If \( \density_A ( x^ + ) \) and \( \density_A ( x ^- ) \) both exist, then \( \density_A ( x ) \) exists, and in this case
\[
\density_A ( x ) = \frac{\density_A ( x^ + ) + \density_A ( x ^- )}{2} .
\]
Conversely, 
\[
\density_A ( x ) \in \setLR{0 , 1} \implies \density_A ( x^ + ) = \density_A ( x ^- ) = \density_A ( x ) .
\]
This result cannot be extended to other values.

\begin{example}\label{xmp:densitybutnoleftorrightdensities}
The set
\[
A = \bigcup_{n} ( - 2^{ - 2 n - 1 } ; - 2^{ - 2 n - 2 } ) \cup ( 2^{ - 2n - 1} ; 2^{ -2n } )
\]
is open and such that \( \density_A ( 0 ) = 1 / 2 \), but \( \density_A ( 0^ + ) \) and \( \density_A ( 0 ^-) \) do not exist.
\end{example}

\subsection{Density in the Cantor and Baire spaces}
Suppose \( T \) is a pruned tree on \( \omega \), \( \mu \) is a finite Borel measure on \( \body{T} \) induced by some \( w \colon T \to [ 0 ; M ] \) as in Section~\ref{subsubsec:measureonCantor}.
Since the metric attains values in \( \setLR{0} \cup \setofLR{2^{-n}}{n \in \omega } \), then 
\[
\density^ + _A ( z ) = \limsup_{ n \to \infty } \frac{ \mu ( A \cap \Nbhd_{z \restriction n} )}{ w ( z \restriction n ) } , \qquad \density^- _A ( z ) = \liminf_{ n \to \infty } \frac{ \mu ( A \cap \Nbhd_{z \restriction n} )}{ w ( z \restriction n ) } .
\]
In particular, when \( T = \pre{ < \omega }{2} \) and \( \mu = \measurecantor \), then \( w ( s ) = 2^{- \lh s } \) so \( \frac{ \mu ( A \cap \Nbhd_{z \restriction n} )}{ w ( z \restriction n ) } = \mu ( \LOC{A}{s} ) \), and the equations above become 
\[
\density^ + _A ( z ) = \limsup_{ n \to \infty }\mu ( \LOC{A}{ z \restriction n } ) , \qquad \density^- _A ( z ) = \liminf_{ n \to \infty } \mu ( \LOC{A}{ z \restriction n } ) .
\]

\subsection{Bases for density}\label{subsec:basisofdifferentiation}
Let \( ( X , d , \mu ) \) be a fully supported, locally finite metric measure space. 
Although the definition of \( \density^\pm_A ( x ) \) requires that balls centered in \( x \) of all radii be considered, it is possible to compute the limit along some specific sequences converging to \( 0 \).

\begin{definition}\label{def:densitybasis}
Suppose \( \varepsilon_n {\downarrow} 0 \) and let \( x\in X \).
\begin{enumerate-(i)}
\item\label{def:densitybasis-i}
\( ( \varepsilon _n )_n \) is a \markdef{basis for density at} \( x \) if for all \( A \in \MEAS_\mu \) and all \( r \in [ 0 ; 1 ] \)
\[ 
\lim_n \frac{ \mu ( A \cap \Ball ( x ; \varepsilon_n ) ) }{ \mu ( \Ball ( x ; \varepsilon_n ) ) } = r \IMPLIES \density_A ( x ) = r .
\] 
\item\label{def:densitybasis-ii}
\( ( \varepsilon _n )_n \) is a \markdef{strong basis for density at} \( x \) if for all \( A \in \MEAS_\mu \)
\[
\limsup_{n \to \infty } \frac{ \mu ( A\cap \Ball ( x ; \varepsilon_n ) ) }{\mu ( \Ball ( x ; \varepsilon_n ) ) } = \density^+_A ( x ) .
\]
\end{enumerate-(i)}
\end{definition}

If \( ( \varepsilon _n )_n \) is a strong basis for density at \( x \), then by taking complements 
\[ 
\liminf_{n \to \infty } \frac{ \mu ( A\cap \Ball ( x ; \varepsilon_n ) ) }{\mu ( \Ball ( x ; \varepsilon_n ) ) } = \density^-_A ( x ) 
\]
for all \( A \in \MEAS_\mu \).
The sequence \( \varepsilon _n = 2^{-n} \) is a strong basis for density at every point, both in the Cantor and in the Baire space.

\begin{theorem}\label{thm:densitybasis}
Suppose \( \varepsilon_n {\downarrow} 0 \).
\begin{enumerate-(a)}
\item\label{thm:densitybasis-a}
If \( \lim_n \frac{ \mu ( \Ball ( x ; \varepsilon_{n + 1} ) ) }{ \mu ( \Ball ( x ; \varepsilon_n ) ) } = 1 \) then \( ( \varepsilon_n )_n \) is a strong basis for density at \( x \).

\item\label{thm:densitybasis-b}
If \( ( \varepsilon_n )_n \) is a basis for density at \( x \), and \( r \mapsto \mu ( \Ball ( x ; r ) ) \) is continuous, then \( \lim_n \frac{ \mu ( \Ball ( x ; \varepsilon_{n + 1} ) ) }{ \mu ( \Ball ( x ; \varepsilon_n ) ) } = 1 \), and hence \( ( \varepsilon_n )_n \) is a strong basis for density at \( x \)
\end{enumerate-(a)}
\end{theorem}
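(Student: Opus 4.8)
The plan is to prove the two parts in turn, obtaining the last clause of \ref{thm:densitybasis-b} from \ref{thm:densitybasis-a}. Write $p(\varepsilon)=\mu(\Ball(x;\varepsilon))$; since $\mu$ is fully supported, $p(\varepsilon)>0$ for $\varepsilon>0$, and $p$ is nondecreasing, so $p(\varepsilon_{n+1})/p(\varepsilon_n)\in(0;1]$ for all $n$. In the setting of \ref{thm:densitybasis-b} continuity of $p$ (part of the hypothesis) gives in addition $p(\varepsilon_n)\to p(0)=0$, since $\Ball(x;0)=\emptyset$.

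For \ref{thm:densitybasis-a}, the inequality $\limsup_n \mu(A\cap\Ball(x;\varepsilon_n))/p(\varepsilon_n)\le\density^{+}_{A}(x)$ holds for every $A$ simply because $\varepsilon_n\downarrow 0$ while the right-hand side is the $\limsup$ of the same quotient taken over \emph{all} radii. For the reverse inequality, fix $A$, choose $\rho_k\downarrow 0$ with $\rho_k<\varepsilon_0$ and $\mu(A\cap\Ball(x;\rho_k))/p(\rho_k)\to\density^{+}_{A}(x)$, and for each $k$ let $n_k$ be least with $\varepsilon_{n_k+1}\le\rho_k$ (so also $\rho_k<\varepsilon_{n_k}$); since the $\varepsilon_n$ are positive this forces $n_k\to\infty$. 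The inclusions $\Ball(x;\varepsilon_{n_k+1})\subseteq\Ball(x;\rho_k)\subseteq\Ball(x;\varepsilon_{n_k})$ yield
\[
\frac{\mu(A\cap\Ball(x;\varepsilon_{n_k}))}{p(\varepsilon_{n_k})}\;\ge\;\frac{p(\varepsilon_{n_k+1})}{p(\varepsilon_{n_k})}\cdot\frac{\mu(A\cap\Ball(x;\rho_k))}{p(\rho_k)},
\]
and as $k\to\infty$ the first factor tends to $1$ by hypothesis and the second to $\density^{+}_{A}(x)$, so $\limsup_n \mu(A\cap\Ball(x;\varepsilon_n))/p(\varepsilon_n)\ge\density^{+}_{A}(x)$. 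This proves \ref{thm:densitybasis-a}.

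For \ref{thm:densitybasis-b} I would argue contrapositively: assume $p(\varepsilon_{n+1})/p(\varepsilon_n)\not\to 1$, fix $\eta\in(0;1)$ and an infinite $S\subseteq\omega$ with $p(\varepsilon_{n+1})\le(1-\eta)\,p(\varepsilon_n)$ for $n\in S$, and build a set $A\in\MEAS_\mu$ violating the basis property. Using the intermediate value theorem for the continuous map $p$ on $[\varepsilon_{n+1};\varepsilon_n]$, pick $\theta_n\in(\varepsilon_{n+1};\varepsilon_n)$ with $p(\theta_n)=\frac12\bigl(p(\varepsilon_n)+p(\varepsilon_{n+1})\bigr)$ and set
\[
A=\bigcup_{n\in\omega}\bigl(\Ball(x;\theta_n)\setminus\Ball(x;\varepsilon_{n+1})\bigr),
\]
i.e.\ the inner half (in measure) of each spherical shell $R_n=\Ball(x;\varepsilon_n)\setminus\Ball(x;\varepsilon_{n+1})$. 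The $R_n$ are pairwise disjoint with $\bigcup_{m\ge n}R_m=\Ball(x;\varepsilon_n)\setminus\set{x}$ and $\mu(A\cap R_m)=\frac12\mu(R_m)$, so the telescoping identity $\sum_{m\ge n}\mu(R_m)=p(\varepsilon_n)-\lim_{M\to\infty}p(\varepsilon_M)=p(\varepsilon_n)$ gives $\mu(A\cap\Ball(x;\varepsilon_n))=\frac12\,p(\varepsilon_n)$ for every $n$; hence $\lim_n\mu(A\cap\Ball(x;\varepsilon_n))/p(\varepsilon_n)=\frac12$, and the basis property would force $\density_A(x)=\frac12$. On the other hand, writing $\Ball(x;\theta_n)=\Ball(x;\varepsilon_{n+1})\cup\bigl(\Ball(x;\theta_n)\setminus\Ball(x;\varepsilon_{n+1})\bigr)$ and using that the second piece lies in $A$ while $\mu(A\cap\Ball(x;\varepsilon_{n+1}))=\frac12 p(\varepsilon_{n+1})$, one computes
\[
\frac{\mu(A\cap\Ball(x;\theta_n))}{p(\theta_n)}=\frac{p(\varepsilon_n)}{p(\varepsilon_n)+p(\varepsilon_{n+1})}\ge\frac{1}{2-\eta}>\frac12\qquad(n\in S).
\]
Since $\theta_n\downarrow 0$ through $S$, this gives $\density^{+}_{A}(x)\ge\frac{1}{2-\eta}>\frac12$, contradicting $\density_A(x)=\frac12$. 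Therefore $\lim_n p(\varepsilon_{n+1})/p(\varepsilon_n)=1$, and \ref{thm:densitybasis-a} then shows $(\varepsilon_n)_n$ is a strong basis for density at $x$.

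I expect the main obstacle to be the choice of the witnessing set in \ref{thm:densitybasis-b}: the naive idea of placing a fixed fraction of each shell $R_n$ with $n\in S$ into $A$ makes the quotients $\mu(A\cap\Ball(x;\varepsilon_n))/p(\varepsilon_n)$ oscillate instead of converging (the mass put at scale $\varepsilon_n$ is comparable to $p(\varepsilon_n)$ but is negligible compared to $p(\varepsilon_m)$ for $m$ just past the previous element of $S$), so the basis hypothesis cannot even be applied. Calibrating $A$ to carry exactly half the measure of \emph{every} ball $\Ball(x;\varepsilon_n)$ cures this — those quotients become identically $\frac12$ — while the latitude in where inside each shell to place that half is exactly what lets the upper density exceed $\frac12$ along the auxiliary radii $\theta_n$; continuity of $p$ is used precisely to produce these $\theta_n$ and to ensure $p(\varepsilon_n)\to 0$.
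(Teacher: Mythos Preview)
Your proof is correct and follows essentially the same approach as the paper. For part~\ref{thm:densitybasis-b} your construction is identical to the paper's (your $\theta_n$ is the paper's $\delta_n$, and the computation $\mu(A\cap\Ball(x;\theta_n))/p(\theta_n)=p(\varepsilon_n)/(p(\varepsilon_n)+p(\varepsilon_{n+1}))$ is the same); for part~\ref{thm:densitybasis-a} your sandwiching argument via a sequence $\rho_k$ realizing $\density_A^+(x)$ is a slight streamlining of the paper's explicit $\varepsilon$--$\delta$ estimate (which splits into the cases $r=0$ and $r>0$), but the underlying idea is the same.
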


\begin{proof}
\ref{thm:densitybasis-a}
Let \( A \) be measurable, and suppose 
\[ 
\limsup_{n \to \infty} \frac{\mu ( \Ball ( x ; \varepsilon _n ) \cap A )}{ \mu ( \Ball ( x ; \varepsilon _n ) )} = r .
\]
Thus \( \density^+_A ( x ) \geq r \).
To prove the reverse inequality we must show that 
\[
 \FORALL{ \varepsilon > 0} \EXISTS{ \delta > 0} \FORALL{0 < \eta < \delta }\Bigl [ \frac{\mu ( \Ball ( x ; \eta ) \cap A )}{ \mu ( \Ball ( x ; \eta ) )} < r + \varepsilon \Bigr ]. 
\]
For each \( \varepsilon > 0 \) choose \( n_1 = n_1 ( \varepsilon ) \in \omega \) be such that
\[
m \geq n_1 \IMPLIES \frac{\mu ( \Ball ( x ; \varepsilon _m ) \cap A )}{ \mu ( \Ball ( x ; \varepsilon _m ) )} < r + \frac{\varepsilon}{2} .
\]
We must takes cases depending whether \( r \) is null or otherwise.

Suppose first \( r = 0 \).
For \( 0 < \varepsilon < 1 \) and let \( n_2 = n_2 ( \varepsilon ) \in \omega \) be such that 
\[
m \geq n_2 \IMPLIES \frac{\mu ( \Ball ( x ; \varepsilon _{m } ) )}{ \mu ( \Ball ( x ; \varepsilon _{ m + 1} ) )} \leq 1 + \varepsilon . 
\]
We claim that \( \delta = \varepsilon _{\bar{n}} \) will do, when \( \bar{n} = \max ( n_1 , n_2 ) \).
Let \( 0 < \eta < \delta \).
Since \( \varepsilon _n { \downarrow } 0 \), fix \( k \geq \bar{n} \) such that 
\begin{equation}\label{eq:th:densitybasis2}
 \varepsilon _{ k + 1} < \eta \leq \varepsilon _k . 
\end{equation}
Then 
\begin{align*}
 \frac{\mu ( \Ball ( x ; \eta ) \cap A )}{ \mu ( \Ball ( x ; \eta ) ) } & \leq \frac{\mu ( \Ball ( x ; \varepsilon _k ) \cap A )}{ \mu ( \Ball ( x ; \varepsilon _{ k + 1 } ) ) } & \text{by~\eqref{eq:th:densitybasis2}}
 \\
 & = \frac{\mu ( \Ball ( x ; \varepsilon _k ) \cap A )}{ \mu ( \Ball ( x ; \varepsilon _{ k } ) ) } \frac{\mu ( \Ball ( x ; \varepsilon _k ) )}{ \mu ( \Ball ( x ; \varepsilon _{ k + 1 } ) ) } 
 \\
 & \leq \frac{ \varepsilon ( 1 + \varepsilon ) }{2} 
 \\
 & < \varepsilon .
\end{align*}

Suppose now \( r > 0 \), and choose \( 0 < \varepsilon < r \).
Let \( n_2 = n_2 ( \varepsilon ) \) be such that 
\[
m \geq n_2 \implies \frac{ \mu ( \Ball ( x ; \varepsilon _m ) ) }{ \mu ( \Ball ( x ; \varepsilon _{ m + 1 } ) ) } \leq 1 + \frac{ \varepsilon }{ 4 r } .
\]
The argument is as before: let \( \delta = \varepsilon _{ \bar{n} } \) where \( \bar{n} = \max ( n_1 , n_2 ) \), and given \( 0 < \eta < \delta \), fix \( k \geq \bar{n} \) such that \( \varepsilon _{ k + 1 } < \eta \leq \varepsilon _k \).
Then
\[
\begin{split}
 \frac{\mu ( \Ball ( x ; \eta ) \cap A )}{ \mu ( \Ball ( x ; \eta ) ) } & \leq \frac{\mu ( \Ball ( x ; \varepsilon _k ) \cap A )}{ \mu ( \Ball ( x ; \varepsilon _{ k } ) ) } \frac{\mu ( \Ball ( x ; \varepsilon _k ) )}{ \mu ( \Ball ( x ; \varepsilon _{ k + 1 } ) ) } 
\\
 & \leq \left ( r + \frac{ \varepsilon }{ 2 } \right ) \left ( 1 + \frac{ \varepsilon }{ 4 r } \right )
 \\
 & < r + \varepsilon .
\end{split}
\]

\ref{thm:densitybasis-b}
Towards a contradiction, suppose there is \( r < 1 \) and a subsequence \( ( \varepsilon_{n_k} )_k \) such that 
\[ 
\lim_{k \to \infty } \frac{ \mu ( \Ball ( x ; \varepsilon_{ n_k + 1 } ) ) }{ \mu ( \Ball ( x ; \varepsilon_{n_k} ) ) } = r .
\]
For each \( n \), let \( \delta_n \in ( \varepsilon_{n + 1} ; \varepsilon_n ) \) be such that \( \mu ( \Ball ( x ; \delta_n ) ) = \frac{1}{2}[ \mu ( \Ball ( x ; \varepsilon_{n + 1 } ) ) + \mu ( \Ball ( x ; \varepsilon_n ) ) ] \).
Define
\[
A = \bigcup_{n }( \Ball ( x ; \delta_n ) \setminus \Ball ( x ; \varepsilon_{ n + 1 } ) ) .
\]

Then \( \mu ( A \cap \Ball ( x ; \varepsilon_n ) ) / \mu ( \Ball ( x ; \varepsilon_n ) ) = \frac 12 \).
On the other hand,
\[
\begin{split}
\frac{\mu (A \cap \Ball ( x ; \delta_n ) ) }{\mu ( \Ball ( x ; \delta_n ) ) }& = \frac{ \frac{1}{2} [ \mu ( \Ball ( x ; \varepsilon_n ) ) - \mu ( \Ball ( x ; \varepsilon_{n + 1 } ) )] + \frac{1}{2}[ \mu ( \Ball ( x ; \varepsilon_{n + 1 } ) ) ] }{ \frac{1}{2}[ \mu ( \Ball ( x ; \varepsilon_{n + 1 } ) ) + \mu ( \Ball ( x ; \varepsilon_n ) ) ] } 
 \\
& = \frac{\mu ( \Ball ( x ; \varepsilon_n ) ) }{\mu ( \Ball ( x ; \varepsilon_{n + 1 } ) ) + \mu ( \Ball ( x ; \varepsilon_n ) ) } 
\\
&= \Bigl ( \frac{\mu ( \Ball ( x ; \varepsilon_{n + 1 } ) ) }{\mu ( \Ball ( x ; \varepsilon_n ) ) } + 1 \Bigr )^{-1} .
\end{split}
\]
Since \( \bigl ( \frac{ \mu ( \Ball ( x ; \varepsilon_{n_k + 1 } ) )}{ \mu ( \Ball ( x ; \varepsilon_{n_k} ) ) } + 1 \bigr )^{-1} \rightarrow \frac {1}{r + 1} > \frac {1}{2} \), then \( (\varepsilon_n )_n \) is not a basis for density at \( x \).
\end{proof}

The next Example shows that ``\( \lim \)'' cannot be replaced by ``\( \limsup \)'' in the statement of Theorem~\ref{thm:densitybasis}.

\begin{example}\label{xmp:oscillatingdensity}
If \( \mu \) is nonsingular then for any \( x \in X \) there is a set \( A \in \MEAS_\mu \) such that for some sequence \( \varepsilon _n {\downarrow} 0 \),
\[
\lim_{n \to \infty} \frac{ \mu ( A \cap \Ball ( x ; \varepsilon_{2n} ) ) }{ \mu ( \Ball ( x ; \varepsilon_{2n} ) ) } = 1 \quad \text{and} \quad \lim_{n \to \infty} \frac{ \mu ( A \cap \Ball ( x ; \varepsilon_{2n + 1} ) )}{ \mu (\Ball ( x ; \varepsilon_{2 n + 1} ) ) } = 0 ,
\]
hence \( \oscillation_A ( x ) = 1 \).
Moreover \( A \) can be taken to be open or closed.

Choose \( ( \varepsilon _n )_n \) strictly decreasing, converging to \( 0 \), and such that 
\begin{equation}\label{eq:convergenceCamillo}
\lim_{n \to \infty} \frac{\mu ( \Ball ( x ; \varepsilon_{n + 1} ) )}{ \mu ( \Ball ( x ; \varepsilon_{n} ) ) } = 0 . 
\end{equation}
This can be done as \( \mu \) is nonsingular.
Let
\[
A = \bigcup_{n} \Ball ( x ; \varepsilon_{2n} ) \setminus \Ball ( x ; \varepsilon_{2n + 1} ) .
\]
Then
\[
\frac{\mu ( A \cap \Ball ( x ; \varepsilon_{2n} ) )}{\mu ( \Ball ( x ; \varepsilon_{2n} ) ) } > \frac{\mu ( \Ball ( x ; \varepsilon_{2n} ) \setminus \Ball ( x ; \varepsilon_{2n + 1} ) )}{\mu ( \Ball ( x ; \varepsilon_{2n} ) ) } = 1 - \frac{ \mu ( \Ball ( x ; \varepsilon_{2n + 1} ) )}{\mu ( \Ball ( x ; \varepsilon_{2n} ) ) } \to 1
\]
and
\[
\frac{\mu ( A \cap \Ball ( x ; \varepsilon_{2n + 1} ) )}{\mu ( \Ball ( x ; \varepsilon_{2n + 1} ) ) } < \frac{\mu ( \Ball ( x ; \varepsilon_{2n + 2} ) )}{\mu ( \Ball ( x ; \varepsilon_{2n + 1} ) ) } \to 0 .
\]

To construct an \( A \) which is open or close, argue as follows.
Let \( ( \varepsilon '_n )_n {\downarrow} 0 \) and satisfying~\eqref{eq:convergenceCamillo} and let \( \varepsilon _n = \varepsilon _{2n}' \).
Then \( \bigcup_{n} \Ball ( x ; \varepsilon_{2n} ) \setminus \Cl \Ball ( x ; \varepsilon_{2n + 1} ) \) and \( \set{x} \cup \bigcup_{n} \Cl \Ball ( x ; \varepsilon_{2n} ) \setminus \Ball ( x ; \varepsilon_{2n + 1} ) \) are as required, and are open and closed, respectively. 
\end{example}

\subsection{The function \( \Phi \)}
Let us list some easy facts about the map \( \Phi \) introduced in~\eqref{eq:Phi}:
\begin{itemize}[leftmargin=1pc]
\item
\( A \subseteq_\mu B \IMPLIES \Phi ( A ) \subseteq \Phi ( B ) \), and therefore \( A =_\mu B \implies \Phi ( A ) = \Phi ( B ) \). 
Thus the map \( \MALG_\mu \to \Pow ( X ) \), \( \eq{A} \mapsto \Phi ( A ) \), is well-defined;
\item
\( \Phi ( A \cap B ) = \Phi ( A ) \cap \Phi ( B ) \) hence \( \Phi ( A^\complement ) \subseteq ( \Phi ( A ) )^\complement \);
\item
\( \Phi ( A \cup B ) \supseteq \Phi ( A ) \cup \Phi ( B ) \); and more generally \( \Phi ( \bigcup_{i \in I} A_i ) \supseteq \bigcup_{i \in I} \Phi ( A_i ) \), provided \( \bigcup_{i \in I} A_i \in \MEAS_\mu \);
\item
\( \Phi ( U ) \supseteq U \), for \( U \) open, and \( \Phi ( C ) \subseteq C \), for \( C \) closed;
\item
 \( \Phi (C_1 \cup C_2 ) = \Phi ( C_1 ) \cup \Phi ( C_2 ) \), if \( C_1 , C_2 \) are disjoint closed sets.
\end{itemize}

\begin{definition}\label{def:DPP}
A Radon metric space \( ( X , d , \mu ) \) has the \markdef{Density Point Property} (DPP) if \( A \symdif \Phi ( A ) \in \NULL \) for each \( A \in \MEAS_\mu \).
\end{definition}

Thus in a DPP space almost every point is in \( \Phi ( A ) \cup \Phi ( A^\complement ) \), so \( \Exc ( A ) \), \( \Blur ( A ) \), and \( \Sharp ( A ) \) are null.
The Lebesgue density theorem states that \( \R^n \) with the Lebesgue measure \( \lambda^n \) and the \( \ell_p \)-norm has the DPP, and this result holds also for \( \pre{\omega }{2} \) with \( \measurecantor \) and the standard ultrametric.
In fact if \( \mu \) is a Borel measure on an ultrametric space \( ( X , d ) \), then \( ( X , d , \mu ) \) has the DPP~\cite[]{Miller:2008fk}. 
Not every Polish measure space is DPP~\cite[][Example 5.6]{Kaenmaki:2015sf}.

\subsection{The complexity of the density function}
\begin{proposition}
If \( ( X , d , \mu ) \) is separable and \( \mu \) finite, then the map 
\[
X \times \cointerval{ 0 }{ + \infty } \to \cointerval{ 0 }{ + \infty } , \qquad ( x , r ) \mapsto \mu ( \Ball ( x ; r ) ) 
\]
is Borel.
\end{proposition}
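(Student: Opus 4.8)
The plan is to exploit that the function is separately regular in the two variables. The key observations are that for each fixed \( r \) the map \( x \mapsto \mu ( \Ball ( x ; r ) ) \) is lower semicontinuous, and that for each fixed \( x \) the map \( r \mapsto \mu ( \Ball ( x ; r ) ) \) is non-decreasing and left-continuous; joint Borelness will then follow from a dyadic truncation of the radius.

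First I would prove the lower semicontinuity in \( x \). Fix \( r \geq 0 \) and a sequence \( x_n \to x \) in \( X \). For every \( y \) with \( d ( x , y ) < r \) the triangle inequality gives \( d ( x_n , y ) \to d ( x , y ) < r \), so \( \cchar{\Ball ( x ; r )} \leq \liminf_n \cchar{\Ball ( x_n ; r )} \) pointwise on \( X \); since \( \mu \) is finite, Fatou's lemma yields \( \mu ( \Ball ( x ; r ) ) \leq \liminf_n \mu ( \Ball ( x_n ; r ) ) \), i.e.\ \( x \mapsto \mu ( \Ball ( x ; r ) ) \) is lower semicontinuous, hence Borel. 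The regularity in \( r \) is immediate: \( \Ball ( x ; r ) = \bigcup_n \Ball ( x ; r - 1 / n ) \) is an increasing union, so continuity of \( \mu \) from below shows \( r \mapsto \mu ( \Ball ( x ; r ) ) \) is non-decreasing and left-continuous.

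Next, for \( n \in \omega \) let \( \theta_n ( r ) = 2^{-n} \lfloor 2^n r \rfloor \) be the largest dyadic rational of the form \( k 2^{-n} \) not exceeding \( r \), and set \( g_n ( x , r ) = \mu ( \Ball ( x ; \theta_n ( r ) ) ) \). On the rectangle \( X \times \cointerval{ k 2^{-n} }{ ( k + 1 ) 2^{-n} } \) the function \( g_n \) coincides with \( ( x , r ) \mapsto \mu ( \Ball ( x ; k 2^{-n} ) ) \), which is Borel by the first step (and independent of \( r \)); since \( \cointerval{0}{+\infty} \) is a countable disjoint union of such half-open intervals, each \( g_n \) is Borel. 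Finally \( \theta_n ( r ) \uparrow r \) for every \( r \), so by left-continuity in the radius \( g_n ( x , r ) \to \mu ( \Ball ( x ; r ) ) \) pointwise, and a pointwise limit of Borel functions is Borel.

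The only delicate step is the lower semicontinuity estimate — this is precisely where finiteness of \( \mu \) enters, to justify Fatou — together with the observation that left-continuity, rather than full continuity, in \( r \) suffices for the truncations to converge (which is clear once one notes that \( \theta_n ( r ) \) is eventually equal to \( r \) when \( r \) is dyadic and strictly increases to \( r \) otherwise). Separability of \( X \) is not actually used in this argument; it is part of the standing hypotheses that make \( \mu \) a Borel measure, and the reasoning goes through verbatim in any finite metric measure space.
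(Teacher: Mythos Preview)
Your argument is correct, and it is genuinely different from the paper's. The paper does not analyze the function at all: it simply observes that the set
\[
A = \setofLR{ ( x , r , y ) \in X \times \cointerval{0}{+\infty} \times X }{ d ( x , y ) < r }
\]
is open (hence Borel), normalizes \( \mu \) to a probability measure, and invokes \cite[Theorem 17.25]{Kechris:1995kc}, which says that for a Borel set \( A \) in a product of standard Borel spaces the map \( ( x , r ) \mapsto \mu ( A_{(x,r)} ) \) is Borel. This is a one-line Fubini-type black box; separability enters because the Kechris theorem is stated in the standard Borel framework, and finiteness enters so that one can rescale to a probability measure.

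Your route is more hands-on: you establish lower semicontinuity in \( x \) via Fatou, left-continuity in \( r \) via continuity from below, and then glue these with a dyadic truncation of the radius. The payoff is that your proof is entirely self-contained and, as you note, does not use separability; it would go through verbatim for any Borel measure on a metric space. One small correction to your commentary: Fatou's lemma for non-negative functions requires no finiteness assumption on \( \mu \), so your proof actually dispenses with the finiteness hypothesis as well (the codomain would then be \( [0;+\infty] \)). The paper's citation approach is shorter but less informative; yours yields the slightly sharper conclusion that the map is an increasing pointwise limit of functions which are lower semicontinuous on each slab \( X \times \cointerval{k2^{-n}}{(k+1)2^{-n}} \).
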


\begin{proof}
By multiplying by a suitable number, we may assume that \( \mu \) is a probability measure.
By~\cite[][Theorem 17.25]{Kechris:1995kc} with 
\[ 
A = \setofLR{ ( x , r , y ) \in X \times \cointerval{ 0 }{ + \infty } \times X}{ d ( x , y ) < r} 
\] 
then \( ( x , r ) \mapsto \mu ( A_{( x , r )} ) = \mu ( \Ball ( x ; r ) ) \) is Borel.
\end{proof}

Several results can be proved under the assumption that either the measure is continuous or else that the space is a closed subset of the Baire space.
The next definition aims at generalize both situations.

\begin{definition}\label{def:amenablespace}
A fully supported Radon metric space \( ( X , d , \mu ) \) is \markdef{amenable} if there are functions \( \varepsilon _n \colon X \to \cointerval{0}{+\infty} \) such that 
\begin{itemize}
\item
\( ( \varepsilon _n ( x ) )_n \) is a strong basis for density at \( x \), for all \( x \in X \),
\item
the map \( X \to \MALG \), \( x \mapsto \eq{\Ball ( x ; \varepsilon _n ( x ) ) } \) is continuous, for all \( n \in \omega \).
\end{itemize}
\end{definition}

\begin{examples}
\begin{enumerate-(a)}
\item
If \( \mu \) is continuous, then \( ( X , d , \mu ) \) is amenable.
In fact , let \( \varepsilon_n ( x ) \) be largest \( \leq 1 \) such that \( \mu ( \Ball ( x ; \varepsilon _n ( x ) ) ) \leq 1 / n \).
By Theorem~\ref{thm:densitybasis} \( ( \varepsilon _n ( x ) )_n \) is a strong basis for density; since the \( \varepsilon _n \) are continuous, by Lemma~\ref{lem:continuityofmeasure} \( x \mapsto \eq{\Ball ( x ; \varepsilon _n ( x ) ) } \) is continuous.
\item
If \( X \) is a closed subset of the Baire space and \( d \) is the induced metric, then \( ( X , d , \mu ) \) is amenable, as taking \( \varepsilon_n ( x ) = 2^{-n} \) the map \( x \mapsto \Ball ( x ; \varepsilon _n ( x ) ) \) is locally constant.
\end{enumerate-(a)}
\end{examples}

\begin{lemma}\label{lem:amenable}
Suppose \( ( X , d , \mu ) \) is amenable.
Then
\[
f_n \colon X \times \MALG \to [ 0 ; 1 ] , \quad ( x , \eq{A} ) \mapsto \frac{ \mu ( A \cap \Ball ( x ; \varepsilon _n ( x ) ) ) }{ \mu ( \Ball ( x ; \varepsilon _n ( x ) ) ) }
\] 
is continuous.
\end{lemma}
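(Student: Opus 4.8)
The plan is to exhibit $f_n$ as a quotient of continuous maps, the continuity of which reduces to two elementary facts about the measure algebra. First, $\hat\mu\colon\MALG\to[0;+\infty]$ is continuous: from the inequality $\card{\mu(A)-\mu(B)}\le\mu(A\symdif B)$ (read so that $\mu(A)=\infty$ forces $\mu(B)=\infty$ whenever $\mu(A\symdif B)<\infty$) one checks directly that $\hat\mu$ carries each basic set $\mathcal{B}_{\eq{A},r}$ into a neighbourhood of $\hat\mu(\eq{A})$. Second, binary intersection $\MALG\times\MALG\to\MALG$, $(\eq{A},\eq{B})\mapsto\eq{A\cap B}$, is continuous: since $(A\cap B)\symdif(A'\cap B')\subseteq(A\symdif A')\cup(B\symdif B')$ we get $\mu\bigl((A\cap B)\symdif(A'\cap B')\bigr)\le\mu(A\symdif A')+\mu(B\symdif B')$, so $\mathcal{B}_{\eq{A'},\eta}\times\mathcal{B}_{\eq{B'},\eta}$ lands inside $\mathcal{B}_{\eq{A'\cap B'},2\eta}$. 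Neither fact requires $\MALG$ to be metrizable, so it does not matter whether $\mu$ is finite.

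Now fix $n$. Amenability gives that $\beta\colon X\to\MALG$, $x\mapsto\eq{\Ball(x;\varepsilon_n(x))}$, is continuous. Since $(\varepsilon_n(x))_n$ is a strong basis for density at $x$, the ratios in Definition~\ref{def:densitybasis}\ref{def:densitybasis-ii} must be well-defined real numbers, whence $\varepsilon_n(x)>0$ and $\mu(\Ball(x;\varepsilon_n(x)))<\infty$; combined with full support this gives $0<\mu(\Ball(x;\varepsilon_n(x)))<\infty$ for every $x$ (this is exactly what makes the $f_n$ of the statement well-defined with values in $[0;1]$). Hence $h\colon X\to(0;+\infty)$, $h(x)=\mu(\Ball(x;\varepsilon_n(x)))=\hat\mu(\beta(x))$, is continuous, finite, and strictly positive at each point, so $x\mapsto1/h(x)$ is continuous.

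For the numerator define $g\colon X\times\MALG\to[0;+\infty)$, $g(x,\eq{A})=\mu\bigl(A\cap\Ball(x;\varepsilon_n(x))\bigr)$; it is the composite
\[
(x,\eq{A})\ \longmapsto\ (\eq{A},\beta(x))\ \longmapsto\ \eq{A}\cap\beta(x)\ \longmapsto\ \hat\mu\bigl(\eq{A}\cap\beta(x)\bigr)
\]
of $\mathrm{id}_X\times\beta$, binary intersection, and $\hat\mu$, hence continuous, and $0\le g(x,\eq{A})\le h(x)<\infty$. Therefore $f_n(x,\eq{A})=g(x,\eq{A})/h(x)$ is a quotient of a jointly continuous $[0;+\infty)$-valued function by a continuous, everywhere positive, finite function of $x$ alone, so it is continuous, with range in $[0;1]$ since $0\le g\le h$. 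I do not expect a genuine obstacle: the only point needing care is checking the two preliminary continuity claims against the generating sets $\mathcal{B}_{\eq{A},r}$ instead of against a metric, which the triangle-type inequalities for $\symdif$ make routine.
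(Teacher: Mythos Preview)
Your argument is correct and follows essentially the same route as the paper: both reduce to the inequality \((A\cap B)\symdif(A'\cap B')\subseteq(A\symdif A')\cup(B\symdif B')\) together with \(\card{\mu(C)-\mu(D)}\le\mu(C\symdif D)\), combined with the amenability hypothesis that \(x\mapsto\eq{\Ball(x;\varepsilon_n(x))}\) is continuous. You factor this through the separate statements ``\(\hat\mu\) is continuous'' and ``intersection is continuous on \(\MALG\times\MALG\)'', whereas the paper writes the resulting single inequality directly; your added remarks on well-definedness of the ratio and on avoiding metrizability of \(\MALG\) are welcome but not essential differences.
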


\begin{proof}
It is enough to show that \( ( x , \eq{A} ) \mapsto \mu ( A \cap \Ball ( x ; \varepsilon _n ( x ) ) ) \) is continuous.
This follows from the continuity of \( \hat{ \mu } \colon \MALG \to [ 0 ; + \infty ] \), and
\begin{multline*}
\cardLR{ \mu \left ( \Ball ( x ; \varepsilon _n ( x ) ) \cap A \right ) - \mu \left ( \Ball ( x' ; \varepsilon _n ( x' ) ) \cap A' \right ) } 
\\
\leq \mu \Bigl ( \bigl ( \Ball ( x ; \varepsilon _n ( x ) ) \cap A \bigr ) \symdif \bigl ( \Ball ( x' ; \varepsilon _n ( x' ) ) \cap A ' \bigr )\Bigr )
\\
 \leq \mu \left ( \Ball ( x ; \varepsilon _n ( x ) ) \symdif \Ball ( x' ; \varepsilon _n ( x' ) ) \right ) + \mu ( A \symdif A' ) . \qedhere
\end{multline*}
 \end{proof}

\begin{lemma}\label{lem:densityBaire2}
If \( ( X , d , \mu ) \) is amenable, then \( \density^+ \colon X \times \MALG \to [ 0 ; 1 ] \), \( ( x , \eq{A} ) \mapsto \density_A^+ ( x ) \) is in \( \mathscr{B}_2 \), and similarly for \( \density^- \) and \( \oscillation \).
\end{lemma}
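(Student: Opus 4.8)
The plan is to realize $\density^+$, $\density^-$, and $\oscillation$ as pointwise upper and lower limits of continuous functions, and then read off the Baire class from the stratification of the Borel hierarchy.

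First I would invoke Lemma~\ref{lem:amenable}: fixing functions $\varepsilon_n \colon X \to \cointerval{0}{+\infty}$ witnessing amenability (Definition~\ref{def:amenablespace}), the maps
\[
f_n \colon X \times \MALG \to [0;1], \qquad (x, \eq{A}) \mapsto \frac{\mu(A \cap \Ball(x; \varepsilon_n(x)))}{\mu(\Ball(x; \varepsilon_n(x)))}
\]
are continuous, hence in $\mathscr{B}_0$. Since $(\varepsilon_n(x))_n$ is a strong basis for density at every $x \in X$, the defining property of a strong basis (together with the observation immediately following Definition~\ref{def:densitybasis} for the lower density) gives, for all $(x, \eq{A}) \in X \times \MALG$,
\[
\density^+_A(x) = \limsup_{n \to \infty} f_n(x, \eq{A}), \qquad \density^-_A(x) = \liminf_{n \to \infty} f_n(x, \eq{A}).
\]
So the lemma reduces to the elementary fact that the pointwise $\limsup$ (resp.\ $\liminf$) of a sequence of continuous $[0;1]$-valued functions is of Baire class $2$.

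To verify this for $\density^+ = \inf_m g_m$ with $g_m = \sup_{n \ge m} f_n$, I would first note that each $g_m$ is lower semicontinuous, since $\{ g_m > a \} = \bigcup_{n \ge m} \{ f_n > a \}$ is open; hence $g_m \in \mathscr{B}_1$, so that $\{ g_m \ge c \}$ is $\Gdelta$ and $\{ g_m < c \}$ is $\Fsigma$ for every $c$. Then for an open interval $(a;b)$, the set $\{ \density^+_A(x) > a \} = \bigcup_k \bigcap_m \{ g_m \ge a + 1/k \}$ is $\Gdeltasigma$, being a countable union of $\Gdelta$ sets, while $\{ \density^+_A(x) < b \} = \bigcup_m \{ g_m < b \}$ is $\Fsigma$; so the preimage of $(a;b)$ lies in $\bSigma^0_3$, and taking countable unions the preimage of every open set is in $\bSigma^0_3 = \bSigma^0_{1+2}$. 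Thus $\density^+ \in \mathscr{B}_2$. For $\density^-$ I would run the dual computation, or simply use the identity $\density^-_A(x) = 1 - \density^+_{A^\complement}(x)$ together with the continuity of complementation on $\MALG$.

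Finally, for $\oscillation = \density^+ - \density^-$, I would note that $(x, \eq{A}) \mapsto (\density^+_A(x), \density^-_A(x))$ is of Baire class $2$ into $[0;1]^2$ (its two coordinates are), and compose with the continuous subtraction map $[0;1]^2 \to [-1;1]$, using that the composition of a continuous function with a Baire class $\xi$ function is again of Baire class $\xi$; hence $\oscillation \in \mathscr{B}_2$. The whole argument is routine; the one point needing care is the level-counting — a pointwise $\limsup$ of continuous functions is of Baire class $2$ rather than $1$, the single $\inf_m$ over the lower semicontinuous functions $g_m$ costing exactly one extra level of the hierarchy.
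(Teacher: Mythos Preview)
Your proof is correct and follows essentially the same route as the paper's: both invoke Lemma~\ref{lem:amenable} for the continuous $f_n$, use the strong basis property to identify $\density^+$ with $\limsup_n f_n$, write this as $\inf_m g_m$ with $g_m = \sup_{n\ge m} f_n \in \mathscr{B}_1$, and conclude $\density^+ \in \mathscr{B}_2$. You supply more detail (the explicit preimage computations and the composition argument for $\oscillation$), whereas the paper just says ``similar'', but the underlying argument is the same.
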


\begin{proof}
Let \( f_n \) be as in Lemma~\ref{lem:amenable}.
Then \( g_n ( x , \eq{A} ) = \sup_{m \geq n} f_m ( x , \eq{A} ) \) is in \( \mathscr{B}_1 \), and therefore \( \limsup_n f_n ( x , \eq{A} ) = \lim_n g_n ( x , \eq{A} ) \) is in \( \mathscr{B}_2 \).
As \( ( \varepsilon _n ( x ) )_n \) is a strong basis for density in \( x \), it follows that \( \density_A^+ ( x ) = \lim_n g_n ( x , \eq{A} ) \).
The case of \( \density^- \) and of \( \oscillation \) is similar.
\end{proof}

By taking the preimage of \( \setLR{1} \) via the map \( \density^-_A \) we get

\begin{corollary}\label{cor:PhiPi03}
If \( ( X , d , \mu ) \) is amenable, then \( \Phi ( A ) \in \bPi^{0}_{3} \).
\end{corollary}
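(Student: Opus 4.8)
The plan is to realise $\Phi(A)$ as the preimage of a closed set under a Baire class $2$ function, using Lemma~\ref{lem:densityBaire2}, and then apply the standard bound on the complexity of such preimages.

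First I would reduce from $\density_A$ to $\density^-_A$. Since $\mu(A \cap \Ball(x;\varepsilon)) \leq \mu(\Ball(x;\varepsilon))$ for every $\varepsilon$, both $\density^-_A(x)$ and $\density^+_A(x)$ lie in $[0;1]$, and $\density^-_A(x) \leq \density^+_A(x)$; hence $\density^-_A(x) = 1$ forces $\density^+_A(x) = 1$, so that $\density_A(x) = 1 \iff \density^-_A(x) = 1$. Consequently
\[
\Phi(A) = \setofLR{x \in X}{\density^-_A(x) = 1} .
\]

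Next I would note that the map $x \mapsto \density^-_A(x)$ is the composition of the continuous map $X \to X \times \MALG$, $x \mapsto (x, \eq{A})$, with the map $\density^- \colon X \times \MALG \to [0;1]$, which is in $\mathscr{B}_2$ by Lemma~\ref{lem:densityBaire2}; since $\mathscr{B}_2$ is closed under precomposition with continuous maps, $x \mapsto \density^-_A(x)$ is in $\mathscr{B}_2(X, [0;1])$. Finally, a function of Baire class $\xi$ pulls back open subsets to sets in $\bSigma^0_{1+\xi}$, hence closed subsets to sets in $\bPi^0_{1+\xi}$; taking $\xi = 2$ and observing that $\setLR{1}$ is closed in $[0;1]$, we obtain $\Phi(A) = (\density^-_A)^{-1}(\setLR{1}) \in \bPi^0_3$, as desired.

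I do not expect any genuine obstacle: the corollary is essentially the combination of Lemma~\ref{lem:densityBaire2} with the elementary observations above. The only point worth spelling out is the passage from the two-variable map $\density^-$ to its section at $\eq{A}$, which is immediate from the closure of the Baire classes under composition with continuous functions.
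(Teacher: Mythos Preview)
Your proof is correct and follows exactly the paper's approach: the paper simply states that $\Phi(A)$ is the preimage of $\{1\}$ under $\density^-_A$, and you have spelled out the details of why this yields a $\bPi^0_3$ set. Your added justifications (the reduction from $\density_A$ to $\density^-_A$, and the passage from the two-variable map of Lemma~\ref{lem:densityBaire2} to its section at $\eq{A}$) are exactly the points the paper leaves implicit.
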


The complexity cannot be lowered in Corollary~\ref{cor:PhiPi03} when \( X \) is the real line or the Cantor space.
When \( A \subseteq \pre{\omega }{2} \) is nontrivial, if \( \Phi ( A ) \) has empty interior, then it is \( \bPi^{0}_{3} \)-complete~\cite[Theorem 1.3]{Andretta:2013uq}.
If \( K \subseteq \R \) is a sufficiently regular Cantor set of positive measure, then \( \Phi ( K ) \) is \( \bPi^{0}_{3} \)-complete~\cite{Carotenuto:2015kq}.

Notice that 
\[
x\in \Sharp ( A ) \IFF \oscillation_A ( x ) = 0 \wedge \EXISTS{q \in \Q_+} \forall ^\infty n \left ( q \leq f_n ( x , A ) \leq 1-q \right ) 
\]
where \( f_n \) is as in Lemma~\ref{lem:amenable}.
Thus in the hypotheses of Lemma~\ref{lem:densityBaire2}, 
\[
\Blur ( A ) \in \bSigma^{0}_{3}, \quad \Sharp ( A ) \in \bPi^{0}_{3}, \quad \Exc ( A ) \in \bSigma^{0}_{3} .
\]

\subsection{Solid sets}\label{sec:solidsets}
\begin{definition}
Let \( ( X , d , \mu ) \) be a Radon metric space.
A measurable \( A \subseteq X \) is 
\begin{itemize}
\item
\markdef{solid} iff \( \Blur ( A ) = \emptyset \),
\item
\markdef{quasi-dualistic} iff \( \Sharp (A ) = \emptyset \),
\item
\markdef{dualistic} iff it is quasi-dualistic and solid iff \( \Exc ( A ) = \emptyset \),
\item
\markdef{spongy} iff \( \Blur ( A ) \neq \emptyset = \Sharp (A ) \) iff it is quasi-dualistic but not solid.
\end{itemize}
\end{definition}

The collections of sets that are solid, dualistic, quasi-dualistic, or spongy are denoted by \( \Solid \), \( \Dual \), \( \qDual \), and \( \Spongy \).
Also
\[
\bDelta^{0}_{1} \subseteq \Dual = \Solid \cap \qDual .
\]
Therefore if the space \( X \) is disconnected, e.g. \( X = \pre{\omega }{2} \), there are nontrivial dualistic sets so adopting the notation of~\eqref{eq:delta(X)}, we conclude that \( \boldsymbol{ \delta } ( X ) = 0 \).
In the Cantor space there are examples of dualistic sets that are not \( =_\mu \) to any clopen set, see~\cite[][Section 3.4]{Andretta:2013uq}.

The situation for \( \R \) is completely different: V.~Kolyada~\cite{Kolyada:1983fk} showed that \( 0 < \boldsymbol{ \delta } ( \R ) < 1 / 2 \), thus, in particular, there are no nontrivial dualistic subsets of \( \R \).
The bounds for \( \boldsymbol{ \delta } ( \R ) \) were successively improved in~\cite{Szenes:2011fk,Csornyei:2008uq}, and in~\cite{Kurka:2011kx} it is shown that \( \boldsymbol{ \delta } ( \R ) \approx 0. 268486 \dots \) is the unique real root of \( 8 x^3 + 8 x^2 + x - 1 \).
A curious consequence is that for each \( \varepsilon > 0 \) there are nontrivial sets \( A \subset \R \) such that \( \ran( \density_A ) \cap ( \boldsymbol{ \delta } ( \R ) + \varepsilon ; 1 - \boldsymbol{ \delta } ( \R ) - \varepsilon ) = \emptyset \); in other words, for any real \( x \) either \( \density _A ( x ) \in [ 0 ; \boldsymbol{ \delta } ( \R ) + \varepsilon ] \cup [ 1 - \boldsymbol{ \delta } ( \R ) - \varepsilon ; 1 ] \) or \( \density^+_A ( x ) \geq 1 - \boldsymbol{ \delta } ( \R ) - \varepsilon \) or else \( \density^-_A ( x ) \leq \boldsymbol{ \delta } ( \R ) + \varepsilon \).
In particular, there is a set \( A \) that does not have points of density \( 1/2 \), in contrast with our intuition that a measurable subset of \( \R \) should have a ``boundary'' like an interval.
We will show in Theorem~\ref{thm:solid} that this intuition is correct when \emph{solid} sets are considered.

Spongy subsets of \( \pre{\omega}{2} \) (or more generally, of closed subsets of \( \pre{\omega}{\omega} \)) are easy to construct, see~\cite[Example 3.8 in][]{Andretta:2013uq}.
The existence of spongy subsets of connected spaces is more problematic.
Theorem~\ref{thm:spongy} shows that there exist a spongy subset \( S \) of \( [ 0 ; 1 ] \), and for such \( S \) we have \( \boldsymbol{ \delta }_S \geq 1 / 3 \). 

The families of sets \( \Solid \), \( \Dual \), \( \qDual \), and \( \Spongy \) are invariant under \( =_\mu \), so they can be defined on the measure algebra as well, that is to say: we can define
\[
 \widehat{\Solid} = \setof{\eq{A} \in \MALG }{ A \in \Solid } ,
\] 
and similarly for \( \widehat{\Dual} \), \( \widehat{\qDual} \) and \( \widehat{\Spongy} \).

\begin{proposition}\label{prop:solidsetBaireclassDensity}
Let \( ( X , d , \mu ) \) be amenable and suppose that \( A \) is solid.
Then \( \density_A \colon X \to [ 0 ; 1 ] \) is in \( \mathscr{B}_1 \).
\end{proposition}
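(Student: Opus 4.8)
The plan is to use solidity to turn $\density_A$ into a genuine (everywhere convergent) pointwise limit of the continuous functions produced by Lemma~\ref{lem:amenable}, and then to recall that such limits are of Baire class~$1$. Concretely, fix the functions $\varepsilon_n \colon X \to \cointerval{0}{+\infty}$ witnessing amenability (Definition~\ref{def:amenablespace}), and for the given solid set $A$ set $g_n \colon X \to [0;1]$, $g_n(x) = \mu(A \cap \Ball(x;\varepsilon_n(x)))/\mu(\Ball(x;\varepsilon_n(x)))$; that is, $g_n$ is the restriction of the map $f_n$ of Lemma~\ref{lem:amenable} to the slice $X \times \set{\eq{A}}$, so each $g_n$ is continuous. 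Since $(\varepsilon_n(x))_n$ is a \emph{strong} basis for density at $x$ (Definition~\ref{def:densitybasis}), we have $\limsup_n g_n(x) = \density^+_A(x)$ and $\liminf_n g_n(x) = \density^-_A(x)$ for every $x \in X$. But $A$ is solid, so $\Blur(A) = \emptyset$, i.e.\ $\oscillation_A(x) = 0$ and hence $\density^+_A(x) = \density^-_A(x) = \density_A(x)$ for all $x$. Therefore $\lim_n g_n(x)$ exists for every $x$ and equals $\density_A(x)$; thus $\density_A$ is a pointwise limit of continuous functions $X \to [0;1]$, hence of Baire class~$1$.

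If one prefers to avoid quoting the Lebesgue--Hausdorff characterization of $\mathscr{B}_1$, I would argue directly. Put $h_n = \sup_{m \geq n} g_m$ and $k_n = \inf_{m \geq n} g_m$, so that $h_n$ is lower semicontinuous, $k_n$ is upper semicontinuous, and, the limit being convergent, $\density_A = \inf_n h_n = \sup_n k_n$. For $0 \leq a < b \leq 1$ this gives $\setof{x}{\density_A(x) > a} = \bigcup_n \setof{x}{k_n(x) > a}$ and $\setof{x}{\density_A(x) < b} = \bigcup_n \setof{x}{h_n(x) < b}$; each $\setof{x}{k_n(x) > a} = \bigcup_{j \geq 1}\setof{x}{k_n(x) \geq a + 1/j}$ is $\Fsigma$ (a countable union of closed sets), and symmetrically each $\setof{x}{h_n(x) < b}$ is $\Fsigma$. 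Hence both displayed sets are $\Fsigma$, so is their intersection $\density_A^{-1}\bigl((a;b)\bigr)$, and since every open subset of $[0;1]$ is a countable union of such intervals, $\density_A^{-1}$ of an open set lies in $\bSigma^{0}_{2}$, i.e.\ $\density_A \in \mathscr{B}_1$.

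There is really no obstacle to overcome: the whole content is already packaged in Lemma~\ref{lem:amenable} together with the definition of amenability, whose very purpose is to let one compute $\density^\pm$ along a \emph{countable} family of radii depending continuously on the base point; solidity is exactly what collapses the $\limsup/\liminf$ pair — responsible for the $\mathscr{B}_2$ bound in Lemma~\ref{lem:densityBaire2} — into a single limit, and that is the only place where the hypothesis is used. The one point worth a line of care is that the $g_n$ are well defined and $[0;1]$-valued (the denominators are positive because $\mu$ is fully supported and $\varepsilon_n(x)>0$), but this is already built into the statement of Lemma~\ref{lem:amenable}.
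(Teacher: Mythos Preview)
Your proposal is correct, and your direct argument is essentially the paper's own proof: the paper writes \( \density_A ( x ) > a \iff \exists q \in \Q_+ \, \forall^\infty n \, \bigl( g_n(x) \geq a + q \bigr) \) (and similarly for \( \density_A(x) < b \)), then invokes Lemma~\ref{lem:amenable} to conclude that the preimages of half-intervals are \( \bSigma^{0}_{2} \). Your first route via the Lebesgue--Hausdorff characterization (pointwise limits of continuous functions are Baire class~1) is a clean repackaging of the same idea and is equally valid.
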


\begin{proof}
Notice that 
\[ 
\density_A ( x ) > a \IFF \EXISTS{ q \in \Q_+ }\FORALLS{\infty}{ n } \Bigl ( \frac{ \mu ( A \cap \Ball ( x ; \varepsilon _n ( x ) ) ) }{ \mu ( \Ball ( x ; \varepsilon _n ( x ) ) ) } \geq a + q \Bigr )
\] 
and apply Lemma~\ref{lem:amenable}.
Similarly for \( \density_A ( x ) < b \).
\end{proof}

By the Baire category theorem we get:

\begin{corollary}\label{cor:solidsetBaireclassDensity}
Let \( ( X , d , \mu ) \) be amenable and completely metrizable.
If there are \( 0 \leq r < s \leq 1 \) such that \( \setofLR{x}{ \density_A ( x ) \leq r } \) and \( \setofLR{x}{ \density_A ( x ) \geq s } \) are dense in some nonempty open set, then \( A \notin \Solid \).
\end{corollary}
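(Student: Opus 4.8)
The plan is to deduce this by contradiction from Proposition~\ref{prop:solidsetBaireclassDensity}. Suppose toward a contradiction that $A \in \Solid$; then by that proposition $\density_A \colon X \to [0;1]$ is of Baire class $1$. Let $U$ be a nonempty open set in which both $\setofLR{x}{\density_A(x) \leq r}$ and $\setofLR{x}{\density_A(x) \geq s}$ are dense.

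The key point is that, since $\density_A \in \mathscr{B}_1$, the preimage of any \emph{closed} subset of $[0;1]$ is in $\bPi^{0}_{2}$: for a closed $C \subseteq [0;1]$ the set $[0;1] \setminus C$ is open, so $\density_A^{-1}([0;1] \setminus C) \in \bSigma^{0}_{2}$ and hence $\density_A^{-1}(C) \in \bPi^{0}_{2}$. In particular $\setofLR{x}{\density_A(x) \leq r} = \density_A^{-1}([0;r])$ and $\setofLR{x}{\density_A(x) \geq s} = \density_A^{-1}([s;1])$ are $\Gdelta$ subsets of $X$, hence also $\Gdelta$ as subsets of $U$. Now $U$, being open in the completely metrizable space $X$, is itself a Baire space, and a dense $\Gdelta$ subset of a nonempty Baire space is comeager. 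Thus both of our sets are comeager in $U$, so their intersection is comeager in $U$ and therefore nonempty. But any $x$ in this intersection satisfies $s \leq \density_A(x) \leq r$, contradicting $r < s$. Hence $A \notin \Solid$.

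An equivalent route, avoiding pointclass bookkeeping, is to apply Baire's theorem that a Baire class $1$ function into a separable metric space has a comeager — in particular dense — set of continuity points to $\density_A \restriction U$, choose such a continuity point $x$, and approximate it both by points where $\density_A \leq r$ and by points where $\density_A \geq s$ to force $s \leq \density_A(x) \leq r$ again.

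There is no real obstacle here: the substance is entirely contained in Proposition~\ref{prop:solidsetBaireclassDensity}, and what remains is the routine observation that $U$ inherits the Baire category property from $X$ and that the sublevel and superlevel sets of a $\mathscr{B}_1$ function are $\Gdelta$. The completely-metrizable hypothesis is used exactly to ensure that $U$ is a Baire space.
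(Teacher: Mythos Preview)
Your argument is correct and is exactly the route the paper has in mind: the paper simply writes ``By the Baire category theorem we get'' before stating the corollary, leaving the reader to fill in precisely the steps you give (solidity $\Rightarrow$ $\density_A\in\mathscr{B}_1$ by Proposition~\ref{prop:solidsetBaireclassDensity}, hence the two sets are $\Gdelta$ and dense in the Baire space $U$, so they meet). Your alternative via points of continuity of a Baire class~1 function is an equally standard variant of the same idea.
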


\begin{proposition}\label{prop:solidsetGdelta}
Let \( ( X , d , \mu ) \) be amenable and suppose that \( A \) is solid.
Then 
\begin{enumerate-(a)}
\item
\( \Phi ( A ) , \Phi ( A^\complement ) \in \bPi^{0}_{2} \), 
\item
\( \Exc ( A ) = \Sharp ( A ) \in \bSigma^{0}_{2} \),
\item
if \( 1 \) is an isolated value of \( \density_A \), that is to say \( \ran \density_A \subseteq [ 0 ; r ] \cup \setLR{1} \) for some \( r < 1 \), then \( \Phi ( A ) \in \bDelta^{0}_{2} \).
\end{enumerate-(a)}
In particular, if \( A \) is dualistic, then \( \Phi ( A ) \in \bDelta^{0}_{2} \).
\end{proposition}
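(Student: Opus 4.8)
The plan is to derive all three statements from Proposition~\ref{prop:solidsetBaireclassDensity}, which, since \( ( X , d , \mu ) \) is amenable and \( A \) is solid, tells us that \( \density_A \colon X \to [ 0 ; 1 ] \) is in \( \mathscr{B}_1 \). The only general fact I need is the defining property of Baire class \( 1 \) functions, combined with taking complements: such a function pulls back open subsets of \( [ 0 ; 1 ] \) to \( \bSigma^{0}_{2} \) sets and closed subsets of \( [ 0 ; 1 ] \) to \( \bPi^{0}_{2} \) sets. Here ``open'' and ``closed'' are meant relative to \( [ 0 ; 1 ] \), so that, e.g., a half-open interval \( ( r ; 1 ] \) counts as open.

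For~(a), write \( \Phi ( A ) = \density_A^{-1} ( \setLR{1} ) \); since \( \setLR{1} \) is closed in \( [ 0 ; 1 ] \), this is \( \bPi^{0}_{2} \). For \( \Phi ( A^\complement ) \): because \( \oscillation_{A^\complement} ( x ) = \oscillation_A ( x ) \) the set \( A^\complement \) is solid too, so the same argument applies (or use \( \density_{A^\complement} = 1 - \density_A \) and \( \Phi ( A^\complement ) = \density_A^{-1} ( \setLR{0} ) \)). For~(b): solidity means \( \Blur ( A ) = \emptyset \), so \( \Exc ( A ) = \Blur ( A ) \cup \Sharp ( A ) = \Sharp ( A ) \) directly from the definitions, and \( \Sharp ( A ) = \density_A^{-1} \bigl( ( 0 ; 1 ) \bigr) \) is the preimage of an open set, hence \( \bSigma^{0}_{2} \).

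For~(c), under the hypothesis \( \ran \density_A \subseteq [ 0 ; r ] \cup \setLR{1} \) with \( r < 1 \) one has \( \Phi ( A ) = \density_A^{-1} ( \setLR{1} ) = \density_A^{-1} \bigl( ( r ; 1 ] \bigr) \); as \( ( r ; 1 ] \) is open in \( [ 0 ; 1 ] \) this realises \( \Phi ( A ) \) as a \( \bSigma^{0}_{2} \) set, while~(a) gives it as \( \bPi^{0}_{2} \), so \( \Phi ( A ) \in \bDelta^{0}_{2} \). Finally, if \( A \) is dualistic then \( \Sharp ( A ) = \emptyset \), whence \( \ran \density_A \subseteq \setLR{0 , 1} \) and the hypothesis of~(c) holds with \( r = 0 \).

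There is no serious obstacle here: all the work is already packaged in Proposition~\ref{prop:solidsetBaireclassDensity}. The only point requiring a little care is choosing the right target set whose preimage one takes --- using the half-open interval \( ( r ; 1 ] \) in place of \( \setLR{1} \) is exactly what upgrades the complexity bound from \( \bPi^{0}_{2} \) to \( \bDelta^{0}_{2} \) in~(c).
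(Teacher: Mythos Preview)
Your proof is correct and follows essentially the same approach as the paper: both derive everything from Proposition~\ref{prop:solidsetBaireclassDensity} by writing \( \Phi ( A ) \), \( \Sharp ( A ) \), etc.\ as preimages under \( \density_A \) of suitable closed or open subsets of \( [ 0 ; 1 ] \), with the key observation for~(c) being \( \Phi ( A ) = \density_A^{-1} \ocinterval{r}{1} \). Your write-up is in fact more complete, spelling out~(b) and the dualistic case explicitly where the paper leaves them implicit.
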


\begin{proof}
By Proposition~\ref{prop:solidsetBaireclassDensity} \( \density_A \) is Baire class \( 1 \), and since \( \Phi ( A ) \) is the preimage of the singleton \( \setLR{1} \), then it is a \( \Gdelta \).
If \( 1 \) is an isolated value of the density function, then \( \Phi ( A ) = \density_A^{-1} \ocinterval{ r }{ 1} \) is also \( \Fsigma \), thus it is a \( \bDelta^{0}_{2} \).
\end{proof}

The (possibly partial) function \( \density_A \colon \pre{\omega }{2} \to [ 0 ;1 ] \) has graph \( \bPi^{0}_{3} \), since
\[
 \density_ A ( x ) = r \IFF \FORALL{ \varepsilon } \EXISTS{ n } \FORALL{ k > n } \card{ \mu ( \LOC{A}{ x \restriction k} ) - r } \leq \varepsilon
\]
and has domain \( \pre{\omega }{2} \setminus \Blur A \).
So perhaps it is more natural to look at its extension \( \density_A^* \colon \pre{\omega }{2} \to [ 0 ;1 ] \cup \set{*} \), where \( * \) means undefined. 
It is an isolated point in \( [ 0 ;1 ] \cup \set{*} \).

\begin{proposition}
 \( \mathrm{graph} ( \density_A^* ) \) is a boolean combination of \( \bPi^{0}_{3} \) sets.
\end{proposition}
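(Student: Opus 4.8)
The plan is to split $\mathrm{graph}(\density_A^*)$ along its codomain. Since $*$ is isolated, $[0;1]\cup\set{*}$ is the topological disjoint sum of the two clopen pieces $[0;1]$ and $\set{*}$, so $\pre{\omega}{2}\times([0;1]\cup\set{*})$ decomposes into the clopen sets $V=\pre{\omega}{2}\times[0;1]$ and $W=\pre{\omega}{2}\times\set{*}$. I would estimate the complexity of $\mathrm{graph}(\density_A^*)$ on each slice separately and then recombine, using that a subset of a clopen piece keeps the same Borel rank when viewed in the ambient space.

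On $V$ the graph of $\density_A^*$ is exactly the graph of the partial function $\density_A$, and this is $\bPi^{0}_{3}$: by the equivalence $\density_A(x)=r \IFF \FORALL{\varepsilon} \EXISTS{n} \FORALL{k>n} \card{\mu(\LOC{A}{x\restriction k})-r}\leq\varepsilon$ recalled just above the statement (with $\varepsilon$ ranging over $\Q_+$ without loss of generality), together with the fact that for each fixed $k$ the map $(x,r)\mapsto\card{\mu(\LOC{A}{x\restriction k})-r}$ is jointly continuous (it is locally constant in $x$), the innermost matrix defines a closed set, $\FORALL{k>n}$ keeps it closed, $\EXISTS{n}$ yields a $\bSigma^{0}_{2}$ set and $\FORALL{\varepsilon}$ a $\bPi^{0}_{3}$ one. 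As $V$ is clopen this is a $\bPi^{0}_{3}$ subset of the ambient product. On $W$ the graph of $\density_A^*$ is $\Blur(A)\times\set{*}$, since $\density_A^*(x)=*$ precisely when $\oscillation_A(x)>0$. In $\pre{\omega}{2}$ with $\measurecantor$ the functions $x\mapsto\mu(\LOC{A}{x\restriction n})$ are continuous, so $\density^\pm_A$ and hence $\oscillation_A$ lie in $\mathscr{B}_2$ (Lemma~\ref{lem:densityBaire2} applies, $\pre{\omega}{2}$ being a closed subspace of the Baire space and thus amenable); consequently $\Blur(A)=\oscillation_A^{-1}(0;+\infty)\in\bSigma^{0}_{3}$, as already observed in the text, and therefore $\Blur(A)\times\set{*}$ is $\bSigma^{0}_{3}$ in the clopen slice $W$, hence in the ambient space.

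To recombine, write $P_1=\mathrm{graph}(\density_A^*)\cap V\in\bPi^{0}_{3}$ and $P_2=\mathrm{graph}(\density_A^*)\cap W\in\bSigma^{0}_{3}$, so that $\mathrm{graph}(\density_A^*)=P_1\cup P_2$; since $P_2$ is the complement of a $\bPi^{0}_{3}$ set, this already exhibits the graph as a boolean combination of $\bPi^{0}_{3}$ sets. With slightly more care one checks the identity $\mathrm{graph}(\density_A^*)=(P_1\cup W)\cap(P_2\cup V)$, where $P_1\cup W$ is $\bPi^{0}_{3}$ (a countable intersection of unions of $\bSigma^{0}_{2}$ sets with the clopen $W$) and $P_2\cup V$ is $\bSigma^{0}_{3}$, so the graph is in fact a difference of two $\bPi^{0}_{3}$ sets. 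I expect no genuine obstacle here: the only points needing attention are the bookkeeping that the two complexity estimates, computed in the subspace topologies of the clopen slices $V$ and $W$, transfer unchanged to the product, and the recollection (rather than re-derivation) that $\Blur(A)$ is $\bSigma^{0}_{3}$ and not something worse.
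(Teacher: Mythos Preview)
Your proposal is correct and follows essentially the same approach as the paper: both decompose the graph into the piece over $[0;1]$ (the $\bPi^{0}_{3}$ graph of the partial function $\density_A$) and the piece over $\set{*}$ (the $\bSigma^{0}_{3}$ set $\Blur(A)\times\set{*}$), yielding a union of a $\bPi^{0}_{3}$ and a $\bSigma^{0}_{3}$ set. The paper's one-line proof is simply the logical equivalence $(z,r)\in\mathrm{graph}(\density_A^*)\IFF(\oscillation_A(z)=0\wedge\density_A(z)=r)\vee(\oscillation_A(z)>0\wedge r=*)$, which is your decomposition written without the topological framing via the clopen slices $V$ and $W$.
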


\begin{proof}
 \( ( z , r ) \in \mathrm{graph} ( \density_A^* ) \IFF \left ( \oscillation_A ( z ) = 0 \wedge \density_A ( z ) = r \right ) \vee \left ( \oscillation_A ( z ) > 0 \wedge r = * \right ) \)
 \end{proof}

By~\cite[Theorem 1.7][]{Andretta:2013uq}, working in the Cantor space we have that \( \setof{\eq{A} \in \MALG}{ \Phi ( A ) \text{ is \( \bPi^{0}_{3} \)-complete}} \) is comeager. 

\begin{corollary}
\( \setofLR{\eq{A} \in \MALG ( \pre{\omega}{2} ) }{ \Blur ( A ) \neq \emptyset } = \MALG \setminus \widehat{ \Solid } \) is comeager.
\end{corollary}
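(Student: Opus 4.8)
The plan is to read off this corollary from the already-quoted result of~\cite[Theorem 1.7]{Andretta:2013uq}, namely that \( \setofLR{\eq{A} \in \MALG}{ \Phi ( A ) \text{ is } \bPi^{0}_{3}\text{-complete} } \) is comeager in \( \MALG ( \pre{\omega}{2} ) \). First I would dispose of the asserted equality \( \setofLR{\eq{A}}{ \Blur ( A ) \neq \emptyset } = \MALG \setminus \widehat{\Solid} \): by definition \( A \in \Solid \iff \Blur ( A ) = \emptyset \), and \( \Blur \) is \( =_\mu \)-invariant by Remarks~\ref{rmks:exc}(b), which is precisely what makes \( \widehat{\Solid} \) well defined on the measure algebra; so the two sets literally coincide. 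Everything therefore reduces to showing that \( \MALG \setminus \widehat{\Solid} \) is comeager.

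For this it suffices to establish the inclusion
\[
\setofLR{\eq{A} \in \MALG}{ \Phi ( A ) \text{ is } \bPi^{0}_{3}\text{-complete} } \subseteq \MALG \setminus \widehat{\Solid} ,
\]
since the left-hand side is comeager by the cited theorem, and a superset of a comeager set is comeager. I would prove the inclusion by contraposition: assume \( \eq{A} \in \widehat{\Solid} \), i.e.\ \( A \) is solid. The Cantor space \( \pre{\omega}{2} \) is a closed subset of the Baire space with the induced ultrametric, hence amenable, witnessed by \( \varepsilon_n ( x ) = 2^{-n} \) (the ball \( \Ball ( x ; 2^{-n} ) = \Nbhd_{x \restriction n} \) is clopen and depends only on \( x \restriction n \), so \( x \mapsto \eq{\Ball ( x ; 2^{-n} )} \) is locally constant, a fortiori continuous). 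Thus Proposition~\ref{prop:solidsetGdelta}(a) applies and yields \( \Phi ( A ) \in \bPi^{0}_{2} \). Since \( \bPi^{0}_{2} \subseteq \bSigma^{0}_{3} \), the set \( \Phi ( A ) \) is \( \bSigma^{0}_{3} \) and therefore cannot be \( \bPi^{0}_{3} \)-complete, because a \( \bPi^{0}_{3} \)-complete set is by definition not in \( \bSigma^{0}_{3} \). This gives the displayed inclusion, and hence the corollary.

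I expect there to be no genuine obstacle: the statement is a repackaging of Theorem~1.7 of~\cite{Andretta:2013uq} with the structural fact (Proposition~\ref{prop:solidsetGdelta}) that solidity collapses \( \Phi ( A ) \) to the \( \Gdelta \) level. The only point that merits an explicit remark is that the ambient space \( \pre{\omega}{2} \) meets the amenability hypothesis of Proposition~\ref{prop:solidsetGdelta}, which is the trivial observation above about dyadic-radius balls being clopen and locally constant in the center.
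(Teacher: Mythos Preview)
Your proposal is correct and follows exactly the approach implicit in the paper: the corollary is stated immediately after recalling that \( \setofLR{\eq{A}}{\Phi(A)\text{ is }\bPi^{0}_{3}\text{-complete}} \) is comeager, and the intended deduction is precisely the contrapositive step you give via Proposition~\ref{prop:solidsetGdelta}, namely that solidity forces \( \Phi(A)\in\bPi^{0}_{2}\subseteq\bSigma^{0}_{3} \), hence \( \Phi(A) \) cannot be \( \bPi^{0}_{3} \)-complete. Your explicit verification that \( \pre{\omega}{2} \) is amenable and that the equality with \( \MALG\setminus\widehat{\Solid} \) is definitional are welcome details the paper leaves tacit.
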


We will prove later (Theorem~\ref{thm:blurrypointsSigma03}) that the set of blurry points can be \( \bSigma^{0}_{3} \)-complete, and in fact this is the case on a comeager set in the measure algebra.

\section{Compact sets in the measure algebra}\label{sec:compactsetsinMALG}
Suppose \( ( X , d , \mu ) \) is a separable Radon metric space and \( A \in \MEAS_\mu \).
The \markdef{\( \mu \)-interior} of \( A \) is
\[
\Int_\mu ( A ) = \bigcup \setof{ U \in \bSigma^{0}_{1} ( X ) }{ U \subseteq_ \mu A } ,
\]
 the \markdef{\( \mu \)-closure} of \( A \) is
\[
\begin{split}
\Cl_\mu ( A ) & = \bigcap \setof{ C \in \bPi^{0}_{1} ( X ) }{ A \subseteq_ \mu C }
\\
 & = X \setminus \bigcup \setof{ U \in \bSigma^{0}_{1} ( X ) }{ A \cap U \in \NULL_\mu } ,
\end{split}
\]
and the \markdef{\( \mu \)-frontier} of \( A \) is
\[
\begin{split}
\Fr_\mu ( A ) & = \Cl_\mu ( A ) \setminus \Int_\mu ( A )
\\
 & = \setof{ x \in X }{ \FORALL{ U \in \bSigma^{0}_{1} ( X )} ( x \in U \implies \mu ( A \cap U ) , \mu ( U \setminus A ) > 0 ) } .
\end{split}
\]
Thus \( \Int_\mu ( A ) \) is open, and \( \Cl_\mu ( A ) \) and \( \Fr_\mu ( A ) \) are closed, and they behave like the usual topological operators, i.e. \( ( \Cl_\mu A )^ \complement = \Int_\mu ( A^ \complement ) \) and \( ( \Int_\mu A )^ \complement = \Cl_\mu ( A^ \complement ) \).
(In~\cite{Andretta:2013uq} the sets \( \Cl_\mu ( A ) \) and \( \Int_\mu ( A ) \) were called the outer and inner supports of \( A \), and were denoted by \( \supt^+ ( A ) \) and \( \supt^- ( A ) \), respectively.)
The \markdef{support of \( \mu \)} is \( \supt ( \mu ) = \Cl_\mu ( X ) \), and therefore \( \mu \) is fully supported if and only \( \supt ( \mu ) = X \).

Clearly \( \Int_\mu ( A ) \subseteq \Phi ( A ) \), and the inclusion can be proper; for example if \( ( X , d , \mu ) \) is fully supported, locally finite and DPP take \( A \) to be closed of positive measure with empty interior.
We start with a trivial observation, that will turn out to be useful in the proof of Theorem~\ref{thm:solid}.

\begin{lemma}\label{lem:useless}
Let \( ( X , d , \mu ) \) be a fully supported locally finite and DPP, and let \( A \in \MEAS_\mu \).
Suppose \( \Fr_\mu A \) has nonempty interior.
Then \( \Phi ( A ) \) and \( \Phi ( A^\complement ) \) are dense in \( \Int ( \Fr_\mu A ) \), so if \( ( X , d , \mu ) \) is amenable and completely metrizable, then \( A \) is not solid.
\end{lemma}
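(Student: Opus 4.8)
The plan is to unwind the measure-theoretic description of \( \Fr_\mu A \) recalled just above and then feed the outcome into Corollary~\ref{cor:solidsetBaireclassDensity}. Write \( V = \Int ( \Fr_\mu A ) \), which is a nonempty open set by hypothesis.

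First I would check that \( \Phi ( A ) \) is dense in \( V \). Given an arbitrary nonempty open \( U \subseteq V \), pick any \( x \in U \); since \( x \in \Fr_\mu A \) and \( U \) is an open neighbourhood of \( x \), applying the defining condition of \( \Fr_\mu \) to the open set \( U \) itself yields \( \mu ( A \cap U ) > 0 \) and \( \mu ( U \setminus A ) > 0 \). Now the DPP gives \( A \symdif \Phi ( A ) \in \NULL \), hence \( \mu ( ( A \cap U ) \setminus \Phi ( A ) ) = 0 < \mu ( A \cap U ) \), so \( \Phi ( A ) \cap U \neq \emptyset \). As \( U \) was an arbitrary nonempty open subset of \( V \), this shows \( \Phi ( A ) \) is dense in \( V \). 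The same argument applied to \( A^\complement \) — which is again measurable, so \( A^\complement \symdif \Phi ( A^\complement ) \in \NULL \) by the DPP — together with \( \mu ( U \setminus A ) = \mu ( A^\complement \cap U ) > 0 \) shows that \( \Phi ( A^\complement ) \) is dense in \( V \).

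For the final clause, note \( \Phi ( A ) \subseteq \setofLR{ x }{ \density_A ( x ) \geq 1 } \) and, since \( x \in \Phi ( A^\complement ) \) forces \( \density_A ( x ) = 0 \), also \( \Phi ( A^\complement ) \subseteq \setofLR{ x }{ \density_A ( x ) \leq 0 } \). Thus both \( \setofLR{ x }{ \density_A ( x ) \leq 0 } \) and \( \setofLR{ x }{ \density_A ( x ) \geq 1 } \) are dense in the nonempty open set \( V \), and Corollary~\ref{cor:solidsetBaireclassDensity} applied with \( r = 0 \), \( s = 1 \) (using that \( ( X , d , \mu ) \) is amenable and completely metrizable) gives \( A \notin \Solid \), i.e. \( \Blur ( A ) \neq \emptyset \).

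I do not expect a genuine obstacle here: the whole argument is a short combination of the definition of \( \Fr_\mu \), the null-set identity \( A \symdif \Phi ( A ) \in \NULL \) furnished by the DPP, and the already-established Corollary~\ref{cor:solidsetBaireclassDensity}. The only point worth flagging is the legitimacy of testing the defining condition of \( \Fr_\mu A \) against the very open set \( U \) in which we are hunting for a point of \( \Phi ( A ) \) (resp. \( \Phi ( A^\complement ) \)); it is precisely this that makes \( \mu ( A \cap U ) \) and \( \mu ( U \setminus A ) \) positive and lets the DPP produce the required point.
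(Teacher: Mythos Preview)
Your proof is correct and follows essentially the same approach as the paper: both arguments show that any nonempty open \( U \subseteq \Int ( \Fr_\mu A ) \) satisfies \( \mu ( A \cap U ) , \mu ( U \setminus A ) > 0 \), use the DPP to force \( U \) to meet both \( \Phi ( A ) \) and \( \Phi ( A^\complement ) \), and then invoke Corollary~\ref{cor:solidsetBaireclassDensity}. The only cosmetic difference is that the paper reads off the positivity from the disjointness of \( U \) with \( \Int_\mu ( A ) \cup \Int_\mu ( A^\complement ) \), whereas you read it off from the pointwise characterisation of \( \Fr_\mu A \); these are the same observation.
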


\begin{proof}
Let \( U \subseteq \Fr_\mu A \) be nonempty and open in \( X \): as \( U \) is disjoint from \( \Int_\mu ( A ) \cup \Int_\mu ( A^\complement ) \), then \( \mu ( A \cap U ) ,  \mu ( U \setminus A ) > 0 \), and therefore by DPP \( U \) intersects both \( \Phi ( A ) \) and \( \Phi ( A^\complement ) \).
That \( A \) is not solid follows from Corollary~\ref{cor:solidsetBaireclassDensity}.
\end{proof}

By separability \( \Cl_\mu A \) is the smallest closed set \( C \) such that \( A \subseteq_\mu C \), and therefore \( \Cl_\mu ( \Cl_\mu A ) = \Cl_\mu A \) by transitivity of \( \subseteq_\mu \).
If \( C \) is closed, then \( C =_\mu \Cl_\mu ( C ) \), so 
\[
\FORALL{ C , D \in \bPi^{0}_{1}} \left ( \Cl_\mu ( C ) =_\mu \Cl_\mu ( D ) \implies C =_\mu D \right ) 
\]
hence, since the operator \( \Cl_\mu \) is \( =_\mu \)-invariant,
\[
\FORALL{ A , B \in \MEAS_\mu }\left ( \Cl_\mu ( A ) =_\mu \Cl_\mu ( B ) \implies \Cl_\mu ( A ) = \Cl_\mu ( B ) \right ) .
\]
Therefore \( \Cl_\mu \) is a selector for the family \( \mathscr{F} \) defined below.

\begin{definition}
If \( X \) is a topological space with a Borel measure \( \mu \), let
\begin{align*}
\mathscr{F} ( X , \mu ) & = \setof{ \eq{C} \in \MALG ( X , \mu )}{ C \text{ is closed}}
\\
\mathscr{K} ( X , \mu ) & = \setof{ \eq{K} \in \MALG ( X , \mu )}{ K \text{ is compact}} .
\end{align*}
As usual the reference to \( X \) and/or \( \mu \) will be dropped whenever possible.
\end{definition}

\begin{lemma}\label{lem:ClPhi=supt}
If \( ( X , d , \mu ) \) is a separable Radon metric space, \( A \) is measurable, and \( A \subseteq_{\mu } \Phi ( A ) \), then \( \Cl \Phi ( A ) = \Cl_\mu A \).
\end{lemma}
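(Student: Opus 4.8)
The plan is to prove the two inclusions $\Cl \Phi ( A ) \subseteq \Cl_\mu A$ and $\Cl_\mu A \subseteq \Cl \Phi ( A )$ separately, using only the formal properties of $\Phi$ and $\Cl_\mu$ recorded above; the hypothesis $A \subseteq_\mu \Phi ( A )$ is needed only for the second one.

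\emph{First inclusion.} It suffices to show $\Phi ( A ) \subseteq \Cl_\mu A$ and then take closures, since $\Cl_\mu A$ is closed. I would obtain $\Phi ( A ) \subseteq \Cl_\mu A$ by combining two of the listed easy facts about $\Phi$: from $A \subseteq_\mu \Cl_\mu A$ and the $\subseteq_\mu$-monotonicity of $\Phi$ one gets $\Phi ( A ) \subseteq \Phi ( \Cl_\mu A )$, and from $\Phi ( C ) \subseteq C$ for $C$ closed one gets $\Phi ( \Cl_\mu A ) \subseteq \Cl_\mu A$. (Equivalently, argue directly: if $x \notin \Cl_\mu A$ there is an open $U \ni x$ with $\mu ( A \cap U ) = 0$; then along a neighbourhood basis of balls at $x$ the ratio defining the density is $0$ or undefined, so $\density_A ( x ) \neq 1$ and $x \notin \Phi ( A )$.) This step uses neither separability nor the standing hypothesis.

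\emph{Second inclusion.} Here the hypothesis enters. By separability, $\Cl_\mu A$ is the smallest closed set $C$ with $A \subseteq_\mu C$, as recalled just above the statement; hence it is enough to verify that $\Cl \Phi ( A )$ is a closed set satisfying $A \subseteq_\mu \Cl \Phi ( A )$. Closedness is immediate, and $A \subseteq_\mu \Phi ( A ) \subseteq \Cl \Phi ( A )$ together with transitivity of $\subseteq_\mu$ gives $A \subseteq_\mu \Cl \Phi ( A )$. Putting the two inclusions together yields $\Cl \Phi ( A ) = \Cl_\mu A$.

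I do not expect a genuine obstacle: the argument is bookkeeping with $\Phi$ and $\Cl_\mu$. The only points requiring care are invoking the correct easy facts ($\Phi ( C ) \subseteq C$ for closed $C$, and $\subseteq_\mu$-monotonicity of $\Phi$) and using the separability-based description of $\Cl_\mu A$ as a \emph{minimal} closed set, not merely as an intersection of closed sets.
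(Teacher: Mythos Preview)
Your proof is correct. The first inclusion matches the paper's argument (your parenthetical ``equivalently'' is exactly what the paper does). For the second inclusion you take a slightly different route: you invoke the minimality characterization of $\Cl_\mu A$ (smallest closed $C$ with $A \subseteq_\mu C$) and check that $\Cl \Phi ( A )$ is a competitor via $A \subseteq_\mu \Phi ( A ) \subseteq \Cl \Phi ( A )$. The paper instead argues directly that every open neighbourhood $U$ of a point $x \in \Cl_\mu A$ meets $\Phi ( A )$: from $\mu ( U \cap A ) > 0$ and $A \subseteq_\mu \Phi ( A )$ one gets $\mu ( U \cap \Phi ( A ) ) > 0$, hence $U \cap \Phi ( A ) \neq \emptyset$. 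Both are one-line arguments; yours leans on the separability-based minimality fact recorded just before the lemma, while the paper's is a self-contained density computation.
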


\begin{proof}
First, \( \Phi (A ) \subseteq \Cl_\mu A \), since any point in \( ( \Cl_\mu A)^{ \complement } \) is contained in some open \( U \) with \( \mu ( A \cap U ) = \emptyset \).
Consequently, \( \Cl \Phi ( A ) \subseteq \Cl_\mu A \).

Conversely, given \( x \in \Cl_\mu A \) and any open neighborhood \( U \) of \( x \), one has \( \mu ( U \cap A ) > 0 \), thus \( \mu ( U \cap \Phi ( A ) ) > 0 \), whence \( U \cap \Phi ( A ) \neq \emptyset \).
It follows \( x \in \Cl \Phi ( A ) \).
\end{proof}

Note that when \( X \) is DPP then the assumption \( A \subseteq_{\mu } \Phi ( A ) \) is automatically satisfied.
If \( X \) is a closed subset of \( \pre{\omega}{\omega} \), that is \( X = \body{T} \) for some pruned tree \( T \) on \( \omega \), then \( X \) is DPP and \( \Cl_\mu A = \body{ \densitytree ( A ) } \) where 
\begin{equation}\label{eq:densitytree}
 \densitytree ( A ) = \setofLR{ t \in T }{ \mu ( A \cap \Nbhd_t ) > 0 }
\end{equation}
is the tree of those basic open sets in which \( A \) is non-null~\cite[][Definition 3.3]{Andretta:2013uq}.
Therefore

\begin{corollary}\label{cor:densitytree3}
\( \densitytree \body{ \densitytree ( A ) } = \densitytree ( A ) \), i.e. \( \densitytree ( \Cl \Phi ( A ) ) = \densitytree ( A ) \).
\end{corollary}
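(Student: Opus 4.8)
The plan is to derive the statement from three ingredients already in place: the identity \( \Cl_\mu B = \body{ \densitytree ( B ) } \) for measurable \( B \) (valid here because \( X = \body{T} \) is DPP), the idempotence \( \Cl_\mu ( \Cl_\mu A ) = \Cl_\mu A \) noted above, and Lemma~\ref{lem:ClPhi=supt}. The underlying observation is that \( \densitytree \), applied to any measurable set, returns a \emph{pruned} subtree of \( T \), and a pruned tree is recovered from its body; so an equality between \( \mu \)-closures translates back into an equality between the corresponding \( \densitytree \)'s.

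First I would check that for every measurable \( B \subseteq X \) the set \( \densitytree ( B ) = \setofLR{ t \in T }{ \mu ( B \cap \Nbhd_t ) > 0 } \) is a pruned subtree of \( T \). It is closed under initial segments since \( \Nbhd_t \subseteq \Nbhd_s \) whenever \( s \subseteq t \), and it is pruned since \( \mu ( B \cap \Nbhd_t ) = \sum_{ t \conc i \in T } \mu ( B \cap \Nbhd_{ t \conc i } ) \), so whenever the left-hand side is positive some summand is too, yielding an immediate successor of \( t \) in \( \densitytree ( B ) \). Next I would record the elementary fact that a pruned subtree \( S \subseteq T \) satisfies \( S = \setofLR{ t \in T }{ \Nbhd_t \cap \body{S} \neq \emptyset } \) (extend any \( t \in S \) step by step to an infinite branch lying in \( \body{S} \)), so that two pruned subtrees of \( T \) with the same body coincide.

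With this in hand the conclusion is immediate. Applying \( \Cl_\mu B = \body{ \densitytree ( B ) } \) with \( B = \Cl_\mu A \) and using idempotence of \( \Cl_\mu \) gives \( \body{ \densitytree ( \Cl_\mu A ) } = \Cl_\mu ( \Cl_\mu A ) = \Cl_\mu A = \body{ \densitytree ( A ) } \); since \( \densitytree ( \Cl_\mu A ) \) and \( \densitytree ( A ) \) are pruned subtrees of \( T \), they are equal, i.e. \( \densitytree ( \Cl_\mu A ) = \densitytree ( A ) \). Because \( X \) is DPP we have \( A \subseteq_\mu \Phi ( A ) \), so Lemma~\ref{lem:ClPhi=supt} gives \( \Cl \Phi ( A ) = \Cl_\mu A \); substituting this together with \( \body{ \densitytree ( A ) } = \Cl_\mu A \) yields both displayed forms, \( \densitytree \body{ \densitytree ( A ) } = \densitytree ( A ) \) and \( \densitytree ( \Cl \Phi ( A ) ) = \densitytree ( A ) \). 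There is essentially no obstacle here; the only step that deserves a moment's attention is verifying that \( \densitytree \) always lands among pruned trees, since without that, knowing the bodies agree would not by itself force the trees to agree.
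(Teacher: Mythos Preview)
Your proof is correct and follows exactly the route the paper intends: the corollary is stated immediately after the identity \( \Cl_\mu A = \body{\densitytree(A)} \) and Lemma~\ref{lem:ClPhi=supt}, with the word ``Therefore'' as its entire justification, and you have simply spelled out the details (including the prunedness of \( \densitytree(B) \), which the paper leaves implicit).
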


A metric space is \markdef{Heine-Borel} if every closed ball is compact.
It is easy to see that any such space is \( \Ksigma \) and Polish.

\begin{theorem}\label{thm:setofcompactsinMALG}
Let \( ( X , d , \mu ) \) be a Heine-Borel space such that every compact set has finite measure.
Then \( \mathscr{K} ( X , \mu ) \) and \( \mathscr{F} ( X , \mu ) \) are \( \bPi^{0}_{3} \) in \( \MALG ( X , \mu ) \). 
\end{theorem}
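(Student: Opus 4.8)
The plan is to characterize membership in $\mathscr{F}$ and $\mathscr{K}$ through the $\mu$-closure operator and then read off the complexity. Since $X$ is Heine--Borel it is Polish and $\sigma$-compact; fix $x_0 \in X$, set $V_n = \Ball(x_0;n)$, so that $X = \bigcup_n V_n$ with $V_n \nearrow X$, each $\Cl V_n$ is compact, hence $\mu(V_n) < \infty$, and in fact every ball has finite measure. Fix also a countable basis $\{U_j\}_{j\in\omega}$ of $X$ consisting of balls. The elementary observations to record first are: (i) $\eq{A} \in \mathscr{F}$ iff $A =_\mu \Cl_\mu A$ --- the inclusion $A \subseteq_\mu \Cl_\mu A$ is automatic by separability, and a closed set $C$ satisfies $C =_\mu \Cl_\mu C$, so if $A =_\mu C$ with $C$ closed then $\Cl_\mu A =_\mu C =_\mu A$; and (ii) in a Heine--Borel space a closed set is compact iff it is bounded, hence $\eq{A} \in \mathscr{K}$ iff $A =_\mu \Cl_\mu A$ and $\Cl_\mu A$ is bounded, i.e.
\[
\mathscr{K} = \mathscr{F} \cap \setof{ \eq{A} \in \MALG }{ \exists n\ \mu(A \setminus V_n) = 0 } .
\]

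Next I would unwind $\eq{A} \in \mathscr{F}$. Writing $W_A = \bigcup\setof{U_j}{\mu(A\cap U_j)=0}$, the second description of $\Cl_\mu$ gives $W_A = X \setminus \Cl_\mu A$ and $\mu(A \cap W_A)=0$; since for each $n$ everything lives inside the finite-measure set $V_n$, one computes $\mu\big((V_n\setminus A)\setminus W_A\big) = \mu(V_n\setminus A) - \mu(V_n\cap W_A)$, and letting $n\to\infty$ (the sets increase) shows $\eq{A}\in\mathscr{F}$ iff $\mu(V_n\setminus A) = \mu(V_n\cap W_A)$ for all $n$. Because $\mu(V_n\setminus A) \ge \mu(V_n\cap W_A)$ always and $\mu(V_n\cap W_A)$ is the supremum of $\mu\big(V_n\cap\bigcup_{j\in F}U_j\big)$ over finite $F\subseteq\setof{j}{\mu(A\cap U_j)=0}$, this becomes
\[
\eq{A}\in\mathscr{F} \iff \forall n\,\forall\varepsilon\in\Q_+\,\exists F\in[\omega]^{<\omega}\Bigl[\bigl(\forall j\in F\ \mu(A\cap U_j)=0\bigr)\wedge \mu(V_n\setminus A) < \mu\bigl(V_n\cap\textstyle\bigcup_{j\in F}U_j\bigr)+\varepsilon\Bigr].
\]

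To finish I would check the complexity of the matrix. For a fixed finite-measure Borel set $U$, the map $\eq{A}\mapsto\mu(A\cap U)$ is continuous because $\card{\mu(A\cap U)-\mu(B\cap U)}\le\mu(A\symdif B)$; hence $\setof{\eq{A}}{\mu(A\cap U_j)=0}$ is closed and, since $\mu(V_n\setminus A)=\mu(V_n)-\mu(V_n\cap A)$, the set $\setof{\eq{A}}{\mu(V_n\setminus A)<c}$ is open. Thus the bracketed matrix is a finite intersection of an $\Fsigma$ set and an $\Fsigma$ set, so $\bSigma^{0}_{2}$; the $\exists F$ is a countable union, still $\bSigma^{0}_{2}$; and $\forall\varepsilon$ followed by $\forall n$ yields $\bPi^{0}_{3}$, proving $\mathscr{F}\in\bPi^{0}_{3}$. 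For $\mathscr{K}$, each $\setof{\eq{A}}{\mu(A\setminus V_n)=0}$ is closed (e.g. it equals $\bigcap_m\setof{\eq{A}}{\mu(A\cap(V_m\setminus V_n))=0}$, or directly by the $\symdif$-estimate), so $\setof{\eq{A}}{\exists n\ \mu(A\setminus V_n)=0}\in\bSigma^{0}_{2}\subseteq\bPi^{0}_{3}$, and $\mathscr{K}=\mathscr{F}\cap(\text{this set})\in\bPi^{0}_{3}$ since $\bPi^{0}_{3}$ is closed under finite intersections. The only genuinely delicate point is keeping the complexity at $\bPi^{0}_{3}$: a priori $\Cl_\mu A$ is assembled from infinitely many conditions ``$\mu(A\cap U_j)=0$'', and the trick that makes it work is that only the single inequality $\mu(V_n\setminus A)\le\mu(V_n\cap W_A)$ needs verification (the reverse is free), and it can be witnessed by a finite $F$ up to an arbitrarily small rational $\varepsilon$.
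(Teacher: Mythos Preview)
Your proof is correct and rests on the same core idea as the paper's---membership in \( \mathscr{F} \) (resp.\ \( \mathscr{K} \)) is equivalent to \( A =_\mu \Cl_\mu A \) (plus boundedness), and the complexity is read off by unwinding \( \Cl_\mu A \) through a countable basis---but the organization is reversed. The paper first handles \( \mathscr{K} \), using that for bounded \( A \) both \( \mu ( A ) \) and \( \mu ( \Cl_\mu A ) \) are finite, and phrases the condition as \( \mu ( A ) \geq \mu ( \Cl_\mu A ) \), approximating \( \mu ( \Cl_\mu A ) \) from above by measures of finite open covers; it then reduces \( \mathscr{F} \) to \( \mathscr{K} \) via \( \eq{A} \in \mathscr{F} \Leftrightarrow \forall n \, ( \eq{ A \cap B_n } \in \mathscr{K} ) \). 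You instead attack \( \mathscr{F} \) directly, localizing to the finite-measure sets \( V_n \) and phrasing the condition dually as \( \mu ( V_n \setminus A ) \leq \mu ( V_n \cap W_A ) \), approximating \( W_A = ( \Cl_\mu A )^\complement \) from inside by finite unions of basic open sets; \( \mathscr{K} \) then follows by intersecting with the \( \bSigma^{0}_{2} \) boundedness condition. Your route is arguably a bit more streamlined for \( \mathscr{F} \) since it avoids the detour through \( \mathscr{K} \), while the paper's ordering is natural if \( \mathscr{K} \) is the primary target. One small remark: your description of the bracketed matrix as ``a finite intersection of an \( \Fsigma \) set and an \( \Fsigma \) set'' undersells it---it is actually (closed) \( \cap \) (open), hence \( \bDelta^{0}_{2} \)---but this only helps.
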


\begin{proof}
Fix \( \bar{x} \in X \), and let \( B_n = \setofLR{y \in X} {d ( \bar{x}, y ) \leq n + 1 } \) be the closed ball of center \( \bar{x} \) and radius \( n > 0 \).

First we prove that \( \mathscr{K} ( X , \mu ) \) is \( \bPi^{0}_{3} \).
Note that
\[
\eq{A} \in \mathscr{K} \IFF \EXISTS{n} ( A \subseteq_\mu B_n ) \AND \mu ( A ) \geq \mu ( \Cl_\mu A )
\]
and the right hand side is equivalent to
\[
 \underbracket[0.5pt]{ \exists n ( A \subseteq_\mu B_n ) }_{\upvarphi ( A )} \wedge \forall q \in \Q_+ \bigl ( \underbracket[0.5pt]{\exists n ( A \subseteq_\mu B_n ) \wedge \mu ( \Cl_\mu A ) > q}_{\uppsi ( A , q ) } \IMPLIES \underbracket[0.5pt]{\mu ( A ) \geq q }_{\upchi (A , q ) } \bigr ) .
\]
The formulæ \( \upvarphi ( A ) \) and \( \upchi (A , q ) \) are easily seen to be \( \mathsf{\Sigma ^0_2} \) and \( \mathsf{\Pi^0_1} \) respectively, so it suffices to show that \( \uppsi ( A , q ) \) is \( \mathsf{\Sigma ^0_3} \).
Let \( ( U_n )_n \) be a countable basis for \( X \).
\[
\begin{split}
\uppsi ( A , q ) & \iff \exists n ( A \subseteq_\mu B_n ) \wedge \exists \varepsilon \in \Q_+ \forall n_0 ,\dots , n_h \in \omega 
\\
& \qquad\qquad [ \Cl_\mu A \subseteq U_{n_0} \cup \dots \cup U_{n_h} \implies q + \varepsilon < \mu ( U_{n_0} \cup \dots \cup U_{n_h} ) ]
 \\
 & \iff \exists n ( A \subseteq_\mu B_n ) \wedge \exists \varepsilon \in \Q_+ \forall n_0 ,\dots , n_h \in \omega 
 \\
 & \qquad\qquad [ \exists m_0 , \dots , m_k ( B_n \setminus ( U_{n_0} \cup \dots \cup U_{n_h} ) \subseteq U_{m_0} \cup \dots \cup U_{m_k} \wedge {}
 \\
 & \qquad\qquad\quad \mu ( A \cap ( U_{m_0} \cup \dots \cup U_{m_k} ) ) = 0 ) \implies q + \varepsilon < \mu ( U_{n_0} \cup \dots \cup U_{n_h} ) ] .
\end{split}
\]
The premise of the implication is \( \mathsf{\Sigma ^0_2} \), so \( \uppsi (A , q ) \) is \( \mathsf{\Sigma^0_3} \), as required.

We now prove that \( \mathscr{F} ( X , \mu ) \) is \( \bPi^{0}_{3} \).
Notice that it is enough to show that 
\[ 
\eq{A} \in \mathscr{F} \IFF \FORALL{ n \in \omega} \bigl ( \eq{A} \cap \eq{B_n } \in \mathscr{K} \bigr )
\] 
and use the fact that \( \MALG^2 \to \MALG \), \( ( \eq{X} , \eq{Y} ) \mapsto \eq{ X \cap Y } \), is continuous.
To establish the equivalence, suppose that \( A =_{\mu } F \) for some closed \( F \).
Then \( \eq{A } \cap \eq{ B_n } = \eq{ F \cap B_n } \in \mathscr{K} \).
Conversely, let \( C_n \) be compact such that \( C_n =_{\mu } A \cap B_n \); if \( F = \bigcup_{ n \in \omega } C_n \), then \( A =_{\mu } F \), concluding the proof.
\end{proof}

\begin{lemma} \label{lem:suptisBaire1}
Let \( X \) be compact, metric.
Then the function \( f \colon \MALG ( X ) \to \KK ( X ) \) defined by \( f ( \eq{A} ) = \Cl_\mu A \) is in \( \mathscr{B}_1 \).
\end{lemma}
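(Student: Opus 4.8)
The plan is to show $f$ is of Baire class $1$ by checking that $f^{-1}(\mathcal V)$ is $\bSigma^{0}_{2}$ in $\MALG(X)$ for every open $\mathcal V \subseteq \KK(X)$. Since $X$ is compact and $\mu$ locally finite, $\mu$ is finite, so by Section~\ref{sec:notationsandpreliminaries} the space $\MALG(X)$ is Polish under $\delta(\eq A,\eq B)=\mu(A\symdif B)$ and the Borel hierarchy is meaningful; moreover $\Cl_\mu A$ is closed in the compact space $X$, hence compact, and depends only on $\eq A$, so $f$ is a well-defined map into $\KK(X)$. Recall that the Vietoris topology on $\KK(X)$ has as a subbasis the ``hit'' sets $\mathcal U^{-}=\setofLR{K\in\KK(X)}{K\cap U\neq\emptyset}$ and the ``containment'' sets $\mathcal U^{+}=\setofLR{K\in\KK(X)}{K\subseteq U}$, with $U$ ranging over the open subsets of $X$. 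As $\KK(X)$ is second countable and $\bSigma^{0}_{2}$ is closed under finite intersections and countable unions, it is enough to verify that $f^{-1}(\mathcal U^{-})$ and $f^{-1}(\mathcal U^{+})$ are $\bSigma^{0}_{2}$ for each open $U\subseteq X$.

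For the hit sets, I would argue that $\Cl_\mu A\cap U\neq\emptyset$ fails precisely when $U$ is itself one of the open sets $V$ with $\mu(A\cap V)=0$ appearing in the definition of $(\Cl_\mu A)^{\complement}$, that is, precisely when $\mu(A\cap U)=0$; hence $f^{-1}(\mathcal U^{-})=\setofLR{\eq A}{\mu(A\cap U)>0}$. Since $\card{\mu(A\cap U)-\mu(B\cap U)}\leq\mu(A\symdif B)$, the map $\eq A\mapsto\mu(A\cap U)$ is continuous on $\MALG(X)$, so this preimage is in fact open, a fortiori $\bSigma^{0}_{2}$.

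For the containment sets, the key observation is that $\Cl_\mu A\subseteq U$ if and only if there is an open $W\supseteq X\setminus U$ with $\mu(A\cap W)=0$: the ``if'' direction holds because such a $W$ is contained in $(\Cl_\mu A)^{\complement}$, and the ``only if'' direction uses that $X\setminus U$ is a closed, hence compact, subset of $X$ which is covered by the open sets $V$ with $\mu(A\cap V)=0$, so finitely many of them suffice and their union is the desired $W$. Fixing a countable basis $(U_n)_n$ of $X$ and refining, $W$ may be taken of the form $U_{n_0}\cup\dots\cup U_{n_h}$, since each basic piece lies in some such $V$ and hence still meets $A$ in a null set. Therefore
\[
f^{-1}(\mathcal U^{+})=\bigcup\setofLR{\ \setofLR{\eq A\in\MALG(X)}{\mu\bigl(A\cap(U_{n_0}\cup\dots\cup U_{n_h})\bigr)=0}\ }{h,n_0,\dots,n_h\in\omega,\ X\setminus U\subseteq U_{n_0}\cup\dots\cup U_{n_h}},
\]
a countable union whose terms are each closed in $\MALG(X)$, being the preimage of $\setLR{0}$ under the continuous map $\eq A\mapsto\mu(A\cap(U_{n_0}\cup\dots\cup U_{n_h}))$. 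Hence $f^{-1}(\mathcal U^{+})$ is $\Fsigma$, i.e.\ $\bSigma^{0}_{2}$, and combining the two cases gives $f\in\mathscr B_1$. The only step that is not pure bookkeeping is the compactness reduction in the containment case, where one passes from an arbitrary open cover of $X\setminus U$ by null-intersection open sets to a single finite, basic-open witness; everything else reduces to continuity of $\eq A\mapsto\mu(A\cap V)$ and the closure properties of $\bSigma^{0}_{2}$.
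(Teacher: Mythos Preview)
Your proof is correct and follows essentially the same approach as the paper's: both reduce to the Vietoris subbasic sets, observe that $\Cl_\mu A \cap U \neq \emptyset \iff \mu(A\cap U)>0$ is open in $\MALG(X)$, and use compactness of $X\setminus U$ together with a countable basis to witness $\Cl_\mu A \subseteq U$ by a finite basic cover with null intersection, giving an $\Fsigma$ condition. Your write-up is simply more explicit about why checking subbasic sets suffices and about the continuity of $\eq{A}\mapsto\mu(A\cap V)$.
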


\begin{proof}
Let \( ( U_n )_n \) be a basis of \( X \) and fix an open subset \( U \subseteq X \).
If \( A \subseteq X \) is measurable, then
\[
\Cl_\mu A \subseteq U \IFF \EXISTS {n_0 , \ldots , n_h} \bigl ( U^{ \complement } \subseteq U_{ n_0 } \cup \ldots \cup U_{n_h} \AND \mu ( A \cap ( U_{n_0} \cup \ldots \cup U_{n_h} ) ) = 0 \bigr )
\]
and this condition is \( \bSigma^0_2 \) on \( \eq{A} \).
Moreover,
\[
\Cl_\mu A\cap U \neq \emptyset \iff \mu ( A \cap U ) > 0 ,
\]
which is an open condition on \( \eq{A} \).
So the preimage under \( f \) of any open subset of \( \KK ( X ) \) is \( \bSigma^0_2 \).
\end{proof}

\begin{lemma} \label{lem:measurefunctionisBaire1}
Let \( X \) be a separable metrizable Radon space whose measure is outer regular.
Then the function \( g \colon \KK ( X ) \to [ 0 ; + \infty ] \) defined by \( g ( K ) = \mu ( K ) \) is in \( \mathscr{B}_1 \).
\end{lemma}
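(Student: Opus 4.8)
The plan is to show $g\in\mathscr{B}_1$ by checking that $g^{-1}(V)\in\bSigma^{0}_{2}$ for every open $V\subseteq[0;+\infty]$. Since $\bSigma^{0}_{2}$ is closed under finite intersections and countable unions and $[0;+\infty]$ is second countable, it suffices to treat the subbasic open sets $[0;c)$ and $(c;+\infty]$ for $c$ real. Everything will rest on a single observation about the Vietoris topology of $\KK(X)$: for every real $c$ the set $\mathcal{U}_c=\setofLR{K\in\KK(X)}{\mu(K)<c}$ is open.

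To prove this I would argue that $\mathcal{U}_c$ is a neighbourhood of each of its points. Let $K\in\mathcal{U}_c$. By outer regularity there is an open $U\supseteq K$ with $\mu(U)<c$. Then $\setofLR{L\in\KK(X)}{L\subseteq U}$ is a subbasic Vietoris-open set containing $K$, and every member $L$ of it satisfies $\mu(L)\leq\mu(U)<c$, so this set is contained in $\mathcal{U}_c$; hence $\mathcal{U}_c$ is open.

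Granting this, $g^{-1}([0;c))=\mathcal{U}_c$ is open, a fortiori in $\bSigma^{0}_{2}$; and $g^{-1}((c;+\infty])=\setofLR{K}{\mu(K)>c}=\bigcup_{m\in\omega}\setofLR{K}{\mu(K)\geq c+2^{-m}}=\bigcup_{m\in\omega}(\mathcal{U}_{c+2^{-m}})^\complement$ is a countable union of closed sets, hence in $\bSigma^{0}_{2}$. Combining as described, $g^{-1}(V)\in\bSigma^{0}_{2}$ for every open $V\subseteq[0;+\infty]$, i.e. $g\in\mathscr{B}_1$.

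I do not anticipate a genuine obstacle. The only spot that needs a moment's care is the assertion that $\mathcal{U}_c$ is really open in $\KK(X)$ rather than merely a (possibly uncountable) union of open pieces; the ``neighbourhood of each point'' formulation settles this at once, and one could equally restrict the witnessing $U$ to finite unions of members of a fixed countable basis, exactly as in the proof of Lemma~\ref{lem:suptisBaire1}. Note that outer regularity is used only in passing from $\mu(K)<c$ to the existence of an open $U\supseteq K$ with $\mu(U)<c$; alternatively one could observe that $g$ is upper semicontinuous because $\mu(\Ball(K;r))\downarrow\mu(K)$ as $r\downarrow 0$ (by $\sigma$-additivity together with local finiteness), which yields the same openness of $\mathcal{U}_c$, but the outer-regularity route is the shortest and parallels the computation in Lemma~\ref{lem:suptisBaire1}.
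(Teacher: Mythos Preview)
Your proof is correct and takes essentially the same approach as the paper: both show that $\setofLR{K}{\mu(K)<c}$ is Vietoris-open (the paper via an explicit formula over a countable basis, you via a neighbourhood-of-each-point argument using outer regularity), and then verify that $\setofLR{K}{\mu(K)>c}$ is $\bSigma^{0}_{2}$. The only cosmetic difference is that the paper writes out the latter condition directly as a $\bSigma^{0}_{2}$ formula, whereas you obtain it more efficiently by complementing the sets $\mathcal{U}_{c+2^{-m}}$ already shown to be open.
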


\begin{proof}
Let \( ( U_n )_{ n < \omega } \) be a basis of \( X \).
Fix \( a \geq 0 \); then, for \( K \in \KK ( X ) \), one has
\begin{multline*}
a < \mu ( K ) \IFF {}
\\
\EXISTS{ \varepsilon > 0 } \FORALL {n_0 , \ldots , n_h}\left ( K \subseteq U_{n_0} \cup \ldots \cup U_{n_h} \implies a + \varepsilon < \mu ( U_{n_0} \cup \ldots \cup U_{n_h} ) \right ) .
\end{multline*}
This condition is \( \bSigma^0_2 \) on \( K \).
For \( b > 0 \), one has
\[
\mu ( K ) < b \IFF \EXISTS{n_0 , \ldots , n_h }\left ( \mu ( U_{n_0} \cup \ldots \cup U_{n_h} ) < b \AND K \subseteq U_{n_0} \cup \ldots \cup U_{n_h} \right ) ,
\]
an open condition on \( K \).
So, the preimage under \( g \) of an open subset of \( [ 0 ; +\infty ] \) is \( \bSigma^0_2 \).
\end{proof}

\begin{definition}\label{def:thickset}
Suppose \( \mu \) is a Borel measure on a topological space \( X \), \( U \) is open and nonempty, and \( A \) is measurable.
We say that \( A \) is 
\begin{itemize}
\item
\markdef{thick in} \( U \) if \( \mu ( A \cap V ) > 0 \) for all open nonempty sets \( V \subseteq U \), 
\item
\markdef{co-thick in} \( U \) if \( A^\complement \) is thick in \( U \).
\end{itemize}
If \( U =_\mu X \) we simply say that \( A \) is thick/co-thick.
\end{definition}
Note that \( A \) is thick in \( U \) if and only if \( \Cl_\mu ( A ) \supseteq U \).
In a DPP space, \( A \) is thick in an open set \( U \) iff \( \Phi ( A ) \) is dense in \( U \).

\begin{lemma}\label{lem:thick}
Let \( ( X , d , \mu ) \) be a separable Radon metric space, with \( \mu \) nonsingular.
If \( 0 < \mu ( A ) < \infty \) then for all \( \varepsilon > 0 \) there is a compact set \( K \subseteq A \) with empty interior and such that \( \mu ( A ) - \varepsilon < \mu ( K ) \).
\end{lemma}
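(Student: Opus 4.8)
The plan is to produce the desired compact set by starting from a compact subset of $A$ of nearly full measure and then removing from it a dense open set of very small measure; subtracting an open set keeps compactness, subtracting a \emph{dense} open set forces the result to be nowhere dense (in particular to have empty interior), and nonsingularity of $\mu$ is exactly what allows that dense open set to be chosen of arbitrarily small measure.

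Concretely, fix $\varepsilon>0$. First I would use tightness of $\mu$ (the space is Radon) to pick a compact $K_0\subseteq A$ with $\mu(A)-\varepsilon/2<\mu(K_0)$. Next, using separability, fix a countable dense set $\{ d_n : n\in\omega \}\subseteq X$; since $\mu$ is locally finite, some ball around $d_n$ has finite measure, and then nonsingularity gives $\mu(\Ball(d_n;r))\to\mu(\{d_n\})=0$ as $r\downarrow 0$, so I can choose $r_n>0$ with $\mu(\Ball(d_n;r_n))<\varepsilon\cdot 2^{-n-2}$. Set $U=\bigcup_{n}\Ball(d_n;r_n)$, so that $U$ is open, $\mu(U)<\varepsilon/2$, and $U$ is dense in $X$ because it contains $\{ d_n : n\in\omega \}$.

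Finally I would put $K=K_0\setminus U$. Since $U$ is open, $K$ is closed in the compact set $K_0$, hence compact, and $K\subseteq K_0\subseteq A$. As $U$ is dense, $\Cl(U)=X$, so $X\setminus U$, and a fortiori its subset $K$, has empty interior. And $\mu(K)\ge\mu(K_0)-\mu(U)>(\mu(A)-\varepsilon/2)-\varepsilon/2=\mu(A)-\varepsilon$, which is what we want.

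I do not expect a genuine obstacle in this argument; the only points needing care are that subtracting an open set from a compact set leaves a compact set, and that $U$ must be arranged so as to be simultaneously dense (to kill the interior of $K$) and of small measure (to keep $\mu(K)$ large). The former is automatic once $U$ is taken to contain a countable dense set, and the latter is precisely where nonsingularity (together with local finiteness, to make continuity of $\mu$ from above applicable to the balls around each $d_n$) enters.
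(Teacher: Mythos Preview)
Your argument is correct and is essentially identical to the paper's own proof: both pick a compact $K_0\subseteq A$ of measure $>\mu(A)-\varepsilon/2$ by tightness, then remove a dense open $U=\bigcup_n \Ball(d_n;r_n)$ of measure $<\varepsilon/2$ built from small balls around a countable dense set, using nonsingularity (and local finiteness) to make the radii small enough. Your write-up is slightly more explicit about why $U$ is dense and why local finiteness is needed, but the method is the same.
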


\begin{proof}
Fix \( A \) and \( \varepsilon \) as above.
Without loss of generality we may assume that \( \varepsilon < \mu ( A ) \).
Let \( F \subseteq A \) be compact and such that \( \mu ( F ) > \mu ( A ) - \varepsilon / 2 \).
Let \( \setof{ q_n }{ n \in \omega } \) be dense in \( X \) and by our assumption on \( \mu \) choose \( r_n > 0 \) such that \( \mu ( \Ball ( q_n ; r_n ) ) \leq \varepsilon 2^{- ( n + 2 ) } \), so that \( U = \bigcup_{ n \in \omega } \Ball ( q_n ; r_n ) \) has measure \( \leq \varepsilon / 2 \).
Then \( K = F \setminus U \subseteq A \) is compact with empty interior and \( \mu ( K ) \geq \mu ( F ) - \varepsilon / 2 > \mu ( A ) - \varepsilon \).
\end{proof}

\begin{theorem}\label{thm:thick&cothick}
Suppose \( ( X , d , \mu ) \) is separable, fully supported Radon metric space, with \( \mu \) nonsingular.
Then there is a \( \Ksigma \) set which is thick and co-thick.
\end{theorem}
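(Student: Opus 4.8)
\emph{Proof proposal.}
The plan is to build, by a simple recursion over a countable basis of $X$, two interleaved families of pairwise disjoint compact sets, $K_n$ and $L_n$, each having empty interior and positive measure, with every nonempty basic open set containing one $K_n$ and one $L_n$, and with the $L_n$'s disjoint from \emph{all} the $K_m$'s. Then $A=\bigcup_n K_n$ is $\Ksigma$; it is thick because every nonempty open set contains some $K_n$, and co-thick because every nonempty open set contains some $L_n\subseteq A^\complement$.

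First fix a countable basis $(U_n)_{n\in\omega}$ of nonempty open sets, which exists by separability. The recursion would go as follows. At stage $n$, suppose the pairwise disjoint compact sets $K_i,L_i$ $(i<n)$ with empty interior have been constructed, and set $C_n=\bigcup_{i<n}(K_i\cup L_i)$. This is a finite union of closed nowhere dense sets, hence closed and nowhere dense, so its complement is dense and $U_n\setminus C_n=U_n\cap C_n^\complement$ is a nonempty open set. Using local finiteness of $\mu$ (built into ``Radon'') and full support, I would shrink it to a nonempty open $W_n\subseteq U_n\setminus C_n$ with $0<\mu(W_n)<\infty$. Applying Lemma~\ref{lem:thick} to $W_n$ with $\varepsilon=\mu(W_n)/2$ gives a compact $K_n\subseteq W_n$ with empty interior and $\mu(K_n)>\mu(W_n)/2>0$; then $W_n\setminus K_n$ is again nonempty and open (since $K_n$ is nowhere dense) with $0<\mu(W_n\setminus K_n)<\infty$, and a second application of Lemma~\ref{lem:thick} produces a compact $L_n\subseteq W_n\setminus K_n$ with empty interior and positive measure. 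By construction $K_n$ and $L_n$ are disjoint from each other and, being contained in $W_n\subseteq C_n^\complement$, disjoint from every $K_i,L_i$ with $i<n$.

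Then I would set $A=\bigcup_n K_n$, which is $\Ksigma$, and verify the two density requirements. Given a nonempty open $V$, pick $n$ with $\emptyset\neq U_n\subseteq V$; since $K_n\subseteq U_n\subseteq V$ one gets $\mu(A\cap V)\geq\mu(K_n)>0$, so $A$ is thick. Also $L_n\subseteq U_n\subseteq V$, and $L_n$ is disjoint from every $K_m$ (for $m\leq n$ by construction, and for $m>n$ because $K_m\subseteq W_m$ is disjoint from $C_m\supseteq K_n\cup L_n$), hence $L_n\subseteq A^\complement\cap V$ and $\mu(A^\complement\cap V)\geq\mu(L_n)>0$, so $A$ is co-thick.

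I do not expect a serious obstacle here: the content is entirely in Lemma~\ref{lem:thick} together with the elementary fact that a finite union of closed nowhere dense sets is nowhere dense (so that the passage from stage $n$ to stage $n+1$ always has room). The one point that needs a little foresight is the decision to carry the auxiliary family $(L_n)$ alongside $(K_n)$: it is what reserves a positive-measure compact slice of the complement inside each basic open set, which a later stage cannot encroach upon, and without it one could only conclude that $A^\complement$ is dense, not that it is thick.
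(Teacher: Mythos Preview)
Your argument is correct. The recursion is sound: at each stage $U_n\setminus C_n$ is nonempty open because $C_n$ is a finite union of closed nowhere dense sets; Lemma~\ref{lem:thick} applies both times since $W_n$ and $W_n\setminus K_n$ are nonempty open (hence of positive measure by full support) and of finite measure; and your verification that $L_n\cap K_m=\emptyset$ for all $m$ is complete.

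The paper takes a different route: it builds only a \emph{single} family $(C_n)$ of compacta, one inside each $U_n$, but controls the measures so that $\mu(C_n)\le 2^{-n-2}\min_{m\le n}\mu(U_{\tilde m})$. Co-thickness is then obtained by a measure estimate: the total mass of $F=\bigcup_n C_n$ that can land in any $U_{\tilde n}$ is at most $\mu(U_{\tilde n})/2$, leaving room for the complement. Your approach trades this quantitative bookkeeping for the auxiliary family $(L_n)$, which reserves a portion of $A^\complement$ inside each basic open set that later stages are forbidden to touch. This is arguably cleaner and avoids any arithmetic. What the paper's approach buys, and yours does not, is control over $\mu(A)$: since the $C_n$ can be made as small as one likes, the paper notes after the theorem that the thick-and-co-thick $\Ksigma$ set can have arbitrarily small measure, a feature used to argue that $\eq{A}\mapsto\densitytree(A)$ is not continuous. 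Your construction gives no such control (indeed, your choice $\mu(K_n)>\mu(W_n)/2$ goes in the opposite direction, though it is inessential to your argument).
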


\begin{proof}
As \( X \) is second countable and \( \mu \) is locally finite, fix a base \( \setof{U_n}{ n \in \omega } \) for \( X \) such that \( 0 < \mu ( U_n ) < \infty \) for all \( n \).
We inductively construct compact sets \( C_n \) for \( n \in \omega \) with empty interior such that \( \FORALL{i \leq n} ( \mu ( U_i \cap \bigcup_{j \leq n} C_j ) > 0 ) \). 
Let \( \tilde{n} \geq n \) be least such that \( U_{\tilde{n}} \subseteq U_n \setminus \bigcup_{j < n } C_j \). 
By Lemma~\ref{lem:thick} choose \( C_n \subseteq U_{\tilde{n}} \) compact with empty interior and such that \( 0 < \mu ( C_n ) \leq 2^{-n - 2 } \min \setof{\mu ( U_{\tilde{m} } ) }{ m \leq n } \).

Clearly \( F = \bigcup_{n} C_n \) is \( \Ksigma \) and thick.
In order to prove it is co-thick, it is enough to show that \( \mu ( U_n \setminus F ) > 0 \) for each \( n \).
Fix \( n \in \omega \): as \( U_{\tilde{n}} \subseteq U_n \), it is enough to show that \( \mu ( U_{\tilde{n}} \cap F ) < \mu ( U_{\tilde{n}} ) \).
By construction if \( C_m \cap U_{\tilde{n}} \neq \emptyset \), then \( m \geq n \), and hence \( \mu ( C_m ) \leq 2^{- m - 2 } \mu ( U_{\tilde{n}} ) \) and therefore \( \mu ( F \cap U_{\tilde{n}} ) \leq \mu ( U_{\tilde{n}} ) / 2 \).
\end{proof}

Theorem~\ref{thm:thick&cothick} emphasize a difference between measure and category, since in a topological space any nonmeager subset with the Baire property is comeager in some open set.
 
Working in \( \pre{\omega}{2} \), the function
\[
 \hat{ \Phi } \colon \MALG \to \bPi^0_3, \quad \hat{ \Phi } ( \eq{A} ) = \Phi ( A ) ,
\]
is Borel-in-the-codes~\cite[][Proposition 3.1]{Andretta:2013uq}, while \( \hat{ \mu } \colon \MALG \to [ 0 ; 1 ] \), \( \hat{ \mu } \eq{A} = \mu ( A ) \), is continuous.
The \( \Ksigma \) set \( F \) constructed in Theorem~\ref{thm:thick&cothick} can be of arbitrarily small measure, and hence \( A \cup F \) can be arbitrarily close to any measurable set \( A \).
Therefore the map \( \MALG \to \PrTr_2 \), \( \eq{A} \mapsto \densitytree ( A ) \), where \( \PrTr_2 \) is the Polish space of all pruned trees on \( \set{0 , 1} \), is not continuous, but it is in \( \mathscr{B}_1 \).
To see this apply Lemma~\ref{lem:suptisBaire1} together with the fact that \( \body{\densitytree ( A ) } = \Cl_\mu A \) and that the map \( \KK ( \pre{\omega}{2} ) \to \PrTr_2 \), \( K \mapsto T_K \), is continuous.
If \( A \) is dualistic, then \( \Phi ( A ) \) and \( \Phi ( A^\complement ) \) are \( \bDelta^{0}_{2} \) by Proposition~\ref{prop:solidsetGdelta}. 
In~\cite[][Section 3.4]{Andretta:2013uq} examples of dualistic, solid, spongy sets are constructed.

For any Polish measure space \( ( X , d , \mu ) \) the set \( \mathscr{ K } ( X ) \) is dense by tightness of \( \mu \), and it is meager by~\cite[Theorem 1.6]{Andretta:2013uq}.
(The proof in that paper is stated for \( \pre{\omega }{2} \), but it works in any Polish measure space.)

In a DPP space, if \( C \) is closed and thick in some nonempty open set \( U \), then \( \Phi ( C ) \) is dense in \( U \), and therefore \( C \supseteq U \).
Therefore 

\begin{lemma}\label{lem:thickcothicknotcompact}
In a DPP space \( ( X , d , \mu ) \), if \( A \) is thick and co-thick in some nonempty open set \( U \), then \( \eq{A} \notin \mathscr{ F } ( X , \mu ) \). 
\end{lemma}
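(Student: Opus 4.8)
The plan is to argue by contradiction. Suppose $\eq{A} \in \mathscr{F}(X,\mu)$, so that $A =_\mu C$ for some closed set $C \subseteq X$. The first step is to observe that being thick, and being co-thick, in $U$ are properties of the $=_\mu$-class: by Definition~\ref{def:thickset} they only constrain the quantities $\mu(A \cap V)$ and $\mu(A^\complement \cap V)$ for nonempty open $V \subseteq U$, and these are unchanged when $A$ is replaced by a $=_\mu$-equivalent set. Hence $C$ is thick in $U$, and $C$ is co-thick in $U$, i.e.\ $C^\complement$ is thick in $U$.

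Next I would apply the observation recorded just before the statement: in a DPP space a closed set which is thick in a nonempty open set $U$ must contain $U$. Spelling this out, thickness of $C$ in $U$ is equivalent (since $X$ is DPP) to $\Phi(C)$ being dense in $U$; as $\Phi(C) \subseteq C$ for closed $C$, we get $U \subseteq \Cl(\Phi(C)) \subseteq \Cl(C) = C$.

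Finally, $U \subseteq C$ gives $C^\complement \cap U = \emptyset$, whence $\mu(C^\complement \cap U) = 0$. But $C^\complement =_\mu A^\complement$ and $A^\complement$ is thick in $U$, so instantiating $V = U$ in the definition of thickness yields $\mu(A^\complement \cap U) > 0$; since $\mu(A^\complement \cap U) = \mu(C^\complement \cap U) = 0$, this is a contradiction, completing the proof.

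I do not expect a genuine obstacle here: the argument is a short bookkeeping exercise resting entirely on the $=_\mu$-invariance of thickness and on the already-noted consequence of the DPP. The only points needing (minor) care are checking that \emph{both} thickness and co-thickness pass to the closed representative $C$, and that one may legitimately take $V = U$ in the definition of thickness, which is fine because $U$ is itself a nonempty open subset of $U$.
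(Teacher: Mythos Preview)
Your argument is correct and is exactly the approach the paper takes: the lemma is stated as an immediate consequence of the preceding observation that in a DPP space a closed set thick in a nonempty open \( U \) must contain \( U \), and you have simply unwound this implication carefully (passing to a closed representative via the \( =_\mu \)-invariance of thickness, then deriving \( U \subseteq C \), then contradicting co-thickness).
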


\begin{theorem}\label{thm:KisPi03completeCantor}
 \( \mathscr{K} ( \pre{\omega}{2} , \measurecantor ) \) is \( \bPi^{0}_{3} \)-complete in \( \MALG \). 
\end{theorem}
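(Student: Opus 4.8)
The \( \bPi^{0}_{3} \) upper bound is immediate from Theorem~\ref{thm:setofcompactsinMALG}: \( \pre{\omega}{2} \) is compact, hence Heine--Borel, and \( \measurecantor \) is a probability measure, so every subset has finite measure. I record two facts for use below: since closed subsets of the compact space \( \pre{\omega}{2} \) are compact, \( \mathscr{K}(\pre{\omega}{2},\measurecantor)=\mathscr{F}(\pre{\omega}{2},\measurecantor) \); and since \( A\subseteq_\mu\Cl_\mu A \) always holds, \( \eq{A}\in\mathscr{K} \) iff \( \measurecantor(A)=\measurecantor(\Cl_\mu A) \). For hardness the plan is to produce a continuous map \( \pre{\omega\times\omega}{2}\to\MALG \), \( z\mapsto\eq{A_z} \), reducing the standard \( \bPi^{0}_{3} \)-complete set \( P_3=\setofLR{z\in\pre{\omega\times\omega}{2}}{\text{every row }z(n,\cdot)\text{ is eventually }0} \) to \( \mathscr{K} \).

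First I fix some gadgets inside \( \pre{\omega}{2} \). Partition \( \pre{\omega}{2} \) into the clopen sets \( C_n=\Nbhd_{0^{(n)}\conc 1} \), of measure \( 2^{-n-1} \), together with the null singleton \( \set{0^{(\omega)}} \). In each \( C_n \) fix a closed nowhere dense \( K_n\subseteq C_n \) with \( \measurecantor(K_n)>0 \) (a fat Cantor set, obtained by the usual construction), and enumerate as \( \seqof{N^n_i}{i\in\omega} \) all basic clopen subsets of \( C_n \) that meet \( K_n \); note that every point of \( K_n \) has arbitrarily small neighbourhoods among the \( N^n_i \). Since \( K_n \) is nowhere dense, each \( N^n_i\setminus K_n \) is nonempty open, so for every \( m \) I may choose a clopen \( G^n_m\subseteq C_n\setminus K_n \) — a finite union of tiny clopen pieces, one inside each of \( N^n_0\setminus K_n,\dots,N^n_m\setminus K_n \) — with \( \measurecantor(G^n_m)\le 2^{-n-m-10} \) and \( \measurecantor(G^n_m\cap N^n_i)>0 \) for all \( i\le m \). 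Put \( A_z=\bigcup_{n}\bigcup\setof{G^n_m}{z(n,m)=1} \); this is an open set. The assignment \( z\mapsto\eq{A_z} \) is continuous: if \( z,z' \) agree on every \( (n,m) \) with \( n,m<N \), then \( A_z\symdif A_{z'}\subseteq\bigcup_{n\ge N}C_n\cup\bigcup_{n<N}\bigcup_{m\ge N}G^n_m \), which has measure at most \( 2^{-N}+2^{-N-8} \).

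It remains to verify \( \eq{A_z}\in\mathscr{K} \) iff \( z\in P_3 \). Since the \( C_n \) are pairwise disjoint clopen sets covering \( \pre{\omega}{2} \) but for the null point \( 0^{(\omega)} \), one has \( \bigsqcup_n\Cl_\mu(A_z\cap C_n)\subseteq\Cl_\mu A_z\subseteq\bigsqcup_n\Cl_\mu(A_z\cap C_n)\cup\set{0^{(\omega)}} \), so \( \measurecantor(\Cl_\mu A_z)=\sum_n\measurecantor(\Cl_\mu(A_z\cap C_n)) \) while \( \measurecantor(A_z)=\sum_n\measurecantor(A_z\cap C_n) \), with \( \measurecantor(A_z\cap C_n)\le\measurecantor(\Cl_\mu(A_z\cap C_n)) \) for each \( n \). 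If \( z\in P_3 \), each row has only finitely many \( 1 \)s, so each \( A_z\cap C_n \) is a finite union of clopen sets, hence clopen, hence has the same measure as its \( \mu \)-closure; summing, \( \measurecantor(\Cl_\mu A_z)=\measurecantor(A_z) \), so \( \eq{A_z}\in\mathscr{K} \). If \( z\notin P_3 \), pick \( n_0 \) with \( S=\setof{m}{z(n_0,m)=1} \) infinite: for every \( x\in K_{n_0} \) and every neighbourhood \( V \) of \( x \) there is \( i \) with \( x\in N^{n_0}_i\subseteq V \), and choosing \( m\in S \) with \( m\ge i \) yields \( G^{n_0}_m\subseteq A_z \) with \( \measurecantor(A_z\cap V)\ge\measurecantor(G^{n_0}_m\cap N^{n_0}_i)>0 \); hence \( K_{n_0}\subseteq\Cl_\mu(A_z\cap C_{n_0}) \). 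As \( A_z\cap C_{n_0}\subseteq C_{n_0}\setminus K_{n_0} \) is disjoint from \( K_{n_0} \), this gives \( \measurecantor(\Cl_\mu(A_z\cap C_{n_0}))\ge\measurecantor(A_z\cap C_{n_0})+\measurecantor(K_{n_0}) \), so \( \measurecantor(\Cl_\mu A_z)\ge\measurecantor(A_z)+\measurecantor(K_{n_0})>\measurecantor(A_z) \), i.e. \( \eq{A_z}\notin\mathscr{K} \). Thus \( z\mapsto\eq{A_z} \) is the required reduction and \( \mathscr{K}(\pre{\omega}{2},\measurecantor) \) is \( \bPi^{0}_{3} \)-complete. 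The delicate point is the choice of the \( G^n_m \): every infinite sub-union \( \bigcup_{m\in S}G^n_m \) must become thick at \( K_n \), i.e. acquire \( K_n \) in its \( \mu \)-closure, while every finite sub-union stays clopen — and reconciling this with the summability of \( \bigl(\measurecantor(G^n_m)\bigr)_m \) needed for continuity works only because \( K_n \) is nowhere dense, so the \( G^n_m \) can be made to shrink in measure while still meeting ever more of the basic clopen neighbourhoods of \( K_n \).
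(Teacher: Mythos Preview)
Your proof is correct. The overall strategy matches the paper's---a continuous reduction of \( \boldsymbol{P}_3 \) using the clopen decomposition \( C_n = \Nbhd_{0^{(n)}\conc 1} \)---but the implementation differs. The paper builds \( f(z) \) as an increasing union \( \bigcup_n \varphi(z\restriction n\times n) \) of compact sets with empty interior, at each stage possibly adjoining a small compact piece inside some \( C_j \); when row \( j \) is infinite the result becomes both thick and co-thick in \( C_j \), and Lemma~\ref{lem:thickcothicknotcompact} is invoked. You instead fix in advance a fat Cantor set \( K_n \subseteq C_n \) and take \( A_z \) to be a union of tiny clopen pieces \( G^n_m \subseteq C_n\setminus K_n \) that cumulatively accumulate \( K_n \); when row \( n_0 \) is infinite you get \( K_{n_0}\subseteq\Cl_\mu(A_z\cap C_{n_0}) \) directly, and disjointness of \( A_z \) from \( K_{n_0} \) gives the measure gap without a separate co-thickness check. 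Your route is a bit more self-contained (no appeal to Lemma~\ref{lem:thickcothicknotcompact}), and your \( A_z \) is open rather than \( K_\sigma \); the paper's version is slightly more uniform in that the gadgets are built on the fly rather than fixed in advance. Both arguments are of comparable length and difficulty.
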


\begin{proof}
By Proposition~\ref{thm:setofcompactsinMALG} \( \mathscr{K} \) is \( \bPi^{0}_{3} \), so it is enough to prove \( \bPi^{0}_{3} \)-hardness.
We define a continuous \( \hat{f} \colon \pre{ \omega \times \omega }{ 2 } \to \MALG \) witnessing \( \boldsymbol{P}_3 \leqW \mathscr{K} \), where
\[
\boldsymbol{P}_3 = \setof{ z \in \pre{ \omega \times \omega }{ 2 } }{ \FORALL{n} \EXISTS{m} \FORALL{k \geq m} z ( n , k ) = 0 }
\]
is \( \boldsymbol{\Pi}^0_3 \)-complete~\cite[p.~179]{Kechris:1995kc}.
More precisely, set \( \hat{f} ( z ) = \eq{ f ( z ) } \) where 
\[
f ( z ) = \bigcup_{n} \varphi ( z \restriction n \times n )
\]
for some suitable function \( \varphi \colon \pre{ < \omega \times \omega }{2} \to \KK ( \pre{\omega }{2} ) \) such that for all \( a \in \pre{ < \omega \times \omega }{2} \)
\begin{subequations}
\begin{gather}
 \Int \varphi ( a ) = \emptyset , \label{eq:thm:KisPi03complete-1}
\\
b \subseteq a \IMPLIES \varphi ( b ) \subseteq \varphi ( a ) , \label{eq:thm:KisPi03complete-2}
\\
a \in \Pre{ ( n + 1 ) \times ( n + 1 ) }{2}\IMPLIES \measurecantor \left ( \varphi ( a ) \setminus \varphi ( a \restriction n \times n ) \right ) \leq 2^{ - ( n + 2 )} . \label{eq:thm:KisPi03complete-3}
\end{gather}
\end{subequations}
For \( a , b \in \pre{ < \omega \times \omega }{2} \) let \( \delta ( a , b ) \) be the largest \( n \) such that \( a \restriction n \times n = b \restriction n \times n \).
Equation~\eqref{eq:thm:KisPi03complete-3} implies that if \( a \in \pre{ n \times n }{2} \) then \( a \subset a' \implies \measurecantor \left ( \varphi ( a' ) \setminus \varphi ( a ) \right ) < 2^{ - ( n + 1 ) } \); thus if \( a , b \in \pre{ < \omega \times \omega }{2} \) are such that \( \delta ( a , b ) = n \), then \( \varphi ( a ) \symdif \varphi ( b ) \subseteq ( \varphi ( a ) \setminus \varphi ( a \restriction n \times n ) ) \cup ( \varphi ( b ) \setminus \varphi ( b \restriction n \times n ) ) \) and hence \( \measurecantor \left ( \varphi ( a ) \symdif \varphi ( b ) \right ) < 2^{ - n } \).
Therefore if \( z , w \in \pre{ \omega \times \omega }{2} \) and \( n \) is largest such that \( z \restriction n \times n = w \restriction n \times n \), then \( \measurecantor \left ( f ( z ) \symdif f ( w ) \right ) \leq 2^{ - n } \), and therefore \( \hat{f} \) is continuous.
We arrange that 
\begin{subequations}
\begin{align}
z \in \boldsymbol{P}_3 & \IMPLIES f ( z ) \in \KK ( \pre{\omega }{2} ) \label{eq:thm:KisPi03complete-5}
\\
z \notin \boldsymbol{P}_3 & \IMPLIES f ( z ) \in \Ksigma ( \pre{\omega }{2} ) \text{ is thick and co-thick in some } \Nbhd_{0^{( j )} \conc 1 } . \label{eq:thm:KisPi03complete-6}
\end{align}
\end{subequations}
By Lemma~\ref{lem:thickcothicknotcompact}, equation~\eqref{eq:thm:KisPi03complete-6} guarantees that if \( z \notin \boldsymbol{P}_3 \) then \( \hat{f} ( z ) \notin \mathscr{K} \), and therefore \( \hat{f} \) witnesses that \( \boldsymbol{P}_3 \leqW \mathscr{K} \).

Here are the details.
Fix \( ( s^j_m )_m \) an enumeration without repetitions of the nodes extending \( 0^{( j )} \conc 1 \), and such that longer nodes are enumerated after shorter ones, that is: \( \lh ( s^j_n ) < \lh ( s^j_m ) \implies n < m \).
\begin{itemize}[leftmargin=1pc]
\item
Set \( \varphi ( \emptyset ) = \set{ 0^{ ( \omega ) } } \).
Then~\eqref{eq:thm:KisPi03complete-1} holds, and~\eqref{eq:thm:KisPi03complete-2} and~\eqref{eq:thm:KisPi03complete-3} do not apply.
\item
Suppose \( a \in \Pre{ n + 1 }{2} \) and that \( \varphi ( a \restriction n \times n ) \) satisfies~\eqref{eq:thm:KisPi03complete-1}--\eqref{eq:thm:KisPi03complete-3}, and let's construct \( \varphi ( a ) \).
If \( a ( j , n ) = 0 \) for all \( j \leq n \), then set \( \varphi ( a ) = \varphi ( a \restriction n \times n ) \) so that~\eqref{eq:thm:KisPi03complete-1}--\eqref{eq:thm:KisPi03complete-3} are still true.
Otherwise, let \( j \leq n \) be least such that \( a ( j , n ) = 1 \).
Then by~\eqref{eq:thm:KisPi03complete-1} for \( \varphi ( a \restriction n \times n ) \), we can define \( k \) to be the least such that \( \mu \bigl (\Nbhd_{ s^j_k } \cap \varphi ( a \restriction n \times n ) \bigr ) = 0 \), and let \( K \subseteq \Nbhd_{ s^j_k } \) be compact with empty interior and such that 
\begin{equation}\label{eq:thm:KisPi03complete-7}
0 < \measurecantor ( K ) \leq \measurecantor ( \Nbhd_{ s^j_k } ) / 2^{ - ( n + 2 ) } .
\end{equation}
Then \( \varphi ( a ) = \varphi ( a \restriction n \times n ) \cup K \) satisfies~\eqref{eq:thm:KisPi03complete-1}--\eqref{eq:thm:KisPi03complete-3}.
\end{itemize}

The proof is complete once we check that~\eqref{eq:thm:KisPi03complete-5} and~\eqref{eq:thm:KisPi03complete-6} hold.
Suppose first \( z \in \boldsymbol{P}_3 \).
Then for each \( j \) there is \( N_j \in \omega \) such that \( z ( j , n ) = 1 \implies n < N_j \), and hence \( \Nbhd_{ 0^{( j )} \conc 1} \cap f ( z ) = \Nbhd_{ 0^{( j )} \conc 1} \cap \varphi ( z \restriction N_j \times N_j ) \) is compact, so \( f ( z ) \) is compact.
Suppose now \( z \notin \boldsymbol{P}_3 \), and let \( j \) be least such that \( \setof{n}{ z ( j , n ) = 1 } \) is infinite.
Then \( f ( z ) \) is thick in \( \Nbhd_{ 0^{( j )} \conc 1 } \): fix \( k \in \omega \), then for \( N \) such that \( \setofLR{ M < N }{ z ( j , M ) = 1 } \) has size at least \( k + 1 \), one has that \( \measurecantor ( \varphi ( z \restriction N \times N ) \cap \Nbhd_{ s^j_k } ) > 0 \).
Moreover \( f ( z ) \) is co-thick in \( \Nbhd_{ 0^{( j )} \conc 1 } \).
To see this fix \( k \in \omega \) and let \( N \) be such that \( \setofLR{ M < N }{ z ( j , M ) = 1 } \) has size \( k \), and let \( H = \varphi ( z \restriction N \times N ) \cap \Nbhd_{ s^j_k } \).
Since \( H \) is closed with empty interior, let \( k' \geq k \) be least with \( s^j_k \subseteq s^j_{k'} \) and \( H \cap \Nbhd_{ s^j_{k'} } = \emptyset \).
Then \( \measurecantor ( f ( z ) \cap \Nbhd_{ s^j_{k'} } ) < \measurecantor ( \Nbhd_{ s^j_{k'} } ) \) by~\eqref{eq:thm:KisPi03complete-7}.
\end{proof}

\begin{corollary}\label{cor:KisPi03completeCantor}
Let \( ( X , d , \mu ) \) be a Polish measure space such that \( \mu \) is nonsingular.
If there is a \( Y \subseteq X \) such that \( 0 < \mu ( Y ) < \infty \), then \( \mathscr{K} ( X , \mu ) \) and \( \mathscr{F} ( X , \mu ) \) are \( \bPi^{0}_{3} \)-hard.
\end{corollary}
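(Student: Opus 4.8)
The plan is to transfer the $\bPi^{0}_{3}$-hardness of $\mathscr{K}(\pre{\omega}{2},\measurecantor)$, established in Theorem~\ref{thm:KisPi03completeCantor}, to $\mathscr{K}(X,\mu)$ and $\mathscr{F}(X,\mu)$ along a measure-preserving copy of the Cantor space sitting inside $X$. Since $\mu$ is a Borel measure we may assume $Y\in\Bor(X)$ (if $Y$ is only $\mu$-measurable, replace it by a Borel subset of the same positive measure). Put $c=\mu(Y)/2>0$ and $\nu=c\cdot\measurecantor$; then $\nu$ is a nonsingular, fully supported Borel measure on $\pre{\omega}{2}$ with $\nu(\pre{\omega}{2})=c<\mu(Y)<\infty$, so hypothesis $(*)$ of Theorem~\ref{thm:embeddingCantorinPolish} holds and we obtain a continuous injection $H\colon\pre{\omega}{2}\to X$ with $\mu(H(B))=\nu(B)$ for every Borel $B\subseteq\pre{\omega}{2}$ (the identity, asserted on basic clopen sets, extends to all Borel $B$ by uniqueness of finite Borel measures). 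As a continuous bijection from a compact space onto its image, $H$ is a homeomorphism onto the compact (Cantor) set $K_{0}=H(\pre{\omega}{2})\subseteq X$.

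Define $H_{*}\colon\MALG(\pre{\omega}{2},\measurecantor)\to\MALG(X,\mu)$ by $H_{*}(\eq{A})=\eq{H(A)}$, where $A$ is any Borel representative of $\eq{A}$; this is well defined since $H$ is injective and sends $\measurecantor$-null sets to $\mu$-null sets. From $\mu(H(A)\symdif H(B))=\mu(H(A\symdif B))=\nu(A\symdif B)=c\cdot\measurecantor(A\symdif B)$ it follows that $H_{*}$ is Lipschitz, hence continuous. I claim
\[
\eq{A}\in\mathscr{K}(\pre{\omega}{2},\measurecantor)\iff H_{*}(\eq{A})\in\mathscr{K}(X,\mu)\iff H_{*}(\eq{A})\in\mathscr{F}(X,\mu).
\]
The two left-to-right implications are immediate: if $A=_{\measurecantor}K'$ with $K'\subseteq\pre{\omega}{2}$ compact, then $H(K')$ is compact, hence closed, in $X$, and $H(K')=_{\mu}H(A)$. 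For the remaining implication suppose $H(A)=_{\mu}C$ with $C\subseteq X$ closed (which in particular covers the case of $C$ compact). Since $H(A)\subseteq K_{0}$ and $K_{0}$ is compact, $C\cap K_{0}$ is compact and $\mu(C\setminus K_{0})\leq\mu(C\setminus H(A))=0$, so $C\cap K_{0}=_{\mu}C=_{\mu}H(A)$; hence $K'=H^{-1}(C\cap K_{0})$ is a compact subset of $\pre{\omega}{2}$ and, by the measure-preservation identity, $\measurecantor(K'\symdif A)=c^{-1}\mu\bigl((C\cap K_{0})\symdif H(A)\bigr)=0$, i.e. $\eq{A}=\eq{K'}\in\mathscr{K}(\pre{\omega}{2},\measurecantor)$.

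Granting the equivalence, $\mathscr{K}(\pre{\omega}{2},\measurecantor)=H_{*}^{-1}(\mathscr{K}(X,\mu))=H_{*}^{-1}(\mathscr{F}(X,\mu))$, so composing $H_{*}$ with a continuous reduction witnessing the $\bPi^{0}_{3}$-hardness of $\mathscr{K}(\pre{\omega}{2},\measurecantor)$ (Theorem~\ref{thm:KisPi03completeCantor}) yields continuous reductions to $\mathscr{K}(X,\mu)$ and to $\mathscr{F}(X,\mu)$, which are therefore $\bPi^{0}_{3}$-hard. The one delicate point is the backward direction of the displayed equivalence: it is precisely the compactness of $K_{0}$ (hence of $C\cap K_{0}$) that lets one pull a closed, or compact, subset of $X$ back to a \emph{compact} subset of $\pre{\omega}{2}$; the rest is routine bookkeeping.
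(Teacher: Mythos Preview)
Your proof is correct and follows essentially the same approach as the paper's: embed $\pre{\omega}{2}$ into $X$ measure-preservingly via Theorem~\ref{thm:embeddingCantorinPolish}, push forward to get a continuous map between measure algebras, and verify that this map reduces $\mathscr{K}(\pre{\omega}{2},\measurecantor)$ to both $\mathscr{K}(X,\mu)$ and $\mathscr{F}(X,\mu)$, the key point in the backward direction being that one may intersect a closed $C$ with the compact range $K_0$. Your write-up is in fact somewhat more explicit than the paper's (e.g.\ the Lipschitz estimate for $H_*$ and the computation $\mu(C\setminus K_0)\leq\mu(C\setminus H(A))=0$), but the argument is the same.
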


\begin{proof}
We may assume that \( Y \) is \( \Gdelta \).
Choose \( r > 0 \) small enough so that Theorem~\ref{thm:embeddingCantorinPolish} can be applied, so that there is an injective continuous \( H \colon \pre{\omega }{2} \to Y \) such that \( r \measurecantor ( A ) = \mu ( H [ A ] ) \) for all measurable \( A \subseteq \pre{\omega }{2} \).
The map \( H \) induces an embedding between the measure algebras 
\[ 
\hat{H} \colon \MALG ( \pre{\omega}{2} , { r \measurecantor} ) \to \MALG ( K , \mu ) , \quad \eq{A} \mapsto \eq{ H [ A ] } ,
\] 
where \( K = \ran H \).
There is a natural embedding \( j \colon \MALG ( K , \mu ) \hookrightarrow \MALG ( X , \mu ) \), sending each \( \eq{A}_K \equalsdef \setof{ B \in \MEAS_\mu \cap \Pow ( K )}{ B =_\mu A } \) to \( \eq{A}_X \equalsdef \setof{ B \in \MEAS_\mu }{ B =_\mu A } \).
Then \( j \circ \hat{H} \) is a reduction witnessing both \( \mathscr{K} ( \pre{\omega }{2} , r \measurecantor ) \leqW \mathscr{ K } ( X , \mu ) \) and \( \mathscr{K} ( \pre{\omega }{2} , r \measurecantor ) \leqW \mathscr{ F } ( X , \mu ) \).
For the second reduction, argue as follows: if \( H ( A ) =_\mu F \) for some closed \( F \subseteq X \), then \( H ( A ) =_\mu F \cap K \) hence \( \eq{A} \in \mathscr{ K } ( \pre{\omega}{2} , { r \measurecantor} ) \).
\end{proof}

By Proposition~\ref{thm:setofcompactsinMALG} and Corollary~\ref{cor:KisPi03completeCantor},

\begin{theorem}\label{thm:KisPi03complete}
Let \( ( X , d , \mu ) \) be a Heine-Borel space such that every compact has finite measure, and suppose \( \mu \) is nonsingular. 
Then \( \mathscr{K} ( X , \mu ) \) and \( \mathscr{F} ( X , \mu ) \) are \( \bPi^{0}_{3} \)-complete. 
\end{theorem}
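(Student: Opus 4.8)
The plan is to obtain the result by combining two facts already established in the paper, after a short verification that the hypotheses line up. First I would recall that a Heine–Borel space is $\Ksigma$ and Polish (as observed just before Proposition~\ref{thm:setofcompactsinMALG}), so $(X,d,\mu)$ is in particular a Polish measure space; moreover, by the running convention of the paper, $\mu$ is fully supported and locally finite, it is nonsingular by hypothesis, and every compact subset has finite measure by hypothesis. These are exactly the standing assumptions needed below.

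With this in hand, Proposition~\ref{thm:setofcompactsinMALG} applies directly and gives that $\mathscr{K}(X,\mu)$ and $\mathscr{F}(X,\mu)$ are $\bPi^{0}_{3}$ in $\MALG(X,\mu)$, which is the upper bound. It then remains only to prove $\bPi^{0}_{3}$-hardness, for which I would invoke Corollary~\ref{cor:KisPi03completeCantor}. That corollary requires a measurable $Y\subseteq X$ with $0<\mu(Y)<\infty$. To produce one, fix any $\bar x\in X$: the open ball $\Ball(\bar x;1)$ is nonempty, hence has strictly positive measure because $\mu$ is fully supported, while $\Cl\Ball(\bar x;1)$ is compact by the Heine–Borel property and so has finite measure. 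Taking $Y=\Ball(\bar x;1)$ we get $0<\mu(Y)\leq\mu(\Cl\Ball(\bar x;1))<\infty$, and Corollary~\ref{cor:KisPi03completeCantor} yields that both $\mathscr{K}(X,\mu)$ and $\mathscr{F}(X,\mu)$ are $\bPi^{0}_{3}$-hard.

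Combining the two halves, $\mathscr{K}(X,\mu)$ and $\mathscr{F}(X,\mu)$ are $\bPi^{0}_{3}$-complete, as claimed. There is no real obstacle at this stage: all the substance lives in Proposition~\ref{thm:setofcompactsinMALG} for the complexity upper bound, and in the chain running from Theorem~\ref{thm:KisPi03completeCantor} through the measure-preserving embedding of Theorem~\ref{thm:embeddingCantorinPolish} to Corollary~\ref{cor:KisPi03completeCantor} for the hardness; the present theorem is merely the bookkeeping step that checks a Heine–Borel space with the stated properties satisfies the hypotheses of both, which is what the paragraphs above do.
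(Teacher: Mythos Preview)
Your proposal is correct and takes essentially the same approach as the paper: the paper's proof is literally the one-line ``By Proposition~\ref{thm:setofcompactsinMALG} and Corollary~\ref{cor:KisPi03completeCantor}'', and you have simply filled in the verification that the hypotheses of those two results are met (in particular producing the set $Y$ of finite positive measure via a ball and the Heine--Borel property). Nothing more is needed.
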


\section{The set of exceptional points}\label{sec:exceptionalpoints}
\begin{theorem}\label{thm:blurrypointsSigma03}
Suppose \( \emptyset \neq A \subseteq {}^{ \omega }2 \) has empty interior, and \( A = \Phi ( A ) \).
Then \( \Blur ( A ) \) is \( \bSigma^{0}_{3} \)-complete.
\end{theorem}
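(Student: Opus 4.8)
The plan is to settle the upper bound quickly and then work on hardness. That \( \Blur(A)\in\bSigma^{0}_{3} \) is already recorded in the discussion following Lemma~\ref{lem:densityBaire2}: the Cantor space with \( \measurecantor \) is amenable, being a closed subspace of the Baire space with the induced ultrametric, and \( A=\Phi(A) \) is Borel by Corollary~\ref{cor:PhiPi03}. For hardness I would exhibit a continuous \( \hat{f}\colon\pre{\omega\times\omega}{2}\to\pre{\omega}{2} \) of the form \( \hat f(z)=\bigcup_j\varphi(z\restriction j\times j) \), with \( \varphi\colon\pre{<\omega\times\omega}{2}\to\pre{<\omega}{2} \) Lipschitz as in Section~\ref{sec:notationsandpreliminaries}, witnessing \( \boldsymbol{P}_3^{\complement}\leqW\Blur(A) \), where \( \boldsymbol{P}_3 \) is the \( \bPi^{0}_{3} \)-complete set from the proof of Theorem~\ref{thm:KisPi03completeCantor}; thus \( \boldsymbol{P}_3^{\complement}=\setof{z}{\exists n\ \card{\setof{k}{z(n,k)=1}}=\infty} \) is \( \bSigma^{0}_{3} \)-complete. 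The branch \( \hat f(z) \) will be built so that \( \oscillation_A(\hat f(z))>0 \) precisely when some row of \( z \) carries infinitely many \( 1 \)s.

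The first real step is to isolate the elementary ``moves'' available along branches in the density tree \( \densitytree(A)=\setof{t\in\pre{<\omega}{2}}{\measurecantor(A\cap\Nbhd_t)>0} \). All of them rest on the hypotheses together with the Lebesgue density theorem for \( \pre{\omega}{2} \): (a) \( \measurecantor(A)>0 \) — else \( \density_A\equiv 0 \) and \( \Phi(A)=\emptyset \) — so \( \emptyset\in\densitytree(A) \); (b) \( \measurecantor(\LOC{A}{s})<1 \) for every node \( s \), since \( \measurecantor(\LOC{A}{s})=1 \) gives \( \Nbhd_s\subseteq\Int_{\measurecantor}(A)\subseteq\Phi(A)=A \), contradicting \( \Int A=\emptyset \); (c) from any \( t\in\densitytree(A) \) and any \( \delta>0 \) one can descend inside \( \densitytree(A) \) to some \( t' \) with \( \measurecantor(\LOC{A}{t'})>1-\delta \), by taking a density point of \( A\cap\Nbhd_t \); (d) from any \( t\in\densitytree(A) \) one can descend inside \( \densitytree(A) \) to some \( t'' \) with \( \measurecantor(\LOC{A}{t''})<\tfrac12 \) — otherwise every node below \( t \) has, by (b), both immediate successors of positive \( A \)-measure, so \( A \) is thick in \( \Nbhd_t \), while \( \density^{+}_{A^{\complement}}\le\tfrac12 \) throughout \( \Nbhd_t \), whence \( \Phi(A^{\complement})\cap\Nbhd_t=\emptyset \) and, by DPP, \( A^{\complement}\cap\Nbhd_t \) is null, contradicting (b); (e) every node \( t \) has an immediate successor \( c \) with \( \measurecantor(\LOC{A}{c})\ge\measurecantor(\LOC{A}{t}) \) (the successor \( A \)-densities average to that of \( t \)), and \( c\in\densitytree(A) \) whenever \( t \) does. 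So along \( \densitytree(A) \) one may freely perform an ``ascent'' (a single step, \( A \)-density nondecreasing), a ``peak'' (reach \( A \)-density \( >1-\delta \)), or a ``dip'' (reach \( A \)-density \( <\tfrac12 \)).

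For the construction itself I would run the standard combinatorial bookkeeping that, reading \( z\restriction j\times j \), produces a ``current row'' \( l(j)\in\omega \) with \( l(j)\to\infty \) when \( z\in\boldsymbol{P}_3 \) and \( \liminf_j l(j)=n_0 \), the least row of \( z \) with infinitely many \( 1 \)s, when \( z\notin\boldsymbol{P}_3 \). Starting from \( \varphi(\emptyset)=\emptyset\in\densitytree(A) \), at macro-stage \( j \): when the bookkeeping calls to ``attend to row \( l(j) \)'', first peak to \( A \)-density \( >1-2^{-j} \) and then dip, but only down into a band of \( A \)-densities depending on \( l(j) \) (bounded away from \( 1 \), and shrinking to \( 1 \) as \( l(j)\to\infty \)), and take \( \varphi(z\restriction j\times j) \) to be the resulting node; otherwise extend by one ascent step. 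By construction all values of \( \varphi \) stay in \( \densitytree(A) \), \( \varphi \) is monotone with lengths tending to infinity, so \( \hat f \) is continuous. If \( z\notin\boldsymbol{P}_3 \), the dips calibrated to \( l(j)=n_0 \) occur at arbitrarily large depths, so \( \density^{-}_A(\hat f(z))<1 \), while the peaks give \( \density^{+}_A(\hat f(z))=1 \); hence \( \oscillation_A(\hat f(z))>0 \). If \( z\in\boldsymbol{P}_3 \), then since \( l(j)\to\infty \) every dip past some stage is shallower than any prescribed \( \varepsilon \) while the recoveries stay near \( 1 \), so \( \measurecantor(\LOC{A}{\hat f(z)\restriction m})\to1 \) and \( \density_A(\hat f(z))=1 \) exists. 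With the upper bound this shows \( \Blur(A) \) is \( \bSigma^{0}_{3} \)-complete.

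The heart of the matter — and the only place where \( A=\Phi(A) \) and \( \Int A=\emptyset \) are really used — is fact (d): one must exclude that \( A \), inside some basic clopen set, is so dense that its relative density never dips below \( 1/2 \) along \( \densitytree(A) \), and this is exactly where the Lebesgue density theorem for the ultrametric \( \pre{\omega}{2} \) enters (through (b)). The remaining technical point is to realize dips of a prescribed, row-dependent depth while preventing the connecting path from overshooting; this is arranged by interleaving ``ascent'' steps with the ``peak''/``dip'' moves of (c)–(e). The bookkeeping for \( l(j) \) and the two density computations are routine.
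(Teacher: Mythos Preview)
Your plan coincides with the paper's: reduce $\boldsymbol{P}_3^\complement$ to $\Blur(A)$ by a Lipschitz $\varphi$ into the density tree $\densitytree(A)$, steering the local densities $\measurecantor(\LOC{A}{\,\cdot\,})$ along the image branch. Facts (a)--(e) are correct, and you are right that (d) is where $A=\Phi(A)$ and $\Int A=\emptyset$ enter. The gap is in your last paragraph: the construction needs not the raw moves (c)--(e) but \emph{path-controlled} versions of them---from $t$ one must reach a node $s$ lying in a prescribed density band while every intermediate node stays above a prescribed floor---and this does not follow from ``interleaving'' (c)--(e). Your peak (c) gives no control on intermediate densities (a density point of $A\cap\Nbhd_t$ may be reached through nodes of very small $A$-density), and your dip (d) lands below $1/2$, not in a row-dependent band near $1$. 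Since for $z\in\boldsymbol{P}_3$ you need \emph{all} local densities along $\hat f(z)$ to tend to $1$, one uncontrolled peak or dip wrecks that direction of the reduction. Calling this ``routine'' is exactly where the argument is incomplete.

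What is missing is the dyadic level function $\rho(t)=n\Leftrightarrow\measurecantor(\LOC{A}{t})\in[1-2^{-n},1-2^{-n-1})$ together with two facts. First, a single step lowers $\rho$ by at most $1$: both successors of $t$ have density $>2\,\measurecantor(\LOC{A}{t})-1$ by your (b), hence $\rho\ge\rho(t)-1$; combined with the (d)-style argument run at each level, this yields controlled dips hitting any target $\rho=m<\rho(t)$ with the path staying in $\bigcup_{k\ge m}I_k$. Second, a controlled ascent from $\rho(t)=n$ to some extension with $\rho\ge n+1$ and the whole path $\ge n$, which requires its own measure-theoretic argument (the tree of ``path stays at level $n$'' nodes must be null in $\Nbhd_t$, else it meets $\Phi(A)$). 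The paper introduces $\rho$ explicitly and imports both controlled moves from~\cite{Andretta:2013uq}; it also preprocesses $z\mapsto z'$ so that for $z\notin\boldsymbol{P}_3$ the branch oscillates between two \emph{fixed finite} $\rho$-levels ($4n_0$ and $4n_0+2$) rather than between $1$ and a band as in your sketch---cleaner, but not essential once the controlled moves are available.
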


\begin{proof}
For any \( z \in \pre{ \omega \times \omega }{2} \), let \( z' \in \pre{ \omega \times \omega }{2} \) be defined by the conditions
\[
\begin{cases}
 z' ( 2i , 2j ) = z' ( 2i + 1 , 2j + 1 ) = z ( i , j ) 
 \\
 z' ( 2i , 2j + 1 ) = z' ( 2i + 1 , 2j ) = 0
\end{cases}
\]
for all \( i , j \in \omega \).
The function \( \pre{ \omega \times \omega }{2} \to \pre{ \omega \times \omega }{2} \), \( z \mapsto z' \) is continuous.

Recall the tree \( \densitytree ( A ) \) defined in~\eqref{eq:densitytree}.
Given \( a \in \Pre{ n \times n }{2} \), a node \( \psi ( a ) \in \densitytree ( A ) \) is constructed with the property that
\[
a \subset b \implies \psi ( a ) \subset \psi ( b )
\]
so that defining
\[
f \colon \pre{ \omega \times \omega }{2} \to \body{ \densitytree ( A ) } , \quad f ( z ) = \bigcup_{ n \in \omega }\psi ( z' \restriction n \times n ) ,
\]
the function \( f \) is continuous and will witness \( \boldsymbol{P}_3^\complement \leqW \Blur ( A ) \).
Define \( I_n \), \( \rho \) as in the proof of~\cite[section 7.1]{Andretta:2013uq}, that is \( I_n = \cointerval{ 1 - 2^{ - n} }{ 1 - 2^{ - n - 1} } \) and \( \rho ( s ) = n \iff \measurecantor ( \LOC{A}{ s} ) \in I_n \).

Let \( \psi ( \emptyset ) = \emptyset \).
Given \( a \in \Pre{ ( n + 1 ) \times ( n + 1 ) }{2} \) define \( \psi ( a ) = t \) as follows:
\begin{itemize}
\item 
If \( \FORALL{ j \leq n} [ a ( j , n ) = 0 ] \), by \cite[Proposition 3.5]{Andretta:2013uq} let \( t \in \densitytree ( A ) \) be a proper extension of \( \psi ( a \restriction n \times n ) \) such that \( \rho ( t ) \geq n + 1 \) and
\[
\FORALL{u} \left [ \psi ( a \restriction n \times n ) \subseteq u \subseteq t \IMPLIES \rho ( u ) \geq \rho \left ( \psi ( a \restriction n \times n ) \right ) \right ] .
\]
\item 
If \( \EXISTS{ j \leq n} [ a ( j , n ) = 1 ] \), let \( j_0 \) be the least such \( j \).
By \cite[Proposition 3.5 and Claim 7.0.1]{Andretta:2013uq}, let \( t \in \densitytree ( A ) \) be a proper extension of \( \psi ( a \restriction n \times n ) \) with \( \rho ( t ) = 2 j_0 \) and
\[
\FORALL{u} \left [ \psi ( a \restriction n \times n ) \subseteq u \subseteq t \IMPLIES \rho ( u ) \geq \min \setLR{ \rho \left ( \psi ( a \restriction n \times n ) \right ) , 2 j_0 } \right ] .
\]
\end{itemize}
Suppose \( z \in \boldsymbol{P}_3 \), so that \( z' \in \boldsymbol{P}_3 \) as well.
For every \( k \in \omega \) choose \( m_k \in \omega \) such that \( \FORALL{ m \geq m_k} [ z' ( k , m ) = 0 ] \) and let \( M_k = \max \setLR{ m_0 , \ldots , m_k } \).
Therefore for every \( n \geq \max \setLR{ k , M_k } \), the least \( j \leq n \) such that \( z' ( j , n ) = 1 \)---if such a \( j \) exists---is larger than \( k \) and thus \( \rho \left (\psi ( z' \restriction n \times n ) \right ) > k \).
This shows that \( \lim_{i \to \infty }\rho ( f ( z ) \restriction i ) = + \infty \) hence \( f ( z ) \in \Phi ( A ) \).

Conversely, suppose \( z \notin \boldsymbol{P}_3 \).
Let \( n_0 \) be the least \( n \) such that \( \EXISTSS{ \infty }{ m } z ( n , m ) = 1 \).
This means that \( 2n_0 \) is the least \( n \) such that \( \EXISTSS{ \infty }{ m} [ z' ( n , m ) = 1 ] \); moreover, whenever \( z' ( 2n_0 , m ) = 1 \), then \( z' ( 2 n_0 , m + 1 ) = 0 \) and \( z' ( 2n_0 + 1 , m + 1 ) = 1 \).
Then there are arbitrarily large values of \( n \) such that
\[ 
\rho \left ( \psi ( z' \restriction n \times n ) \right ) = 4 n_0 , \quad \rho \left ( \psi ( z' \restriction ( n + 1 ) \times ( n + 1 ) ) \right ) = 4 n_0 + 2 
\] 
hence \( \rho ( f ( z ) \restriction i ) = 4 n_0 \) for infinitely many values of \( i \) and \( \rho ( f ( z ) \restriction i ) = 4 n_0 + 2 \) for infinitely many values of \( i \).
From this it follows that \( f ( z ) \in \Blur (A) \).
\end{proof}

In~\cite[Theorems 1.3 and 1.7]{Andretta:2013uq} it is shown that in the Cantor space the set \( \eq{A} \in \MALG \) such that \( A = \Phi ( A ) \) and \( \Int ( A ) = \emptyset \) is comeager in \( \MALG \).

\begin{corollary}\label{cor:blurrycomeager}
\( \setof{\eq{A} \in \MALG ( \pre{\omega}{2} ) }{ \Blur ( A ) \text{ is \( \bSigma^{0}_{3} \)-complete}} \) and \( \setof{\eq{A} \in \MALG ( \pre{\omega}{2} ) }{ \Exc ( A ) \text{ is \( \bSigma^{0}_{3} \)-complete}} \) are both comeager in \( \MALG \).
\end{corollary}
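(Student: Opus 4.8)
The plan is to derive this quickly from Theorem~\ref{thm:blurrypointsSigma03} and the comeagerness statement of~\cite{Andretta:2013uq} recalled just above. By~\cite[Theorems 1.3 and 1.7]{Andretta:2013uq} the set $\mathcal{C}_0=\setof{\eq{A}\in\MALG(\pre{\omega}{2})}{\Phi(A)=A\AND\Int(A)=\emptyset}$ is comeager; since $\MALG$ has no isolated points and $\eq{\emptyset}$ is the unique class with $\Phi(A)=\emptyset$ (if $\mu(A)>0$ then, by the DPP, $\Phi(A)=_\mu A$ is nonnull), the set $\mathcal{C}=\mathcal{C}_0\setminus\set{\eq{\emptyset}}$ is still comeager, and for each $\eq{A}\in\mathcal{C}$ its canonical representative $B\equalsdef\Phi(A)$ satisfies $\emptyset\neq B\subseteq\pre{\omega}{2}$, $\Int(B)=\emptyset$ and $B=\Phi(B)$, so Theorem~\ref{thm:blurrypointsSigma03} applies to $B$. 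It is therefore enough to prove that $\Blur(A)$ and $\Exc(A)$ are $\bSigma^{0}_{3}$-complete for every $\eq{A}\in\mathcal{C}$.

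So fix $\eq{A}\in\mathcal{C}$ and put $B=\Phi(A)$. Theorem~\ref{thm:blurrypointsSigma03} says $\Blur(B)$ is $\bSigma^{0}_{3}$-complete; since $B=_\mu A$ and $\Blur$ is $=_\mu$-invariant (Remark~\ref{rmks:exc}), $\Blur(A)=\Blur(B)$ is $\bSigma^{0}_{3}$-complete. Hence $\mathcal{C}\subseteq\setof{\eq{A}}{\Blur(A)\text{ is }\bSigma^{0}_{3}\text{-complete}}$, which is thus comeager.

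For $\Exc$ I would recycle the very reduction $f\colon\pre{\omega\times\omega}{2}\to\body{\densitytree(B)}$ produced in the proof of Theorem~\ref{thm:blurrypointsSigma03}, which satisfies $z\in\boldsymbol{P}_3\implies f(z)\in\Phi(B)$ and $z\notin\boldsymbol{P}_3\implies f(z)\in\Blur(B)$. In the first case $\density_B(f(z))=1$, so $f(z)\notin\Blur(B)\cup\Sharp(B)=\Exc(B)$; in the second case $f(z)\in\Blur(B)\subseteq\Exc(B)$. Thus $f$ also witnesses $\boldsymbol{P}_3^{\complement}\leqW\Exc(B)$. Since $\pre{\omega}{2}$ with $\measurecantor$ is amenable, we have $\Exc(B)\in\bSigma^{0}_{3}$ (Lemma~\ref{lem:densityBaire2} and the discussion after Corollary~\ref{cor:PhiPi03}), and $\boldsymbol{P}_3^{\complement}$ is $\bSigma^{0}_{3}$-complete, so $\Exc(B)$ — hence $\Exc(A)$, by $=_\mu$-invariance — is $\bSigma^{0}_{3}$-complete. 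Therefore $\mathcal{C}$ is contained in $\setof{\eq{A}}{\Exc(A)\text{ is }\bSigma^{0}_{3}\text{-complete}}$ as well, so this set is comeager.

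There is essentially no obstacle beyond bookkeeping once Theorem~\ref{thm:blurrypointsSigma03} is in hand; the only point worth double-checking is that the reduction built for $\Blur$ transfers verbatim to $\Exc$. It does precisely because that reduction maps the ``yes'' instances of $\boldsymbol{P}_3$ into $\Phi(B)$ --- points of density exactly $1$ --- and not merely into the complement of $\Blur(B)$, and a point of density $1$ belongs neither to $\Blur(B)$ nor to $\Sharp(B)$.
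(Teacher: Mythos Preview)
Your argument is correct and matches the paper's intended proof: apply Theorem~\ref{thm:blurrypointsSigma03} to the comeager set of \( \eq{A} \) whose canonical representative \( \Phi ( A ) \) is nonempty with empty interior, and for \( \Exc \) observe that the same reduction \( f \) built there sends \( \boldsymbol{P}_3 \) into \( \Phi ( A ) \) (not merely into the complement of \( \Blur ( A ) \)), hence into the complement of \( \Exc ( A ) \). The paper leaves all of this implicit in the sentence preceding the corollary; you have simply spelled out the details.
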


\begin{theorem}\label{thm:sharppointsPi03}
There is a \( K \in \KK ( \pre{\omega }{2} ) \) such that \( \Phi ( K ) \) is open, and \( \Sharp ( K ) \) is \( \bPi^{0}_{3} \)-complete.
Moreover for any given \( r \in ( 0 ; 1 ) \) we can arrange that \( \setof{ x \in \pre{\omega}{2} }{ \density_K ( x ) = r } \) is \( \bPi^{0}_{3} \)-complete.
\end{theorem}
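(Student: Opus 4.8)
The plan is to prove \( \bPi^{0}_{3} \)-hardness of \( \Sharp ( K ) \) and of \( \setofLR{ x \in \pre{\omega}{2} }{ \density_K ( x ) = r } \) at the same time, by a single continuous reduction from the \( \bPi^{0}_{3} \)-complete set \( \boldsymbol{P}_3 = \setofLR{ z \in \pre{ \omega \times \omega }{2} }{ \FORALL{n} \EXISTS{m} \FORALL{k \geq m}\, z ( n , k ) = 0 } \); that both sets are \( \bPi^{0}_{3} \) is already known (for \( \Sharp \), by the remark after Lemma~\ref{lem:densityBaire2}, as \( \pre{\omega}{2} \) is amenable; for the level set, because the graph of \( \density_K \) is \( \bPi^{0}_{3} \)). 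The compact \( K \subseteq \pre{\omega}{2} \) is built once and for all as a disjoint union \( \bigcup_{j} C_j \) of clopen ``bricks'' together with the set \( D \) of its limit points. The construction will make \( D \) perfect and will keep \( \measurecantor ( \LOC{K}{ u } ) \leq 1 - \delta \), for a fixed \( \delta > 0 \), at every node \( u \) lying below \( D \). Then \( K \) is compact with \( \measurecantor ( K ) > 0 \), a point has density \( 1 \) at \( K \) exactly when it eventually enters some \( C_j \), and hence \( \Phi ( K ) = \bigcup_{j} C_j \) is open, as required (and \( D \) comes out \( \measurecantor \)-null).

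Fix pairwise distinct dyadic rationals \( r_j \in ( 0 ; 1 - \delta ) \setminus \set{ r } \) with \( r_j \to r \). Following the proof of Theorem~\ref{thm:blurrypointsSigma03}, first pass from \( z \) to the re-coordinatized \( z' \) defined there (continuously in \( z \)); this re-coordinatization will be essential, for the same reason as in that proof. Then, by recursion on \( n \), one defines a monotone \( \psi \colon \pre{ < \omega \times \omega }{2} \to \pre{ < \omega }{2} \), together with the relevant part of \( K \), so that \( g ( z ) = \bigcup_{n} \psi ( z' \restriction n \times n ) \) is continuous and is the desired reduction. In the step producing \( \psi ( a ) \), for \( a \in \Pre{ ( n + 1 ) \times ( n + 1 ) }{2} \), from \( \psi ( a \restriction n \times n ) \): if \( \FORALL{ j \leq n }\, a ( j , n ) = 0 \) (a \emph{quiet} step) one extends to a node with \( \card{ \measurecantor ( \LOC{K}{ \psi ( a ) } ) - r } < 2^{ - n } \); if \( j_0 \leq n \) is least with \( a ( j_0 , n ) = 1 \) (a \emph{firing} step) one extends to a node with \( \card{ \measurecantor ( \LOC{K}{ \psi ( a ) } ) - r_{j_0} } < 2^{ - n } \); in both cases all intermediate nodes are kept with \( \measurecantor ( \LOC{K}{ \cdot } ) \) inside the interval spanned by the two consecutive scheme values, enlarged by \( 2^{-n} \). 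This mirrors exactly the \( \rho \)-bookkeeping of Theorem~\ref{thm:blurrypointsSigma03}, with ``\( \rho \) large'' replaced by ``\( \measurecantor ( \LOC{K}{ \cdot } ) \) near \( r \)'' and ``\( \rho ( t ) = 2 j_0 \)'' by ``\( \measurecantor ( \LOC{K}{ t } ) \) near \( r_{j_0} \)''.

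The verification runs parallel to that of Theorem~\ref{thm:blurrypointsSigma03}. If \( z \in \boldsymbol{P}_3 \) then for each \( k \) every sufficiently late column of \( z' \) is either quiet or fires only at rows \( j_0 > k \); since \( r_j \to r \) this gives \( \measurecantor ( \LOC{K}{ g ( z ) \restriction \ell } ) \to r \), so \( \density_K ( g ( z ) ) = r \in ( 0 ; 1 ) \) and \( g ( z ) \in \Sharp ( K ) \cap \setofLR{ x }{ \density_K ( x ) = r } \). If \( z \notin \boldsymbol{P}_3 \), let \( n_0 \) be least with \( \EXISTSS{ \infty }{ m }\, z ( n_0 , m ) = 1 \); by the definition of \( z' \), the rows \( 2 n_0 \) and \( 2 n_0 + 1 \) are the two least rows of \( z' \) that fire infinitely often, every firing of row \( 2 n_0 \) is immediately followed at the next column by a firing of row \( 2 n_0 + 1 \) (with row \( 2 n_0 \) quiet), and all smaller rows of \( z' \) fire only finitely often. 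Hence both \( r_{2 n_0} \) and \( r_{2 n_0 + 1} \) are subsequential limits of \( \ell \mapsto \measurecantor ( \LOC{K}{ g ( z ) \restriction \ell } ) \); since \( r_{2 n_0} \neq r_{2 n_0 + 1} \) this forces \( \oscillation_K ( g ( z ) ) \geq \card{ r_{2 n_0} - r_{2 n_0 + 1} } > 0 \), so \( \density_K ( g ( z ) ) \) does not exist and \( g ( z ) \notin \Sharp ( K ) \), \( g ( z ) \notin \setofLR{ x }{ \density_K ( x ) = r } \). (Without the re-coordinatization, a \( z \notin \boldsymbol{P}_3 \) having a single row eventually constantly \( 1 \) would make \( \measurecantor ( \LOC{K}{ g ( z ) \restriction \ell } ) \) converge to some \( r_{j_0} \in ( 0 ; 1 ) \), wrongly placing \( g ( z ) \) in \( \Sharp ( K ) \).) Thus \( g \) witnesses \( \boldsymbol{P}_3 \leqW \Sharp ( K ) \) and \( \boldsymbol{P}_3 \leqW \setofLR{ x }{ \density_K ( x ) = r } \).

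The main obstacle is the measure-theoretic ``module'' behind the extension step: from any node \( t \) reached by the construction — with \( \measurecantor ( \LOC{K}{ t } ) \) in the current window and with arbitrarily much unfilled room inside \( \Nbhd_t \) — one must be able to pass to \( u \supset t \) with \( \measurecantor ( \LOC{K}{ u } ) \) equal to any prescribed dyadic value in \( ( 0 ; 1 - \delta ) \) up to arbitrarily small error: local mass is \emph{increased} by inserting fresh clopen bricks, and \emph{decreased} by steering \( t \) into a sub-neighbourhood meeting \( K \) in small measure and then inserting a brick of the right relative size; throughout, one bounds the intermediate values, reserves inside every \( \Nbhd_t \) a perfect family of nodes kept forever only partially filled (so \( D \) is perfect and null), and maintains \( \measurecantor ( \LOC{K}{ u } ) \leq 1 - \delta \) along \( D \) (so \( \Phi ( K ) \) is open). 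This is the bookkeeping carried out in~\cite[Proposition~3.5, Claim~7.0.1, and Section~7.1]{Andretta:2013uq}, which transfers here with only routine changes; granting it, the compactness of \( K \), the continuity of \( g \), and the openness of \( \Phi ( K ) \) are immediate.
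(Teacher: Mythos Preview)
Your reduction strategy is sound but takes a different route from the paper. You manufacture oscillation for \( z \notin \boldsymbol{P}_3 \) via the \( z \mapsto z' \) re-coordinatization, forcing two distinct subsequential limits \( r_{2n_0} \) and \( r_{2n_0+1} \). The paper avoids \( z' \) entirely: in defining \( \varphi ( a ) \) for \( a \in \Pre{(n+1)\times(n+1)}{2} \) it \emph{first} extends to a good node \( t \) with \( \rho ( t ) = n + 1 \) (local density within \( 2^{-n-2} \) of \( r \)) and only \emph{then}, if the current column fires at some row \( j_0 \leq n \), descends to \( \rho = j_0 \). Hence for \( z \notin \boldsymbol{P}_3 \) with least infinitely-firing row \( j \), the local density oscillates between ``near \( r \)'' (at the intermediate high-\( \rho \) nodes inserted at every step) and ``at distance about \( 2^{-j-1} \) from \( r \)'' (at the nodes \( \varphi ( z \restriction n \times n ) \) where \( j \) fires). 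Your approach stays closer to the \( \Blur \) argument; the paper's buys a simpler reduction with no re-coordinatization and no auxiliary sequence \( ( r_j )_j \).

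There is, however, a real gap in your final paragraph. The results you invoke from \cite{Andretta:2013uq}---Proposition~3.5 and Claim~7.0.1---concern navigating \( \densitytree ( A ) \) for a \emph{given} \( A \) with \( \Phi ( A ) = A \) and empty interior; they say nothing about \emph{constructing} a compact \( K \) with \( \Phi ( K ) \) open and with prescribed local densities along a skeleton you are simultaneously building. That construction is the main technical content of the theorem, and it does not ``transfer with only routine changes'': one must commit now to the density at each node while bounding the perturbation from all future bricks, keep \( \measurecantor ( \LOC{K}{u} ) \) bounded away from \( 1 \) along every branch of the skeleton, and arrange that the skeleton is closed so that \( K \) is compact. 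The paper does all of this explicitly: it fixes dyadic \( r_n \) at controlled distance from \( r \), defines clopen sets \( E_n \) of measure \( \approx r_n \) that leave two tiny sub-neighbourhoods \( \Nbhd_{u_n}, \Nbhd_{v_n} \) (of relative measure \( 2^{-n-5} \)) free to absorb all future construction, encodes the allowed density transitions as a tree \( \mathcal{G} \) of ``good sequences'' on the alphabet \( \{ u_n , v_n : n \in \omega \} \), sets \( K = \body{T} \cup \bigcup_{s \in \tilde{\mathcal{G}}} s \conc E_{\boldsymbol{n}(s)} \), and then verifies the going-up/going-down properties and \( \Exc ( K ) = \body{T} \) by direct estimates. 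Without supplying an analogous explicit construction, your argument stops exactly where the difficulty lies.
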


\begin{proof}
We will construct a compact set \( K \subseteq \pre{\omega }{2} \) together with a continuous injective \( f \colon \pre{\omega \times \omega }{2} \to \pre{\omega}{2} \) such that \( \ran f \subseteq \Exc ( K ) \) and \( f \) witnesses that \( \boldsymbol{P}_3 \leqW \Sharp ( K ) \). 
The construction is arranged so that 
\begin{subequations}
\begin{align}
z \in \boldsymbol{P}_3 & \IMPLIES \density_K ( f ( z ) ) = r , \label{eq:th:sharppointsPi03converges}
\\
z \notin \boldsymbol{P}_3 & \IMPLIES \oscillation_K ( f ( z ) ) > 0 , \label{eq:th:sharppointsPi03oscillates}
\end{align} 
\end{subequations}
where \( r \in ( 0 ; 1 ) \) is some fixed value that can be chosen in advance.

We will define a collection \( \tilde{ \mathcal{G} } \subseteq \pre{ < \omega }{2} \) whose elements are called \markdef{good nodes} such that its closure under initial segments 
\begin{equation}\label{eq:th:sharppointsPi03defT}
T = \setof{ t \in \pre{ < \omega }{2} }{ \exists s \in \tilde{ \mathcal{G} } ( t \subseteq s )} 
\end{equation}
is a pruned tree.
The set
\begin{equation}\label{eq:th:sharppointsPi03defK}
K = \body{T} \cup \bigcup_{ s \in \tilde{ \mathcal{G} } } s \conc U_s ,
\end{equation}
where the \( U_s \) are clopen, is compact.
We will arrange the construction so that 
\begin{subequations}
\begin{gather}
 \measurecantor ( \body{T} ) = 0 , \label{eq:th:sharppointsPi03-a}
\\
\forall s \in \tilde{ \mathcal{G} } \left ( \body{T} \cap ( s \conc U_s ) = \emptyset \right ) , \label{eq:th:sharppointsPi03-b}
\\
\ran f \subseteq \body{T} = \Exc ( K ) . \label{eq:th:sharppointsPi03-c}
\end{gather}
\end{subequations}
Therefore \( \Phi ( K ) = \bigcup_{ s \in \tilde{ \mathcal{G} } } s \conc U_s \) is open.

We define the function \( \rho \colon T \to \omega + 1 \)
\begin{equation}\label{eq:th:sharppointsPi03rho}
\rho ( t ) = n \IFF 2^{ - n - 2} \leq \card{ \measurecantor ( \LOC{K}{t} ) - r } < 2^{ - n - 1 } , 
\end{equation}
where \( \rho ( t ) = \omega \) just in case \( \measurecantor ( \LOC{K}{t} ) = r \).
The construction will ensure that \( \rho ( \emptyset ) = 0 \), that is 
\begin{equation}\label{eq:th:sharppointsPi03measureK}
 1 / 4 \leq \card{ \measurecantor ( K ) - r } < 1 / 2.
\end{equation}
We require that any good node \( t \) can be gently extended to a good node \( s \) having any prescribed value of the \( \rho \) function, that is to say: for every \( t \in \tilde{ \mathcal{G} } \)
\begin{subequations}
\begin{align}
m \geq \rho ( t ) & \IMPLIES \EXISTS{s \in \tilde{ \mathcal{G} } } \left ( s \supset t \wedge \rho ( s ) = m \wedge \forall u \left ( t \subseteq u \subset s \implies \rho ( u ) \geq \rho ( t ) \right ) \right ) \label{eq:goingup}
\\
m < \rho ( t ) & \IMPLIES \EXISTS{s \in \tilde{ \mathcal{G} } } \left ( s \supset t \wedge \rho ( s ) = m \wedge \forall u \left ( t \subseteq u \subset s \implies \rho ( u ) \geq m \right ) \right ) . \label{eq:goingdown}
\end{align}
\end{subequations}
Assuming all this can be done, we can define the reduction.

\paragraph{\bfseries The construction of \( f \).}
For \( a \in \Pre{ n \times n }{2} \) let \( \gamma ( a ) \) be the first row (if it exists) where a \( 1 \) appears in column \( n - 1 \):
\[
 \gamma ( a ) = \begin{cases}
 \text{the least \( j \) such that } a ( j , n - 1 ) = 1 & \text{if } \EXISTS{j < n} \left ( a ( j , n - 1 ) = 1 \right ) ,
 \\
 n & \text{otherwise.}
 \end{cases}
\]
The function \( f \) is induced by a Lipschitz \( \varphi \colon \pre{ < \omega \times \omega }{2} \to T \); in fact \( \varphi \) will take values in \( \tilde{ \mathcal{G} } \) and will satisfy that
\[
\rho ( \varphi ( a ) ) = \gamma ( a ) .
\]
Here is the definition of \( \varphi \).
\begin{itemize}[leftmargin=1pc]
\item
Set \( \varphi ( \emptyset ) = \emptyset \).
Then \( \rho ( \varphi ( \emptyset ) ) = \rho ( \emptyset ) = 0 = \gamma ( \emptyset ) \) by~\eqref{eq:th:sharppointsPi03measureK}.
\item
Let us define \( \varphi ( a ) \) for \( a \in \Pre{ ( n + 1 ) \times ( n + 1 ) }{2} \), assuming \( \varphi ( a \restriction n \times n ) \) has been defined.
By~\eqref{eq:goingup} choose a good node \( t \supseteq \varphi ( a \restriction n \times n ) \) such that \( \rho ( t ) = n + 1 \) and such that \( \varphi ( a \restriction n \times n ) \subseteq u \subset t \implies \rho ( u ) \geq \gamma ( a \restriction n \times n ) = \rho ( \varphi ( a \restriction n \times n ) ) \).
\begin{description}
\item[Case 1] \( \gamma ( a ) = n + 1 \).
Then set \( \varphi ( a ) = t \).
\item[Case 2] \( \gamma ( a ) \leq n \).
Apply~\eqref{eq:goingdown} to get a good node \( s \supset t \) such that \( \rho ( s ) = \gamma ( a ) \) and \( t \subseteq u \subset s \implies \rho ( u ) \geq \gamma ( a ) \) and set \( \varphi ( a ) = s \).
\end{description}
\end{itemize}
Let us check that the function \( f = f_ \varphi \) is indeed the required reduction.

Suppose \( z \in \boldsymbol{P}_3 \): for all \( j \) there is \( N_j \) such that if \( n \geq N_j \) then \( \forall j' \leq j \left ( z ( j' , n ) = 0 \right ) \), and therefore \( \gamma ( z \restriction n \times n ) = \rho ( \varphi ( z \restriction n \times n ) ) > j \).
Since 
\[
 \forall j \exists N \FORALL{n \geq N} \left ( \rho ( \varphi ( z \restriction n \times n ) ) > j \right ) \IMPLIES \density_K ( f ( z ) ) = r ,
\]
then \( \density_K ( f ( z ) ) = r \) and \( f ( z ) \in \Sharp ( K ) \).
Thus~\eqref{eq:th:sharppointsPi03converges} holds.

Suppose \( z \notin \boldsymbol{P}_3 \): let \( j \) be least such that \( I = \setof{ n \in \omega }{ z ( j , n ) = 1 } \) is infinite.
Choose \( N > j \) such that for all \( n \geq N \) if \( j' < j \) then \( z ( j' , n ) = 0 \).
Fix \( n' > n > N \) such that \( n - 1 \) and \( n' - 1 \) are consecutive elements of \( I \).
Then for \( m \in \setLR{ n , n' } \)
\[
2^{ - j - 2 } \leq \card{ \measurecantor ( \LOC{K}{ \varphi ( z \restriction m \times m ) } ) - r } < 2^{ - j - 1 }
\]
while by definition of \( \varphi \) there is \( t \) such that \( \rho ( t ) = n \) and \( \varphi ( z \restriction n \times n ) \subset t \subset \varphi ( z \restriction n' \times n' ) \).
Therefore, as \( n > N > j \)
\[
2^{ - n - 2 } \leq \card{ \measurecantor ( \LOC{K}{t} ) - r } < 2^{ - n - 1 } < 2^{ - j - 2 }
\]
hence \( \oscillation _K ( f ( z ) ) > 0 \) and \( f ( z ) \in \Blur ( K ) \).
Thus~\eqref{eq:th:sharppointsPi03oscillates} holds.

Therefore it is enough to construct \( \tilde{\mathcal{G}} \), and hence \( T \) and \( K \), so that~\eqref{eq:th:sharppointsPi03-a}--\eqref{eq:th:sharppointsPi03-c}, \eqref{eq:th:sharppointsPi03measureK}, and \eqref{eq:goingup}--\eqref{eq:goingdown} are satisfied.

\paragraph{\bfseries The construction of \( \tilde{\mathcal{G}} \), \( T \), and \( K \).}
Choose \( r_n \in \mathbb{ D } \) such that 
\begin{equation}\label{eq:th:sharppointsPi03r_n}
2^{ - n - 2 } + 2^{ - n - 4 } \leq \card{ r_n - r } < 2^{ - n - 1 } - 2^{ - n - 4 } .
\end{equation}
Let \( D_n \) be clopen such that \( \measurecantor ( D_ n ) = r_n \), let \( u_n = 0^{ ( n + 6 ) } \) and \( v_n = 1^{ ( n + 6 ) } \), and
\[	
E_n = \bigcup_{ 0 < i \leq n + 5 } \left ( 0^{ ( i ) } \conc 1 \conc D_n \cup 1^{ ( i ) } \conc 0 \conc D_n \right )
\]
Thus \( u_0 \), \( v_0 \), and \( E_0 \) can be visualized as follows (the grey area is \( D_0 \)): 
\[
\begin{tikzpicture}[scale=0.5]
\filldraw (2,-6) circle (2pt) -- (3,-5) circle (2pt) --(4, -4) circle (2pt) --(5, -3) circle (2pt) -- (6 , -2) circle (2pt) -- (7 , -1) circle (2pt) -- (8,0) circle (2pt) -- (9 , -1) circle (2pt) -- (10 , -2) circle (2pt) -- (11 , -3) circle (2pt) -- (12 , -4) circle (2pt) -- (13 , -5) circle (2pt) -- (14 , -6) circle (2pt) ;
\node at (7 , -1) [label=165:\( 0 \)]{};
\node at (6 , -2) [label=165:\( 00 \)]{};
\node at (5 , -3) [label=165:\( 000 \)]{};
\node at (4 , -4) [label=165:\( 0000 \)]{};
\node at (3 , -5) [label=165:\( 00000 \)]{};
\node at (2 , -6) [label=180:\( u_0 \)]{};
\node at (9, -1) [label=15:\( 1 \)]{};
\node at (10 , -2) [label=15:\( 11 \)]{};
\node at (11 , -3) [label=15:\( 111 \)]{};
\node at (12 , -4) [label=15:\( 1111 \)]{};
\node at (13, -5) [label=15:\( 11111 \)]{};
\node at (14, -6) [label=0:\( v_0 \)]{};
\fill [top color=gray, bottom color=gray!60] (3,-6)--(2.5, -7)--(3.5, -7)--cycle;
\draw (3 , -5)-- (3,-6)--(2.5, -7);
\draw (3,-6)--(3.5, -7);
\fill [top color=gray, bottom color=gray!60] (4,-5)--(3.5, -6)--(4.5, -6)--cycle;
\draw (4 , -4)-- (4,-5)--(3.5, -6);
\draw (4,-5)--(4.5, -6);
\fill [top color=gray, bottom color=gray!60] (5,-4)--(4.5, -5)--(5.5, -5)--cycle;
\draw (5 , -3)-- (5,-4)--(4.5, -5);
\draw (5,-4)--(5.5, -5);
\fill [top color=gray, bottom color=gray!60] (6,-3)--(5.5, -4)--(6.5, -4)--cycle;
\draw (6 , -2)-- (6,-3)--(5.5, -4);
\draw (6,-3)--(6.5, -4);
\fill [top color=gray, bottom color=gray!60] (7.2, -2)--(7.7 , -3)--(6.7 , -3)--cycle;
\draw (7 , -1)--(7.2, -2)--(7.7 , -3);
\draw (7.2, -2)--(6.7 , -3);
\fill [top color=gray, bottom color=gray!60] (8.8, -2)--(8.3 , -3)--(9.3 , -3)--cycle;
\draw (9 , -1)--(8.8, -2)--(9.3 , -3);
\draw (8.8, -2)--(8.3 , -3);
\fill [top color=gray, bottom color=gray!60] (10,-3)--(9.5, -4)--(10.5, -4)--cycle;
\draw (10 , -2)-- (10,-3)--(9.5, -4);
\draw (10,-3)--(10.5, -4);
\fill [top color=gray, bottom color=gray!60] (11,-4)--(10.5, -5)--(11.5, -5)--cycle;
\draw (11 , -3)-- (11,-4)--(10.5, -5);
\draw (11,-4)--(11.5, -5);
\fill [top color=gray, bottom color=gray!60] (12,-5)--(11.5, -6)--(12.5, -6)--cycle;
\draw (12 , -4)-- (12,-5 )--(11.5, -6);
\draw (12,-5)--(12.5, -6);
\fill [top color=gray, bottom color=gray!60] (13,-6)--(12.5, -7)--(13.5, -7)--cycle;
\draw (13 , -5 )-- (13,-6 )--(12.5, -7);
\draw (13,-6 )--(13.5, -7 );
\end{tikzpicture}
\]
Therefore
\begin{equation}\label{eq:th:sharppointsPi03-error}
\measurecantor ( E_n ) = r_n \left ( 1 - 2^{ - n - 5 } \right )
\end{equation}
and 
\begin{equation}\label{eq:th:sharppointsPi03-N_s}
 \Nbhd_{u_n} \cap E_n = \Nbhd_{ v_n } \cap E_n = \emptyset .
\end{equation}
We are now ready to define \( \tilde{ \mathcal{G} } \) and \( T \).
Let 
\[
\begin{split}
\Sigma & = \setofLR{ u_n }{ n \in \omega } \cup \setofLR{ v_n }{ n \in \omega \setminus \set{0} } 
\\
 & = \setofLR{0^{( k )} , 1^{( k + 1 ) } }{ k \geq 6 } .
\end{split} 
\] 
A sequence \( \sigma \in \pre{ < \omega }{ \Sigma } \) is
\begin{itemize}[leftmargin=1pc]
\item 
\markdef{ascending} if it is of the form \( \seq{ u_n , u_{n + 1} , \dots , u_{n + k } } \) with \( n , k \geq 0 \),
\item 
\markdef{descending} if it is of the form \( \seq{ v_n , v_{n - 1} , \dots , v_{n - k } } \) with \( n > k \geq 0 \),
\item 
\markdef{good} if either 
\begin{itemize}
\item
\( \sigma = \emptyset \), or else
\item
it is \markdef{positive}, that is a concatenation of an odd number of blocks of ascending and descending sequences, where the ascending and descending sequences alternate:
\[
\sigma = \seq{ u_0 , \dots , u_{ n_0 } } \conc \seq{ v_{ n_0 + 1 } , \dots , v_{ n_1 } } \conc \seq{ u_{ n_1 - 1 } , \dots , u_{ n_2 } } \conc \dots \conc \seq{ u_{ n_k - 1} , \dots , u_{ n_{k + 1} } } ,
\]
or else
\item
 it is \markdef{negative}, that is a concatenation of an even number of blocks of ascending and descending sequences, where the ascending and descending sequences alternate: 
\[
\sigma = \seq{ u_0 , \dots , u_{ n_0 } } \conc \seq{ v_{ n_0 + 1 } , \dots , v_{ n_1 } } \conc \seq{ u_{ n_1 - 1 } , \dots , u_{ n_2 } } \conc \dots \conc \seq{ v_{ n_k + 1 } , \dots , v_{ n_{k + 1} } } .
\]
\end{itemize}
\end{itemize}

The collection \( \mathcal{G} \) of all good sequences \( \sigma \) is a tree on \( \Sigma \), and can be defined as follows (see Figure~\ref{fig:treeofgoodnodes}): 
\begin{itemize}[leftmargin=1pc]
\item
\( \seq{ u_0 } \) is the least nonempty node, 
\item
if a node \( \sigma \) ends with \( u_k \), then its immediate successors are \( \sigma \conc \seq{ u_{k + 1} } \) and \( \sigma \conc \seq{ v_{ k + 1 } } \), 
\item
if the node \( \sigma \) ends with \( v_k \) then:
\begin{itemize}
\item 
if \( k > 1 \) there are two immediate successors \( \sigma \conc \seq{ u_{ k - 1} } \) and \( \sigma \conc \seq{v_{k - 1 }} \), 
\item
if \( k = 1 \) then there is a unique immediate successor \( \sigma \conc \seq{ u_0 } \).
\end{itemize}
\end{itemize}
\begin{figure}
\[
 \begin{tikzpicture}
\foreach \x in {0,...,3}
\filldraw ( 0 , -\x ) circle [radius=1pt] ;
\foreach \x in {0,...,3}
\draw ( 0 , -\x )-- node[left] {\( u_{\x} \)} (0 , -\x - 1 ) ;
\draw ( 0 , -1 )-- node[above] {\( v_{1} \)} (10 , -2 ) ;
\draw ( 10 , -2 )-- node[left] {\( u_{0} \)} (10 , -3 ) ;
\draw ( 0 , -2 )-- node[above] {\( v_{2} \)} (8 , -3 ) ;
\draw ( 8 , -3 )-- node[right] {\( v_{1} \)} (9 , -4 ) ;
\draw ( 9 , -4 )-- node[right] {\( u_{0} \)} (10 , -5 ) ;
\draw ( 8 , -3 )-- node[left] {\( u_{1} \)} (8 , -4 ) ;
\draw ( 8 , -4 )-- node[left] {\( u_{2} \)} (8 , -5 ) ;
\draw ( 8 , -4 )-- node[right] {\( v_{2} \)} (9 , -5 ) ;
\draw (0 , -3 )-- node[above] {\( v_{3} \)} (2 , -3.33333 ) ;
\draw (2 , -3.33333 )-- node[left] {\( u_{2} \)} (2 , -4.33333 ) ;
\draw (2 , -3.33333 )-- node[above] {\( v_{2} \)} (4 , -3.66666 ) ;
\draw (4 , -3.66666 )-- node[above] {\( v_{1} \)} (6 , -4 ) ;
\draw (4 , -3.66666 )-- node[left] {\( u_{1} \)} (4 , -4.66666 ) ;
\draw (6 , -4 )-- node[left] {\( u_{0} \)} (6,-5 ) ;
\filldraw (10 , -2) circle [radius=1pt] ;
\filldraw (8 , -3 ) circle [radius=1pt] ;
\filldraw (9 , -4 ) circle [radius=1pt] ;
\filldraw (8 , -4 ) circle [radius=1pt] ;
\filldraw (2 , -3.33333) circle [radius=1pt] ;
\filldraw (4 , -3.66666 ) circle [radius=1pt] ;
\filldraw (6 , -4) circle [radius=1pt] ;
\end{tikzpicture}
\]
 \caption{The first few nodes of the tree \( \mathcal{G} \)}
 \label{fig:treeofgoodnodes}
\end{figure}
Given \( \sigma \in \mathcal{G} \) let \( \tilde{ \sigma } \in \pre{ < \omega }{2} \) be the sequence obtained by concatenating the sequences in \( \sigma \).
In other words, if \( \sigma \) is positive as above then 
\[
\tilde{ \sigma } = \underbracket[0.5pt]{u_0 \conc \dots \conc u_{ n_0 }} \conc \underbracket[0.5pt]{v_{ n_0 + 1 } \conc \dots \conc v_{ n_1 }} \conc \underbracket[0.5pt]{ u_{ n_1 - 1 } \conc \dots \conc u_{ n_2 } } \conc \dots \dots\conc \underbracket[0.5pt]{ u_{ n_k - 1} \conc \dots \conc u_{ n_{k + 1}}} ,
\] 
and similarly for negative \( \sigma \).
Let
\[ 
 \tilde{ \mathcal{G} } = \setofLR{ \tilde{ \sigma } }{ \sigma \in \mathcal{G} } \subseteq \pre{ < \omega}{2} .
\]
Note that any \( s \in \tilde{ \mathcal{G} } \) determines a unique \( \sigma \in \mathcal{G} \) such that \( s = \tilde{ \sigma } \).
Using the same notation as before, let \( \boldsymbol{n} ( s ) \) for \( s \in \tilde{ \mathcal{G} } \) be defined by
\[
\boldsymbol{n} ( s ) = \begin{cases}
n_{ k + 1 } + 1 & \text{if \( s \) is positive,}
\\
n_{ k + 1 } - 1 & \text{if \( s \) is negative,}
\\
0 & \text{if } s = \emptyset . 
\end{cases}
\]
A branch of \( \mathcal{G} \) is a sequence \( \seqofLR{ w_n }{ n \in \omega } \) of elements of \( \Sigma \) such that each \( \sigma _n \equalsdef \seq{ w_0 , \dots , w_n } \in \mathcal{G} \), so any branch of \( \mathcal{G} \) yields a branch of \( T \) by letting 
\begin{equation}\label{eq:branchfrombranch}
x = w_0 \conc w_1 \conc \dots = \bigcup_{ n \in \omega } \tilde{ \sigma }_n .
\end{equation}
Conversely, any \( x \in \body{T} \) yields a branch of \( \mathcal{G} \).
A branch \( x \) of \( \body{T} \) is oscillating if \( \setof{n \in \omega }{ \sigma _n \text{ is positive}} \) and \( \setof{n \in \omega }{ \sigma _n \text{ is negative}} \) are both infinite; otherwise \( \sigma _n \) is positive for all sufficiently large \( n \), and \( x \) is said to be positive.
Let 
\[ U_s = E_{ \boldsymbol{n} ( s ) } 
\] 
so that the definition of \( K \) as in~\eqref{eq:th:sharppointsPi03defK} is complete.

\paragraph{\bfseries Checking that the construction works.}
First of all we check that the function \( \rho \) of~\eqref{eq:th:sharppointsPi03rho} is defined on \( \tilde{\mathcal{G}} \).

\begin{claim}\label{claim:sharppointsPi03}
 \( \FORALL{ s \in \tilde{ \mathcal{G} } } \left ( \rho ( s ) = \boldsymbol{n} ( s ) \right ) \).
\end{claim}

\begin{proof}
Fix \( s \in \tilde{ \mathcal{G} } \) and let \( n = \boldsymbol{n} ( s ) \).
Equation~\eqref{eq:th:sharppointsPi03-error} yields that 
\[ 
\card{ \measurecantor ( \LOC{K}{s} ) - r_n } \leq \card{ \measurecantor ( \LOC{K}{s} ) - \measurecantor ( E_n ) } + \card{ \measurecantor ( E_n ) - r_n } \leq 2^{ - n - 5 } + r_n 2^{ - n - 5 } \leq 2^{ - n - 4 } .
\]
The triangular inequality and~\eqref{eq:th:sharppointsPi03r_n} imply that 
\begin{multline*}
 2^{ - n - 2 } \leq \card{ r_n - r } - \card{ \measurecantor ( \LOC{K}{s} ) - r_n } \leq \card{ \measurecantor ( \LOC{K}{s} ) - r } 
\\
{} \leq \card{ \measurecantor ( \LOC{K}{s} ) - r_n } + \card{ r_n - r } < 2^{ - n - 1 } ,
\end{multline*}
which is what we had to prove.
\end{proof}

Note that taking \( s = \emptyset \) we obtain that \( 1 / 4 \leq \card{ \measurecantor ( K) - r } < 1 /2 \) hence~\eqref{eq:th:sharppointsPi03measureK} holds.
Next we check that \( \rho \) is defined on all of \( T \). 

Fix \( s \in \tilde{\mathcal{G}} \) and let \( n = \boldsymbol{n} ( s ) \).
For \( 0 < k \leq n + 5 \) and \( i \in \set{0 , 1 } \) we have that
\[
\LOC{K}{ s \conc i^{( k )}} = i^{( n + 6 - k )} \conc \LOC{K}{ s \conc i^{( n + 6 )}} \cup \bigcup_{0 \leq j \leq n + 5 - k} i^{( j )} \conc ( 1 - i ) \conc D_n
\]
hence
\begin{equation}\label{eq:painful}
\measurecantor \bigl ( \LOC{K}{ s \conc i^{( k )}} \bigr ) = 2^{- n - 6 + k} \measurecantor \bigl ( \LOC{K}{ s \conc i^{( n + 6 )}} \bigr ) + r_n \left ( 1 - 2^{- n - 6 + k} \right ) . 
\end{equation}
Since \( \card{\measurecantor \bigl ( \LOC{K}{ s \conc i^{( n + 6 )}} \bigr ) - r } < 1 / 2 \) and \( \card{r_n - r } < 1 / 2 \) by~\eqref{eq:th:sharppointsPi03r_n}, it follows that \( \card{\measurecantor \bigl ( \LOC{K}{ s \conc i^{( k )}}\bigr ) -r } < 1 / 2 \).
Therefore \( \rho \colon T \to \omega + 1 \) is well-defined.

In order to verify~\eqref{eq:goingup} and~\eqref{eq:goingdown}, it is enough to prove them when \( m = \rho ( t ) + 1 \) and \( m = \rho ( t ) - 1 \), if \( \rho ( t ) \neq 0 \).
So fix \( t \in \tilde{\mathcal{G}} \) and let \( n = \boldsymbol{n} ( t ) = \rho ( t ) \).
If \( n = 0 \), then either \( t = \emptyset \) or else it ends with \( v_1 \), and therefore it has exactly one immediate successor \( s^+ \) in \( \tilde{\mathcal{G}} \), and \( \rho ( s^+ ) = 1 \).
If \( n > 0 \) then it has two immediate successors \( s^+ \) and \( s^- \) in \( \tilde{\mathcal{G}} \), that is \( s^+ = t \conc 0^{ ( n + 6 ) } \) and \( s^- = t \conc 1^{ ( n + 6 ) } \), and \( \rho ( s^+ ) = n + 1 \) and \( \rho ( s^- ) = n - 1 \).
We must check that if \( t \subset u \subset s^+ \) then \( \rho ( u ) \geq n \), and that if \( t \subset u \subset s^- \) then \( \rho ( u ) \geq n - 1 \).
If \( u = t \conc 0^{( k )} \) then 
\begin{align*}
\card{ \measurecantor ( \LOC{K}{ t \conc 0^{( k )} } ) - r } & = \cardLR{ \frac{ \measurecantor ( \LOC{K}{s^+} ) }{2^{ n + 6 - k } } + r_n \left ( 1 - \frac{1}{2^{ n + 6 - k } }\right ) - r} && \text{by~\eqref{eq:painful}}
\\
 & \leq \frac{1}{2^{ n + 6 - k } } \card{\measurecantor ( \LOC{K}{s^+} ) - r } + \left ( 1 - \frac{1}{2^{ n + 6 - k } } \right ) \card{ r_n - r }
 \\
 & < \frac{1}{2^{ n + 6 - k } } 2^{-n - 2} + \left ( 1 - \frac{1}{2^{ n + 6 - k } } \right ) 2^{- n - 1} &&\text{by~\eqref{eq:th:sharppointsPi03r_n}}
 \\
 & < 2^{-n - 1} ,
\end{align*}
and if \( u = t \conc 1^{( k )} \) with similar computations we obtain
\[
\card{ \measurecantor ( \LOC{K}{ t \conc 1^{( k )} } ) - r } \leq \frac{1}{2^{ n + 6 - k } } \card{\measurecantor ( \LOC{K}{s^-} ) - r } + \left ( 1 - \frac{1}{2^{ n + 6 - k } } \right ) \card{ r_n - r } < 2^{-n} .
\]
Therefore~\eqref{eq:goingup} and~\eqref{eq:goingdown} hold.

Let us check that~\eqref{eq:th:sharppointsPi03-a}--\eqref{eq:th:sharppointsPi03-c} hold.
Equation~\eqref{eq:th:sharppointsPi03-a} follows from the fact that \( \lh ( u_n ) , \lh ( v_n ) \geq 6 \) for all \( n \), equation~\eqref{eq:th:sharppointsPi03-b} follows from~\eqref{eq:th:sharppointsPi03-N_s}, equation~\eqref{eq:th:sharppointsPi03-c} follows by definition of \( \varphi \). 
\end{proof}

\begin{remark}
Corollary~\ref{cor:blurrycomeager}shows that \( \Blur ( A ) \) is \( \bSigma^{0}_{3} \)-complete for \emph{most} \( \eq{A} \) in the measure algebra, while Theorem~\ref{thm:sharppointsPi03} constructs \emph{some specific} compact \( K \) such that \( \Sharp ( K )  \) is \( \bPi^{0}_{3} \)-complete.
This asymmetry is to be expected as the proof (and the statement) of Theorem~\ref{thm:sharppointsPi03} hinges on the choice of the value \( r \).
\end{remark}

\section{Spongy and solid sets in \( \R^n \)}\label{sec:solid&spongy}
In this section we shall construct a spongy subset of \( \R \) (Theorem~\ref{thm:spongy}) and we shall show that a solid subset of \( \R^n \) has always points of density \( 1 / 2 \) (Corollary~\ref{cor:nodualisticsetsinRn}).
 
\subsection{Spongy sets}\label{subsec:spongy}
The goal of this section is to prove the following

\begin{theorem}\label{thm:spongy^n}
For each \( n \geq 1 \), there is a bounded spongy set \( S \subseteq \R^n \).
Furthermore \( S \) can be taken to be either open or closed.
\end{theorem}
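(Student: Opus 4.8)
The plan is to reduce the $n$‑dimensional statement to a one–dimensional construction and solve the latter by a triadic Cantor‑construction of non‑shrinking diameter, as anticipated at the end of Section~\ref{subsec:Cantorschemes}. First I would build a bounded spongy set $H\subseteq(0;+\infty)$ in $\R$ all of whose blurry points oscillate \emph{lacunarily} — meaning that for each $x^{*}\in\Blur(H)$ there are $0<p<q\le 1$ and scales $\varepsilon_{k}\downarrow 0$ with $\varepsilon_{k+1}/\varepsilon_{k}\to 0$ along which the density ratios $\lambda(H\cap(x^{*}-\varepsilon_{k};x^{*}+\varepsilon_{k}))/(2\varepsilon_{k})$ tend to $q$ over even $k$ and to $p$ over odd $k$ — and then set $S=\setofLR{x\in\R^{n}}{\|x\|\in H}$ (for $n=1$ this is $H\cup(-H)$, or one may just take $S=H$). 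Such an $S$ is automatically bounded, nontrivial (by the polar formula $\lambda^{n}(S)=\sigma_{n-1}\int_{H}r^{n-1}\,dr$), and open, resp.\ closed, whenever $H$ is.

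For $x\ne 0$, putting $\rho=\|x\|$, a short computation with Euclidean balls gives
\[
\frac{\lambda^{n}(S\cap \Ball(x;\varepsilon))}{\lambda^{n}(\Ball(x;\varepsilon))}=\frac{\int_{-\varepsilon}^{\varepsilon}\chi_{H}(\rho+t)\,(\varepsilon^{2}-t^{2})^{\frac{n-1}{2}}\,dt}{\int_{-\varepsilon}^{\varepsilon}(\varepsilon^{2}-t^{2})^{\frac{n-1}{2}}\,dt}+o(1),
\]
so $\density_{S}(x)$ is obtained from the density of $H$ at $\rho$ through a fixed nonnegative, even, radially decreasing kernel; an integration by parts rewrites the right‑hand side as a weighted average of the two‑sided ratios $\lambda(H\cap(\rho-s;\rho+s))/(2s)$ over $s\le\varepsilon$, with weight $\propto s^{2}(\varepsilon^{2}-s^{2})^{\frac{n-3}{2}}$. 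Hence (i) if $\density_{H}(\rho)$ exists then $\density_{S}(x)$ exists and equals it, so a point of $H$ of density $0$ or $1$ cannot spawn a point of $S$ of intermediate density; and (ii) if the ratios of $H$ hover near $q$ at scales $\varepsilon_{k}$ and near $p<q$ at scales $\varepsilon_{k+1}$ with $\varepsilon_{k+1}\ll\varepsilon_{k}$, then, since the weight concentrates on the band $(\varepsilon_{k+1};\varepsilon_{k})$, the weighted average still hovers near $q$, resp.\ $p$, so $\density_{S}(x)$ does not exist. At $x=0$ one has $\density_{S}(0)=\lim_{\tau\downarrow 0}\tau^{-1}\lambda(\setofLR{r^{n}}{r\in H}\cap(0;\tau))=0$ because $H$ avoids a neighbourhood of $0$. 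Thus $\Sharp(S)=\emptyset$ while $\Blur(S)\supseteq\setofLR{x}{\|x\|\in\Blur(H)}\ne\emptyset$, i.e.\ $S$ is spongy — \emph{provided} $H$ is spongy with lacunarily oscillating blurry points.

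To build $H$ I would start from a triadic Cantor‑construction $\seqof{K_{s},I_{s}^{-},I_{s}^{+}}{s\in\pre{<\omega}{\set{-1,0,1}}}$ inside some $K_{\emptyset}\subseteq(0;+\infty)$, removing intervals so small along the all‑middle branch that the total fraction removed from the $K_{0^{(k)}}$ is finite; then $K_{0^{(k)}}\downarrow[\alpha;\beta]$ with $\beta>\alpha$, a non‑shrinking‑diameter construction. Next, define $H$ recursively while assigning each $K_{s}$ one of three roles: \emph{full} ($K_{s}\subseteq H$), \emph{empty} ($K_{s}\cap H=\emptyset$), or \emph{active} (inside $K_{s}$, $H$ is a rescaled copy of a fixed oscillating pattern $P$, namely an open middle part together with \emph{full} and \emph{empty} shells alternating at lacunarily spaced scales and accumulating at both endpoints). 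The bookkeeping must be arranged so that (a) every endpoint created at every stage (each $a_{s},b_{s},c_{s}^{\pm},d_{s}^{\pm}$) is an endpoint of a rescaled copy of $P$ — equivalently, a \emph{full} and an \emph{empty} piece are never adjacent except across such a point — and (b) the residual set $\setofLR{\bigcap_{k}K_{z\restriction k}}{z\text{ never settles into a role}}$ is Lebesgue‑null, each of its points being approximated by arbitrarily small \emph{empty} pieces of overwhelming relative measure. Tuning the shell ratios in $P$ lets one prescribe $p<q$; taking $p=1/3$, $q=2/3$ yields $\boldsymbol{\delta}_{H}\ge 1/3$ (and Kolyada's bound $\boldsymbol{\delta}(\R)\approx 0.268$ shows one cannot make the oscillation ``full'' at every blurry point, which is the reason the pattern $P$ must be built with $q<1$).

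Granting (a)--(b), one checks that every $x\in\R$ lies in the interior of a \emph{full}/\emph{empty} piece or of the middle part of an \emph{active} copy in a \emph{full}/\emph{empty} context ($\density_{H}(x)\in\set{0,1}$), or is an endpoint of a rescaled $P$ ($\density_{H}(x)$ oscillates, lacunarily), or is in the residual set ($\density_{H}(x)=0$ by (b)); so $H$ is quasi‑dualistic, and it is not solid since $\alpha\in\Blur(H)$, hence spongy with the required oscillations. Taking all \emph{full} pieces and middle parts open makes $H$ open with $\lambda(\Cl H\setminus H)=0$, so $\Cl H=_{\mu}H$ is a closed representative, and all these properties are $=_{\mu}$‑invariant by Remarks~\ref{rmks:exc}; everything then transfers to $S$. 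The hard part is the tension inside the construction of $H$: forbidding \emph{any} point of intermediate density forces every transition between \emph{full} and \emph{empty} to be mediated by a rescaled $P$ rather than to occur cleanly — a clean junction would be a density‑$1/2$ point — and in $\R^{n}$ the same phenomenon reappears because a radially symmetric set has density $1/2$ wherever its radius‑set has a clean transition; so the quasi‑dualism of $H$ together with the lacunarity of its oscillations (which is exactly what keeps the spherical averages from washing the oscillation out) are precisely what must be engineered, and making the recursive definition meet (a) and (b) simultaneously is where the real work lies.
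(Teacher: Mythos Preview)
Your route diverges from the paper's in both halves. For $n=1$ the paper builds a completely explicit set $K(M,\varepsilon)$ by iterating the operations $\Psi_{\varepsilon^{k+1}}$ and verifies sponginess by direct estimates on $\lambda(K\cap K_{s})$; there are no patterns, no roles, no recursion beyond the triadic scheme itself. For $n>1$ the paper does \emph{not} prove Theorem~\ref{thm:spongy^n} here: it only remarks that for suitable $M,\varepsilon$ the product $S\times\R$ is spongy in $\R^{2}$ and defers the details. Your radial lifting $S=\{x:\|x\|\in H\}$ is therefore a genuinely different path to the higher-dimensional case; your kernel identity is correct, and after the integration by parts the density ratio at scale $\varepsilon$ really is a $\phi$-average of the two-sided ratios $g(s)=\lambda(H\cap(\rho-s;\rho+s))/(2s)$ over $s\le\varepsilon$, with $\phi(u)\propto u^{2}(1-u^{2})^{(n-3)/2}$. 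Note, though, that the product approach the paper hints at faces exactly the same averaging obstacle (slicing $S\times[0;1]^{n-1}$ in the first coordinate produces the same kernel), so your explicit engagement with it is a point in favour of your plan.

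There are two gaps. First, your lacunarity hypothesis as stated---values of $g$ along a sequence $\varepsilon_{k}$ with $\varepsilon_{k+1}/\varepsilon_{k}\to0$---is not sufficient to force the $\phi$-average to oscillate: the average at scale $\varepsilon_{2k}$ samples $g(\varepsilon_{2k}u)$ for \emph{all} $u\in(0;1)$, not just $u=1$. What actually makes the argument go through is that in the shell pattern $g$ stays close to $q$ (resp.\ $p$) on the whole band $(\varepsilon_{2k+1};\varepsilon_{2k})$ in the $\phi$-sense; this does follow from the explicit shell structure of $P$, but it is a statement about $g$ on intervals, not at points, and should be said. Second, and more seriously, the one-dimensional construction is only a programme. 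Your own condition (a) forces every shell boundary inside every rescaled copy of $P$ to carry a further rescaled copy of $P$ (otherwise that boundary is a clean full/empty junction and hence a density-$1/2$ point), so the construction is an infinite recursion; you must then check that the residual set of points lying in infinitely many nested copies is null and consists of density-$0$ points, and that the nested copies do not spoil the lacunary estimates at the endpoints of their parents. You flag this as ``where the real work lies'' but do not do it. The paper's $K(M,\varepsilon)$ sidesteps all of this for $n=1$, but its blurry points do not oscillate lacunarily in your sense (the relevant pairs of scales in \eqref{eq:th:spongy-a}--\eqref{eq:th:spongy-b} differ by the fixed factor $M$), so it cannot be fed directly into your lifting; if you pursue your route you really do need a tailored $H$.
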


The crux of the matter is establishing the result for \( \R \) (Theorem~\ref{thm:spongy}), and
 this is achieved by a triadic Cantor-construction of non-shrinking diameter (Section~\ref{subsec:Cantorschemes}) .

\subsubsection{Some notation}
Before we jump in the technical details, let us introduce some notation that will be useful in this section.

For \( a \leq b \), \( [ a ; b ] \) denotes either the \emph{closed interval with endpoints \( a , b \)}, when \( a < b \) or else the \emph{singleton} \( \setLR{a} \), when \( a = b \).

Given an interval \( [ a ; b ] \) of length \( \leq 1 \) let
\[ 
\varepsilon < \frac{b - a}{ 3 + 2 M } \leq \frac{1}{ 3 + 2 M } ,
\] 
where \( M \) is some number greater that \( 1 \), and let \( \Psi_{ \varepsilon } ( [ a ; b ] ) \) be the set obtained by removing from \( [ a ; b ] \) two open intervals \( ( a + \varepsilon ; a + ( 1 + M ) \varepsilon) \) and \( ( b - ( 1 + M ) \varepsilon ; b - \varepsilon ) \), each of length \( M \varepsilon \), that is 
\[ 
\Psi_{ \varepsilon } ( [ a ; b ] ) = [ a ; a + \varepsilon ] \cup [ a + ( 1 + M ) \varepsilon ; b - ( 1 + M ) \varepsilon ] \cup [ b - \varepsilon ; b ] .
\]
The set \( \Psi_{ \varepsilon } ( [ a ; b ] ) \) has three connected components: two side intervals of length \( \varepsilon \), and a middle interval of length \( b - a - 2 ( 1 + M ) \varepsilon \).
By choice of \( \varepsilon \), the middle interval is of length \( > \varepsilon \).
Since \( \varepsilon ^2 < \varepsilon / ( 3 + 2 M ) \) and since each of the three intervals has length \( \geq \varepsilon \), we can apply the operation \( \Psi_{ \varepsilon ^2} \) to each of the three intervals obtained so far, obtaining nine closed intervals.
This procedure can be iterated: at stage \( n \) we have \( 3^{ n } \) closed intervals, and we apply the operation \( \Psi_{ \varepsilon ^{n + 1}} \) to them.
Let 
\[
 H_n ( a , b ) = \Bigl [ a + ( 1 + M ) \sum_{k = 1}^n \varepsilon ^k ; b - ( 1 + M ) \sum_{k = 1}^n \varepsilon ^k \Bigr ] 
\]
be the center-most interval constructed at stage \( n \), i.e. the one containing the point \( ( a + b ) / 2 \).
As \( ( 1 + M ) \sum_{k = 1}^\infty \varepsilon ^k = \frac{ ( 1 + M ) \varepsilon }{ 1 - \varepsilon } < \frac{b - a}{2} \), it follows that 
\begin{equation}\label{eq:connectedcomponent1}
\bigcap_{n} H_n ( a , b ) = \Bigl [ a + ( 1 + M ) \sum_{k = 1}^\infty \varepsilon ^k ; b - ( 1 + M ) \sum_{k = 1}^\infty \varepsilon ^k \Bigr ] 
\end{equation}
is a closed interval.

\subsubsection{The construction}
Fix \( M > 1 \) and let \( 0 < \varepsilon < \frac{1}{ 3 + 2 M } \).
Consider the triadic Cantor-construction obtained by applying the \( \Psi_{ \varepsilon ^{ n + 1 } } \) operations, that is let
\[
\seqof{ K_s , I_s^- , I_s^+ }{ s \in \pre{ < \omega}{ \set{ -1 , 0 , 1 } } } 
\] 
be a sequence of intervals such that
\begin{itemize}
\item
\( K_s = [ a_s ; b_s ] \) and \( K_\emptyset = [ 0 ; 1 ] \), that is \( a_\emptyset = 0 \) and \( b_\emptyset = 1 \),
\item
\( I_s^- = ( a_s + \varepsilon^{ \lh ( s ) + 1} ; a_s + ( 1 + M ) \varepsilon^{ \lh ( s ) + 1} ) \) and \( I_s^+ = ( b_s - ( 1 + M ) \varepsilon^{ \lh ( s ) + 1} ; b_s - \varepsilon^{ \lh ( s ) + 1} ) \).
\end{itemize}
Figure~\ref{fig:spongy} may help to visualize the construction.
\begin{figure}
\centering
\begin{tikzpicture}[scale=1.35,text height=0ex,text depth=0ex,pin distance=1cm]
\node at (0,0.5) {};
\draw[decorate,decoration=brace] (0 , 0.1)-- node[above] {\( \varepsilon ^{\lh ( s ) + 1} \)} ( 0.98, 0.1);
\draw[decorate,decoration=brace] (1.02, 0.1)-- node[above] {\( M \varepsilon ^{\lh ( s ) + 1} \)} ( 3 , 0.1);
\draw[decorate,decoration=brace] (7.05, 0.1)-- node[above] {\( M \varepsilon ^{\lh ( s ) + 1} \)} (8.98 , 0.1);
\draw[decorate,decoration=brace] (9.02, 0.1)-- node[above] {\( \varepsilon ^{\lh ( s ) + 1} \)} ( 10 , 0.1);
\draw[decorate,decoration=brace] (0, 0.5)-- node[above] {\( K _{ s } \)} ( 10 , 0.5);
\draw[thick] (0,0)--(1,0);
\draw[thick] (3,0)--(7,0);
\draw[thick] (9,0)--(10,0);
\draw[very thin] (1,0)--(3,0) ;
\draw[very thin] (7,0)--(9,0) ;
 \node [circle,fill=black,inner sep = 1pt,pin=270:$a_{s \conc \seq{-1}}$] {};
 \node at (1,0) [circle,fill=black, inner sep = 1pt,pin=270:$b_{s \conc \seq{-1}}$] {};
 \node at (3,0) [circle,fill=black, inner sep = 1pt,pin=270:$a_{s \conc \seq{0}}$] {};
 \node at (7,0) [circle,fill=black, inner sep = 1pt,pin=270:$b_{s \conc \seq{0}}$] {};
 \node at (9,0) [circle,fill=black, inner sep = 1pt,pin=270:$a_{s \conc \seq{1}}$] {};
 \node at (10,0) [circle,fill=black, inner sep = 1pt,pin=270:$b_{s \conc \seq{1}}$] {};
\node at (10,0) [right] {\( b_s \)};
\node at (0,0) [left] {\( a_s \)};
\node at (0.5,-0.15) [below]{\( \scriptscriptstyle K_{ s \conc \seq{ -1 }} \)};
\node at (2,-0.15) [below]{\( \scriptscriptstyle I_s^- \)};
\node at (5,-0.15) [below]{\( \scriptscriptstyle K_{ s \conc \seq{ 0 }} \)};
\node at (8,-0.15) [below]{\( \scriptscriptstyle I_s^+ \)};
\node at (9.5, -0.15) [below]{\( \scriptscriptstyle K_{ s \conc \seq{ 1 }} \)};
\end{tikzpicture}
\caption{Where the intervals \( K_{ s \conc \seq{ -1 }} , K_{ s \conc \seq{ 0 }} , K_{ s \conc \seq{ 1 }} \) lie in \( K_s \).}\label{fig:spongy}
\end{figure}
Following the notation in Section~\ref{subsec:Cantorschemes}, let 
\begin{align*}
 K^{( n )} & = \bigcup_{s \in \pre{ n }{ \set{ -1 , 0 , 1 } } } K_s
\\
 K = \bigcap_{n \in \omega } K^{( n )} &= \bigcup_{z \in \pre{ \omega }{\setLR{ - 1 , 0 , 1 }}} \bigcap_{n \in \omega } K_{ z \restriction n } .
\end{align*} 
By induction on \( \lh s \), one checks that \( \card{ K_s } \geq \varepsilon ^{\lh s} \) and \( \varepsilon ^{ \lh ( s ) + 1 } < \card{ K_s } / ( 3 + 2 M ) \), and if \( \lh s > 0 \) then 
 \begin{equation}\label{eq:connectedcomponent2}
s ( \lh ( s ) - 1 ) \in \setLR{ -1 , 1 } \IFF \card{ K_s } = \varepsilon ^{\lh s} .
\end{equation}
Recall that the connected components of \( K \) are the sets 
\[ 
\bigcap_{n \in \omega } K_{ z \restriction n} = [ a_z ; b_z ] 
\]
where \( a_z = \sup_{n \to \infty} a _{z \restriction n} \) and \( b_z = \inf_{n \to \infty} b _{z \restriction n} \).
By~\eqref{eq:connectedcomponent1} and~\eqref{eq:connectedcomponent2} \( a_z < b_z \IFF z \in F \), where
\[
F = \setofLR{ z \in \pre{ \omega }{\setLR{ - 1 , 0 , 1 }} }{ \EXISTS{n} \FORALL{m \geq n } ( z ( n ) = 0 ) } .
\]
Therefore \( \Int ( K ) = \bigcup \setofLR{ ( a_z ; b_z )}{ z \in F } \) and \( \lambda ( K ) > 0 \).

Let \( s \in \pre{ < \omega }{ \setLR{ - 1 , 0 , 1 }} \).
By induction on \( \lh s \), it can be checked that
\begin{equation}\label{eq:spongydisjoint}
\left ( ( a_s - M \varepsilon ^{ \lh s } ; a_s ) \cup ( b_s ; b_s + M \varepsilon ^{ \lh s } ) \right ) \cap K^{ ( \lh s ) } = \emptyset ,
\end{equation}
hence \( ( a_s - M \varepsilon ^{ \lh s } ; a_s ) \cup ( b_s ; b_s + M \varepsilon ^{ \lh s } ) \) is disjoint from \( K \), and that
\begin{equation}\label{eq:measurespongy}
\begin{split}
\lambda ( K_s \cap K ) & = \card{ K_s } - 2 M \sum_{ i = 0 }^{ \infty } 3^i \varepsilon^{ \lh ( s ) + i + 1 } 
\\
 & = \card{ K_s } - \frac{ 2 M \varepsilon^{ \lh ( s ) + 1 }}{ 1 - 3 \varepsilon } .
\end{split} 
\end{equation}
Clearly \( K = K ( M , \varepsilon ) \subseteq [ 0 ; 1 ] \) is compact, and depends on \( M \) and \( \varepsilon \).
Note that the construction above requires that \( \varepsilon < \frac{1}{ 3 + 2 M } \).
If  this requirement is strengthened by imposing that 
\[
 0 < \varepsilon < \varepsilon _0 \equalsdef \frac{ M - 1 }{ M ( 3 + 2 M ) - 3 } ,
\] 
a spongy set is obtained.

\begin{theorem}\label{thm:spongy}
\( \FORALL{ M > 1} \FORALL{ \varepsilon \in ( 0 ; \varepsilon _0 ) } \) the sets \( K ( M , \varepsilon ) \) and \( \Int \left ( K ( M , \varepsilon ) \right ) \) are spongy.
\end{theorem}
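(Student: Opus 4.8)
The plan is to reduce the claim about $\Int(K)$ to the one about $K = K(M,\varepsilon)$ and then prove directly that $K$ is spongy. For the reduction I would check that $\lambda(\Fr(K)) = 0$, so that $K =_\mu \Int(K)$; since the class $\Spongy$ is invariant under $=_\mu$, it then suffices to show $K$ is spongy. That $\lambda(\Fr(K)) = 0$ is a direct computation: $\lambda(K) = 1 - \tfrac{2M\varepsilon}{1-3\varepsilon}$ by~\eqref{eq:measurespongy} applied to $s = \emptyset$, while $\Int(K)$ is the disjoint union over $z\in F$ (the addresses that are eventually $0$) of the intervals $(a_z;b_z)$; writing such a $z$ uniquely as $s\conc 0^{(\omega)}$ with $s = \emptyset$ or $s$ ending in $\pm1$, the corresponding component has length $\card{K_s} - \tfrac{2(1+M)\varepsilon^{\lh(s)+1}}{1-\varepsilon}$, and summing over $s = \emptyset$ and the $2\cdot 3^{k-1}$ such stems of each length $k\ge 1$ (all with $\card{K_s} = \varepsilon^k$ by~\eqref{eq:connectedcomponent2}) the geometric series telescope to $\lambda(\Int(K)) = 1 - \tfrac{2M\varepsilon}{1-3\varepsilon} = \lambda(K)$.

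Recall $A$ is spongy iff $\Blur(A)\ne\emptyset = \Sharp(A)$, so it suffices to prove $\oscillation_K(x) > 0$ for every $x\in\Fr(K)$: a point outside the closed set $K$ has density $0$ and a point of $\Int(K)$ has density $1$, so this forces $\Sharp(K) = \emptyset$; and it gives $\Fr(K)\subseteq\Blur(K)$, which is nonempty since $0 = \min K\in\Fr(K)$. Each $x\in K$ lies in a unique $K$-component $\bigcap_n K_{z\restriction n} = [a_z;b_z]$, $z$ being the address of $x$; by~\eqref{eq:connectedcomponent1}--\eqref{eq:connectedcomponent2} this is a genuine interval precisely when $z\in F$ and a single point otherwise, and since $(a_z;b_z)\subseteq\Int(K)$, a point of $\Fr(K)$ is either an endpoint of a non-degenerate component or is itself a degenerate one; I would handle these separately. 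Put $c = \tfrac{2M\varepsilon}{1-3\varepsilon}$; then $0 < c < 1$ because $\varepsilon < \varepsilon_0 \le \tfrac1{3+2M}$, and a one-line rearrangement of $\varepsilon(M(3+2M)-3) < M-1$ shows that $\varepsilon < \varepsilon_0$ is \emph{equivalent} to $\tfrac12(1-c) > \tfrac1{2M}$ --- this is the only place the precise threshold $\varepsilon_0$ is used.

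Suppose first $\{x\}$ is a degenerate component, so the address $z$ of $x$ is not in $F$; then $\card{K_{z\restriction k}} = \varepsilon^k$ for infinitely many $k$ (by~\eqref{eq:connectedcomponent2}) and $\card{K_{z\restriction k}}\downarrow 0$. The ball about $x$ of radius $r_k = \card{K_{z\restriction k}}$ contains the interval $K_{z\restriction k}$, so by~\eqref{eq:measurespongy} its density ratio is at least $\tfrac12\bigl(1 - \tfrac{2M\varepsilon^{k+1}}{(1-3\varepsilon)\card{K_{z\restriction k}}}\bigr)\ge\tfrac12(1-c)$ (using $\card{K_{z\restriction k}}\ge\varepsilon^k$); as $r_k\to0$ this gives $\density^+_K(x)\ge\tfrac12(1-c)$. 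For each of the infinitely many $k$ with $\card{K_{z\restriction k}} = \varepsilon^k$, the ball $\Ball(x;M\varepsilon^k)$ still engulfs $K_{z\restriction k}$ (here $M>1$ is used) and, by the $K$-free annuli of~\eqref{eq:spongydisjoint}, meets $K$ only inside $K_{z\restriction k}$, so its density ratio equals $\tfrac{\varepsilon^k(1-c)}{2M\varepsilon^k} = \tfrac{1-c}{2M}$, whence $\density^-_K(x)\le\tfrac{1-c}{2M}$. By the equivalence above, $\density^+_K(x)\ge\tfrac12(1-c) > \tfrac1{2M} > \tfrac{1-c}{2M}\ge\density^-_K(x)$, so $\oscillation_K(x)>0$.

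Now let $x$ be an endpoint of a non-degenerate component; using the reflection $t\mapsto 1-t$ (which swaps $-1$ and $1$) I may assume $x = b_z$ with $z\in F$, say $z(j)=0$ for $j\ge n$. For all small $r$ the left half of $\Ball(x;r)$ lies in $(a_z;b_z)\subseteq K$, so only the right side varies, and for $k\ge n$ the segment $(b_z;b_{z\restriction k})$ meets $K$ (up to a null set) in $\bigsqcup_{j\ge k}(K_{z\restriction j\conc 1}\cap K)$, with $\card{K_{z\restriction j\conc 1}} = \varepsilon^{j+1}$; hence by~\eqref{eq:measurespongy} its $K$-mass is $\tfrac{\varepsilon^{k+1}}{1-\varepsilon}(1-c)$, while $b_{z\restriction k}-b_z = \tfrac{(1+M)\varepsilon^{k+1}}{1-\varepsilon}$ and, by~\eqref{eq:spongydisjoint}, $(b_{z\restriction k};b_{z\restriction k}+M\varepsilon^{k+1})$ is $K$-free. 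Comparing the radii $r'_k = b_{z\restriction k}-b_z$ and $r_k = r'_k + M\varepsilon^{k+1} = \tfrac{(1+2M-M\varepsilon)\varepsilon^{k+1}}{1-\varepsilon}$ --- both balls having left half inside $K$ and right half of $K$-mass $\tfrac{\varepsilon^{k+1}}{1-\varepsilon}(1-c)$, the segment separating the two radii being a gap --- the density ratios come out to $\tfrac12 + \tfrac{1-c}{2(1+M)}$ at $r'_k$ and $\tfrac12 + \tfrac{1-c}{2(1+2M-M\varepsilon)}$ at $r_k$; since $1+M < 1+2M-M\varepsilon$ and $1-c>0$ these differ, so $\oscillation_K(x)>0$. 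I expect the only genuinely delicate step to be this geometric bookkeeping for the non-degenerate endpoints --- tracking exactly which intervals $K_s$ and which removed gaps the balls around $x$ meet --- the degenerate case being short once~\eqref{eq:spongydisjoint} and~\eqref{eq:measurespongy} are in hand.
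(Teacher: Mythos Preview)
Your proposal is correct and follows essentially the same architecture as the paper's proof: reduce $\Int(K)$ to $K$ via $K =_\mu \Int(K)$, then show $\oscillation_K(x) > 0$ for every $x \in \Fr(K)$ by splitting into degenerate and non-degenerate addresses and, in each case, exhibiting two sequences of radii along which the density ratios take distinct values, using the gap structure~\eqref{eq:spongydisjoint} and the mass formula~\eqref{eq:measurespongy}. The only cosmetic differences are that the paper deduces $K =_\mu \Int(K)$ from the Lebesgue density theorem rather than by your direct computation of $\lambda(\Int K)$, and in the non-degenerate case the paper compares the right-density ratios at $b_s$ and $a_{s \conc \langle 1 \rangle}$ (separating them through the intermediate value $\tfrac{1}{M(1+M)}$) whereas you compare the full-density ratios at $r'_k$ and $r_k = r'_k + M\varepsilon^{k+1}$ and obtain the exact constants $\tfrac12 + \tfrac{1-c}{2(1+M)}$ versus $\tfrac12 + \tfrac{1-c}{2(1+2M-M\varepsilon)}$; your bookkeeping here is a bit cleaner and, as you observe, does not actually require the threshold $\varepsilon < \varepsilon_0$ in that case.
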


\begin{proof}
We are going to show that for \( M >1 \) and \( \varepsilon < \varepsilon _0 \)
\begin{equation*}
\FORALL{z \in \pre{ \omega }{\setLR{ - 1 , 0 , 1 }} } \left ( \oscillation_K ( a_z ) , \oscillation_K ( b_z ) > 0 \right ) .
\end{equation*}
Therefore \( \oscillation_K ( x ) > 0 \) for all \( x \in K \setminus \Int ( K ) = \setofLR{a_z , b_z}{ z \in \pre{ \omega }{\setLR{ - 1 , 0 , 1 }} } \), thus \( K \) is spongy and closed.
Since \( \Fr ( K ) = \Exc ( K ) \), by the Lebesgue density theorem \( K =_\mu \Int ( K ) \), so \( \Int ( K ) \) is spongy and open.

The idea behind the proof is an elaboration of the argument used in Examples~\ref{xmp:densitybutnoleftorrightdensities} and~\ref{xmp:oscillatingdensity}.

Let \( x \in K_{ s \conc \seq{ -1 } } \).
By~\eqref{eq:spongydisjoint} we have (see Figure~\ref{fig:spongy}):
\begin{equation}\label{eq:spongycontained}
( x - \varepsilon ^{ \lh ( s ) + 1} ; x + \varepsilon ^{ \lh ( s ) + 1} ) \cap K \subseteq ( x - M \varepsilon ^{ \lh ( s ) + 1} ; x + M \varepsilon ^{ \lh ( s ) + 1 } ) \cap K \subseteq K_{ s \conc \seq{ -1 } } 
\end{equation}
hence 
\begin{equation}\label{eq:th:spongy-a}
\begin{aligned}
\frac{\lambda ( ( x - M \varepsilon ^{ \lh ( s ) + 1} ; x + M \varepsilon ^{ \lh ( s ) + 1 } ) \cap K )}{ 2 M \varepsilon ^{ \lh ( s ) + 1}} & < \frac{ \card{ K_{ s \conc \seq{ -1 } } } }{ 2 M \varepsilon ^{ \lh ( s ) + 1}}
\\
 & = \frac{1}{ 2 M } & \text{by~\eqref{eq:connectedcomponent2}} 
\end{aligned}
\end{equation}
and by~\eqref{eq:measurespongy} with \( s \conc \seq{ -1 } \) in place of \( s \),
\begin{equation}\label{eq:th:spongy-b}
\begin{split}
\frac{ \lambda ( ( x - \varepsilon ^{ \lh ( s ) + 1} ; x + \varepsilon ^{ \lh ( s ) + 1} ) \cap K ) }{ 2 \varepsilon ^{ \lh ( s ) + 1}} & = \frac{1}{ 2 \varepsilon ^{ \lh ( s ) + 1}} \Bigl [ \varepsilon ^{ \lh ( s ) + 1} - \frac{ 2 M \varepsilon ^{ \lh ( s ) + 2} }{1 - 3 \varepsilon } \Bigr ]
 \\
& = \frac{1 - ( 3 + 2 M ) \varepsilon }{2 - 6 \varepsilon } 
\\
& \equalsdef f ( M , \varepsilon ).
\end{split}
\end{equation}
Note that for fixed \( M \) we have that \( \lim_{ \varepsilon {\downarrow} 0} f ( M , \varepsilon ) = \frac{1}{2}\), and since \( M > 1 \) and \( \varepsilon < \varepsilon_0 \), then 
\begin{equation*}
f ( M , \varepsilon ) > \frac{1 }{ 2 M } .
\end{equation*}
Therefore if \( z \in \pre{ \omega }{ \setLR{ - 1 , 0 , 1 }} \) has infinitely many \( -1 \), then letting \( s = z \restriction n \) with \( z ( n ) = - 1 \), it follows that \( a_z = b_z \in K_{ s \conc \seq{ -1 } } \), so~\eqref{eq:th:spongy-a} implies that 
\begin{equation}\label{eq:D-(az)}
\density^-_K ( a_z ) < \frac{1}{2M} 
\end{equation}
and since \( \varepsilon < \varepsilon_0 \), then~\eqref{eq:th:spongy-b} implies that 
\begin{equation}\label{eq:D+(az)}
\density^+_K ( a_z ) \geq f ( M , \varepsilon ) .
\end{equation}
Thus \( \oscillation_K ( a_z ) > 0 \).
A similar argument applies to the case when \( z \) has infinitely many \( 1 \).

Suppose now \( z \in F \), and let \( s \) be any large enough initial segment of \( z \) so that \( z = s \conc 0^{ ( \omega )} \).
Then \( a_z \) and \( b_z \) are the endpoints of the closed interval \( \bigcap_{n} [ a_{ s \conc 0^{( n ) }} ; b_{ s \conc 0^{( n ) }} ] \).
We only show that \( \oscillation_K ( b_z ) > 0 \), the argument for \( \oscillation_K ( a_z ) > 0 \) being similar.
Since \( ( b_z - r ; b_z ) \subseteq K \) for sufficiently small \( r \), it is enough to prove that \( \density_K^+ ( b_z^+ ) > \density_K^- ( b_z^+ ) \).
For ease of notation, set 
\[
g ( x ) = \frac{ \lambda ( K \cap ( b_z ; x ) ) }{ \card{ b_z - x } }, \quad \text{for }x > b_z .
\]
We will show (see~\eqref{eq:th:spongy-f} below) that for any \( s \) as above, the numbers \( g ( a_{ s \conc \seq{ 1 } } ) \) and \( g ( b_s ) \) are sufficiently far apart so that \( \density_K^+ ( b_z^+ ) > \density_K^- ( b_z^+ ) \) holds.

Recall that \( b_z = \inf_n b_{ s \conc 0^{( n )} } = \inf_n a_{ s \conc 0^{( n )}\conc \seq{1} } \) and 
\[ 
b_{ s \conc 0^{( n + 1 )} } < a_{ s \conc 0^{( n )} \conc \seq{1 } } < b_{ s \conc 0^{( n )} \conc \seq{1 } } = b_{ s \conc 0^{( n )} } 
\]
as summarized by Figure~\ref{fig:spongy2}.
\begin{figure}
\centering
\begin{tikzpicture}[scale=1,text height=0ex,text depth=0ex,pin distance=0.5cm]
\draw[very thin] (-0.5,0)--(-0.25,0);
\draw[very thick] (-0.25,0)--(0,0);
\draw[very thin] (0,0)--(1,0);
\draw[very thick] (1,0)--(1.5,0);
\draw[very thin] (1.5,0)--(3.5,0);
\draw[very thick] (3.5,0)--(4.5,0) ;
\draw[very thin] (4.5,0)--(8.5,0) ;
\draw[very thick] (8.5,0)--(10.5,0) ;
 \node at (-0.25,0) [circle,fill=black, inner sep = 1pt] {};
 \node at (0,0) [circle,fill=black, inner sep = 1pt] {};
 \node at (1,0) [circle,fill=black, inner sep = 1pt,pin=90:$a_{s \conc \seq{001}}$] {};
 \node at (1.5,0) [circle,fill=black, inner sep = 1pt,pin=270:$b_{s \conc \seq{001}}$] {};
 \node at (3.5,0) [circle,fill=black, inner sep = 1pt,pin=90:$a_{s \conc \seq{01}}$] {};
 \node at (4.5,0) [circle,fill=black, inner sep = 1pt,pin=270:$b_{s \conc \seq{01}}$] {};
 \node at (8.5,0) [circle,fill=black, inner sep = 1pt,pin=90:$a_{s \conc \seq{1}}$] {};
 \node at (10.5,0) [circle,fill=black, inner sep = 1pt,pin=270:$b_{s \conc \seq{1}}$] {};
\node at (10.5,0) [right] {\( b_s \)};
\node at (-0.5,0) [left] {\( b_z \)};
\end{tikzpicture}
\caption{}\label{fig:spongy2}
\end{figure}
\begin{subequations}\label{eq:subequations}
\begin{gather}
 a_{ s \conc 0^{( n ) } \conc \seq{ 1 }} = b_{ s \conc 0^{( n ) } } - \varepsilon ^{ \lh ( s ) + n + 1 } \label{eq:as1>bs}
 \\
 b_{ s \conc 0^{ ( n + 1 ) }} = b_{ s \conc 0^{ ( n ) }} - ( 1 + M ) \varepsilon^{\lh ( s ) + n + 1 } \label{eq:as1>bs2}
 \\
 b_{z} = b_{ s} - \frac{( 1 + M ) \varepsilon^{\lh ( s ) + 1 } }{ 1 - \varepsilon } . \label{eq:bz>bs}
\end{gather} 
\end{subequations}
Since \( K \cap \ocinterval{b_z}{b_s} \subseteq \bigcup_{n \in \omega } [ a_{ s \conc 0^{( n ) } \conc \seq{ 1 } } ; b_{ s \conc 0^{( n ) } \conc \seq{ 1 }} ] = \bigcup_{ n \in \omega } K_{ s \conc 0^{( n ) } \conc \seq{ 1 } } \), then
\begin{align*}
g ( b_s ) & = \frac{ 1 - \varepsilon }{( 1 + M ) \varepsilon^{\lh ( s ) + 1 }} \sum_{n = 0}^\infty \lambda ( K \cap K_{ s \conc 0^{( n ) } \conc \seq{ 1 } } ) &\text{by~\eqref{eq:bz>bs}}
\\
& = \frac{ 1 - \varepsilon } {( 1 + M ) \varepsilon^{\lh ( s ) + 1 } }\sum_{n = 0}^\infty \bigl [ \card{ K_{ s \conc 0^{( n ) } \conc \seq{ 1 } } } - \frac{ 2 M \varepsilon^{ \lh ( s ) + n + 2 }}{ 1 - 3 \varepsilon } \bigr ] &\text{by~\eqref{eq:measurespongy}}
\\
& = \frac{ 1 - \varepsilon } {( 1 + M ) \varepsilon^{\lh ( s ) + 1 } }\sum_{n = 0}^\infty \bigl [ \varepsilon^{\lh ( s ) + n + 1 } - \frac{ 2 M \varepsilon^{ \lh ( s ) + n + 2 }}{ 1 - 3 \varepsilon } \bigr ] &\text{by~\eqref{eq:connectedcomponent2}}
\\
& = \frac{ 1 - \varepsilon } {( 1 + M ) }\sum_{n = 0}^\infty \bigl [ 1- \frac{ 2 M \varepsilon }{ 1 - 3 \varepsilon } \bigr ] \varepsilon^{ n }
\\
& = \frac{ 1 - \varepsilon ( 3 + 2 M ) }{ ( 1 + M ) ( 1 - 3 \varepsilon ) }.
\end{align*}
For fixed \( M \), the map \( \varepsilon \mapsto \frac{ 1 - \varepsilon ( 3 + 2 M ) }{ ( 1 + M ) ( 1 - 3 \varepsilon ) } \) is decreasing, and since \( \varepsilon < \varepsilon_0 \),
\begin{equation*}
g ( b_s ) > \frac{ 1 }{ M ( 1 + M ) } .
\end{equation*}
By the equations~\eqref{eq:subequations}, 
\begin{equation}\label{eq:th:spongy-d}
 \frac{ \card{ b_z - b_{s \conc \seq{ 0 } } } }{ \card{ b_z - a _{ s \conc \seq{ 1 }} } } = \frac{ ( 1 + M ) \varepsilon / ( 1 - \varepsilon ) }{ M + ( 1 + M ) \varepsilon / ( 1 - \varepsilon ) } .
\end{equation}
 As \( K \cap ( b_{ s \conc \seq{ 0 } } ; a_{ s \conc \seq{ 1 } } ) = \emptyset \), then
\begin{align*}
g ( a_{ s \conc \seq{ 1 }} ) & = \frac{ \lambda ( K \cap \ocinterval{ b_z }{ b_{s \conc \seq{ 0 } } } ) }{ \card{ b_z - a_{ s \conc \seq{ 1 } } } } 
\\
& < \frac{ \card{ b_z - b_{s \conc \seq{ 0 } } } }{ ( 1 + M ) \card{ b_z - a _{ s \conc \seq{ 1 }} } } && 
\\
& = \frac{ \varepsilon / ( 1 - \varepsilon ) }{ M + ( 1 + M ) \varepsilon / ( 1 - \varepsilon ) } && \text{by~\eqref{eq:th:spongy-d}} 
\\
&= \frac{ \varepsilon }{ M + \varepsilon } .
\end{align*}
For fixed \( M \) the map \( \varepsilon \mapsto \frac{ \varepsilon }{ M + \varepsilon } \) is increasing, and since \( \varepsilon < \varepsilon_0 \), then 
\begin{equation}\label{eq:th:spongy-f}
g ( a_{ s \conc \seq{ 1 }} ) < \frac{M - 1}{ 2 M^3 + 3M^2 - 2M - 1} < \frac{1}{M ( M + 1 )} < g ( b_s ) . 
\end{equation}
Therefore \( \density_K^- ( b_z ^+ ) < \density_K^+ ( b_z ^+ ) \) as required.
\end{proof}

\begin{remarks}\label{rmk:spongysubset}
\begin{enumerate-(a)}
\item\label{rmk:spongysubset-a}
Since \( 0 = a_{ -1 ^{ ( \omega ) } } = a_\emptyset \) and \( 1 = b_{ 1^{( \omega )} } = b_\emptyset \), equations~\eqref{eq:D-(az)} and~\eqref{eq:D+(az)} imply that \( \oscillation_S ( 0 ) , \oscillation_S ( 1 ) > 0 \), where \( S = K \) or \( S = \Int K \).
\item\label{rmk:spongysubset-b}
Choosing suitable \( M \) and \( \varepsilon \), a spongy set \( S \subseteq \R \) is obtained so that \( S \times \R \) is spongy in \( \R^2 \).
This result will appear elsewhere.
\end{enumerate-(a)}
\end{remarks}

\begin{corollary}
For every \( m \in ( 0 ; 1 ) \) there is a spongy set \( X \subset [ 0 ; 1 ] \) such that \( \inf X = 0 \), \( \sup X = 1 \), and \( \lambda ( X ) = m \).
Moreover \( X \) can be taken to be open or closed.
Furthermore we can arrange the construction so that \( 0 < \oscillation_X ( 0 ) , \oscillation_X ( 1 ) \) or \( \oscillation_X ( 0 ) = \oscillation_X ( 1 ) = 0 \).
\end{corollary}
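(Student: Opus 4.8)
The plan is to read everything off Theorem~\ref{thm:spongy}, the measure identity~\eqref{eq:measurespongy}, and the elementary fact that $\density_A(x)$ is unchanged under an affine map $x\mapsto \alpha x+\beta$ with $\alpha>0$, so that an affine copy of $K(M,\varepsilon)$, or of $\Int(K(M,\varepsilon))$, is again spongy with the corresponding connected components and endpoints. First I would determine which measures occur. Putting $s=\emptyset$ in~\eqref{eq:measurespongy} gives $\lambda(K(M,\varepsilon))=1-\frac{2M\varepsilon}{1-3\varepsilon}$, which for fixed $M>1$ is continuous and strictly decreasing in $\varepsilon$, with limit $1$ as $\varepsilon\downarrow0$ and limit $1/M$ as $\varepsilon\uparrow\varepsilon_0=\frac{M-1}{M(3+2M)-3}$ (a direct computation, using $1-3\varepsilon_0=\frac{2M^2}{2M^2+3M-3}$). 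Hence $\{\lambda(K(M,\varepsilon)):0<\varepsilon<\varepsilon_0\}=(1/M;1)$, and letting $M\to\infty$ shows that every $m\in(0;1)$ equals $\lambda(K(M,\varepsilon))=\lambda(\Int(K(M,\varepsilon)))$ for a suitable pair $(M,\varepsilon)$ (the last equality because $K(M,\varepsilon)=_\mu\Int(K(M,\varepsilon))$, as in the proof of Theorem~\ref{thm:spongy}).

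For the variant with $0<\oscillation_X(0),\oscillation_X(1)$ there is nothing more to do: fix $(M,\varepsilon)$ with $\lambda(K(M,\varepsilon))=m$ and take $X=K(M,\varepsilon)$ for the closed case and $X=\Int(K(M,\varepsilon))$ for the open case. Both are spongy by Theorem~\ref{thm:spongy}, of measure $m$, and have $\inf X=0$, $\sup X=1$ — for $\Int(K(M,\varepsilon))$ because the side intervals $K_{s\conc\seq{-1}}$ and $K_{s\conc\seq{1}}$ shrink to $0$ and to $1$, so non-degenerate components of $K(M,\varepsilon)$ accumulate there — and $\oscillation_X(0),\oscillation_X(1)>0$ by Remark~\ref{rmk:spongysubset}\ref{rmk:spongysubset-a}.

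For the variant with $\oscillation_X(0)=\oscillation_X(1)=0$, note first that since $X\subseteq[0;1]$ the ball $\Ball(0;r)$ lies at least half outside $X$, so $\density^+_X(0)\le1/2$; as $X$ is spongy this forces $\density_X(0)=0$, and likewise $\density_X(1)=0$, so the task is to build such an $X$. For the closed case, choose $\delta\in(0;\frac12\min\{m,1-m\})$, let $B=\delta+(1-2\delta)K(M,\varepsilon)$ with $\lambda(K(M,\varepsilon))=m/(1-2\delta)\in(0;1)$ (first paragraph), and set $X=\{0\}\cup B\cup\{1\}$: then $B$ is spongy, $X$ is closed with $\inf X=0$, $\sup X=1$, $\lambda(X)=m$, the isolated null points $0,1$ add no sharp point and make the density $0$ there, and at every other point the density of $X$ is that of $B$ (oscillating at the endpoints $\delta,1-\delta$ of $B$), so $\Sharp(X)=\Sharp(B)=\emptyset$ and $\emptyset\ne\Blur(B)\subseteq\Blur(X)$. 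For the open case the points $0,1$ may not be used, so I would replace $\{0\}$ by a \emph{thin spongy comb} $C_L=\bigcup_{n\ge1}J^L_n$, each $J^L_n$ an affine copy of $\Int(K(M_n,\varepsilon_n))$ sitting in a small open interval $(a_n,b_n)$, the $(a_n,b_n)$ pairwise disjoint with gaps between them, $b_n\downarrow0$ (say $b_{n+1}\ge b_n/2$), and chosen so that $\lambda(K(M_n,\varepsilon_n))\to0$; similarly a symmetric comb $C_R$ near $1$; and keep $B=\delta+(1-2\delta)\Int(K(M,\varepsilon))$, now fixing $\lambda(K(M,\varepsilon))$ so that $\lambda(X)=\lambda(B)+\lambda(C_L)+\lambda(C_R)=m$ — possible since $\lambda(C_L)+\lambda(C_R)$ may be taken as small as desired while $\lambda(B)$ sweeps $(0;1-2\delta)$. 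Then $X=B\cup C_L\cup C_R$ is open with $\inf X=0$, $\sup X=1$, $\lambda(X)=m$; a telescoping bound ($\sum_{k\ge n}(b_k-a_k)\le b_n$ together with $b_{n+1}\ge b_n/2$) gives $\density^+_X(0)\le\frac12\limsup_n\lambda(K(M_n,\varepsilon_n))=0$, and symmetrically $\density_X(1)=0$; and $X$ is spongy because inside each copy the density is that of a spongy set, in the gaps it equals $0$, and at the countably many endpoints of the comb pieces one side of a small ball lies in a gap while the other meets a spongy set at its endpoint, so the density there is half of an oscillating quantity — a blurry point, not a sharp one — while $\emptyset\ne\Blur(B)\subseteq\Blur(X)$ still.

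The measure bookkeeping in the first, second and closed-third steps is routine; the one genuinely delicate point is the open case with vanishing endpoint oscillation, where one must make the copies accumulating at $0$ and $1$ thin enough ($\lambda(K(M_n,\varepsilon_n))\to0$) for the density there to vanish and, more importantly, check that the endpoints of the comb pieces become blurry rather than sharp. The latter uses that the one-sided densities of $K(M,\varepsilon)$ at $0$ and at $1$ genuinely oscillate, i.e. the strict inequalities~\eqref{eq:D-(az)} and~\eqref{eq:D+(az)} together with $f(M,\varepsilon)>\frac1{2M}$ established in the proof of Theorem~\ref{thm:spongy}.
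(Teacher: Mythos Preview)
Your proof is correct, and in several places more direct than the paper's. The paper fixes one spongy set $S$ of some measure $M=\lambda(S)$ and then obtains arbitrary measure by gluing two scaled copies $S_{0,\alpha}$ and $S_{1-\alpha,1}$, possibly together with the middle interval $(\alpha;1-\alpha)$; you instead compute from~\eqref{eq:measurespongy} that $\lambda(K(M,\varepsilon))=1-\frac{2M\varepsilon}{1-3\varepsilon}$ sweeps $(1/M;1)$ for fixed $M$, hence all of $(0;1)$ as $M$ varies, which lets you take $X=K(M,\varepsilon)$ (or its interior) directly for the positive-oscillation case. For the closed vanishing-oscillation case your device $X=\{0\}\cup B\cup\{1\}$ is cleaner than the paper's comb. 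For the open vanishing-oscillation case both arguments place thin spongy pieces in disjoint intervals accumulating at the endpoints with relative measure tending to $0$; the paper's version is terser (it leaves the measure bookkeeping and the verification at the piece-endpoints implicit), while you give more detail and add a central piece $B$ to control the total measure. Your approach exploits the explicit formula~\eqref{eq:measurespongy} and is more elementary; the paper's gluing approach is more modular, in that it would work starting from \emph{any} spongy subset of $[0;1]$ with positive endpoint oscillation, without knowing how its measure depends on parameters.
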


\begin{proof}
Let \( S \) be an open, spongy set as in Theorem~\ref{thm:spongy} and let \( 0 < M = \lambda ( S ) < 1 \).
By Remark~\ref{rmk:spongysubset}\ref{rmk:spongysubset-a}, \( 0 < \oscillation_S ( 0 ) , \oscillation_S ( 1 ) \).
We first prove the existence of an open spongy set \( X \) of measure \( m \) and such that \( \oscillation_X ( i ) = \oscillation_S ( i ) \) for \( i = 0 , 1 \).
The affine map \( [ 0 ; 1 ] \to [ a ; b ] \), \( x \mapsto a + ( b - a ) x \), preserves densities, thus  the image of \( S \) under this map, call it \( S_{a , b} \), is a spongy subset of \( [ a ; b ] \) such that \( \oscillation_{ S_{a,b} } ( a ) = \oscillation_S ( 0 ) \) and \( \oscillation_{ S_{a,b} } ( b ) = \oscillation_S ( 1 ) \), and \( \lambda ( S_{a,b} ) = ( b - a ) M \).
For each \( 0 < \alpha < 1 / 2 \) the sets \( X^- ( \alpha ) = S_{ 0 , \alpha } \cup S_{ 1 - \alpha , 1 } \) and \( X^+ ( \alpha ) = X^- ( \alpha ) \cup \left ( \alpha ; 1 - \alpha \right ) \) are open, spongy, and have measure \( 2 M \alpha \) and \( 1 - 2 \alpha ( 1 - M ) \), respectively, and therefore for each \( m \in ( 0 ; 1 ) \) there is an open \( X \) as in the statement. 
The requirement ``\( X \) closed'' can be fulfilled by starting with a closed \( S \) and using \( [ \alpha ; 1 - \alpha ] \) in the definition of \( X^+ ( \alpha ) \).

Let us now show how to modify the construction in order to attain \( \oscillation_X ( 0 ) = \oscillation_X ( 1 ) = 0 \).
Choose \( \varepsilon _n {\downarrow} 0 \) be such that \( \varepsilon _0 \leq 1/2 \) and let \( X_n^0 \subseteq ( \varepsilon _{ 2 n + 1} ; \varepsilon _{2 n} ) \) and \( X_n ^1 \subseteq ( 1 - \varepsilon _{ 2n } ; 1 - \varepsilon _{ 2n + 1 } ) \) be spongy sets such that \( \lambda ( X^i_n ) / ( \varepsilon _{ 2 n } - \varepsilon _{ 2 n + 1 } ) \leq 2^{ - n } \), for \( i = 0 , 1 \).
Then \( X = \bigcup_{n \in \omega } X^0_n \cup X^1_n \) is spongy and \( \oscillation_X ( 0 ) = \oscillation_X ( 1 ) = 0 \).
\end{proof}

\subsection{Solid sets}
Balls in \( \R^n \) are typical examples of solid sets.
A ball in \( \R \) of center \( x \) and radius \( r \) is just the interval \( ( x - r ; x + r ) \) and the points of its frontier \( \set{x - r , x + r } \) have density \( 1 / 2 \).
The same is true for \( B_2 = \setof{ \mathbf{y} \in \R^{ n + 1 } }{ \absval{ \mathbf{y} - \mathbf{x} }_2 < r } \), the ball in \( \R^{n + 1} \) with center \( \mathbf{x} \) and radius \( r \): its frontier is the \( n \)-dimensional sphere \( S_2 = \setof{ \mathbf{y} }{ \absval{ \mathbf{y} - \mathbf{x} }_2 = r } \) which, being a differentiable manifold, can be smoothly approximated with a hyperplane at every point, and therefore \( \density_{B_2} ( \mathbf{y} ) = 1 / 2 \) for all \( \mathbf{y} \in S_2 \).
The index \( 2 \) refers to the fact that we used the \( \ell_2 \)-norm, but a similar argument works for the \( \ell_p \)-norm, with \( 1 < p < + \infty \).
When \( p \in \set{ 1 , + \infty } \) the ball \( B_p \) is still solid, but \( S_p \) is no longer smooth, and we get the weaker result that \( \density_{B_p} ( \mathbf{y} ) = 1 / 2 \) for comeager many (in fact: all but finitely many) \( \mathbf{y} \in S_p \).
 
\begin{definition}\label{def:quasiEuclidean}
A Polish measure space \( ( X , d , \mu ) \) is \markdef{quasi-Euclidean} if it is locally compact, connected, \( \mu \) is continuous, fully supported, locally finite and satisfies the DPP. 
\end{definition}

Thus \( \R^n \) with the \( \ell_p \)-metric (\( 1 \leq p \leq \infty \)) and the \( n \)-th dimensional Lebesgue measure is quasi-Euclidean.
Note that all \( \ell_p \) metrics on \( \R^n \) are equivalent.

\begin{theorem}\label{thm:solid}
Suppose \( ( X , d , \mu ) \) is quasi-Euclidean and that \( A \subseteq X \) is nontrivial and solid.
Suppose \( d' \) is an equivalent metric such that every \( \Ball' ( x ; r ) = \setofLR{ z \in X}{ d' ( z , x ) < r } \) is solid and there is a \( \rho \in ( 0 ; 1 ) \) such that 
\[
 \FORALL{ x , y \in X } \FORALL{ r > 0 } \left [ d' ( y , x ) = r \implies \density_{ \Ball' ( x ; r ) } ( y ) = \rho \right ] .
\]
Then 
\begin{enumerate-(a)}
\item\label{thm:solid-a}
\( \Fr_\mu ( A ) \) is closed and nonempty,
\item\label{thm:solid-b}
\( \setofLR{ x \in X }{ \density_A ( x ) = \rho } \) is a dense subset of \( \Fr_\mu ( A ) \), and
\item\label{thm:solid-c}
\( \rho = 1 / 2 \).
\end{enumerate-(a)}
\end{theorem}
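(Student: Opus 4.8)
The overall plan is: part (a) is immediate from connectedness; the engine behind (b) and (c) is a \emph{ball-inscription} argument that, near any point of $\Fr_\mu A$, produces a point where $\density_A$ is forced toward $\rho$; part (c) then comes out by feeding the engine the complement of a $d'$-ball, and part (b) by combining the engine with the fact that $\density_A$ is Baire class $1$ (Proposition~\ref{prop:solidsetBaireclassDensity}) and the Baire category theorem. For (a): $\Fr_\mu(A)=\Cl_\mu A\setminus\Int_\mu A$ is closed, and if it were empty $\Cl_\mu A=\Int_\mu A$ would be clopen, hence $\emptyset$ or $X$ by connectedness; since $A\subseteq_\mu\Cl_\mu A$ and $\Int_\mu A\subseteq_\mu A$, the first option makes $A$ null and the second makes $A^\complement$ null, both excluded by nontriviality. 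Two preliminary facts then make the $d'$-balls usable. First, every $d'$-sphere $\setofLR{z}{d'(z,x)=r}$ is $\mu$-null: if it had positive measure, by the DPP (Lebesgue density theorem) $\mu$-a.e.\ of its points would be a density point of the sphere, forcing $\density^+_{\Ball'(x;r)}=0$ there, against $\density_{\Ball'(x;r)}\equiv\rho>0$ on it; hence $r\mapsto\mu(\Ball'(x;r))$ is continuous and $\mu$ is continuous with respect to $d'$ in the sense of Definition~\ref{def:continuousmeasure}, so Lemma~\ref{lem:continuityofmeasure} applies to $d'$, giving continuity of $(c,r)\mapsto\eq{\Ball'(c;r)}$ into $\MALG$ and of $R\mapsto\mu\big(E^\complement\cap\Ball'(c;R)\big)$. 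Second, $\Cl\Ball'(c;r)=_\mu\Ball'(c;r)$, every $\mu$-frontier point of $\Ball'(c;r)$ satisfies $d'(z,c)=r$, and (applying (a) to the nontrivial set $\Ball'(c;r)$) $\Fr_\mu\Ball'(c;r)$ is nonempty, with $\density_{\Ball'(c;r)^\complement}\equiv 1-\rho$ on it.

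The engine. For a solid, nontrivial $E$ and $x_0\in\Fr_\mu E$, solidity forbids (by Corollary~\ref{cor:solidsetBaireclassDensity}) that $E$ and $E^\complement$ be simultaneously thick in a common nonempty open set; hence $\Int_\mu E\cup\Int_\mu E^\complement$ is dense, so $X=\Cl(\Int_\mu E)\cup\Cl(\Int_\mu E^\complement)$ and in particular $x_0$ lies in one of these two closures. Assume, say, $x_0\in\Cl(\Int_\mu E)$ (otherwise swap $E$ and $E^\complement$, replacing $\rho$ by $1-\rho$ throughout, which is harmless once $\rho=1/2$ is known). Inside a given neighbourhood $W$ of $x_0$, pick a nonempty open $G\subseteq W\cap\Int_\mu E$ and a tiny $d'$-ball $\Ball'(c_0;r_0)\subseteq G$ with $c_0$ so near $x_0$ that $\Cl\Ball'\big(c_0;d'(c_0,x_0)\big)\subseteq W$, and inflate concentrically: let $R^{*}=\sup\setofLR{R}{\Ball'(c_0;R)\subseteq_\mu E}$. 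Continuity of $\mu$ along $d'$-balls gives $B^{*}:=\Ball'(c_0;R^{*})=\bigcup_{R<R^{*}}\Ball'(c_0;R)\subseteq_\mu E$, while $R^{*}\le d'(c_0,x_0)$ because once $R>d'(c_0,x_0)$ the ball contains a neighbourhood of $x_0\in\Cl_\mu E^\complement$ and so meets $E^\complement$ in positive measure; thus $\Cl B^{*}\subseteq W$ is compact. Maximality yields $\mu\big(E^\complement\cap\Ball'(c_0;R^{*}+\varepsilon)\big)>0$ for all $\varepsilon>0$ while $E^\complement\cap B^{*}$ is null, so $E^\complement$ has positive measure in every outer shell of the sphere of $B^{*}$; a compactness argument on that sphere produces a point $y^{*}$ on it at which $E^\complement$ is thick. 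Since $E$ is also thick at $y^{*}$ (neighbourhoods meet $B^{*}\subseteq_\mu E$ in positive measure), $y^{*}\in\Fr_\mu E\cap W$, and from $B^{*}\subseteq_\mu E$ together with $\density_{B^{*}}(y^{*})=\rho$ (as $d'(y^{*},c_0)=R^{*}$) and solidity of $E$ we get $\density_E(y^{*})\ge\rho$. The dual statement, a point $y^{*}$ arbitrarily near $x_0$ with $\density_E(y^{*})\le\rho$, is to be obtained by the same maximality scheme used to \emph{circumscribe} a $d'$-ball around $E$ near the touch point (equivalently, to inscribe the exterior of a $d'$-ball into $E^\complement$ with $y^{*}$ on its sphere), giving $\density^+_E(y^{*})\le\rho$.

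Granting the engine in both directions, (c) follows by applying it to $E=\Ball'(c;r)^\complement$, which is solid and, for suitable $c,r$, nontrivial: on $\Fr_\mu E$ the density is identically $1-\rho$, so for both $\setofLR{x\in\Fr_\mu E}{\density_E(x)\ge\rho}$ and $\setofLR{x\in\Fr_\mu E}{\density_E(x)\le\rho}$ to be dense in the nonempty set $\Fr_\mu E$ one needs $1-\rho\ge\rho$ and $1-\rho\le\rho$, i.e.\ $\rho=1/2$. For (b), now that $\rho=1/2$: applying the engine to $A$ and to $A^\complement$ makes $\setofLR{x\in\Fr_\mu A}{\density_A(x)\ge 1/2}$ and $\setofLR{x\in\Fr_\mu A}{\density_A(x)\le 1/2}$ dense in $\Fr_\mu A$; since $A$ is solid, $\density_A$ is Baire class $1$, so $\setofLR{x\in\Fr_\mu A}{\density_A(x)>1/2}$ and $\setofLR{x\in\Fr_\mu A}{\density_A(x)<1/2}$ are $\Fsigma$ subsets of the Polish space $\Fr_\mu A$ with dense complements, hence meager; therefore $\setofLR{x\in\Fr_\mu A}{\density_A(x)=1/2}$ is comeager in $\Fr_\mu A$ and in particular dense, which is (b) (and $\rho=1/2$ matches the statement). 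The main obstacle is the circumscription step, i.e.\ turning the inscribed-ball inequality $\density_E(y^{*})\ge\rho$ into the matching upper bound at the \emph{same} point (and handling frontier points lying in only one of $\Cl(\Int_\mu E)$, $\Cl(\Int_\mu E^\complement)$): this is precisely where the roundness of the $d'$-balls encoded by the hypothesis on $\rho$ must be exploited, and it is also the step that delivers $\rho\ge 1/2$.
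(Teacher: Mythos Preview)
Your inscribed-ball ``engine'' is exactly the heart of the paper's argument (it is Claim~1 there, phrased as a nearest-point construction rather than an inflation), and your overall Baire-category framing is the right one. The genuine gap is the one you yourself flag: the ``circumscription step'' is never carried out, and you need it because you insist on proving (c) before (b). In fact circumscription is not needed at all, and trying to make it work is why the argument stalls.

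The fix is to reverse the order and prove (b) for the given (unknown) $\rho$ first; (c) is then a one-liner. The missing half---producing points of $\Fr_\mu A$ with density \emph{strictly below} $\rho$ near an arbitrary $x_0\in\Fr_\mu A$---does not require circumscribing a ball. It comes from a trivial DPP observation (the paper's Claim~2): if $x_0\notin\Cl(\Int_\mu A^\complement)$, choose a ball $V$ about $x_0$ disjoint from $\Int_\mu A^\complement$; since $x_0\notin\Int_\mu A$, the set $A^\complement$ has positive measure in $V$, so by DPP there is $y\in V$ with $\density_A(y)=0<\rho$; and $y\in\Fr_\mu A$ because $y\notin\Int_\mu A^\complement$ (as $V$ misses it) and $y\notin\Int_\mu A$ (as $\density_A(y)=0$). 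Symmetrically, if $x_0\notin\Cl(\Int_\mu A)$ one finds $y\in\Fr_\mu A$ near $x_0$ with $\density_A(y)=1>\rho$. Combine this with your engine and its $A^\complement$-version: for every $x_0\in\Fr_\mu A$, either $x_0\in\Fr(\Int_\mu A)$ (engine gives a nearby $y^*\in\Fr_\mu A$ with $\density_A(y^*)\ge\rho$, in fact $>\rho$ in the paper's contradiction setup) or $x_0\notin\Fr(\Int_\mu A)$ (DPP gives density $1$); and dually for the $<\rho$ side. So in the contradiction scheme ``no density-$\rho$ point in $U\cap\Fr_\mu A$'', both $F^+=\{\density_A>\rho\}$ and $F^-=\{\density_A<\rho\}$ are dense in $U\cap\Fr_\mu A$; they are $\bDelta^0_2$ there (your Baire-class-1 remark, or directly via the closed approximants $F^{\pm}_{m,k}$), contradicting Baire. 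This is (b) for the original $\rho$. Then (c): apply (b) to $A=\Ball'(c;r)^\complement$; on $\Fr_\mu A$ one has $\density_A\equiv 1-\rho$, yet (b) produces a point with $\density_A=\rho$, so $\rho=1-\rho$.

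Note also that your engine, as written, only covers $x_0\in\Cl(\Int_\mu E)$; the same DPP trick above (not the swap $E\leftrightarrow E^\complement$, which gives the wrong inequality $\le 1-\rho$) is what handles the remaining points for the $\ge\rho$ direction.
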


\begin{remark}
The density function \( \density \) in Theorem~\ref{thm:solid} refers to the metric \( d \), not to \( d' \).
\end{remark}

By Proposition~\ref{prop:solidsetBaireclassDensity} \( \setofLR{ x \in X }{ \density_A ( x ) = \rho } \) is \( \Gdelta \) for any \( \rho \), so

\begin{corollary}\label{cor:solid}
If \( X , d , d' , \mu , A \) are as in Theorem~\ref{thm:solid}, then \( \setofLR{ x \in X }{ \density_A ( x ) = 1 / 2 } \) is \( \Gdelta \) dense in \( \Fr_\mu ( A ) \), and \( \setofLR{ x \in X }{ \density_A ( x ) = \rho } \) is not dense in \( \Fr_\mu ( A ) \) for any \( \rho \in ( 0 ; 1 ) \setminus \set{ 1 / 2 } \).
\end{corollary}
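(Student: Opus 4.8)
The plan is to read the corollary off from Theorem~\ref{thm:solid} together with the Baire category theorem. First I would record that Theorem~\ref{thm:solid}\ref{thm:solid-c} forces $\rho = 1 / 2$, so that Theorem~\ref{thm:solid}\ref{thm:solid-b} already asserts that the set $E_{1/2} \equalsdef \setofLR{ x \in X }{ \density_A ( x ) = 1 / 2 }$ is dense in $\Fr_\mu ( A )$, the latter being closed and nonempty by Theorem~\ref{thm:solid}\ref{thm:solid-a}. By Proposition~\ref{prop:solidsetBaireclassDensity}, since $A$ is solid the function $\density_A$ is Baire class $1$, so for every $\rho$ the set $E_\rho \equalsdef \setofLR{ x \in X }{ \density_A ( x ) = \rho }$ is the preimage of the closed singleton $\setLR{ \rho }$ under a Baire class $1$ map, hence $\Gdelta$ in $X$ (as already noted just before the corollary). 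In particular $E_{1/2} \cap \Fr_\mu ( A )$ is a dense $\Gdelta$ subset of $\Fr_\mu ( A )$, which is the first assertion.

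For the second assertion I would fix $\rho \in ( 0 ; 1 ) \setminus \setLR{ 1 / 2 }$ and argue by contradiction, assuming $E_\rho$ is dense in $\Fr_\mu ( A )$. Since $X$ is Polish and $\Fr_\mu ( A )$ is closed and nonempty, $\Fr_\mu ( A )$ is a nonempty Polish --- hence Baire --- space; both $E_{1/2} \cap \Fr_\mu ( A )$ and $E_\rho \cap \Fr_\mu ( A )$ are then dense $\Gdelta$, so comeager, subsets of it. Two comeager subsets of a nonempty Baire space have nonempty (indeed dense) intersection, but $E_{1/2} \cap E_\rho = \emptyset$ because a point cannot have density both $1 / 2$ and $\rho$ at $A$. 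This contradiction shows that $E_\rho$ is not dense in $\Fr_\mu ( A )$.

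I do not expect a genuine obstacle: the corollary is essentially bookkeeping on top of Theorem~\ref{thm:solid} and the Baire-class-$1$ property of $\density_A$. The only points deserving a line of care are that all the topological notions in the conclusion (\emph{$\Gdelta$}, \emph{dense}, \emph{not dense}) are to be read in the subspace topology of $\Fr_\mu ( A )$, and that $\Fr_\mu ( A )$, as a nonempty closed subspace of the Polish space $X$, is itself a nonempty Polish, hence Baire, space --- which is what makes the category argument in the second paragraph legitimate.
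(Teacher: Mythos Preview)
Your proposal is correct and follows the same approach the paper has in mind: the paper simply notes (in the sentence preceding the corollary) that Proposition~\ref{prop:solidsetBaireclassDensity} makes each level set \( \Gdelta \), and leaves the rest implicit, while you spell out the Baire-category step for the non-density of \( E_\rho \) when \( \rho \neq 1/2 \). There is nothing to add.
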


\begin{corollary}\label{cor:nodualisticsetsinRn}
Work in \( \R^n \) with the \( \ell_p \) metric \( ( 1 \leq p \leq \infty ) \) and the Lebesgue measure.
If \( A \subseteq \R^n \) is nontrivial and solid, then \( \density_A ( \mathbf{x} ) = \frac{1}{2} \) for comeager many \( \mathbf{x} \in \Fr_\mu ( A ) \).

In particular, there are no nontrivial dualistic sets.
\end{corollary}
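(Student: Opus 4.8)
The plan is to deduce this from Theorem~\ref{thm:solid}, or rather from Corollary~\ref{cor:solid}, after supplying an appropriate auxiliary metric. First I would observe that $\R^n$ with any $\ell_p$-metric $d$ ($1\le p\le\infty$) and the Lebesgue measure $\lambda^n$ is quasi-Euclidean in the sense of Definition~\ref{def:quasiEuclidean}: it is locally compact, connected, and $\lambda^n$ is continuous, fully supported, locally finite and satisfies the DPP by the Lebesgue density theorem; in particular $(X,d,\lambda^n)$ is amenable since $\lambda^n$ is continuous. Since all $\ell_p$-metrics on $\R^n$ are equivalent, I would apply Theorem~\ref{thm:solid} with the given metric $d$ and with $d'$ the Euclidean ($\ell_2$) metric. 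The hypotheses on $d'$ are exactly the facts recorded in the discussion preceding Definition~\ref{def:quasiEuclidean}: every Euclidean ball $\Ball'(\mathbf{x};r)$ is solid, and for $\mathbf{y}$ on the bounding sphere $S_2=\setofLR{\mathbf{z}}{d'(\mathbf{z},\mathbf{x})=r}$ one has $\density_{\Ball'(\mathbf{x};r)}(\mathbf{y})=1/2$, because $S_2$ is a smooth hypersurface, hence near $\mathbf{y}$ it is approximated to first order by its tangent hyperplane, which in the limit splits every small $d$-ball into two halves of equal measure. So the constant $\rho$ of Theorem~\ref{thm:solid} is $1/2$ (and Theorem~\ref{thm:solid}\ref{thm:solid-c} would force this anyway).

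With this in place, Theorem~\ref{thm:solid} and Corollary~\ref{cor:solid} apply verbatim: for $A$ nontrivial and solid, $\Fr_\mu(A)$ is closed and nonempty, and $\setofLR{\mathbf{x}}{\density_A(\mathbf{x})=1/2}$ is $\Gdelta$ (by Proposition~\ref{prop:solidsetBaireclassDensity}, since $\density_A$ is Baire class $1$) and dense in $\Fr_\mu(A)$. As $\Fr_\mu(A)$ is closed in $\R^n$, it is completely metrizable, so a dense $\Gdelta$ in it is comeager; this gives the first assertion. For the last sentence: if $A$ were dualistic, then $\Exc(A)=\emptyset$, in particular $\Sharp(A)=\emptyset$; but a comeager subset of the nonempty completely metrizable space $\Fr_\mu(A)$ is nonempty, so there is $\mathbf{x}$ with $\density_A(\mathbf{x})=1/2\in(0;1)$, i.e.\ $\mathbf{x}\in\Sharp(A)$, a contradiction. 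Hence no nontrivial $A\subseteq\R^n$ is dualistic.

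The substantive input is Theorem~\ref{thm:solid} itself; within the present corollary the one point needing care is the choice of the auxiliary metric $d'$. For $1<p<\infty$ one could simply take $d'=d$, but for $p\in\set{1,\infty}$ the $\ell_p$-sphere is only piecewise smooth and would yield $\density_{\Ball'(\mathbf{x};r)}(\mathbf{y})=1/2$ merely off a finite set, so the equivalent Euclidean metric is used instead; this is exactly the freedom built into the statement of Theorem~\ref{thm:solid}, and it is the step I expect a reader to find the least obvious.
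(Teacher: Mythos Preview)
Your proposal is correct and matches the paper's intended route: the corollary is stated without proof immediately after Corollary~\ref{cor:solid}, and the surrounding text (the discussion before Definition~\ref{def:quasiEuclidean} on densities at boundary points of \( \ell_p \)-balls, and the remark that all \( \ell_p \)-metrics on \( \R^n \) are equivalent) supplies exactly the verification you spell out---namely that one takes \( d' \) to be the Euclidean metric so that the hypothesis \( \density_{\Ball'(\mathbf{x};r)}(\mathbf{y}) = 1/2 \) holds at every boundary point. Your closing remark about why \( d' = d \) would not work for \( p \in \set{1,\infty} \) is also the point the paper is making in that preliminary discussion.
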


\begin{proof}[Proof of Theorem~\ref{thm:solid}]
\ref{thm:solid-a} follows from the fact that \( A \) is nontrivial and \( X \) is connected.
For the sake of notation let \( F = \Fr_\mu ( A ) \).

The crux of the matter is the proof of~\ref{thm:solid-b}.
Towards a contradiction, suppose that \( \density_A ( x ) \neq \rho \) for all \( x \in U \cap F \), where \( U \) is open in \( X \) and \( U \cap F \neq \emptyset \).
Then the sets
\begin{align*}
F^ + & = \setofLR{ x \in F \cap U }{ \density_A ( x ) > \rho }
\\
F^- & = \setofLR{ x \in F \cap U }{ \density_A ( x ) < \rho }
\end{align*}
partition \( F \cap U \). 
Since 
\[
F^ + = \bigcup_{m , k} F^ + _{m, k } \qquad\text{and}\qquad F^- = \bigcup_{m, k} F^-_{m, k} 
\]
where
\begin{align*}
F^ + _{m, k} & = \setofLR{ x \in F \cap U }{ \FORALL{n > m} \Bigl [ \frac{ \mu ( \Ball ( x ; 1 / n ) \cap A ) }{ \mu ( \Ball ( x ; 1 / n ) )} \geq \rho + 2^{-k} \Bigr ] }
\\
F^-_{m,k} & = \setofLR{ x \in F \cap U }{ \FORALL{n > m} \Bigl [ \frac{ \mu ( \Ball ( x ; 1 / n ) \cap A ) }{ \mu ( \Ball ( x ; 1 / n ) )} \leq \rho - 2^{-k} \Bigr ] } ,
\end{align*}
by the continuity property of the measure in the Definition~\ref{def:quasiEuclidean}, the sets \( F^ + _{m, k} \) and \( F^-_{m, k} \) are closed in \( F \cap U \), and hence both \( F^ + \) and \( F^- \) are \( \bSigma^{0}_{2} \), and therefore are \( \bDelta^{0}_{2} \).
If we show that both \( F^ + \) and \( F^- \) are dense in \( F \cap U \), a contradiction follows applying the Baire category theorem.

Fix \( x \in F \cap U \) towards proving that \( x \in \Cl ( F^+ ) \cap \Cl ( F^- ) \).
The solidity of \( A \) together with Lemma~\ref{lem:useless} yield that \( \Int ( F ) = \emptyset \).

\begin{claim}
If \( x \in \Fr ( \Int_\mu A ) \) then \( x \in \Cl ( F^+ ) \). 
\end{claim}

\begin{proof}
Towards a contradiction, choose \( \delta \) such that \( \Ball' ( x ; \delta ) \) is compact and such that 
\begin{equation}\label{eq:antiSzenes-absurd2}
 \Ball ' ( x ; \delta ) \cap F^ + = \emptyset .
\end{equation}
Pick \( y \in \Int_\mu ( A ) \cap \Ball ' ( x ; \delta / 2 ) \).
By compactness there is \( w \in \Ball' ( x ; \delta ) \setminus \Int_\mu ( A ) \) such that
\[ 
0 < r = d ' ( y , X \setminus \Int_\mu ( A ) ) = d ' ( y , w ) < \delta / 2 .
\]
Since \( d' ( x , w ) \leq d' ( x , y ) + d ' ( y , w ) < \delta /2 + r < \delta \), then \( w \in \Ball ' ( x ; \delta ) \), so \( w \in \Int_\mu ( A^ \complement ) \cup F^- \) by~\eqref{eq:antiSzenes-absurd2}, and therefore \( \density_A ( w ) < \rho \).
Moreover, \( z \in \Ball' ( y ; r ) \implies z \in \Int_\mu ( A ) \); so \( \Ball' ( y ; r ) \subseteq \Int_\mu ( A ) \).
By assumption \( \density_{ \Ball' ( y ; r ) } ( w ) = \rho \) hence \( \density^-_{\Int_\mu ( A )} ( w ) \geq \rho . \)
Since \( \Int_\mu ( A ) \subseteq \Phi( A ) =_\mu A \), then \( \density^-_{\Int_\mu ( A )} ( w ) \leq \density_A ( w ) \), contradicting the preceding calculations.
\end{proof}

Similarly, if \( x \in \Fr ( \Int_\mu A^\complement ) \) then \( x \in \Cl ( F^- ) \).
Therefore if \( x \in \Fr ( \Int_\mu A ) \cap \Fr ( \Int_\mu A^\complement ) \) then \( x \in \Cl ( F^+ ) \cap \Cl ( F^- ) \), as required. 
 
\begin{claim}
If \( x \notin \Fr \Int_\mu ( A^\complement ) \) then \( x \in \Cl ( F^- ) \).
\end{claim}

\begin{proof}
Fix \( \gamma \) sufficiently small such that \( \Ball ( x ; \gamma ) \cap \Int_\mu ( A^ \complement ) = \emptyset \).
Since \( x \notin \Int_\mu ( A ) \), then \( \mu ( \Ball ( x ; \gamma ) \cap A ) < \mu ( \Ball ( x ; \gamma ) ) \) so by DPP there is \( y \in \Ball ( x ; \gamma ) \) such that \( \density_A ( y ) = 0 \), and therefore \( y \in F^- \).
\end{proof}

Similarly if \( x \notin \Fr \Int_\mu ( A ) \) then \( x \in \Cl ( F^+ ) \).

Therefore we have shown that if \( x \in F \cap U \) then \( x \in \Cl ( F^+ ) \cap \Cl ( F^- ) \).
This concludes the proof of part~\ref{thm:solid-b} of the theorem.

Now we argue for part~\ref{thm:solid-c}.
Fix \( y \in X \) and \( r > 0 \), and \( A = \Ball' ( y ; r ) ^ \complement \), so by part~\ref{thm:solid-b} there is \( x_0 \in X \) such that \( \density_A ( x_0 ) = \rho \). 
On the other hand \( \density_A ( x ) = 1 - \density_{A^\complement} ( x ) \), and \( \density_{A^\complement} ( x ) \in \setLR{ 0 , \rho , 1} \).
Thus \( \rho = 1 - \rho = 1 / 2 \).
\end{proof}

\printbibliography

\end{document}